\documentclass[11pt,a4paper,reqno]{amsart}
\usepackage{a4wide}
\usepackage{amsaddr}
\usepackage{amsmath}
\usepackage{amsfonts, amssymb, amsthm}
\usepackage{paralist}
\usepackage[colorlinks=true]{hyperref}
\hypersetup{urlcolor=blue, citecolor=red}
\usepackage{graphics,graphicx,color}



\makeatletter
\renewcommand{\email}[2][]{%
	\ifx\emails\@empty\relax\else{\g@addto@macro\emails{,\space}}\fi%
	\@ifnotempty{#1}{\g@addto@macro\emails{\textrm{(#1)}\space}}%
	\g@addto@macro\emails{#2}%
}
\makeatother

\theoremstyle{definition}

\numberwithin{equation}{section}


\newcommand{\R}{\mathbb{R}}

\newtheorem{Thm}{Theorem}[section]
\newtheorem{Lem}{Lemma}[section]
\newtheorem{Cor}[Thm]{Corollary}
\newtheorem{Prop}{Proposition}[section]

\theoremstyle{definition}

\newtheorem{Rem}{Remark}[section]

\newtheorem*{Not}{Notations}
\newtheorem{Assum}{Assumption}[section]
\newtheorem{Claim}{Claim}

\allowdisplaybreaks

\usepackage[normalem]{ulem}
\usepackage{color}
\addtolength{\parskip}{0.15em}
\hfuzz=5pt \vfuzz=3pt

\begin{document}
\title[ Asymptotic behavior for the Brezis-Nirenberg problem]
{Asymptotic behavior for the Brezis-Nirenberg problem. The subcritical perturbation case}
\author{Jinkai Gao \ and \ Shiwang Ma$^{ \rm *}$}\email{jinkaigao@mail.nankai.edu.cn, shiwangm@nankai.edu.cn}
\address{School of Mathematical Sciences and LPMC, Nankai University\\ 
	Tianjin 300071, China}

\thanks{
  {$^{\rm *}$ Corresponding author.}
}

\keywords{Critical Sobolev exponent; Least energy solutions; Asymptotic; Uniqueness; Nondegeneracy.}


\subjclass[2020]{ 35J60, 35B09, 35B33, 35B40, 35A02.}

\date{}

\begin{abstract}
In this paper, we are concerned with the well-known  Brezis-Nirenberg problem
\begin{equation*}
\begin{cases}
-\Delta u= u^{2^*-1}+\varepsilon u^{q-1},\quad
u>0, &{\text{in}~\Omega},\\
\quad \  \ u=0, &{\text{on}~\partial \Omega},
\end{cases}
\end{equation*}
where $\Omega\subset \mathbb R^N$ with $N\ge 3$ is a bounded domain,  $q\in(2,2^*)$ and $2^*=\frac{2N}{N-2}$ denotes the critical Sobolev exponent. It is well-known (H. Br\'{e}zis and L. Nirenberg, 
\newblock {\em Comm. Pure Appl. Math.}, 36(4):437--477, 1983) that the above problem admits a positive least energy solution for all $\varepsilon >0$ and $q>\max\{2,\frac{4}{N-2}\}$. In the present paper, we first analyze the asymptotic behavior of the positive least energy solution as $\varepsilon\to 0$ and establish a sharp asymptotic characterisation of the profile and blow-up rate of the least energy solution. Then, we prove the uniqueness and nondegeneracy of the least energy solution under some mild assumptions on domain $\Omega$. The main results in this paper can be viewed as a generalization of the results for $q=2$ previously established in the literature. But the situation is  quite different from the case $q=2$, and the blow-up rate not only heavily depends on  the space dimension $N$ and the geometry of the domain $\Omega$, but also depends on the exponent $q\in(\max\{2,\frac{4}{N-2}\}, 2^*)$ in a non-trivial way.  
\end{abstract}

\maketitle



\section{Introduction and main results}
\setcounter{equation}{0}
In this paper, we consider the following Brezis-Nirenberg problem
\begin{equation}\label{p-varepsion}\tag{$P_{\varepsilon}$}
\begin{cases}
-\Delta u= u^{2^*-1}+\varepsilon u^{q-1}, \quad 
 u>0, &{\text{in}~\Omega},\\
\quad \  \ u=0, &{\text{on}~\partial \Omega},
\end{cases}
\end{equation}
where $N\geq 3$, $q \in [2,2^*)$, $2^*:=\frac{2N}{N-2}$, $\varepsilon>0$ is a parameter, $\Omega$ is a smooth bounded domain in $\R^N$.

In 1983, in their celebrated paper \cite{Brezis1983CPAM},  Brezis and Nirenberg  proved that in the case that $q=2$,  if $N\geq 4$, problem (\ref{p-varepsion}) admits a positive least energy solution for all $\varepsilon\in (0,\lambda_{1})$, where $\lambda_{1}$ is the first eigenvalue of $-\Delta$ with zero Dirichlet boundary condition; if $N=3$, there exists $\lambda_{*}\in (0,\lambda_{1})$ such that (\ref{p-varepsion}) has a positive least energy solution for all $\varepsilon\in (\lambda_{*},\lambda_{1})$ and no least energy solution exists for $\varepsilon \in (0,\lambda_{*})$. 
However, in the case that $q\in (2,2^*)$, the situation is quite different in nature. Brezis and Nirenberg  \cite{Brezis1983CPAM}  proved that if  one of the following conditions holds
\begin{enumerate}
    \item $N\geq 3$, $q\in(\max\{2,\frac{4}{N-2}\}, 2^*)$ and every $\varepsilon>0$;
    \item $N=3$, $q\in (2,4]$ and $\varepsilon$ large enough,
\end{enumerate}
then the problem \eqref{p-varepsion} has a positive least energy solution.

Once the solvability of problem (\ref{p-varepsion}) has be established, it is interesting to consider the asymptotic behavior of solutions. It is worth noting that as $\varepsilon\to0$, the formal limit for equation (\ref{p-varepsion}) is the critical Emden-Fowler equation 
\begin{equation}\label{p-0}\tag{$P_{0}$}
\begin{cases}
-\Delta u= u^{2^*-1},\quad u>0, &{\text{in}~\Omega},\\
\quad \  \ u=0, &{\text{on}~\partial \Omega},
\end{cases}
\end{equation}
and it is well known \cite{Pohozaev1965} that problem \eqref{p-0} admits no non-trivial solutions, if $\Omega$ is star-shaped. While, Bahri and Coron \cite{Bahri1988CPAM} gave an existence result, if the domain has a non-trivial topology, see also Dancer \cite{dancer1988}, Ding \cite{weiyueding} and Passaseo \cite{passaseo1989multiplicity,passaseo1994} for other existence results of (\ref{p-0}). In particularly, when $\Omega=\mathbb{R}^{N}$, 
we can give all the  solutions of (\ref{p-0}) (see \cite{AUBIN, Talenti}) by
\begin{equation}\label{Aubin-Talenti bubble}
    \alpha_{N}U_{\lambda,x}(y):=(N(N-2))^{\frac{N-2}{4}}\left(\frac{\lambda}{1+\lambda^{2}|y-x|^{2}}\right)^{\frac{N-2}{2}},
\end{equation}
which called Aubin-Talenti bubble at $x\in\R^{N}$ with height $\lambda\in\mathbb{R}^{+}$ and $\alpha_{N}=(N(N-2))^{\frac{N-2}{4}}$. We also recall the following minimization problem corresponding to problem (\ref{p-0})
\begin{equation}\label{definition of S}
    S(\Omega):=\inf\limits_{u\in\mathcal{D}^{1,2}_{0}(\Omega)\setminus\{0\}}\frac{\int_{\Omega}|\nabla u|^{2}dx}{\left(\int_{\Omega}|u|^{2^*}dx\right)^{\frac{2}{2^*}}}.
\end{equation}
It is known that \cite{willem2012minimax}  $S(\Omega)=S(\mathbb{R}^{N})$ ( we will use $S$ for simplification) and $S(\Omega)$ is never achieved except when $\Omega=\mathbb R^{N}$. 

Before recalling some more known results, it is useful to introduce some notations. Let $G(x,\cdot)$ denote the Green's function of the negative Laplacian on $\Omega$, i.e.
\begin{equation}
\begin{cases}
-\Delta G(x,\cdot)= \delta_x, &{\text{in}~\Omega}, \\
\quad \  \ G(x,\cdot)=0, &{\text{on}~\partial\Omega},
\end{cases}
\end{equation}
where $\delta_x$ is the Dirac function at $x\in\Omega$. For $G(x,y)$, we have the following form
\begin{equation*}
G(x,y)=S(x,y)-H(x,y), ~(x,y)\in \Omega\times \Omega,
\end{equation*}
where (if $N\geq 3$)
\begin{equation}
S(x,y)=\frac{1}{(N-2)\omega_{N}|x-y|^{N-2}},
\end{equation}
is the singular part, which is also the fundamental solution to negative Laplace equation, $\omega_{N}$ is a measure of the unit sphere in $\R^N$ and $H(x,y)$ is the regular part of $G(x,y)$ satisfying 
\begin{equation}
 \begin{cases}
-\Delta H(x,\cdot)=0, &{\text{in}~\Omega}, \\
\quad \  \ H(x,\cdot)=S(x,\cdot), &{\text{on}~\partial\Omega}.
\end{cases}   
\end{equation}
For any $x\in \Omega$, we denote the leading term of $H$ as
\begin{equation}\label{definition of Robin function}
    R(x):=H(x,x),
\end{equation}
 which is called the Robin function of $\Omega$ at the point $x$.

Now, we recall some known results on the  asymptotic behavior of solutions for \eqref{p-varepsion}. When $q=2$, Han \cite{Han1991} and Rey \cite{Rey1989ProofOT} proved independently that the one-peak solutions of (\ref{p-varepsion}) blow-up and concentrate at a critical point of Robin function, which has be conjectured by Brezis and Peletier \cite{Brezispletier1989} previously. Futhermore, Takahashi \cite{FunkcialajEkvacioj2004,Takahashivariablecoefficients2006} and Frank-K\"onig-Kova\v{r}\'{i}k \cite{Frank2019Energythree-dimensional,Frank2019Energyhigher-dimensional} give a precise energy expansion on the least energy and prove that blow-up point is a minimum point of the Robin function.  For  the asymptotic behavior of ground state solutions of the Schr\"odinger equation and Choquard equation  in the whole space $R^N$ and related topics,  we refer the readers to \cite{Ma-1,Ma-2} and reference therein.

Motivated by the results mentioned above, it is natural to consider the case that $q\in (2,2^*)$ and our first result states as follows.
 \begin{Thm}\label{main theorem-1}
Assume $N\geq 3$, $q\in (\max\{2,\frac{4}{N-2}\},2^*)$, $\Omega$ is a smooth bounded star-shaped domain  in $\mathbb{R}^{N}$. If $u_{\varepsilon}$ is a least energy solution of (\ref{p-varepsion}) and $x_{\varepsilon}$ is the maximum point of $u_{\varepsilon}$, then as $\varepsilon\to 0$ (up to a subsequence)
\end{Thm}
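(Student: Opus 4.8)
The plan is to analyze the blow-up of $u_\varepsilon$ by first establishing that the energy level $c_\varepsilon$ of the least energy solution satisfies $c_\varepsilon \to \frac{1}{N}S^{N/2}$ as $\varepsilon \to 0$, so that $u_\varepsilon$ is a single-bubble (one-peak) sequence: no energy is left for a second concentration point, and the star-shapedness of $\Omega$ together with the Pohozaev identity rules out a nonzero weak limit. From the characterization $\int_\Omega |\nabla u_\varepsilon|^2 = S^{N/2} + o(1)$ and $\int_\Omega u_\varepsilon^{2^*} = S^{N/2}+o(1)$, I would deduce $\varepsilon \int_\Omega u_\varepsilon^q \to 0$, hence $\lambda_\varepsilon := u_\varepsilon(x_\varepsilon)^{2/(N-2)} \to +\infty$ (after rescaling, the bubble concentrates). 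The first concrete step is therefore to show, via a standard concentration-compactness / rescaling argument, that
\[
\lambda_\varepsilon^{-\frac{N-2}{2}} u_\varepsilon\!\left(x_\varepsilon + \lambda_\varepsilon^{-1} y\right) \longrightarrow \alpha_N U_{1,0}(y)
\]
in $C^2_{\mathrm{loc}}(\R^N)$, and that $x_\varepsilon \to x_0 \in \overline\Omega$; combining this with the absence of boundary blow-up for least energy solutions (again Pohozaev plus the boundedness of energy) gives $x_0 \in \Omega$ and $\mathrm{dist}(x_\varepsilon,\partial\Omega)$ bounded below.

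The heart of the matter is the \emph{rate}: one must pin down the precise asymptotics of $\lambda_\varepsilon$ in terms of $\varepsilon$, $N$, and $q$. For this I would use the Pohozaev identity on $\Omega$ (or on a fixed ball around $x_\varepsilon$) for $u_\varepsilon$, which produces a boundary integral that, to leading order, equals a positive multiple of $R(x_0)\lambda_\varepsilon^{-(N-2)}$ (by the standard expansion of the solution in terms of the Green's function, i.e. $u_\varepsilon \approx \alpha_N\lambda_\varepsilon^{-(N-2)/2} G(x_0,\cdot)$ away from $x_\varepsilon$), and balance it against the bulk term coming from the subcritical perturbation, which to leading order is a positive multiple of $\varepsilon\,\lambda_\varepsilon^{-\frac{(N-2)(2^*-q)}{2}}\cdot(\text{universal integral }\int_{\R^N} U_{1,0}^q)$ when this integral converges, i.e. roughly when $q > \frac{N}{N-2}$. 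The matching of these two terms gives the scaling law; schematically,
\[
\varepsilon \sim c(N,q)\,R(x_0)\,\lambda_\varepsilon^{-\beta(N,q)}, \qquad
\beta(N,q) = (N-2) - \frac{(N-2)(2^*-q)}{2},
\]
so that $\lambda_\varepsilon \sim \big(c(N,q)R(x_0)/\varepsilon\big)^{1/\beta(N,q)}$, with the convergent-integral regime and the borderline/divergent regimes (where logarithmic corrections or a different power of $\varepsilon$ appear, analogous to the $N=3$ phenomena in the $q=2$ case) treated separately. In parallel, testing the equation against suitable functions (the bubble itself, its $\lambda$-derivative, and $\partial_{x}U$) yields the companion estimate that $x_\varepsilon$ is asymptotically a critical point of the Robin function $R$, and refining the energy expansion $c_\varepsilon = \frac1N S^{N/2} - a\varepsilon\lambda_\varepsilon^{-\gamma} + b\,R(x_\varepsilon)\lambda_\varepsilon^{-(N-2)} + \text{l.o.t.}$ and minimizing over $\lambda_\varepsilon$ recovers the same rate and forces $x_0$ to be a \emph{minimum} of $R$.

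The main obstacle I anticipate is controlling the error terms in the two competing expansions uniformly, precisely because — unlike the $q=2$ case — the perturbation term $\varepsilon\int U^q$ scales with an exponent depending on $q$, so one must carefully track when $\int_{\R^N}(1+|y|^2)^{-\frac{(N-2)q}{2}}dy$ converges (roughly $q>\frac{N}{N-2}$), when it diverges, and the intermediate decay of $u_\varepsilon$ on the annular region $\lambda_\varepsilon^{-1}\lesssim |y-x_\varepsilon|\lesssim 1$ where the profile transitions from bubble-like to Green's-function-like; getting the sharp constant requires a delicate splitting of $\int_\Omega u_\varepsilon^q$ into the inner, neck, and outer regions with matching estimates. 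A secondary difficulty is establishing the a priori $C^2_{\mathrm{loc}}$ convergence and the uniform pointwise bound $u_\varepsilon(x) \le C\lambda_\varepsilon^{(N-2)/2}(1+\lambda_\varepsilon^2|x-x_\varepsilon|^2)^{-(N-2)/2}$, which underpins all the integral estimates; this I would obtain by a blow-up/Harnack argument combined with a comparison-function (barrier) construction using $G$, following the strategy that is by now standard for Brezis–Nirenberg-type problems but must be adapted to the extra nonlinearity.
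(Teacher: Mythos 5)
Your overall strategy coincides with the paper's: energy convergence $S_{\varepsilon}\to\frac{1}{N}S^{N/2}$ plus star-shapedness and the Poho\v{z}aev identity to force blow-up, rescaling to the Aubin--Talenti bubble in $C^{2}_{loc}$, a global pointwise bound of bubble type, the Green's-function approximation $\|u_{\varepsilon}\|_{L^\infty}u_{\varepsilon}\to \frac{1}{N}\alpha_N^{2^*}\omega_N G(\cdot,x_0)$ away from the peak, and the global Poho\v{z}aev identity balancing the Robin-function boundary term against the subcritical bulk term, which yields exactly the rate $\varepsilon\|u_{\varepsilon}\|_{L^\infty}^{q+2-2^*}\to\alpha_{N,q}R(x_0)$; your exponent $\beta(N,q)=\frac{N-2}{2}(q+2-2^*)$ is the correct one. (The paper obtains the pointwise bound by Kelvin transform plus Moser iteration rather than a Harnack/barrier argument, and additionally proves an orthogonal decomposition $u_{\varepsilon}=\alpha_{\varepsilon}PU_{\lambda_{\varepsilon},a_{\varepsilon}}+w_{\varepsilon}$ with quantitative bounds on $w_{\varepsilon}$ to get the energy expansion; these are implementation choices, not essential differences.)

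There are, however, two genuine problems. First, the assertion that $x_0\in\Omega$ follows from ``Poho\v{z}aev plus the boundedness of energy'' is a gap: the global identity on $\Omega$ only controls $\int_{\partial\Omega}|\nabla u_{\varepsilon}|^{2}(x\cdot n)\,dS$ and does not by itself prevent $d_{\varepsilon}:=\mathrm{dist}(x_{\varepsilon},\partial\Omega)\to 0$. The paper's argument is a \emph{local} Poho\v{z}aev identity on $\partial B(x_{\varepsilon},2d_{\varepsilon})$, combined with the Green representation of $u_{\varepsilon}$ on that sphere (which itself requires splitting $\int(G(x,y)-G(x,x_{\varepsilon}))u_{\varepsilon}^{q-1}\,dy$ into inner, neck and outer regions with $q$-dependent exponents) and the sharp divergence $\nabla R(x)\sim d^{-(N-1)}$ near $\partial\Omega$; the contradiction comes from comparing rates in $d_{\varepsilon}$. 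Some quantitative boundary analysis of this kind is unavoidable, and your sketch passes over it. Second, your claim that the energy expansion ``forces $x_0$ to be a minimum of $R$'' overreaches: for $q=2$ that argument exploits that the least energy solution is a constrained minimizer, whereas here it is only a Nehari/mountain-pass minimizer, and the authors explicitly record that minimality of $R$ at the concentration point is \emph{open} for $q\in(2,2^*)$. The theorem only asserts $\nabla R(x_0)=0$, which the paper derives from $\int_{\partial\Omega}(\partial_n u_{\varepsilon})^{2}\,n\,dS=0$ together with the $C^{1}$ convergence to the Green's function. A minor remark: under the hypothesis $q>\max\{2,\frac{4}{N-2}\}$ one always has $(N-2)q>N$, so $\int_{\R^N}U^{q}$ converges and the ``borderline/divergent regimes'' you propose to treat separately never occur.
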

\begin{enumerate}
    \item $\{u_{\varepsilon}\}_{\varepsilon>0}$ is a blow-up, minimizing sequence of the best Sobolev constant $S$, i.e. 
    \begin{equation}
     \|u_{\varepsilon}\|_{L^{\infty}(\Omega)}\to\infty\text{~~and~~}\frac{\int_{\Omega}|\nabla u_{\varepsilon}|^{2}dx}{(\int_{\Omega}|u_{\varepsilon}|^{2^*}dx)^{\frac{2}{2^*}}}\to S.
    \end{equation}
     \item Let $u_{\varepsilon}(x)=0$, if $x\in\R^{N}\setminus\Omega$, then
    \begin{equation}
        \frac{1}{\|u_{\varepsilon}\|_{L^{\infty}(\Omega)}}u_{\varepsilon}\left(\frac{x}{\|u_{\varepsilon}\|_{L^{\infty}(\Omega)}^{(2^*-2)/2}}+x_{\varepsilon}\right)\to U\text{~~in~~}\mathcal{D}^{1,2}(\R^{N})\text{~and~}C^{2}_{loc}(\R^{N}),
    \end{equation}
    where 
    $$U=\left(\frac{N(N-2)}{N(N-2)+|x|^{2}}\right)^{\frac{N-2}{2}}.$$
    \item The best upper bound of $u_{\varepsilon}$ is
    \begin{equation}
        u_{\varepsilon}(x)\leq C\frac{\|u_{\varepsilon}\|_{L^{\infty}(\Omega)}}{(N(N-2)+\|u_{\varepsilon}\|_{L^{\infty}(\Omega)}^{2^*-2}|x-x_{\varepsilon}|^{2})^{\frac{N-2}{2}}}.
    \end{equation}
    \item The maximum point $x_{\varepsilon}$ of $u_{\varepsilon}$  concentrates in an interior point $x_0$ of $\Omega$, which a critical point of Robin function $R(x)$, i.e.
    \begin{equation}\label{The location of blow-up point in main theorem 1}
       x_{\varepsilon}\to x_{0}\in\Omega\text{~~and~~} \nabla R(x_{0})=0.
    \end{equation}  
    Moreover, if $x_{0}$ is a nondegenerate critical point of $R(x)$ and $q\in[\frac{5}{2},4)$ if $N=4$, then
    \begin{equation}\label{contentrate speed}
        |x_{\varepsilon}-x_{0}|=O(\varepsilon^{\frac{2}{(N-2)q-4}}).
    \end{equation}
    \item We have the following convergence 
    \begin{equation}
        u_{\varepsilon}\to 0\quad \text{~in~} C^{1}(\Omega\setminus\{x_{0}\}),
    \end{equation}
    and
       \begin{equation}
|\nabla u_{\varepsilon}|^{2}\stackrel{*}{\rightharpoonup} S^{\frac{N}{2}}\delta_{x_{0}} \text{~as~} \varepsilon\to 0, 
    \end{equation}
in the sense of Radon measures of the compact space $\bar{\Omega}$, where $\delta_{x_{0}}$ is the Dirac measure supported by $x_{0}$. Moreover, we have
 \begin{equation}
\|u_{\varepsilon}\|_{L^{\infty}(\Omega)}u_{\varepsilon}(x)\to\frac{1}{N} \alpha_{N}^{2^*}\omega_{N}G(x,x_{0})\text{~in~}C^{1,\alpha}(\omega),    
\end{equation}
where $\omega_{N}$ denotes the measure of unit sphere in $\R^{N}$ and $\omega$ is a neighbourhood  of $\partial\Omega$ not containing $x_{0}$.
    \item The exact blow-up rate is
    \begin{equation}\label{exact blow-up rate}
     \varepsilon \|u_{\varepsilon}\|_{L^{\infty}(\Omega)}^{q+2-2^*}\to \alpha_{N,q}R(x_{0}),
    \end{equation}
where 
\begin{equation}
\alpha_{N,q}=\frac{2q}{2^*-q}\frac{\alpha_{N}^{2^*}\omega_{N}}{N^{2}}\frac{\Gamma(\frac{N-2}{2}q)}{\Gamma(\frac{N}{2})\Gamma(\frac{N-2}{2}q-\frac{N}{2})}.
\end{equation}
\item Let $S_{\varepsilon}$ be the least energy of $u_{\varepsilon}$, then 
\begin{equation}
   \frac{1}{N}S^{\frac{N}{2}}-S_{\varepsilon}\sim \varepsilon^{\frac{2N-4}{(N-2)q-4}}.
\end{equation}
\end{enumerate}
\begin{Rem}
\begin{enumerate}
    \item  From the conclusion in Theorem \ref{main theorem-1}, we know that if $u_{\varepsilon}$ is a sequence of blow-up solutions with only one-peak, then the concentrate point $x_{0}$ is a critical point of Robin function $R(x)$. Conversely, by using the Lyapounov-Schmidt procedure, Rey \cite{Rey1990} and Molle \cite{Molle2003} proved that any isolated,
 nondegeneracy critical point  $x_{0}$ of Robin function $R(x)$ generate a family of solutions which concentrate at $x_{0}$. 
 \item From the blow-up rate obtained in Theorem \ref{main theorem-1}, it's easy to see that the restriction on $(N,q)$ in Theorem \ref{main theorem-1} is almost optimal. In addition, the restriction on $\Omega$ in Theorem \ref{main theorem-1} is used to prove that the limit equation \eqref{p-0} has no solution, and then  $u_{\varepsilon}$ must blow-up. In fact, there also exists some non-starshaped domain, such that \eqref{p-0}  has no solution, see \cite{RODRIGUEZ1992243} for more details.
\end{enumerate}

\end{Rem}

On the other hand, it's also a very profound topic to study the uniqueness and nondegeneracy of the least energy solution, which is more complicate and is known depending on the shape of domain $\Omega$. In the following, we will state some known results on uniqueness and nondegeneracy. 

For the uniqueness problem: when $q=2$, if $\Omega$ is a ball in $\R^{N}$, it was proved that the solution is unique in the case $N\geq4$ and for $\frac{\lambda_{1}}{4}<\varepsilon<\lambda_{1}$ if $N=3$, see \cite{Liqua1992UniquenessOP,Srikanth1993UniquenessOS,Adimurthi}. On the other hand, Cerqueti \cite{Cerqueti1999AUR} proved the uniqueness for $N\geq 5 $ and $\varepsilon$ small enough, under the assumption that domain $\Omega$ is both symmetric and convex with respect to $N$ orthogonal direction. As for non-convex domain, Glangetas \cite{Glangetas1993UniquenessOP} established another uniqueness result for $N\geq 5$ with an assumption that the blow-up point is a nondegenerate critical point of the Robin function.  When $q\in (2,2^*)$, if $\Omega$ is a ball, the uniqueness has be almost resolved by Erbe and Tang \cite{Erbe1997UniquenessTF} and Chen and Zou \cite{Chen2012OnTB}, for every $\varepsilon>0$ if $N\geq 6$; for almost every $\varepsilon>0$ if $N=4,5$, and for almost every $\varepsilon>\mu_{0}$ if $N=3$, where $\mu_{0}$ is a sufficiently large constant. 

For the nondegeneracy problem: When $q=2$, it has been proved by Cerqueti \cite{Cerqueti1999AUR}, Grossi \cite{Grossi2005ANR} and Takahashi \cite{Takahashi2008nondegeneracy} under the assumption that $\Omega$ is a convex and symmetric domain or blow-up point is a nondegeneracy critical point of the Robin function.

Inspired by the above results, a natural question arises, is the least energy solution of (\ref{p-varepsion}) with $q\in (2,2^*)$ unique and nondegenerate ? In the rest of the paper, we focuses on this issue. Our second and third results state as follows.

\begin{Thm}\label{main theorem 2}
Let $\Omega$ be a smooth bounded star-shaped domain of $\R^{N}$, $u_{\varepsilon}$ and $v_{\varepsilon}$ are two least energy solutions of (\ref{p-varepsion}) which concentrate at the same point $x_{0}$. If  $(N,q,x_{0},\Omega)$ satisfy one of the following conditions:
\begin{enumerate}
    \item $N\geq 5$, $q\in (2,2^*)$ and $\Omega$ is convex in the $x_{i}$ directions and symmetric with respect to the hyperplanes $\{x_{i}=0\}$ for $i=1,\cdots, N$;
    \item $N\geq 4$, $q\in (2,2^*)$, $q\geq \frac{N+2}{N-2}$ and $x_{0}$ is a nondegenerate critical point of Robin function $R(x)$.
\end{enumerate}
Then, there exists a constant $\varepsilon_{0}>0$ such that for any $\varepsilon<\varepsilon_{0}$
\begin{equation}
    u_{\varepsilon}\equiv v_{\varepsilon}\quad \text{~in~}\Omega.
\end{equation}
\end{Thm}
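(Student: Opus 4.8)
The plan is to argue by contradiction. Suppose that along a sequence $\varepsilon\to0$ there exist two least energy solutions $u_\varepsilon\not\equiv v_\varepsilon$ of \eqref{p-varepsion}, both concentrating at $x_0$. Write $M_\varepsilon:=\|u_\varepsilon\|_{L^\infty(\Omega)}$, $\widetilde M_\varepsilon:=\|v_\varepsilon\|_{L^\infty(\Omega)}$, $\lambda_\varepsilon:=M_\varepsilon^{(2^*-2)/2}$, and let $x_\varepsilon,\widetilde x_\varepsilon$ be the maximum points of $u_\varepsilon,v_\varepsilon$. Applying Theorem~\ref{main theorem-1} to each solution supplies the complete single-bubble description: the rescaled functions converge to $U$ in $C^2_{loc}(\R^N)$, the sharp bound of item~(3) holds, $x_\varepsilon,\widetilde x_\varepsilon\to x_0$ with $\nabla R(x_0)=0$, the outer profiles satisfy $M_\varepsilon u_\varepsilon,\ \widetilde M_\varepsilon v_\varepsilon\to c_* G(\cdot,x_0)$ with $c_*=\tfrac1N\alpha_N^{2^*}\omega_N$ in $C^{1,\alpha}$ near $\partial\Omega$, and the rate \eqref{exact blow-up rate} gives $\varepsilon M_\varepsilon^{q+2-2^*}\to\alpha_{N,q}R(x_0)$ and likewise for $\widetilde M_\varepsilon$. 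Since $q+2-2^*=q-\tfrac4{N-2}>0$ and $R(x_0)>0$ (the regular part $H$ is positive by the maximum principle), this forces $M_\varepsilon/\widetilde M_\varepsilon\to1$. Under hypothesis~(1) the moving plane method on a domain convex and symmetric in the $x_i$-directions (as in the works of Cerqueti and Grossi for $q=2$) shows in addition that $u_\varepsilon,v_\varepsilon$ are even in each $x_i$, so $x_0=0$; under hypothesis~(2) the constraints on $(N,q)$ place us in the range where item~(4) gives $|x_\varepsilon-x_0|,\ |\widetilde x_\varepsilon-x_0|=O(\varepsilon^{2/((N-2)q-4)})$.

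Next I would set $w_\varepsilon:=(u_\varepsilon-v_\varepsilon)/\|u_\varepsilon-v_\varepsilon\|_{L^\infty(\Omega)}$, so that $\|w_\varepsilon\|_{L^\infty(\Omega)}=1$, $w_\varepsilon=0$ on $\partial\Omega$, and
\[
-\Delta w_\varepsilon=V_\varepsilon\,w_\varepsilon\quad\text{in }\Omega,\qquad
V_\varepsilon:=\frac{u_\varepsilon^{2^*-1}-v_\varepsilon^{2^*-1}}{u_\varepsilon-v_\varepsilon}+\varepsilon\,\frac{u_\varepsilon^{q-1}-v_\varepsilon^{q-1}}{u_\varepsilon-v_\varepsilon},
\]
with $0\le V_\varepsilon\le C(u_\varepsilon^{2^*-2}+v_\varepsilon^{2^*-2})+C\varepsilon(u_\varepsilon^{q-2}+v_\varepsilon^{q-2})$. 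From the Green representation $w_\varepsilon(x)=\int_\Omega G(x,y)V_\varepsilon(y)w_\varepsilon(y)\,dy$, the pointwise bubble bound of Theorem~\ref{main theorem-1}(3), and a bootstrap, one derives a uniform pointwise estimate that pins the size of $w_\varepsilon$ to the bubble region: $w_\varepsilon$ is $o(1)$ outside any fixed neighborhood of $x_0$ and near $x_0$ is dominated by the height-one profile $(1+\lambda_\varepsilon^2|x-x_\varepsilon|^2)^{-(N-2)/2}$, so that $\|w_\varepsilon\|_{L^\infty(\Omega)}$ is, up to $o(1)$, attained in $\{|x-x_\varepsilon|\le R\lambda_\varepsilon^{-1}\}$ for $R$ large. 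Rescaling $\widetilde w_\varepsilon(y):=w_\varepsilon(x_\varepsilon+\lambda_\varepsilon^{-1}y)$, the perturbation in $V_\varepsilon$ scales to $O(\varepsilon M_\varepsilon^{q-2^*})=o(1)$ by \eqref{exact blow-up rate}, so $-\Delta\widetilde w_\varepsilon=\widetilde V_\varepsilon\widetilde w_\varepsilon$ with $\widetilde V_\varepsilon\to(2^*-1)U^{2^*-2}$ in $C^0_{loc}$, and elliptic estimates give $\widetilde w_\varepsilon\to\widetilde w_0$ in $C^1_{loc}(\R^N)$ with $\widetilde w_0$ bounded and $-\Delta\widetilde w_0=(2^*-1)U^{2^*-2}\widetilde w_0$ in $\R^N$. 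By the nondegeneracy of the Aubin--Talenti bubble,
\[
\widetilde w_0=a_0\,\Psi_0+\sum_{i=1}^N a_i\,\partial_{y_i}U,\qquad \Psi_0:=\tfrac{N-2}{2}U+y\cdot\nabla U;
\]
under hypothesis~(1) the symmetry of $w_\varepsilon$ forces $a_1=\cdots=a_N=0$ at once.

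To conclude $\widetilde w_0\equiv0$ I would use, for a small fixed $\rho>0$, the local Pohozaev identities on $B_\rho:=B_\rho(x_0)$: for any solution $u$ of \eqref{p-varepsion}, with $F(u)=\tfrac1{2^*}u^{2^*}+\tfrac\varepsilon q u^q$,
\[
\int_{\partial B_\rho}\!\Big(\tfrac12|\nabla u|^2\nu_i-\partial_\nu u\,\partial_i u\Big)dS=\int_{\partial B_\rho}\!F(u)\,\nu_i\,dS\qquad(i=1,\dots,N),
\]
\[
\int_{\partial B_\rho}\!\Big(\tfrac{N-2}{2}u\,\partial_\nu u-\tfrac12|\nabla u|^2(x-x_0)\!\cdot\!\nu+\partial_\nu u\,(x-x_0)\!\cdot\!\nabla u+F(u)(x-x_0)\!\cdot\!\nu\Big)dS=\varepsilon\,\frac{2N-(N-2)q}{2q}\int_{B_\rho}\!u^q,
\]
where the $u^{2^*}$ bulk coefficient has vanished and the $u^q$ one is positive since $q<2^*$. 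Writing these for $u_\varepsilon$ and for $v_\varepsilon$ and subtracting, every boundary integrand becomes bilinear in $\nabla w_\varepsilon$ and $\nabla(u_\varepsilon+v_\varepsilon)$, while the $u^q$ bulk difference becomes $q\int_{B_\rho}\theta_\varepsilon^{q-1}w_\varepsilon\,dx$ with $\theta_\varepsilon$ between $u_\varepsilon$ and $v_\varepsilon$. On $\partial B_\rho$ I would substitute $M_\varepsilon u_\varepsilon,\widetilde M_\varepsilon v_\varepsilon\to c_* G(\cdot,x_0)$ together with the matched outer expansion of $w_\varepsilon$ — which, being approximately harmonic in $\Omega\setminus B_\rho$ with zero Dirichlet data, is a combination of $G(\cdot,x_0)$ and $\nabla_\xi G(\cdot,x_0)$ whose coefficients are tied to $a_0$, to the $a_i$, and to the displacements $M_\varepsilon-\widetilde M_\varepsilon$, $x_\varepsilon-\widetilde x_\varepsilon$. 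Expanding $G=S-H$, $R=H(\cdot,\cdot)$, rescaling the bulk term (which converges to a multiple of $\int_{\R^N}U^{q-1}\widetilde w_0$), and dividing by the matching power of $M_\varepsilon$, the subtracted identities collapse to finitely many linear relations in $a_0,\dots,a_N$ whose coefficients involve $\int_{\R^N}U^{q-1}\Psi_0\,dy=\tfrac1q\big(\tfrac{N-2}{2}q-N\big)\int_{\R^N}U^q\,dy<0$, the value $R(x_0)>0$, $\nabla R(x_0)=0$, and $D^2R(x_0)$. Under hypothesis~(2) the nondegeneracy $\det D^2R(x_0)\ne0$, combined with the localization rate of item~(4), forces $a_1=\cdots=a_N=0$; in either case the surviving (dilation) relation reads $a_0\big(c_1\int_{\R^N}U^{q-1}\Psi_0-c_2R(x_0)\big)=0$ with $c_1,c_2>0$, whence $a_0=0$. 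Thus $\widetilde w_0\equiv0$, and by the pointwise control of $w_\varepsilon$ this yields $\|w_\varepsilon\|_{L^\infty(\Omega)}\to0$, contradicting $\|w_\varepsilon\|_{L^\infty(\Omega)}=1$. Hence $u_\varepsilon\equiv v_\varepsilon$ for $\varepsilon$ small.

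The hard part is the order bookkeeping inside the Pohozaev step: for each admissible $(N,q)$ one must pin down which of the competing quantities — the $\varepsilon$-bulk term $\varepsilon\int_{B_\rho}\theta_\varepsilon^{q-1}w_\varepsilon$, the $S$-part versus the $H$-part of the boundary integrals, and the cross terms created by $x_\varepsilon\ne\widetilde x_\varepsilon$ and $M_\varepsilon\ne\widetilde M_\varepsilon$ — is dominant, and then show the rest are genuinely of lower order; this is delicate because the bubble scale $\lambda_\varepsilon^{-1}$ and the displacement scale $\varepsilon^{2/((N-2)q-4)}$ both depend on $q$, and because for $N=4$ the quantity $\int_{\R^N}U^{2^*-2}$ diverges logarithmically, so $\int_\Omega V_\varepsilon$ grows like $\log M_\varepsilon$. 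This is precisely why hypothesis~(1) is limited to $N\ge5$ and why hypothesis~(2) compensates with $q\ge\frac{N+2}{N-2}$, which makes the perturbation $u^{q-1}$ strong enough for the borderline terms to be absorbed. The other technical point to secure before running the Pohozaev argument is the sharp pointwise estimate on $w_\varepsilon$ near $x_0$ — and, with it, the fact that $\|w_\varepsilon\|_{L^\infty}$ does not leak toward $\partial\Omega$ — which is what lets the blow-up of $w_\varepsilon$ capture its full size.
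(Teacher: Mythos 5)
Your proposal follows essentially the same route as the paper: blow-up of the normalized difference $w_\varepsilon$, identification of its limit with an element $a_0\Psi_0+\sum a_i\partial_i U$ of the kernel of the linearized Aubin--Talenti operator, a Kelvin-transform/Green-representation pointwise bound to keep $\|w_\varepsilon\|_{L^\infty}$ from escaping the bubble, and Pohozaev identities matched against the Green-function outer profile to kill first the translation modes (by symmetry in case (1), by nondegeneracy of $D^2R(x_0)$ in case (2)) and then the dilation mode, where the coefficient degenerates exactly at $q=\frac{4}{N-2}$, which is excluded. The only cosmetic difference is that for case (1) the paper runs the Pohozaev identity globally on $\partial\Omega$ rather than on a small ball, and for case (2) it first establishes the refined rates $\nabla R(x_{1,\varepsilon})=O(\ln\lambda_\varepsilon/\lambda_\varepsilon^2)$ and $\varepsilon=c_0\lambda_\varepsilon^{-\frac{N-2}{2}(q+2-2^*)}(1+o(1))$ before the order bookkeeping you correctly identify as the technical core.
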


\begin{Thm}\label{asymptotic nondegeneracy-domain and Robin function}
    Let $\Omega$ be a smooth bounded star-shaped domain of $\R^{N}$, $N\geq 5$, $q\in(2,2^*)$, $u_{\varepsilon}$ is a least energy solution of (\ref{p-varepsion}) and $x_{0}$ is the concentration point of $u_{\varepsilon}$. If domain $\Omega$ is convex in the $x_{i}$ directions and symmetric with respect to the hyperplanes $\{x_{i}=0\}$ for $i=1,\cdots, N$ or concentration point $x_{0}$ is a non-degenerate critical point of Robin function $R(x)$. Then, the least energy solution is nondegenerate, i.e. the linear problem
    \begin{equation}\label{nondegenerate-1}
        \begin{cases}
            -\Delta v=(2^*-1)u_{\varepsilon}^{2^*-2}v+\varepsilon (q-1)u_{\varepsilon}^{q-2}v, &\text{~~in~~}\Omega,\\
            \quad \  \ v=0,&\text{~~on~~}\partial\Omega,
        \end{cases}
    \end{equation}
    admits only the trivial solution $v\equiv0$.
\end{Thm}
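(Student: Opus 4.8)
The plan is a blow-up argument by contradiction, with the nondegeneracy of the Aubin--Talenti bubble as the engine and the sharp asymptotics of Theorem~\ref{main theorem-1} supplying all quantitative input; it parallels the proof of the uniqueness Theorem~\ref{main theorem 2}. Suppose the conclusion fails: there are $\varepsilon_n\to0$ and nontrivial $v_n$ solving \eqref{nondegenerate-1} with $\varepsilon=\varepsilon_n$, and we normalise $\|v_n\|_{L^\infty(\Omega)}=1$. Set $\mu_n:=\|u_{\varepsilon_n}\|_{L^\infty(\Omega)}^{-(2^*-2)/2}\to0$, let $x_n$ be the maximum point of $u_{\varepsilon_n}$, and put $\tilde u_n(y):=\mu_n^{(N-2)/2}u_{\varepsilon_n}(\mu_n y+x_n)$, $\tilde v_n(y):=v_n(\mu_n y+x_n)$. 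By Theorem~\ref{main theorem-1}, $\tilde u_n\to U$ in $C^2_{\mathrm{loc}}(\R^N)$ and $x_n\to x_0$ with $\nabla R(x_0)=0$; rescaling \eqref{nondegenerate-1} gives $-\Delta\tilde v_n=(2^*-1)\tilde u_n^{2^*-2}\tilde v_n+\varepsilon_n\mu_n^2\|u_{\varepsilon_n}\|_{L^\infty(\Omega)}^{q-2}(q-1)\tilde u_n^{q-2}\tilde v_n$, whose last coefficient equals $O(\|u_{\varepsilon_n}\|_{L^\infty(\Omega)}^{-2})\to0$ by \eqref{exact blow-up rate}. Hence along a subsequence $\tilde v_n\to v_0$ in $C^2_{\mathrm{loc}}(\R^N)$ with $\|v_0\|_{L^\infty}\le1$ and $-\Delta v_0=(2^*-1)U^{2^*-2}v_0$ in $\R^N$. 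A supersolution/barrier estimate based on the rescaled form $\tilde u_n(y)\le C(N(N-2)+|y|^2)^{-(N-2)/2}$ of the pointwise bound in item~(3) of Theorem~\ref{main theorem-1} yields a uniform decay $|\tilde v_n(y)|\le C(1+|y|)^{-(N-2)+\sigma}$, which places $v_0$ in $\mathcal D^{1,2}(\R^N)$ and, together with $q>2$ and $N\ge5$ (so $q>\tfrac{N}{N-2}$), guarantees $\int_{\R^N}U^q<\infty$ and $\int_{\R^N}U^{q-1}\psi_0<\infty$ — the point where $N\ge5$ enters. By the nondegeneracy of $U$,
\[
v_0=a_0\,\psi_0+\sum_{j=1}^{N}a_j\,\partial_{y_j}U,\qquad \psi_0:=\tfrac{N-2}{2}U+y\cdot\nabla U .
\]
The entire task is to show $a_0=a_1=\dots=a_N=0$.

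To remove the translation coefficients I would argue case by case. In the symmetric/convex case, the moving-plane method (applicable since $s\mapsto s^{2^*-1}+\varepsilon s^{q-1}$ is $C^1$ on $[0,\infty)$ because $q>2$) shows every least energy solution $u_{\varepsilon_n}$ is even in each $x_i$ and strictly decreasing in $\abs{x_i}$, so its maximum point is the common symmetry centre, which we take to be the origin; thus $x_n\equiv0$. Decomposing $v_n=\sum_{T\subseteq\{1,\dots,N\}}v_n^{(T)}$ by parity (odd in $x_i$ exactly for $i\in T$), each $v_n^{(T)}$ again solves \eqref{nondegenerate-1} because $u_{\varepsilon_n}$ is even; the components with $\abs{T}\ge2$ have vanishing rescaled limit, since no element of $\mathrm{span}\{\partial_1 U,\dots,\partial_N U,\psi_0\}$ is odd in two variables, hence $v_n^{(T)}\equiv0$ by the outer estimate below, while the components with $T=\{i\}$, whose rescaled limit is $c_i\partial_{y_i}U$, are killed by a local Pohozaev identity obtained by pairing $v_n$ with $\partial_{x_i}u_{\varepsilon_n}$ (which solves the homogeneous linearisation) on a fixed ball $B_\delta(0)$ — the identity simplifies because, the origin being the symmetry centre, the dipole term in the bubble-plus-regular-part expansion of $u_{\varepsilon_n}$ vanishes. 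In the nondegenerate-Robin case I would run the same local Pohozaev identity on $B_\delta(x_n)$, now using the refined profile $u_{\varepsilon_n}\approx\mu_n^{-(N-2)/2}\big(U(\tfrac{\,\cdot\,-x_n}{\mu_n})-c_N\mu_n^{N-2}H(\cdot,x_n)\big)$, item~(6) of Theorem~\ref{main theorem-1}, and the concentration rate \eqref{contentrate speed} (which forces $|x_n-x_0|=O(\mu_n)$) to control the lower-order terms; to leading order the identity becomes the linear system $\nabla^2 R(x_0)\,(a_1,\dots,a_N)^{\top}=0$, so $a_j=0$ by nondegeneracy of $x_0$. In either case $v_0=a_0\psi_0$.

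To kill $a_0$ I would exploit that $\varepsilon u^{q-1}$ is not scale-invariant. A direct computation with $Z_n:=\tfrac{N-2}{2}u_{\varepsilon_n}+(x-x_n)\cdot\nabla u_{\varepsilon_n}$ gives
\[
-\Delta Z_n=(2^*-1)u_{\varepsilon_n}^{2^*-2}Z_n+\varepsilon_n(q-1)u_{\varepsilon_n}^{q-2}Z_n+\tfrac{(N-2)(2^*-q)}{2}\,\varepsilon_n u_{\varepsilon_n}^{q-1};
\]
pairing this with $v_n$ and \eqref{nondegenerate-1} with $Z_n$, subtracting and using Green's formula (with $v_n=u_{\varepsilon_n}=0$ on $\partial\Omega$) leaves
\[
\int_{\partial\Omega}\langle x-x_n,\nu\rangle\,\partial_\nu u_{\varepsilon_n}\,\partial_\nu v_n\,dS=\tfrac{(N-2)(2^*-q)}{2}\,\varepsilon_n\int_\Omega u_{\varepsilon_n}^{q-1}v_n\,dx .
\]
On the left, item~(6) of Theorem~\ref{main theorem-1} together with its analogue $\|u_{\varepsilon_n}\|_{L^\infty(\Omega)}^2 v_n\to(\text{const})\,a_0\,G(\cdot,x_0)$ in $C^1$ near $\partial\Omega$, and the classical Rellich--Pohozaev identity $\int_{\partial\Omega}\langle x-x_0,\nu\rangle(\partial_\nu G(x,x_0))^2\,dS=(N-2)R(x_0)$, turn the boundary integral into a fixed nonzero multiple of $a_0 R(x_0)\|u_{\varepsilon_n}\|_{L^\infty(\Omega)}^{-3}$; on the right, the inner region dominates and, using $v_0=a_0\psi_0$, the identity $\int_{\R^N}U^{q-1}\psi_0=-\tfrac{(N-2)(2^*-q)}{2q}\int_{\R^N}U^q$ and the Beta-function value of $\int_{\R^N}U^q$ (which makes the Gamma factors cancel), it becomes another fixed multiple of $a_0 R(x_0)\|u_{\varepsilon_n}\|_{L^\infty(\Omega)}^{-3}$, the orders matching exactly thanks to \eqref{exact blow-up rate}. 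The two multiples differ by a nonzero constant times $q+2-2^*$, which is positive since $q>2>2^*-2$ for $N\ge5$; hence the identity forces $a_0 R(x_0)=0$, and as $R(x_0)>0$ we get $a_0=0$ and $v_0\equiv0$.

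Finally I would contradict $\|v_n\|_{L^\infty(\Omega)}=1$. Since $\tilde v_n\to0$ in $C^2_{\mathrm{loc}}$, the sup-norm is not attained near $x_n$; feeding the pointwise bound of item~(3), the decay $|v_n|\lesssim\|u_{\varepsilon_n}\|_{L^\infty(\Omega)}^{-1}u_{\varepsilon_n}$ away from the concentration region, and the smallness of $u_{\varepsilon_n}$ on $\Omega\setminus B_\delta(x_0)$ from item~(5) of Theorem~\ref{main theorem-1} into the Green representation
\[
v_n(x)=\int_\Omega G(x,y)\Big[(2^*-1)u_{\varepsilon_n}^{2^*-2}+\varepsilon_n(q-1)u_{\varepsilon_n}^{q-2}\Big]v_n(y)\,dy ,
\]
split over inner, intermediate and outer annuli centred at $x_n$, bootstraps $\|v_n\|_{L^\infty(\Omega)}\to0$, the desired contradiction. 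I expect the two Pohozaev steps to be the main obstacle, and the dilation step most of all: it is the one mode that neither the symmetry of $\Omega$ nor the nondegeneracy of the Robin critical point controls, and closing it requires matching the boundary term — which sees only $R(x_0)$ — against the non-scale-invariant bulk term $\varepsilon_n\int_\Omega u_{\varepsilon_n}^{q-1}v_n$ at the precise order $\|u_{\varepsilon_n}\|_{L^\infty(\Omega)}^{-3}$, which in turn forces one to establish the sharp $G(\cdot,x_0)$-profile of $v_n$ near $\partial\Omega$ and to track every constant in items~(6)--(7) of Theorem~\ref{main theorem-1}; in the nondegenerate-Robin case, extracting the clean relation $\nabla^2 R(x_0)\,(a_1,\dots,a_N)^{\top}=0$ from the local Pohozaev identity, with the error between $u_{\varepsilon_n}$ and its projected-bubble profile controlled on $B_\delta(x_n)$, is of comparable difficulty.
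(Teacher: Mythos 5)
Your overall architecture coincides with the paper's: argue by contradiction, blow up the normalized kernel element to a bounded solution of the linearized Aubin--Talenti equation, classify it via Lemma \ref{Kernel of Emden-Fowler equation} after a Kelvin-transform decay estimate, kill the dilation mode with a Pohozaev-type identity, kill the translation modes using either the symmetry of $\Omega$ or the nondegeneracy of $R$, and reach a contradiction with the normalization through the Green representation. Your treatment of the dilation mode is essentially identical to the paper's Step~1: pairing with $Z_n=\tfrac{N-2}{2}u_{\varepsilon_n}+(x-x_n)\cdot\nabla u_{\varepsilon_n}$ reproduces exactly the identity \eqref{nondegeneracy-1-proof-step-1-2}, and your observation that the two limiting constants differ by a nonzero multiple of $q+2-2^*$ is precisely the paper's conclusion that equality would force $q=\tfrac{4}{N-2}$, impossible for $q>2$ and $N\ge 5$. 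The genuine divergence is in the translation modes under the nondegenerate-Robin hypothesis. The paper keeps the global identity $\int_{\partial\Omega}\partial_{x_i}u_\varepsilon\,\partial_n v_\varepsilon\,dS=0$ and extracts $\sum_j a_j\,\partial^2_{ij}R(x_0)=0$ from the refined boundary profile $\|u_\varepsilon\|_{L^\infty}^{N/(N-2)}v_\varepsilon\to\omega_N\sum_j a_j\,\partial_{z_j}G(\cdot,x_0)$; this profile is the delicate point, because once $b=0$ the naive leading coefficient $\int_{\R^N}U^{2^*-2}v_0$ vanishes, and the paper resolves it with Takahashi's first-order-PDE lemma. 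You instead propose a local Pohozaev identity on $B_\delta(x_n)$ with the projected-bubble decomposition, i.e.\ the machinery of Theorem \ref{local uniqueness-Robin function}. Both routes end at the same linear system, so your plan is viable, but it is a different (and, in one respect, more restrictive) proof of that sub-step.

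Two soft spots should be flagged. First, the local-Pohozaev route you borrow is only carried out in the paper under $q\ge\frac{N+2}{N-2}$: the error bounds in Claim~1 there, and hence the rate \eqref{contentrate speed} you invoke, use this restriction, which for $N=5$ excludes $q\in(2,\tfrac{7}{3})$ even though Theorem \ref{asymptotic nondegeneracy-domain and Robin function} covers all of $(2,2^*)$; the paper's global-identity method is chosen precisely to avoid this. Second, in the symmetric case your parity decomposition is finer than the paper's one-line dismissal, but your elimination of the components odd in exactly one variable still reduces, after pairing with $\partial_{x_i}u_{\varepsilon_n}$ and inserting the dipole-order boundary profiles, to $a_i\,\partial^2_{ii}R(0)=0$; this silently uses that $0$ is a nondegenerate critical point of the Robin function in such domains (Grossi's result, cited in the paper), and the claim that ``the dipole term vanishes'' does not by itself close the argument. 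Neither point invalidates the strategy, but both need to be made explicit for the proof to cover the full range of the theorem.
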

\begin{Rem}
\begin{enumerate}
    \item The proof of Theorem $\ref{main theorem 2}$ relies on a blow-up technique (see \cite{Grossi2000ADE,Cerqueti1999AUR}) and local Poho\v{z}aev identity (see \cite{Cao2015UniquenessOP,cao2021Trans,Deng2015OnTP}).
    \item One would like to point out that the assumption on $\Omega$ in the first case of Theorem \ref{main theorem 2} is very strong. For one thing, from the results in \cite{Caffarelli1985Convexity,Cardaliaguet2002convexity,Grossi2010Nonexistence}, we know that the multiple blow-up can't occur and $0$ is a unique blow-up point, thus the restriction on least energy solution or one-peak solution in Theorem \ref{main theorem 2} can be removed, see also \cite{Cerqueti2001Localestimates} for a similar result. For another thing, from the results in \cite{Caffarelli1985Convexity,Cardaliaguet2002convexity,GROSSInondegeneracy}, it's easy to see that $0$ is a unique and nondegenerate critical point of Robin function, thus the first case in Theorem \ref{main theorem 2} can be regarded as a special case of the second case, while the restriction on $q$ is different, when $N=5$.  
    \item To prove Theorem $\ref{asymptotic nondegeneracy-domain and Robin function}$, we adopt the method used in \cite{Grossi2005ANR,Takahashi2008nondegeneracy} which does not depend on the decomposition of the least energy solution (see Lemma $\ref{decomposition of u-varepsilon}$). This method can deal with both convex and non-convex domain, but only for $N\geq 5$ and $q\in(2,2^*)$. In fact, if $N\geq 4$, $q\in(2,2^*)$, $q\geq \frac{N+2}{N-2}$ and $x_{0}$ is a nondegeneracy critical point of Robin function, we can prove the nondegeneracy by a similar proof as Theorem $\ref{main theorem 2}$ and we omit it.
\end{enumerate}
\end{Rem}

\begin{Rem}
Compared with the previous work, there are some features of this paper:
\begin{enumerate}
    \item The blow-up rate obtained in Theorem \ref{main theorem-1} extends the earlier results of the special case $q=2$ in \cite{Brezispletier1989,Frank2021BlowupOS, Han1991,Rey1989ProofOT,Rey1990}. Thus, we can have a clearer understanding of the dependence of blow-up rate on spatial dimension $N$ and subcritical exponent $q$.
    \item The estimate of the least energy can't calculate directly as in \cite{Frank2019Energyhigher-dimensional,FunkcialajEkvacioj2004,Takahashivariablecoefficients2006}. Since, when $q=2$, the least energy solution can be constructed by a constrained minimization problem, while it's not hold for the case $q\in (2,2^*)$, instead a mountain pass lemma has been used.
    \item The restriction on $q$ in Theorem $\ref{main theorem 2}$ and Theorem $\ref{asymptotic nondegeneracy-domain and Robin function}$ is important in some sense. In fact, from the results in Theorem \ref{main theorem-1}, we can conclude that if $\{u_{\varepsilon}\}$ is a sequence of least energy solution with $\|u_{\varepsilon}\|_{L^{\infty}(\Omega)}=u_{\varepsilon}(x_{\varepsilon})$ and $x_{\varepsilon}\to x_{0}$. Then $x_{0}\in\Omega$, $\lim\limits_{\varepsilon\to0}\varepsilon\|u_{\varepsilon}\|^{q+2-2^*}_{L^{\infty}(\Omega)}:=k_{0}^{2^*-q-2}<\infty$ and $(k_{0},x_{0})$ is a critical point of the function 
    \begin{equation}
        F(k,x)=\frac{1}{2}R(x)k^{2}-\frac{1}{\alpha_{N,q}(2^*-q)}k^{2^*-q}.
    \end{equation}
    If $q\geq \frac{N+2}{N-2}$, then $\mathcal{D}^{2}_{k}F(k,x_{0})$ is strictly positive definite for every $k$, thus $F(k,x_{0})$ is convex in the variable $k$ and has a unique critical point $k_{0}$. Hence, the asymptotic behavior is uniquely determined in term of $\Omega,N,q$ and $x_{0}$.
   \item Although the main idea of this paper comes from previous literature,  it is more complicated to deal with the case $q\in(2,2^*)$, compared with the case $q=2$, and we need some refined argument, which is related to the choice of $q$.
\end{enumerate}
\end{Rem}
\begin{Rem}
\begin{enumerate}
    \item  The main results in this paper are also hold for one-peak solutions. But, it's still open that whether or not the blow-up point is a minimum point of Robin function, as showed in \cite{FunkcialajEkvacioj2004,Takahashivariablecoefficients2006} when $q=2$, where the restriction on the least energy solution is essential.
    \item Recently,  in Frank, K{\"o}nig and Kovař{\'i}k \cite{Frank2021BlowupOS}, and K{\"o}nig and Laurain \cite{konig2023multibubbleblowupanalysisbrezisnirenberg}, 
     a complete picture of blow-up phenomena is given for $N=3$ and $q=2$. While, it is not fully understood that whether or not the solution exists, when $N=3,q\in(2,4]$ and $\varepsilon$ not large enough. In particular, when $\Omega$ is a ball,  the existence and uniqueness of the solutions has been suggested in \cite{Brezis1983CPAM} by numerical computations,
    \begin{enumerate}
    \item If $2<q<4$, there is some $\varepsilon_{0}>0$ such that
    \begin{enumerate}
        \item for $\varepsilon>\varepsilon_{0}$, there are two solutions;
        \item for $\varepsilon=\varepsilon_{0}$, there is a unique solution;
        \item for $\varepsilon<\varepsilon_{0}$, there is no solutions.
    \end{enumerate}
    \item If $q=4$, there is some $\varepsilon_{0}>0$ such that
    \begin{enumerate}
        \item for $\varepsilon>\varepsilon_{0}$, there is a unique solution;
        \item for $\varepsilon\leq \varepsilon_{0}$, there is no solution.
    \end{enumerate}
\end{enumerate}
and the case $q\in (2,4)$ has be confirmed by Atkinson-Peletier \cite{AtkinsonPeletier} afterwards. On the hand, when $q=4$, even if the existence and uniqueness of the solution suggested above hold, the leading order of the speed at which blow-up solution concentrate as $\varepsilon\searrow \varepsilon_{0}$ is hard to obtain and the leading order of $\|u_{\varepsilon}\|_{\infty}$ can no longer be captured by the right hand side of (\ref{exact blow-up rate}), that is, the blow-up rate obtained in Theorem \ref{main theorem-1} is no longer applicable. We will study this in a forthcoming work.
\end{enumerate}
\end{Rem}
Our paper is organized as follows. In section \ref{Asymptotic behavior}, we show the asymptotic behavior of the least energy solution by using the method of blow-up analysis. In section \ref{Asymptotic uniqueness} and section \ref{Asymptotic nondegeneracy}, we establish the uniqueness and nondegeneracy by using the local Poho\v{z}aev identity and the results obtained in section \ref{Asymptotic behavior}. Finally,  we give some known facts in the Appendix.

\begin{Not}
Throughout this paper, we use $\|u\|_{H^{1}_{0}(\Omega)}:=\|\nabla u\|_{L^{2}(\Omega)}$ to denote the norm in $H^{1}_{0}(\Omega)$  and $\langle\cdot,\cdot\rangle$ to mean the corresponding inner product. The homogeneous Sobolev space $\mathcal{D}^{1,2}(\R^{N})$ is defined as the completion of $C_{c}^{\infty}(\R^{N})$ with respect to the norm $\|\nabla u\|_{L^{2}(\R^{N})}$. In addition, $C$ denotes positive constant possibly different from line to line, $A=o(B)$ means $A/B\to 0$, $A=O(B)$ means that $|A/B|\leq C$ and we write $A\sim B$, if $A=O(B)$ and $B=O(A)$.
\end{Not}


\section{Asymptotic behavior}\label{Asymptotic behavior} 
In this section, we will give a refined asymptotic behavior of the least energy solution and the main results in Theorem \ref{main theorem-1} are obtained. In the following, we always assume that: 
\begin{Assum}\label{assumption 1}
    $N\geq 3$, $q\in (\max\{2,\frac{4}{N-2}\},2^*)$, $\Omega$ is a  smooth bounded star-shaped domain in $\mathbb{R}^{N}$. 
\end{Assum}
\subsection{Asymptotic behavior of \texorpdfstring{$u_\varepsilon$}{}}

Let  $u_{\varepsilon}$ be the positive least energy solution of (\ref{p-varepsion}) obtained in \cite{Brezis1983CPAM} with energy $0<S_{\varepsilon}<\frac{1}{N}S^{\frac{N}{2}}$, then $u_{\varepsilon}$ can be characterized as
\begin{equation}
    S_{\varepsilon}=\inf_{u\in N_{\varepsilon}}I_{\varepsilon}(u),
\end{equation}
where $N_{\varepsilon}$ is the Nehari manifold correspond to (\ref{p-varepsion})
\begin{equation}\label{Nehari identity for p-varepsion}
    N_{\varepsilon}:=\left\{u\in H^{1}_{0}(\Omega)\setminus\{0\},u\geq 0:\int_{\Omega}|\nabla u|^{2}dx=\int_{\Omega}|u|^{2^*}dx+\varepsilon\int_{\Omega}|u|^{q}dx\right\}.
\end{equation}
In addition, the least energy solution $u_{\varepsilon}$ is also known as a mountain pass solutions and
\begin{equation}
S_{\varepsilon}=\inf_{\gamma\in\Gamma}\max_{t\in[0,1]}I_{\varepsilon}(\gamma(t)),
\end{equation}
where
\begin{equation}
    \Gamma:=\left\{\gamma\in C([0,1],H^{1}_{0}(\Omega)):\gamma(0)=0, I_{\varepsilon}(\gamma(1))<0\right\},
\end{equation}
and $I_{\varepsilon}$ is the energy function of (\ref{p-varepsion}) defined by
\begin{equation}
    I_{\varepsilon}(u):=\frac{1}{2}\int_{\Omega}|\nabla u|^{2}dx-\frac{1}{2^*}\int_{\Omega}|u|^{2^*}dx-\frac{\varepsilon}{q}\int_{\Omega}|u|^{q}dx.
\end{equation}
On the other hand, by Corollary \ref{Pohozaev identity}, we have the following Poho\v{z}aev identity for (\ref{p-varepsion})
\begin{equation}\label{Pohazaev-p-varepsion}
\frac{1}{2N}\int_{\partial\Omega}|\nabla u|^{2}(x-y)\cdot n d S_{x}=\left(\frac{1}{q}-\frac{1}{2^*}\right)\varepsilon\int_{\Omega}|u|^{q}dx,
\end{equation}
for any $y\in\R^{N}$. 

Now, we show some properties of $u_{\varepsilon}$ as $\varepsilon\to 0$.
\begin{Lem}\label{limit of u-varepsilon}
 $\{u_{\varepsilon}\}_{\varepsilon>0}$ is bounded in $H^{1}_{0}(\Omega)$ and as $\varepsilon\to 0$
 \begin{equation}\label{limit of u-varepsilon proof 1}
    \begin{cases}
 u_{\varepsilon}\rightharpoonup 0 &\text{~weakly in~} H^{1}_{0}(\Omega),\\
    u_{\varepsilon}\to 0 &\text{ ~strongly in~} L^{p}(\Omega), \text{~for any~} p\in [1,2^*),\\
    u_{\varepsilon}\to 0 &\text{~almost everywhere in~} \R^{N},\\
 \end{cases}  
 \end{equation}
but $u_{\varepsilon}$ does not converge to $0$ strongly in $H^{1}_{0}(\Omega)$.
 \end{Lem}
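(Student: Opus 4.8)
The plan is to establish the boundedness first, then extract the three convergence statements by standard functional-analytic facts, and finally isolate the crucial point: that strong convergence in $H^1_0(\Omega)$ would force a nontrivial solution of the limit problem \eqref{p-0} on a star-shaped domain, contradicting Pohozaev. First I would show $\{u_\varepsilon\}$ is bounded in $H^1_0(\Omega)$: combining the Nehari identity \eqref{Nehari identity for p-varepsion}, which gives $\int_\Omega|\nabla u_\varepsilon|^2 = \int_\Omega u_\varepsilon^{2^*} + \varepsilon\int_\Omega u_\varepsilon^q$, with the energy bound $I_\varepsilon(u_\varepsilon)=S_\varepsilon<\frac1N S^{N/2}$, one gets
\begin{equation*}
S_\varepsilon = \Bigl(\frac12-\frac{1}{2^*}\Bigr)\int_\Omega|\nabla u_\varepsilon|^2 + \Bigl(\frac{1}{2^*}-\frac1q\Bigr)\varepsilon\int_\Omega u_\varepsilon^q,
\end{equation*}
and since $q<2^*$ the last term is negative, whence $\frac1N\int_\Omega|\nabla u_\varepsilon|^2 \le S_\varepsilon < \frac1N S^{N/2}$, so in fact $\|u_\varepsilon\|_{H^1_0}^2 < S^{N/2}$ uniformly.

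Next, boundedness gives a weakly convergent subsequence $u_\varepsilon\rightharpoonup u_0$ in $H^1_0(\Omega)$, and by Rellich compactness $u_\varepsilon\to u_0$ strongly in $L^p(\Omega)$ for every $p\in[1,2^*)$ and a.e. in $\Omega$ (extend by zero outside $\Omega$ for the a.e.\ statement on $\R^N$). I would then argue $u_0\equiv 0$: passing to the limit in the weak formulation of \eqref{p-varepsion} tested against $\varphi\in C_c^\infty(\Omega)$, the term $\varepsilon\int_\Omega u_\varepsilon^{q-1}\varphi\to 0$ (the integral is bounded, $\varepsilon\to0$), and $\int_\Omega u_\varepsilon^{2^*-1}\varphi$ — here I would use that $u_\varepsilon^{2^*-1}$ is bounded in $L^{2^*/(2^*-1)}$ so converges weakly to $u_0^{2^*-1}$ — so $u_0$ solves $-\Delta u_0 = u_0^{2^*-1}$ in $\Omega$ with $u_0\in H^1_0(\Omega)$, $u_0\ge 0$. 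By Pohozaev's identity for \eqref{p-0} on the star-shaped domain $\Omega$ (this is exactly the $\varepsilon=0$ case of \eqref{Pohazaev-p-varepsion}, or the classical result of Pohozaev cited in the introduction), $u_0\equiv 0$. Hence $u_\varepsilon\rightharpoonup 0$, completing the three displayed limits.

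Finally, the contrapositive: suppose $u_\varepsilon\to 0$ strongly in $H^1_0(\Omega)$. Then $\int_\Omega|\nabla u_\varepsilon|^2\to 0$, and by Sobolev $\int_\Omega u_\varepsilon^{2^*}\to 0$, while $\varepsilon\int_\Omega u_\varepsilon^q\to 0$ as well (bounded integral times $\varepsilon$). But the Nehari identity combined with the Sobolev inequality $\int_\Omega|\nabla u_\varepsilon|^2\ge S\bigl(\int_\Omega u_\varepsilon^{2^*}\bigr)^{2/2^*}$ gives, writing $t_\varepsilon:=\int_\Omega u_\varepsilon^{2^*}$,
\begin{equation*}
S\, t_\varepsilon^{2/2^*} \le \int_\Omega|\nabla u_\varepsilon|^2 = t_\varepsilon + \varepsilon\int_\Omega u_\varepsilon^q,
\end{equation*}
so either $t_\varepsilon\to 0$ forces — after dividing by $t_\varepsilon^{2/2^*}$ and using $2/2^*<1$ — that $S \le t_\varepsilon^{1-2/2^*} + \varepsilon t_\varepsilon^{-2/2^*}\int_\Omega u_\varepsilon^q$; the first term goes to $0$, and the second term must then be bounded below, but one checks via the energy level $S_\varepsilon\to \frac1N S^{N/2}$ (shown using a standard test-function/bubble comparison, or simply that $S_\varepsilon\ge$ some positive constant since $S_\varepsilon=\frac1N\|u_\varepsilon\|_{H^1_0}^2 + (\text{nonneg.})$ would then be forced to $0$, yet $S_\varepsilon$ is bounded away from $0$ because any function on $N_\varepsilon$ has $\|u\|_{H^1_0}^2\ge S^{N/2}(1+o(1))$ by Sobolev and the Nehari constraint) that this is impossible. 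The cleanest route I expect to use: on $N_\varepsilon$ one has $S t_\varepsilon^{2/2^*}\le t_\varepsilon + \varepsilon\int u^q$, and since $\varepsilon\int u^q = o(1)\cdot\|u_\varepsilon\|^2 = o(\|u_\varepsilon\|^2)$, this yields $\|u_\varepsilon\|_{H^1_0}^2 \ge S^{N/2} - o(1)$, i.e.\ $\|u_\varepsilon\|_{H^1_0}$ is bounded \emph{away} from zero, directly contradicting strong convergence to $0$. The main obstacle is the bookkeeping in this last step — making precise that the $\varepsilon\int_\Omega u_\varepsilon^q$ term is genuinely lower order relative to $\|u_\varepsilon\|_{H^1_0}^2$ so that the Sobolev/Nehari combination pins the norm away from zero — together with confirming $S_\varepsilon$ stays bounded below, which is where one invokes that $\frac1N S^{N/2}$ is the relevant threshold and the infimum over $N_\varepsilon$ is positive.
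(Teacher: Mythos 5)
Your argument for boundedness contains a genuine error of direction. From the identity
\begin{equation*}
S_{\varepsilon}=\Bigl(\tfrac12-\tfrac{1}{2^*}\Bigr)\int_{\Omega}|\nabla u_{\varepsilon}|^{2}\,dx+\Bigl(\tfrac{1}{2^*}-\tfrac1q\Bigr)\varepsilon\int_{\Omega}u_{\varepsilon}^{q}\,dx,
\end{equation*}
the negativity of the last term yields $S_{\varepsilon}\le \frac1N\int_{\Omega}|\nabla u_{\varepsilon}|^{2}$, i.e.\ a \emph{lower} bound on the gradient norm, not the upper bound $\frac1N\int_{\Omega}|\nabla u_{\varepsilon}|^{2}\le S_{\varepsilon}$ that you assert. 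As written, your first step proves nothing about boundedness; note also that your version never uses $q>2$, which is a warning sign, since for $q=2$ boundedness genuinely requires $\varepsilon<\lambda_{1}$. The repair is to eliminate the critical term instead of the subcritical one: the Nehari constraint gives
\begin{equation*}
S_{\varepsilon}=\Bigl(\tfrac12-\tfrac1q\Bigr)\int_{\Omega}|\nabla u_{\varepsilon}|^{2}\,dx+\Bigl(\tfrac1q-\tfrac{1}{2^*}\Bigr)\int_{\Omega}|u_{\varepsilon}|^{2^*}\,dx<\tfrac1N S^{\frac N2},
\end{equation*}
where now \emph{both} coefficients are positive precisely because $2<q<2^*$, and boundedness follows at once. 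This is the combination the paper uses.

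The remainder of your proposal is sound. The identification of the weak limit as a nonnegative $H^1_0$-solution of \eqref{p-0}, killed by Pohozaev on the star-shaped domain, and the Rellich step match the paper. For the final claim you take a slightly different route: you combine the Nehari identity with the Sobolev inequality to get $\|\nabla u_{\varepsilon}\|_{2}^{2}\ge S^{N/2}(1-o(1))$ whenever $\|\nabla u_{\varepsilon}\|_{2}\to0$ (using $\varepsilon\int u_{\varepsilon}^{q}\le C\varepsilon\|\nabla u_{\varepsilon}\|_{2}^{q}=o(\|\nabla u_{\varepsilon}\|_{2}^{2})$ since $q>2$), which directly pins the norm away from zero; the paper instead shows $S_{\varepsilon}\ge\frac14\delta_{0}>0$ uniformly via the mountain-pass geometry. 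Both are valid; yours is arguably more self-contained, while the paper's lower bound on $S_{\varepsilon}$ is reused elsewhere. You could trim the exploratory detour in that step, but the core estimate is correct.
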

\begin{proof}
Note that $u_{\varepsilon}\in N_{\varepsilon}$, where $N_{\varepsilon}$ is the Nehari manifold defined in (\ref{Nehari identity for p-varepsion}), then we have
\begin{equation}
   S_{\varepsilon}=I_{\varepsilon}(u_{\varepsilon})=\left(\frac{1}{2}-\frac{1}{q}\right)\int_{\Omega}|\nabla u_{{\varepsilon}}|^2dx+\left(\frac{1}{q}-\frac{1}{2^*}\right)\int_{\Omega}|u_{\varepsilon}|^{2^*}dx< \frac{1}{N}S^{\frac{N}{2}}.
\end{equation}
Thus by the Assumption \ref{assumption 1}, we can obtain that $\{u_{\varepsilon}\}_{\varepsilon>0}$ is bounded in $H^{1}_{0}(\Omega)$.  If we assume that $u_{\varepsilon}\rightharpoonup u_{0}$ weakly in $H^{1}_{0}(\Omega)$, then $u_{0}$ is a solution of (\ref{p-0}). Since $\Omega$ is starshaped, thus $u_{0}=0$ (see \cite{Pohozaev1965}). Moreover, by Rellich compact embedding theorem, the remaining results of (\ref{limit of u-varepsilon proof 1}) hold. Next, we claim that $u_{\varepsilon}$ does not converge to $0$ strongly in $H^{1}_{0}(\Omega)$, otherwise $u_{\varepsilon}\to 0 $ and $S_{\varepsilon}\to 0$ as $\varepsilon\to0$. To derive a contraction, we only need to show that there exist a constant $C>0$ such that $S_{\varepsilon}>C$ for any $\varepsilon\leq 1$. By Sobolev embedding, we know that there exist constant $C_{1}(N)$ and $C_{2}(N,q)$ such that for any $\varepsilon\leq 1$, 
\begin{equation}
\begin{aligned}
  I_{\varepsilon}(u)&=\frac{1}{2}\|\nabla u\|_{2}^{2}-\frac{1}{2^*}\|u\|_{2^*}^{2^*}-\frac{\varepsilon}{q}\|u\|_{q}^{q} \\
  &\geq \left(\frac{1}{2}-C_{1}\|\nabla u\|_{2}^{2^*-2}-\varepsilon C_{2}\|\nabla u\|_{2}^{q-2}\right)\|\nabla u\|_{2}^{2}.
\end{aligned}
\end{equation}
Then, there exist a small enough constant $\delta_{0}(N,q)>0$ such that for any $\varepsilon\leq 1$ and $\|\nabla u\|_{2}=\delta_{0}$ we have
\begin{equation}
    I_{\varepsilon}(u)\geq \frac{1}{4}\delta_{0},
\end{equation}
which together with the mountain pass character of $u_{\varepsilon}$, we have $S_{\varepsilon}\geq \frac{1}{4}\delta_{0}$ for any $\varepsilon\leq 1$. Hence, we complete the proof.
\end{proof}

In addition, we can obtain the asymptotic behavior of the least energy $S_{\varepsilon}$ as $\varepsilon\to 0$.

\begin{Lem}\label{limit of S-varepsilon}
$S_{\varepsilon}$ is strictly decreasing for $\varepsilon$. Furthermore, $S_{\varepsilon}\to \frac{1}{N}S^{\frac{N}{2}}$, as $\varepsilon\to 0$.
\end{Lem}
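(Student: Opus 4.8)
The plan is to prove the two assertions of Lemma~\ref{limit of S-varepsilon} separately, namely (i) monotonicity of $\varepsilon\mapsto S_\varepsilon$, and (ii) the limit $S_\varepsilon\to\frac{1}{N}S^{N/2}$ as $\varepsilon\to0$.

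\textbf{Monotonicity.} First I would use the mountain-pass characterization $S_\varepsilon=\inf_{\gamma\in\Gamma_\varepsilon}\max_{t\in[0,1]}I_\varepsilon(\gamma(t))$. Fix $0<\varepsilon_1<\varepsilon_2$. For any $u\in H^1_0(\Omega)\setminus\{0\}$ with $u\ge0$ and any $t>0$ one has $I_{\varepsilon_2}(tu)=I_{\varepsilon_1}(tu)-\frac{\varepsilon_2-\varepsilon_1}{q}t^q\int_\Omega|u|^q\,dx<I_{\varepsilon_1}(tu)$ whenever $t>0$, so pointwise $I_{\varepsilon_2}\le I_{\varepsilon_1}$ on the cone of nonnegative functions, with strict inequality away from $0$. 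Given a path $\gamma\in\Gamma_{\varepsilon_1}$ realizing a value close to $S_{\varepsilon_1}$, the same path (or a slight rescaling of its endpoint to ensure $I_{\varepsilon_2}(\gamma(1))<0$, which only makes the endpoint more negative) lies in $\Gamma_{\varepsilon_2}$, and $\max_t I_{\varepsilon_2}(\gamma(t))\le\max_t I_{\varepsilon_1}(\gamma(t))$; taking infima gives $S_{\varepsilon_2}\le S_{\varepsilon_1}$. For \emph{strict} monotonicity I would instead work with the Nehari characterization: let $u_{\varepsilon_1}$ be the least energy solution for $\varepsilon_1$ and project it onto $N_{\varepsilon_2}$, i.e. find the unique $t=t_{\varepsilon_2}>0$ with $t u_{\varepsilon_1}\in N_{\varepsilon_2}$; since $\varepsilon_2>\varepsilon_1$ and $u_{\varepsilon_1}\in N_{\varepsilon_1}$ one checks $t_{\varepsilon_2}<1$, and then $S_{\varepsilon_2}\le I_{\varepsilon_2}(t_{\varepsilon_2}u_{\varepsilon_1})<I_{\varepsilon_1}(u_{\varepsilon_1})=S_{\varepsilon_1}$, where the last strict inequality uses both $t_{\varepsilon_2}<1$ and the extra negative $\varepsilon$-term. (A cleaner alternative: by the envelope/Hadamard first-variation argument, $\frac{d}{d\varepsilon}S_\varepsilon=-\frac1q\int_\Omega|u_\varepsilon|^q\,dx<0$, since $u_\varepsilon$ is nontrivial.)

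\textbf{The limit.} Since $S_\varepsilon$ is decreasing and bounded above by $\frac1N S^{N/2}$ and below by $\tfrac14\delta_0$ (Lemma~\ref{limit of u-varepsilon}), the limit $L:=\lim_{\varepsilon\to0}S_\varepsilon$ exists with $\tfrac14\delta_0\le L\le\frac1N S^{N/2}$. For the lower bound $L\ge\frac1N S^{N/2}$, recall from Lemma~\ref{limit of u-varepsilon} that $u_\varepsilon\rightharpoonup0$ weakly in $H^1_0(\Omega)$ and $u_\varepsilon\to0$ strongly in $L^q(\Omega)$ for $q<2^*$; hence $\varepsilon\int_\Omega|u_\varepsilon|^q\,dx\to0$, and from the Nehari identity $\int_\Omega|\nabla u_\varepsilon|^2\,dx=\int_\Omega|u_\varepsilon|^{2^*}\,dx+o(1)$. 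Writing $a_\varepsilon:=\int_\Omega|\nabla u_\varepsilon|^2\,dx$, which is bounded and bounded away from $0$ (non-vanishing), the Sobolev inequality gives $a_\varepsilon\ge S\big(\int_\Omega|u_\varepsilon|^{2^*}\big)^{2/2^*}=S(a_\varepsilon+o(1))^{2/2^*}$, so in the limit any accumulation value $a$ of $a_\varepsilon$ satisfies $a\ge S\,a^{2/2^*}$, i.e. $a\ge S^{N/2}$. Then $S_\varepsilon=\big(\tfrac12-\tfrac1{2^*}\big)a_\varepsilon+o(1)=\tfrac1N a_\varepsilon+o(1)$, whence $L=\tfrac1N\lim a_\varepsilon\ge\tfrac1N S^{N/2}$. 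Combined with $S_\varepsilon<\frac1N S^{N/2}$ this forces $L=\frac1N S^{N/2}$.

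\textbf{Main obstacle.} The routine part is the monotonicity; the only genuinely delicate point in the limit argument is justifying that $\varepsilon\int_\Omega|u_\varepsilon|^q\,dx\to0$ and that $a_\varepsilon$ stays bounded away from zero — both of which are already supplied by Lemma~\ref{limit of u-varepsilon} (boundedness in $H^1_0$, strong $L^q$ convergence to $0$, and non-vanishing in $H^1_0$), so here it is a matter of assembling these facts correctly. If one wanted strict monotonicity via the variational-inequality route rather than the derivative formula, the bookkeeping of the Nehari projection constant $t_{\varepsilon_2}<1$ is the one place to be careful, but it is elementary.
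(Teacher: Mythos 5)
Your proposal is correct and follows essentially the same route as the paper: strict monotonicity via projecting $u_{\varepsilon_1}$ onto the Nehari manifold $N_{\varepsilon_2}$ (the paper then rewrites $I_{\varepsilon_2}(t_{\varepsilon_2}u_{\varepsilon_1})$ using the Nehari identity so that only $t_{\varepsilon_2}<1$ and the positivity of the coefficients $\frac12-\frac1q$, $\frac1q-\frac1{2^*}$ are needed, while your variant gets strictness from the extra negative $\varepsilon$-term; both are valid), and the limit via the Nehari identity, the Sobolev inequality, and the non-vanishing and strong $L^q$-convergence facts from Lemma \ref{limit of u-varepsilon}, yielding $\liminf\|\nabla u_\varepsilon\|_2^2\ge S^{N/2}$ and hence $S_\varepsilon=\frac1N\|\nabla u_\varepsilon\|_2^2+o(1)\to\frac1N S^{N/2}$.
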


\begin{proof}
 Assume that $0<\varepsilon_{1}<\varepsilon_{2}$ and $u_{\varepsilon_{1}}, u_{\varepsilon_{2}}$ are the corresponding least energy solution. It is easy to verify that for any $u\in H^{1}_{0}(\Omega)\setminus\{0\}$ and $\varepsilon>0$, the fiber map $I_{\varepsilon}(tu)$ exist a unique maximum point $t_{\varepsilon,u}$ and $t_{\varepsilon,u}u\in N_{\varepsilon}$, where $N_{\varepsilon}$ is the Nehari manifold defined in (\ref{Nehari identity for p-varepsion}). Hence, we can obtain that $t_{\varepsilon, u}$ is strictly decreasing on $\varepsilon$ and $t_{\varepsilon_{2},u_{\varepsilon_{1}}}<t_{\varepsilon_{1},u_{\varepsilon_{1}}}=1$. Now, we have
 \begin{equation}\label{limit of S-varepsilon proof 1}
     \begin{aligned}
         S_{\varepsilon_{2}}\leq \max_{t>0}I_{\varepsilon_{2}}(tu_{\varepsilon_{1}})&= I_{\varepsilon_{2}}(t_{{\varepsilon}_{2},u_{\varepsilon_{1}}}u_{{\varepsilon_{1}}})\\
         &=\left(\frac{1}{2}-\frac{1}{q}\right)t_{\varepsilon_{2,u_{{\varepsilon_{1}}}}}^{2}\int_{\Omega}|\nabla u_{{\varepsilon}_{1}}|^2dx+\left(\frac{1}{q}-\frac{1}{2^*}\right)t_{\varepsilon_{2,u_{{\varepsilon_{1}}}}}^{2^*}\int_{\Omega}|u_{{\varepsilon}_{1}}|^{2^*}dx\\
         &<\left(\frac{1}{2}-\frac{1}{q}\right)\int_{\Omega}|\nabla u_{{\varepsilon}_{1}}|^2dx+\left(\frac{1}{q}-\frac{1}{2^*}\right)\int_{\Omega}|u_{{\varepsilon}_{1}}|^{2^*}dx\\
         &=I_{\varepsilon_{1}}(u_{\varepsilon_{1}})=S_{\varepsilon_{1}}.    
     \end{aligned}
 \end{equation}
 Next, we show $S_{\varepsilon}\to S$, as $\varepsilon\to 0$. By Lemma \ref{limit of u-varepsilon} and (\ref{definition of S}), we have
 \begin{equation}\label{limit of S-varepsilon proof 2}
     \|\nabla u_{\varepsilon}\|_{2}^{2}=\|u_{\varepsilon}\|_{2^*}^{2^*}+\varepsilon\|u_{\varepsilon}\|_{q}^{q}\leq S^{-\frac{2^*}{2}}\|\nabla u_{\varepsilon}\|^{2^*}_{2}+o(1).
 \end{equation}
Let $\|\nabla u_{\varepsilon}\|_{2}^{2}\to l>0$, as $\varepsilon\to 0$. Then by (\ref{limit of S-varepsilon proof 2}), we have $l\geq S^{\frac{N}{2}}$. Moreover, we have $S_{\varepsilon}=I_{\varepsilon}(u_{\varepsilon})=(\frac{1}{2}-\frac{1}{2^*})\|\nabla u_{\varepsilon}\|_{2}^{2}+o(1)$, thus 
\begin{equation}
  \frac{1}{N}S^{\frac{N}{2}}+o(1)\leq S_{\varepsilon}<\frac{1}{N}S^{\frac{N}{2}}. 
\end{equation}
Let $\varepsilon\to0$ and we complete the proof.
\end{proof}
The following Lemma shows that $\{u_{\varepsilon}\}_{\varepsilon>0}$ is a blow-up sequence as $\varepsilon\to 0$.

\begin{Lem}
 $\|u_{\varepsilon}\|_{L^{\infty}(\Omega)}\to \infty$ as $\varepsilon\to 0$. 
\end{Lem}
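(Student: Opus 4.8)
The plan is to argue by contradiction: suppose that along a subsequence $\|u_\varepsilon\|_{L^\infty(\Omega)} \le M$ for some constant $M$. Combined with Lemma \ref{limit of u-varepsilon}, the sequence $\{u_\varepsilon\}$ is then bounded in $H^1_0(\Omega) \cap L^\infty(\Omega)$, and the right-hand side $u_\varepsilon^{2^*-1} + \varepsilon u_\varepsilon^{q-1}$ of \eqref{p-varepsion} is bounded in $L^\infty(\Omega)$. By standard elliptic $L^p$-estimates applied to $-\Delta u_\varepsilon = f_\varepsilon$ with $\|f_\varepsilon\|_{L^\infty}$ bounded, we get a uniform bound on $u_\varepsilon$ in $W^{2,p}(\Omega)$ for every $p<\infty$, hence in $C^{1,\alpha}(\bar\Omega)$ by Sobolev embedding. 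Thus, up to a further subsequence, $u_\varepsilon \to u_0$ in $C^1(\bar\Omega)$, and $u_0$ solves the limit equation \eqref{p-0} (the subcritical term drops out since $\varepsilon \to 0$ and $\|u_\varepsilon\|_{L^\infty}$ is bounded). Since $\Omega$ is star-shaped, Pohozaev's identity forces $u_0 \equiv 0$.

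Next I would derive the contradiction by ruling out $u_\varepsilon \to 0$ in this strong topology. The cleanest way is to return to the energy level: from Lemma \ref{limit of S-varepsilon} we know $S_\varepsilon \to \tfrac1N S^{N/2} > 0$, while $C^1$-convergence of $u_\varepsilon$ to $0$ would give $\int_\Omega |\nabla u_\varepsilon|^2 \to 0$ and hence $S_\varepsilon = (\tfrac12 - \tfrac1{2^*})\|\nabla u_\varepsilon\|_2^2 + o(1) \to 0$, a contradiction. (Equivalently, one can invoke the final assertion of Lemma \ref{limit of u-varepsilon} that $u_\varepsilon$ does not converge to $0$ strongly in $H^1_0(\Omega)$; uniform convergence in $C^1(\bar\Omega)$ would certainly imply $H^1_0$-convergence, so $u_0 \equiv 0$ is impossible.) This closes the argument: the assumption $\|u_\varepsilon\|_{L^\infty(\Omega)} \le M$ is untenable, so $\|u_\varepsilon\|_{L^\infty(\Omega)} \to \infty$.

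The only genuinely delicate point is the elliptic bootstrap that upgrades an $L^\infty$-bound on $u_\varepsilon$ (together with the $H^1_0$-bound) to a $C^1$-bound. Since $q, 2^* < \infty$, once $\|u_\varepsilon\|_{L^\infty}$ is controlled the nonlinearity is automatically controlled in $L^\infty$, so this is really just interior-plus-boundary Calderón–Zygmund estimates plus Schauder or Sobolev embedding on the smooth bounded domain $\Omega$; there is no subtlety of critical growth here precisely because we are \emph{assuming} boundedness in $L^\infty$ for contradiction. Alternatively, one can avoid elliptic regularity altogether: if $\|u_\varepsilon\|_{L^\infty} \le M$, then $\|u_\varepsilon\|_{2^*}^{2^*} \le M^{2^*-2}\|u_\varepsilon\|_2^2 \to 0$ and $\varepsilon\|u_\varepsilon\|_q^q \to 0$ by Lemma \ref{limit of u-varepsilon}, so $\|\nabla u_\varepsilon\|_2^2 = \|u_\varepsilon\|_{2^*}^{2^*} + \varepsilon\|u_\varepsilon\|_q^q \to 0$, directly contradicting that $u_\varepsilon$ does not converge to $0$ strongly in $H^1_0(\Omega)$. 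This second route is shorter and is the one I would actually write down; I expect the paper's proof to use essentially this computation.
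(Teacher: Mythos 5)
Your second, shorter route is essentially the paper's proof: the authors also assume $\|u_\varepsilon\|_{L^\infty}\le C$, use Lemma \ref{limit of u-varepsilon} together with dominated convergence to conclude $S_\varepsilon\to 0$, and contradict Lemma \ref{limit of S-varepsilon}. Your computation via the Nehari identity and the non-convergence of $u_\varepsilon$ to $0$ in $H^1_0(\Omega)$ is an equivalent formulation of the same contradiction, so the proposal is correct and matches the paper's approach.
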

\begin{proof}
If $\|u_{\varepsilon}\|_{L^{\infty}(\Omega)}\leq C$, then by Lemma \ref{limit of u-varepsilon} and Lebesgue dominated convergence theorem, we have $S_{\varepsilon}\to 0$ as $\varepsilon\to 0$, which make a contraction with Lemma \ref{limit of S-varepsilon}.    
\end{proof}

Let $x_{\varepsilon}\in\Omega$, $\mu_{\varepsilon}\in\R^{+}$ such that 
\begin{equation}
    u_{\varepsilon}(x_{\varepsilon})=\max_{x\in\Omega}u_{\varepsilon}(x)=\mu_{\varepsilon}^{\frac{N-2}{2}}.
\end{equation}
We assume that $x_{\varepsilon}\to x_{0}\in \bar{\Omega}$ and we define a family of rescaled functions
\begin{equation}
    v_{\varepsilon}(x)=\mu_{\varepsilon}^{-\frac{N-2}{2}}u_{\varepsilon}(\mu_{\varepsilon}^{-1}x+x_{\varepsilon}),
\end{equation}
then $v_{\varepsilon}$ satisfy that
\begin{equation}\label{p-varepsion-*}\tag{$P_{\varepsilon}^*$}
\begin{cases}
-\Delta v_{\varepsilon}= v_{\varepsilon}^{2^*-1}+\varepsilon\mu_{\varepsilon}^{-\frac{N-2}{2}(2^*-q)} v_{\varepsilon}^{q-1},\quad v_{\varepsilon}>0, &{\text{in}~\Omega_{\varepsilon}},\\
\quad \  \ v_{\varepsilon}=0, &{\text{on}~\partial \Omega_{\varepsilon}},
\end{cases}
\end{equation}
where $\Omega_{\varepsilon}:=\{x\in\R^{N}:\mu_{\varepsilon}^{-1}x+x_{\varepsilon}\in\Omega\}$.

It is standard to verify the following Lemma.
\begin{Lem}\label{properties of v-varepsilon}
The following properties hold true
\begin{enumerate}
    \item $0\leq v_{\varepsilon}(x)\leq 1$, for $x\in \Omega_{\varepsilon}$ and  $v_{\varepsilon}(0)=\max\limits_{x\in\Omega_{\varepsilon}}v_{\varepsilon}(x)=1$.
    \item $\|\nabla v_{\varepsilon}\|_{L^{2}(\Omega_{\varepsilon})}^{2}=\|\nabla u_{\varepsilon}\|_{L^{2}(\Omega)}^{2}$, $\|v_{\varepsilon}\|_{L^{p}(\Omega_{\varepsilon})}^{p}=\mu_{\varepsilon}^{\frac{N-2}{2}(2^*-p)}\|u_{\varepsilon}\|_{L^{p}(\Omega)}^{p}$, \textrm{for any} $p\in[2,2^*].$
\end{enumerate}
\end{Lem}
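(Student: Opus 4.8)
The plan is to verify both assertions directly from the definition $v_\varepsilon(x)=\mu_\varepsilon^{-\frac{N-2}{2}}u_\varepsilon(\mu_\varepsilon^{-1}x+x_\varepsilon)$, the first by a pointwise argument and the second by the change of variables $y=\mu_\varepsilon^{-1}x+x_\varepsilon$, which maps $\Omega_\varepsilon$ diffeomorphically onto $\Omega$.

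For assertion (1), I would note that positivity of $v_\varepsilon$ on $\Omega_\varepsilon$ is immediate, since $x\in\Omega_\varepsilon$ means $\mu_\varepsilon^{-1}x+x_\varepsilon\in\Omega$ and $u_\varepsilon>0$ there. For the upper bound, again for $x\in\Omega_\varepsilon$ the point $\mu_\varepsilon^{-1}x+x_\varepsilon$ lies in $\Omega$, hence $u_\varepsilon(\mu_\varepsilon^{-1}x+x_\varepsilon)\le\max_{\Omega}u_\varepsilon=u_\varepsilon(x_\varepsilon)=\mu_\varepsilon^{\frac{N-2}{2}}$; dividing by $\mu_\varepsilon^{\frac{N-2}{2}}$ yields $v_\varepsilon(x)\le1$. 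Finally, evaluating at $x=0$ gives $v_\varepsilon(0)=\mu_\varepsilon^{-\frac{N-2}{2}}u_\varepsilon(x_\varepsilon)=1$, so the maximum of $v_\varepsilon$ over $\Omega_\varepsilon$ equals $1$ and is attained at the origin.

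For assertion (2), I would compute by the chain rule $\nabla v_\varepsilon(x)=\mu_\varepsilon^{-\frac{N}{2}}(\nabla u_\varepsilon)(\mu_\varepsilon^{-1}x+x_\varepsilon)$, and then substitute $y=\mu_\varepsilon^{-1}x+x_\varepsilon$ with $dx=\mu_\varepsilon^{N}\,dy$. This gives $\int_{\Omega_\varepsilon}|\nabla v_\varepsilon|^2\,dx=\mu_\varepsilon^{-N}\mu_\varepsilon^{N}\int_{\Omega}|\nabla u_\varepsilon|^2\,dy=\int_{\Omega}|\nabla u_\varepsilon|^2\,dy$, the cancellation of the powers of $\mu_\varepsilon$ being exactly the scaling invariance of the Dirichlet integral under this critical rescaling. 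For the $L^p$ norms, the same substitution gives $\int_{\Omega_\varepsilon}|v_\varepsilon|^p\,dx=\mu_\varepsilon^{N-\frac{p(N-2)}{2}}\int_{\Omega}|u_\varepsilon|^p\,dy$, and it only remains to observe the elementary identity $N-\tfrac{p(N-2)}{2}=\tfrac{N-2}{2}\bigl(\tfrac{2N}{N-2}-p\bigr)=\tfrac{N-2}{2}(2^*-p)$, using $2^*=\tfrac{2N}{N-2}$.

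I do not expect a genuine obstacle: this is a bookkeeping lemma, and the only point demanding a little care is tracking the exponents of $\mu_\varepsilon$ produced by the chain rule and the Jacobian, together with the exponent identity above. The choice of normalization $\mu_\varepsilon^{\frac{N-2}{2}}=\|u_\varepsilon\|_{L^\infty(\Omega)}$ is precisely what makes $v_\varepsilon$ bounded by $1$ and the gradient norm invariant, which is also what produces the clean form of the rescaled equation \eqref{p-varepsion-*}.
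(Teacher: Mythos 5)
Your proof is correct and is exactly the standard verification the paper has in mind (the paper omits the proof entirely, remarking only that the lemma is "standard to verify"). All the exponent bookkeeping checks out: the chain rule gives $\nabla v_{\varepsilon}(x)=\mu_{\varepsilon}^{-N/2}(\nabla u_{\varepsilon})(\mu_{\varepsilon}^{-1}x+x_{\varepsilon})$, the Jacobian is $\mu_{\varepsilon}^{N}$, and $N-\tfrac{p(N-2)}{2}=\tfrac{N-2}{2}(2^*-p)$.
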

Hence, $v_{\varepsilon}$ is also a least energy solution of (\ref{p-varepsion-*}) with the same energy $S_{\varepsilon}$. The following proposition shows that the rescaled least energy solutions $v_{\varepsilon}$ converge to the standard bubble function $U$ as $\varepsilon\to 0$.

\begin{Prop}\label{limit of v-varepsilon}
Let $v_{\varepsilon}(x):=0$, for any $x\in \R^{N}\setminus \Omega_{\varepsilon}$. Then, up to a sub-sequence, one has $\mu_{\varepsilon}d(x_{\varepsilon},\partial\Omega)\to\infty$ as $\varepsilon\to 0$ and there exists a function $U\in \mathcal{D}^{1,2}(\R^{N})$ such that
\begin{equation}
v_{\varepsilon}\to U \text{~in~} \mathcal{D}^{1,2}(\R^{N})\text{~and~}v_{\varepsilon}\to U  \text{~in~}C^{2}_{loc}(\R^{N}).
\end{equation}
Moreover, $U$ satisfies that
\begin{equation}\label{limit of v-varepsilon 2}
    \begin{cases}
        -\Delta U=U^{2^*-1},\quad 0\leq U\leq 1, \quad x\in \R^{N},\\
       \ U(0)=1,
    \end{cases}
\end{equation}
Hence by $(\ref{Aubin-Talenti bubble})$, one has
\begin{equation}
U(x)=\left(\frac{N(N-2)}{N(N-2)+|x|^{2}}\right)^{\frac{N-2}{2}}. 
\end{equation}
\end{Prop}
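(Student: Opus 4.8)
The plan is to follow the classical blow-up scheme of Brezis--Kohn and Rey, adapted to the subcritical perturbation. First I would establish that $v_\varepsilon$ is locally bounded in $C^2$: since $0\le v_\varepsilon\le 1$ by Lemma \ref{properties of v-varepsilon}, the right-hand side of \eqref{p-varepsion-*}, namely $v_\varepsilon^{2^*-1}+\varepsilon\mu_\varepsilon^{-\frac{N-2}{2}(2^*-q)}v_\varepsilon^{q-1}$, is bounded in $L^\infty_{loc}$ provided the coefficient $\varepsilon\mu_\varepsilon^{-\frac{N-2}{2}(2^*-q)}$ stays bounded. That boundedness I would get from the Pohozaev identity \eqref{Pohazaev-p-varepsion} combined with $S_\varepsilon\to \frac1N S^{N/2}$: indeed $\varepsilon\int_\Omega|u_\varepsilon|^q\,dx$ is controlled, and after rescaling $\varepsilon\int_\Omega |u_\varepsilon|^q = \varepsilon\mu_\varepsilon^{-\frac{N-2}{2}(2^*-q)}\int_{\Omega_\varepsilon}|v_\varepsilon|^q$, while $\int_{\Omega_\varepsilon}|v_\varepsilon|^q$ is bounded below on a fixed ball because $v_\varepsilon(0)=1$ and $v_\varepsilon$ is, say, $\ge 1/2$ on a small ball by uniform interior gradient estimates. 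Hence $\varepsilon\mu_\varepsilon^{-\frac{N-2}{2}(2^*-q)}\le C$. With this, elliptic $L^p$ and Schauder estimates give $v_\varepsilon$ bounded in $C^{2,\alpha}_{loc}$ of any ball contained in $\Omega_\varepsilon$, and Arzel\`a--Ascoli yields a subsequence converging in $C^2_{loc}$ to some $U\ge 0$ with $U(0)=1$ (so $U\not\equiv 0$).

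Next I would pin down the limit domain. Set $d_\varepsilon=d(x_\varepsilon,\partial\Omega)$ and suppose, for contradiction, that $\mu_\varepsilon d_\varepsilon\to c<\infty$ along a subsequence. Then $\Omega_\varepsilon$ converges (after a rotation) to a half-space $\{x_N>-c\}$, and $U$ solves $-\Delta U=U^{2^*-1}+\kappa U^{q-1}$ (with $\kappa=\lim\varepsilon\mu_\varepsilon^{-\frac{N-2}{2}(2^*-q)}\ge0$) in that half-space with $U=0$ on the boundary hyperplane and $0\le U\le1$; by the Pohozaev/moving-plane nonexistence results for positive bounded solutions on a half-space (with the boundary condition and the sign of the subcritical term) this forces $U\equiv 0$, contradicting $U(0)=1$. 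Therefore $\mu_\varepsilon d_\varepsilon\to\infty$, so $\Omega_\varepsilon\to\R^N$ and $U$ solves $-\Delta U=U^{2^*-1}+\kappa U^{q-1}$ on all of $\R^N$ with $0\le U\le 1$. Here I would note that $\kappa$ must in fact be $0$: since $v_\varepsilon\to U$ in $C^2_{loc}$ and $\int_{\Omega_\varepsilon}|v_\varepsilon|^q$ is bounded while $\varepsilon\mu_\varepsilon^{-\frac{N-2}{2}(2^*-q)}\int_{\Omega_\varepsilon}|v_\varepsilon|^q=\varepsilon\int_\Omega|u_\varepsilon|^q\to 0$ (from Lemma \ref{limit of u-varepsilon}, as $\varepsilon\|u_\varepsilon\|_q^q\le \varepsilon\, C$), we get $\varepsilon\mu_\varepsilon^{-\frac{N-2}{2}(2^*-q)}\to 0$ once we know $\int_{\Omega_\varepsilon}|v_\varepsilon|^q$ does not vanish, which it does not since $v_\varepsilon\ge1/2$ on a fixed ball. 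Thus $U$ solves $-\Delta U=U^{2^*-1}$, $0\le U\le 1$, $U(0)=1$ on $\R^N$, and by the Caffarelli--Gidas--Spruck classification (or, since $U\in\mathcal D^{1,2}$, the Aubin--Talenti characterization \eqref{Aubin-Talenti bubble}) together with the normalization that the maximum is at the origin with value $1$, we conclude $U(x)=\bigl(\tfrac{N(N-2)}{N(N-2)+|x|^2}\bigr)^{\frac{N-2}{2}}$.

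It remains to upgrade $C^2_{loc}$ convergence to convergence in $\mathcal D^{1,2}(\R^N)$. For this I would use the energy: $\|\nabla v_\varepsilon\|_{L^2(\Omega_\varepsilon)}^2=\|\nabla u_\varepsilon\|_{L^2(\Omega)}^2\to S^{N/2}$ by Lemma \ref{limit of S-varepsilon} and its proof, and $\|\nabla U\|_{L^2(\R^N)}^2=S^{N/2}$ as well. Extending $v_\varepsilon$ by zero, weak lower semicontinuity gives $v_\varepsilon\rightharpoonup U$ weakly in $\mathcal D^{1,2}$ with $\|\nabla U\|_2\le\liminf\|\nabla v_\varepsilon\|_2$; combined with the equality of limits of the norms this forces $\|\nabla v_\varepsilon\|_2\to\|\nabla U\|_2$, and in a Hilbert space weak convergence plus norm convergence gives strong convergence. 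I expect the main obstacle to be the two nonexistence inputs — first, rigorously ruling out a half-space limit (one must handle the extra subcritical term $\kappa U^{q-1}$ with $\kappa\ge0$, for which a Pohozaev identity on the half-space or a Kelvin-transform/moving-plane argument is needed), and second, the bookkeeping needed to show $\varepsilon\mu_\varepsilon^{-\frac{N-2}{2}(2^*-q)}$ is bounded and in fact tends to $0$, which is the crux linking the subcritical scaling to the blow-up. Everything else is standard elliptic regularity and functional-analytic routine.
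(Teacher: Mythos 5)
Your proposal is correct and follows essentially the same strategy as the paper: rescale, pass to a $C^{2}_{loc}$ limit, exclude the half-space limit by a nonexistence/moving-plane result (the paper cites Dancer's theorem \cite{Dancer1992SomeNO}, noting that $U(0)=\max U=1$ at an interior point contradicts the monotonicity in the normal direction), identify $U$ via the classification \eqref{Aubin-Talenti bubble}, and upgrade to strong $\mathcal{D}^{1,2}$ convergence. Two small remarks. First, your bookkeeping for the coefficient $\alpha_{\varepsilon}:=\varepsilon\mu_{\varepsilon}^{-\frac{N-2}{2}(2^*-q)}$ is far more elaborate than necessary and contains a mild circularity: you invoke ``uniform interior gradient estimates'' to bound $\int_{\Omega_\varepsilon}v_\varepsilon^{q}$ from below, but such estimates already presuppose that the right-hand side of \eqref{p-varepsion-*}, hence $\alpha_\varepsilon$, is under control. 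In fact no Poho\v{z}aev argument is needed at all: since $q<2^*$ and $\mu_\varepsilon\to\infty$, one has $\mu_{\varepsilon}^{-\frac{N-2}{2}(2^*-q)}\to 0$, so $\alpha_\varepsilon\le\varepsilon\to 0$ trivially; this is why the paper writes the limiting equation with the critical term only. Second, for the final compactness step the paper invokes the Lions concentration-compactness lemma together with Lemma \ref{limit of S-varepsilon}, whereas you use weak convergence plus convergence of the Dirichlet norms (both equal to $S^{N/2}$) in the Hilbert space $\mathcal{D}^{1,2}(\R^{N})$; your route is equally valid and arguably more elementary, but it does require first identifying $U$ as the Aubin--Talenti bubble so that $\|\nabla U\|_{2}^{2}=S^{N/2}$ is known. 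Neither difference affects correctness.
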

\begin{proof}
By Lemma \ref{properties of v-varepsilon}, we know that $\{v_{\varepsilon}\}$ is bounded in $\mathcal{D}^{1,2}(\R^{N})$, thus there exist a function $U\in \mathcal{D}^{1,2}(\R^{N})$ such that $v_{\varepsilon}\rightharpoonup U$ weakly in $\mathcal{D}^{1,2}(\R^{N})$ as $\varepsilon\to 0$. Next by standard elliptic regular theory, we have $v_{\varepsilon} \to U$ in $C^{2}_{loc}(\R^{N})$ and hence $U\not\equiv 0$. Moreover, if $\mu_{\varepsilon}d(x_{\varepsilon},\partial\Omega)\to a<\infty$, then after translation and rotation $U$ satisfies
\begin{equation}\label{limit of v-varepsilon proof 1}
    \begin{cases}
        -\Delta U=U^{2^*-1},\quad U>0, \quad x\in \R^{N}_{+},\\
        \ U(0)=\max\limits_{y\in\R^{N}_{+}}U(x)=1,
    \end{cases}
\end{equation}
which make a contraction with  \cite[Theorem 2]{Dancer1992SomeNO}, hence $\mu_{\varepsilon}d(x_{\varepsilon},\partial\Omega)\to\infty$ and $U$ satisfies equation (\ref{limit of v-varepsilon 2}). Finally, by Lions concentration-compactness lemma \cite{Lions1984TheCP1,Lions1984TheCP2} and Lemma \ref{limit of S-varepsilon} it's standard to show that $v_{\varepsilon}\to U$ in $\mathcal{D}^{1,2}(\R^{N})$ as $\varepsilon\to 0$.
\end{proof}

\begin{Cor}
Let $u_{\varepsilon}$ be the least energy solution to the problem (\ref{p-varepsion}), then 
\begin{equation}
\frac{\int_{\Omega}|\nabla u_{\varepsilon}|^{2}dx}{(\int_{\Omega}|u_{\varepsilon}|^{2^*}dx)^{\frac{2}{2^*}}}\to S \text{~as~} \varepsilon\to 0. 
\end{equation}
\end{Cor}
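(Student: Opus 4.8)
The plan is to deduce this corollary directly from Proposition~\ref{limit of v-varepsilon} together with Lemma~\ref{properties of v-varepsilon}. The point is that the Sobolev quotient is invariant under the scaling $u_\varepsilon \mapsto v_\varepsilon$, so it suffices to evaluate the quotient along $v_\varepsilon$ and pass to the limit using the strong $\mathcal{D}^{1,2}(\R^N)$ convergence $v_\varepsilon \to U$.

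First I would record, using part (2) of Lemma~\ref{properties of v-varepsilon}, that
\begin{equation}
\frac{\int_{\Omega}|\nabla u_{\varepsilon}|^{2}dx}{\left(\int_{\Omega}|u_{\varepsilon}|^{2^*}dx\right)^{2/2^*}}
=\frac{\int_{\Omega_{\varepsilon}}|\nabla v_{\varepsilon}|^{2}dx}{\left(\int_{\Omega_{\varepsilon}}|v_{\varepsilon}|^{2^*}dx\right)^{2/2^*}},
\end{equation}
since $\|\nabla v_{\varepsilon}\|_{L^2(\Omega_\varepsilon)}^2=\|\nabla u_{\varepsilon}\|_{L^2(\Omega)}^2$ and $\|v_\varepsilon\|_{L^{2^*}(\Omega_\varepsilon)}^{2^*}=\|u_\varepsilon\|_{L^{2^*}(\Omega)}^{2^*}$ (the exponent $2^*-p$ vanishes for $p=2^*$). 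Extending $v_\varepsilon$ by zero outside $\Omega_\varepsilon$ as in the statement of Proposition~\ref{limit of v-varepsilon}, the numerator and denominator become integrals over $\R^N$. By Proposition~\ref{limit of v-varepsilon} we have $v_\varepsilon \to U$ strongly in $\mathcal{D}^{1,2}(\R^N)$, hence $\int_{\R^N}|\nabla v_\varepsilon|^2 \to \int_{\R^N}|\nabla U|^2$; and by the Sobolev inequality $\mathcal{D}^{1,2}(\R^N)\hookrightarrow L^{2^*}(\R^N)$ this convergence also gives $v_\varepsilon \to U$ in $L^{2^*}(\R^N)$, so $\int_{\R^N}|v_\varepsilon|^{2^*} \to \int_{\R^N}|U|^{2^*}$. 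Since $U$ is the Aubin--Talenti bubble, $\int_{\R^N}|U|^{2^*}>0$, so we may pass to the limit in the quotient and obtain
\begin{equation}
\frac{\int_{\Omega}|\nabla u_{\varepsilon}|^{2}dx}{\left(\int_{\Omega}|u_{\varepsilon}|^{2^*}dx\right)^{2/2^*}}
\longrightarrow
\frac{\int_{\R^N}|\nabla U|^{2}dx}{\left(\int_{\R^N}|U|^{2^*}dx\right)^{2/2^*}}.
\end{equation}
Finally I would identify the right-hand side with $S$: since $-\Delta U = U^{2^*-1}$ on $\R^N$, testing against $U$ gives $\int_{\R^N}|\nabla U|^2 = \int_{\R^N}|U|^{2^*}$, and the Aubin--Talenti function is the extremal for the Sobolev inequality \eqref{definition of S}, so the quotient equals $S$.

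There is no real obstacle here; the only point requiring a word of care is the passage from $\mathcal{D}^{1,2}$-convergence to convergence of the $L^{2^*}$-norms, which is immediate from the continuous Sobolev embedding. Alternatively, one could invoke Lemma~\ref{limit of S-varepsilon} and the Nehari identity: since $S_\varepsilon = (\tfrac12-\tfrac1{2^*})\|\nabla u_\varepsilon\|_2^2 + o(1) \to \tfrac1N S^{N/2}$ forces $\|\nabla u_\varepsilon\|_2^2 \to S^{N/2}$, and the Nehari relation \eqref{Nehari identity for p-varepsion} together with Lemma~\ref{limit of u-varepsilon} gives $\|u_\varepsilon\|_{2^*}^{2^*} = \|\nabla u_\varepsilon\|_2^2 - \varepsilon\|u_\varepsilon\|_q^q = S^{N/2}+o(1)$, whence the quotient tends to $S^{N/2}/(S^{N/2})^{2/2^*} = S$. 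Either route is short; I would present the first one, as it is the most transparent consequence of the blow-up analysis just carried out.
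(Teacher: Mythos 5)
Your proof is correct and follows exactly the route the paper intends: the corollary is stated without proof immediately after Proposition \ref{limit of v-varepsilon}, and deducing it from the scaling invariance of the Sobolev quotient (Lemma \ref{properties of v-varepsilon}(2)) together with the strong $\mathcal{D}^{1,2}(\R^{N})$ convergence $v_{\varepsilon}\to U$ is precisely the intended derivation. Both of your routes (including the alternative via Lemma \ref{limit of S-varepsilon} and the Nehari identity) are valid.
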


By Kelvin transformation and Moser iteration, we can obtain a upper bound for $v_{\varepsilon}$.
\begin{Prop}\label{uniform estimate of v-varepsilon}
There exist a constant $C>0$ such that for any 
$x\in\Omega_{\varepsilon}$
\begin{equation}
    v_{\varepsilon}(x)\leq CU(x).
\end{equation}
\end{Prop}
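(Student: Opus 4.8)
The plan is to combine the Kelvin transform with a Moser iteration argument, exploiting the concentration–compactness information already obtained in Proposition \ref{limit of v-varepsilon}. First I would fix a point $x\in\Omega_\varepsilon$ and let $R=|x|$. Since $v_\varepsilon\to U$ in $C^2_{loc}(\R^N)$ and $U$ decays like $|x|^{-(N-2)}$, the estimate $v_\varepsilon(x)\le CU(x)$ is immediate on any fixed compact set (say $|x|\le R_0$), so the real content is a uniform bound for $|x|$ large. For this I would pass to the Kelvin transform: set $\widetilde v_\varepsilon(y):=|y|^{-(N-2)}v_\varepsilon(y/|y|^2)$ defined on $\widetilde\Omega_\varepsilon:=\{y/|y|^2:y\in\Omega_\varepsilon\}\cup\{0\}$. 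A direct computation shows $\widetilde v_\varepsilon$ solves
\begin{equation*}
-\Delta\widetilde v_\varepsilon=\widetilde v_\varepsilon^{2^*-1}+\varepsilon\mu_\varepsilon^{-\frac{N-2}{2}(2^*-q)}|y|^{(N-2)(\frac{2^*-q}{1}\cdot\frac12)\cdot 2-\,\cdots}\,\widetilde v_\varepsilon^{q-1},
\end{equation*}
i.e.\ the critical term is Kelvin-invariant and the subcritical term acquires a bounded weight $|y|^{-(N-2)(2^*-q)/2+\,(\text{lower order})}$ which, because $0$ is an interior accumulation point of $\widetilde\Omega_\varepsilon$ (equivalently $\mu_\varepsilon d(x_\varepsilon,\partial\Omega)\to\infty$, which is exactly what Proposition \ref{limit of v-varepsilon} gives), stays bounded near $y=0$ after noting $q>\tfrac{4}{N-2}$ forces the weight exponent to be controllable. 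The point of the Kelvin transform is that the behavior of $v_\varepsilon$ at spatial infinity is converted into the behavior of $\widetilde v_\varepsilon$ near the origin.

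Next I would establish that $\widetilde v_\varepsilon$ is uniformly bounded in $L^\infty$ near $y=0$. Since $v_\varepsilon\to U$ in $\mathcal D^{1,2}(\R^N)$ and $U$ is Kelvin-invariant up to the standard conformal factor, $\widetilde v_\varepsilon$ is bounded in $\mathcal D^{1,2}$ near $0$ with arbitrarily small local energy on small balls $B_\rho(0)$ once $\rho$ is small; this smallness is the hypothesis needed to start the Moser iteration. Concretely, for a cutoff $\eta$ supported in $B_{2\rho}(0)$ I would test the equation for $\widetilde v_\varepsilon$ against $\eta^2\widetilde v_\varepsilon^{2\beta-1}$ (truncated), use the critical Sobolev inequality, and absorb the critical nonlinearity using $\|\widetilde v_\varepsilon\|_{L^{2^*}(B_{2\rho})}<\delta$ for $\delta=\delta(N)$ small. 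The subcritical term carries the factor $\varepsilon\mu_\varepsilon^{-\frac{N-2}{2}(2^*-q)}$; I would need that this factor stays bounded (indeed it does: from the Pohozaev identity \eqref{Pohazaev-p-varepsion} together with $\|v_\varepsilon\|_q^q$-control from Lemma \ref{properties of v-varepsilon}, one sees $\varepsilon\mu_\varepsilon^{-\frac{N-2}{2}(2^*-q)}$ is $O(1)$), after which the subcritical contribution is a harmless lower-order perturbation. A finite Moser iteration then upgrades $L^{2^*}$-control to $L^\infty$-control on $B_\rho(0)$, giving $\widetilde v_\varepsilon(0)\le C$ and, by interior elliptic estimates, $\widetilde v_\varepsilon\le C$ on a fixed neighborhood of $0$.

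Finally, translating back through the Kelvin transform: $\widetilde v_\varepsilon(y)\le C$ for $|y|\le\rho_0$ means $v_\varepsilon(x)\le C|x|^{-(N-2)}$ for $|x|\ge 1/\rho_0$, while the $C^2_{loc}$ convergence $v_\varepsilon\to U$ handles $|x|\le 1/\rho_0$. Since $U(x)\sim |x|^{-(N-2)}$ for large $|x|$ and $U\ge c>0$ on bounded sets, both regimes combine to $v_\varepsilon(x)\le CU(x)$ on all of $\Omega_\varepsilon$, as claimed.

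\textbf{Main obstacle.} The delicate point is the uniformity of the $\varepsilon\mu_\varepsilon^{-\frac{N-2}{2}(2^*-q)}$ factor multiplying the subcritical nonlinearity and the resulting weight in the Kelvin-transformed equation: one must check that this prefactor does not blow up and that the exponent of $|y|$ in the transformed subcritical term keeps the weight locally bounded (this is precisely where $q>\tfrac{4}{N-2}$ enters), so that the Moser iteration is not derailed by a singular coefficient at the origin. Everything else is a standard $\varepsilon$-regularity/Moser scheme, but this bookkeeping — and verifying the needed boundedness via the Pohozaev identity and the energy bounds of Lemma \ref{properties of v-varepsilon} and Lemma \ref{limit of u-varepsilon} — is where the argument has to be carried out with care.
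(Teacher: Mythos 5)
Your overall strategy (Kelvin transform plus Moser iteration, with the $C^2_{loc}$ convergence handling the bounded region) is exactly the paper's, but the way you propose to tame the subcritical term in the transformed equation has a genuine gap. After the Kelvin transform the subcritical nonlinearity becomes
\begin{equation*}
\varepsilon\,\mu_{\varepsilon}^{-\frac{N-2}{2}(2^*-q)}\,|y|^{-(N-2)(2^*-q)}\,\widetilde v_{\varepsilon}^{\,q-1},
\end{equation*}
and since $q<2^*$ the weight $|y|^{-(N-2)(2^*-q)}$ is \emph{singular} at $y=0$, not bounded; on the actual domain (which excludes a ball of radius $\sim 1/(a\mu_{\varepsilon})$ around the origin) it is bounded only by a constant times $\mu_{\varepsilon}^{(N-2)(2^*-q)}$, which blows up. So the assertion that the weight ``stays bounded near $y=0$'' is false, and one cannot treat the subcritical term as a harmless bounded perturbation. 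What actually rescues the argument in the paper is a H\"older estimate of the full coefficient $\varepsilon\mu_{\varepsilon}^{-\frac{N-2}{2}(2^*-q)}|y|^{-(N-2)(2^*-q)}\widetilde v_{\varepsilon}^{\,q-2}$ in $L^{N/2}$ over the annulus $\{1/(a\mu_{\varepsilon})\le|y|\le 4\}$: the powers of $\mu_{\varepsilon}$ coming from the prefactor and from $\int_{|y|\ge 1/(a\mu_{\varepsilon})}|y|^{-2N}$ cancel \emph{exactly}, leaving $C\varepsilon^{N/2}\to 0$. This is the computation your sketch omits, and it is the actual content of the proof.

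Relatedly, your fallback — establishing $\varepsilon\mu_{\varepsilon}^{-\frac{N-2}{2}(2^*-q)}=O(1)$ from the Poho\v{z}aev identity \eqref{Pohazaev-p-varepsion} — is both unnecessary and circular at this stage. To extract that bound from \eqref{Pohazaev-p-varepsion} one must control the boundary integral $\int_{\partial\Omega}|\nabla u_{\varepsilon}|^2(x-y)\cdot n$, which in the paper is done (in Lemma \ref{upper blow-up estimate}) via the gradient estimate of Lemma \ref{gradient estimate} applied with $\|u_{\varepsilon}^{2^*-1}+\varepsilon u_{\varepsilon}^{q-1}\|_{L^{\infty}(\omega)}$ bounded near $\partial\Omega$ — and that bound is obtained precisely from Corollary \ref{upper estimate of u-varepsilon}, i.e.\ from the conclusion of the proposition you are trying to prove. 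At the point where this proposition sits, one does not yet even know that $x_{\varepsilon}$ stays away from $\partial\Omega$. The correct order of logic is: prove the $L^{N/2}$ (and then $L^{s}$, $s>N/2$) smallness of the coefficient directly by the annulus computation, run the Moser iteration of Lemma \ref{moser iteration}, deduce the pointwise bound, and only afterwards derive $\varepsilon\lesssim\mu_{\varepsilon}^{-\frac{N-2}{2}(q+2-2^*)}$ from Poho\v{z}aev.
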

\begin{proof}
Let $w_{\varepsilon}$ be the Kelvin transform of $v_{\varepsilon}$
\begin{equation}
    w_{\varepsilon}(x):=|x|^{2-N}v_{\varepsilon}\left(\frac{x}{|x|^{2}}\right),
\end{equation}
then
\begin{equation}
\begin{cases}
-\Delta w_{\varepsilon}=w_{\varepsilon}^{2^*-1}+\varepsilon\mu_{\varepsilon}^{-\frac{N-2}{2}(2^*-q)}|x|^{-(N-2)(2^*-q)}w_{\varepsilon}^{q-1}, \text{~in~} \Omega_{\varepsilon}^*,\\
\quad \ \ w_{\varepsilon}\geq 0,
\end{cases}    
\end{equation}
where $\Omega_{\varepsilon}^*$ is the image of $\Omega_{\varepsilon}$ under the Kelvin transform, which is the whole $\R^{N}$ except a small region near the origin and there exist a constant $a>0$ such that $\Omega_{\varepsilon}^*\subset B^{c}(0,\frac{1}{a\mu_{\varepsilon}})$. Then it just need to show that there exist a constant $C>0$ such that
\begin{equation}\label{uniform estimate of v-varepsilon 8}
    w_{\varepsilon}\leq C \text{~for any~} x\in \Omega_{\varepsilon}^*.
\end{equation}
From the definition of $w_{\varepsilon}$ and  the fact that $0\leq v_{\varepsilon}\leq 1$, we have 
\begin{equation}
    w_{\varepsilon}(x)\leq |x|^{2-N}, \text{~for any~} x\in \Omega_{\varepsilon}^*,
\end{equation}
so we only need to bound $w_{\varepsilon}$ near the origin. We shall show this by Moser iteration (see Lemma \ref{moser iteration}) with $a(x)=0$ and $b(x)=w_{\varepsilon}^{2^*-2}(x)+\varepsilon\mu_{\varepsilon}^{-\frac{N-2}{2}(2^*-q)}|x|^{-(N-2)(2^*-q)}w_{\varepsilon}^{q-2}(x)$. 

Let $v_{\varepsilon}(x):=0$, when $x\in \R^{N}\setminus \Omega_{\varepsilon}$ and $w_{\varepsilon}(x):=0$, when $x\in \R^{N}\setminus \Omega_{\varepsilon}^{*}$. Note that Kelvin transform is linear and preserve the $\mathcal{D}^{1,2}(\R^{N})$ norm, then by $v_{\varepsilon}\to U$ in $L^{2^*}(\R^N)$, we have 
\begin{equation}\label{decay estimate-proof-5}
  w_{\varepsilon}(x)\to w(x):=\left(\frac{N(N-2)}{N(N-2)|x|^{2}+1}\right)^{\frac{N-2}{2}} \text{~in~} L^{2^*}(\R^N) \quad \text{~as~}\varepsilon\to 0.
\end{equation}
Let $\alpha_{\varepsilon}:=\varepsilon\mu_{\varepsilon}^{-\frac{N-2}{2}(2^*-q)}$ and $\gamma:=(N-2)(2^*-q)$, then by H\"older inequality we have  
\begin{equation}\label{decay estimate-proof-6}
\begin{aligned}
\int_{B(0,4)}\left|\frac{\alpha_{\varepsilon}}{|x|^{\gamma}}w_{\varepsilon}(x)^{q-2}\right|^{\frac{N}{2}}dx &=\int_{\frac{1}{a\mu_{\varepsilon}}\leq |x|\leq 4}\left|\frac{\alpha_{\varepsilon}}{|x|^{\gamma}}w_{\varepsilon}(x)^{q-2}\right|^{\frac{N}{2}}dx\\
&\leq \alpha_{\varepsilon}^{\frac{N}{2}}(\int_{\frac{1}{a\mu_{\varepsilon}}\leq |x|\leq 4}|x|^{-2N}dx)^{\frac{4-(q-2)(N-2)}{4}}(\int_{\frac{1}{a\mu_{\varepsilon}}\leq |x|\leq 4}w_{\varepsilon}^{2^*}dx)^{\frac{(q-2)(N-2)}{4}}\\
&\leq C\alpha_{\varepsilon}^{\frac{N}{2}}\mu_{\varepsilon}^{\frac{4N-N(q-2)(N-2)}{4}}=C\varepsilon^{\frac{N}{2}}\to 0\quad \text{~as~} \varepsilon\to 0,
\end{aligned}
\end{equation}
which together with (\ref{decay estimate-proof-5}) and Lemma \ref{moser iteration}, we can obtain that for any $p>1$ there exist $\varepsilon_{0}>0$ and a constant $C_{p}$ such that for any $\varepsilon<\varepsilon_{0}(p)$
\begin{equation}\label{decay estimate-proof-7}
    \|w_{\varepsilon}^{p}\|_{H^{{1}}(B_{1})}\leq C_{p}.
\end{equation}
Next, for $p>2^*$, we let $s=\frac{2Np}{p(N-2)(2^*-q)+2N(q-2)}$, then $s>\frac{N}{2}$, $(q-2)s<p$ and
\begin{equation}
\begin{aligned}
\int_{B(0,4)}\left|\frac{\alpha_{\varepsilon}}{|x|^{\gamma}}w_{\varepsilon}(x)^{q-2}\right|^{s}dx &=\int_{\frac{1}{a\mu_{\varepsilon}}\leq |x|\leq 4}\left|\frac{\alpha_{\varepsilon}}{|x|^{\gamma}}w_{\varepsilon}(x)^{q-2}\right|^{s}dx\\
&\leq \alpha_{\varepsilon}^{s}(\int_{\frac{1}{a\mu_{\varepsilon}}\leq |x|\leq 4}|x|^{-\gamma s\frac{p}{p-(q-2)s}}dx)^{\frac{p-(q-2)s}{p}}(\int_{\frac{1}{a\mu_{\varepsilon}}\leq |x|\leq 4}w_{\varepsilon}^{p}dx)^{\frac{(q-2)s}{p}}\\
&\leq C\varepsilon^{s}\to 0\quad \text{~as~} \varepsilon\to 0,
\end{aligned}
\end{equation}
which together with (\ref{decay estimate-proof-7}), we can obtain that there exist $s>\frac{N}{2}$ and $\varepsilon_{1}>0$, such that $b\in L^{s}(B_{4})$ for any $\varepsilon<\varepsilon_{1}$.
Then by Lemma \ref{moser iteration} we have
\begin{equation}
\|w_{\varepsilon}\|_{L^{\infty}(B_{1})}\leq C(N,s,\|b\|_{L^{s}(B_{4})})\|w_{\varepsilon}\|_{L^{2^{*}}(B_{4})},   
\end{equation}
which together with (\ref{decay estimate-proof-5}), we obtain that $(\ref{uniform estimate of v-varepsilon 8})$ hold.
\end{proof}
\begin{Cor}\label{upper estimate of u-varepsilon}
    $u_{\varepsilon}(x)\leq C\left(\frac{N(N-2)\mu_{\varepsilon}}{N(N-2)+\mu_{\varepsilon}^{2}|x-x_{\varepsilon}|^{2}}\right)^{\frac{N-2}{2}}\leq CU_{\mu_{\varepsilon},x_{\varepsilon}}(x)$.
\end{Cor}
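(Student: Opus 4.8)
The plan is to deduce the estimate purely by unwinding the scaling that defines $v_\varepsilon$ and quoting Proposition~\ref{uniform estimate of v-varepsilon}; no new analysis is needed, so I regard this as a bookkeeping step. First I would fix $x\in\Omega$ and set $y:=\mu_\varepsilon(x-x_\varepsilon)$, so that $y\in\Omega_\varepsilon$ and $\mu_\varepsilon^{-1}y+x_\varepsilon=x$. The defining relation $v_\varepsilon(z)=\mu_\varepsilon^{-\frac{N-2}{2}}u_\varepsilon(\mu_\varepsilon^{-1}z+x_\varepsilon)$ then gives $u_\varepsilon(x)=\mu_\varepsilon^{\frac{N-2}{2}}v_\varepsilon(y)$, and combining this with the bound $v_\varepsilon\le CU$ from Proposition~\ref{uniform estimate of v-varepsilon} and the explicit profile $U(y)=\big(N(N-2)/(N(N-2)+\abs{y}^2)\big)^{\frac{N-2}{2}}$ yields
\begin{align*}
u_\varepsilon(x)&\le C\,\mu_\varepsilon^{\frac{N-2}{2}}\left(\frac{N(N-2)}{N(N-2)+\abs{y}^2}\right)^{\frac{N-2}{2}}\\
&=C\left(\frac{N(N-2)\mu_\varepsilon}{N(N-2)+\mu_\varepsilon^2\abs{x-x_\varepsilon}^2}\right)^{\frac{N-2}{2}},
\end{align*}
where in the last step I substitute $\abs{y}^2=\mu_\varepsilon^2\abs{x-x_\varepsilon}^2$ and pull the factor $\mu_\varepsilon^{\frac{N-2}{2}}$ inside the bracket. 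This is precisely the first claimed inequality.

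For the second inequality I would compare this bubble-type quantity with $U_{\mu_\varepsilon,x_\varepsilon}$. From~\eqref{Aubin-Talenti bubble} and $\alpha_N=(N(N-2))^{\frac{N-2}{4}}$ one reads off $U_{\mu_\varepsilon,x_\varepsilon}(x)=\big(\mu_\varepsilon/(1+\mu_\varepsilon^2\abs{x-x_\varepsilon}^2)\big)^{\frac{N-2}{2}}$, so it suffices to observe that, writing $t:=\mu_\varepsilon^2\abs{x-x_\varepsilon}^2\ge0$ and using $N(N-2)\ge3>1$ for $N\ge3$,
\begin{align*}
\frac{N(N-2)\mu_\varepsilon}{N(N-2)+\mu_\varepsilon^2\abs{x-x_\varepsilon}^2}
&=\frac{N(N-2)(1+t)}{N(N-2)+t}\cdot\frac{\mu_\varepsilon}{1+\mu_\varepsilon^2\abs{x-x_\varepsilon}^2}\\
&\le N(N-2)\,\frac{\mu_\varepsilon}{1+\mu_\varepsilon^2\abs{x-x_\varepsilon}^2},
\end{align*}
since $t\mapsto N(N-2)(1+t)/(N(N-2)+t)$ increases from $1$ to $N(N-2)$ on $[0,\infty)$ and is therefore bounded above by $N(N-2)$. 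Raising both sides to the power $\frac{N-2}{2}$ and enlarging $C$ by the harmless constant $(N(N-2))^{\frac{N-2}{2}}$ gives $u_\varepsilon(x)\le C\,U_{\mu_\varepsilon,x_\varepsilon}(x)$, completing the proof.

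There is essentially no obstacle in this corollary: it is a cosmetic restatement of Proposition~\ref{uniform estimate of v-varepsilon} in the original variables. The only points that demand a little care are the accounting of the scaling exponent $\frac{N-2}{2}$ when passing between $u_\varepsilon$ and $v_\varepsilon$, and the harmless dimensional constant appearing when one normalisation of the standard bubble is compared with the other. All the substantive work — the universal pointwise domination of $u_\varepsilon$ by a single Aubin--Talenti profile centred at $x_\varepsilon$ with concentration parameter $\mu_\varepsilon$ — has already been carried out in Proposition~\ref{uniform estimate of v-varepsilon} via the Kelvin transform and Moser iteration.
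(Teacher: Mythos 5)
Your proof is correct and is exactly the intended derivation: the paper states this as an immediate corollary of Proposition \ref{uniform estimate of v-varepsilon} with no written proof, and your unwinding of the scaling $u_\varepsilon(x)=\mu_\varepsilon^{\frac{N-2}{2}}v_\varepsilon(\mu_\varepsilon(x-x_\varepsilon))$ together with the elementary comparison of the two bubble normalisations is precisely what is being left implicit.
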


\subsection{Blow-up rate and location of blow-up point}
In this subsection, we will consider the location of Blow-up point and give the exact blow-up rate. First, we can show that the blow-up point concentrates in the interior of domain $\Omega$. 
\begin{Prop}
$x_{\varepsilon}\to x_{0}\in\Omega$ as $\varepsilon\to 0$. 
\end{Prop}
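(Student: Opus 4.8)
The plan is to argue by contradiction: suppose that (along a subsequence) $x_\varepsilon \to x_0 \in \partial\Omega$. The strategy is to combine the pointwise upper bound from Corollary \ref{upper estimate of u-varepsilon} with the Poho\v{z}aev identity \eqref{Pohazaev-p-varepsion}, showing that boundary concentration forces the boundary integral $\int_{\partial\Omega}|\nabla u_\varepsilon|^2 (x-y)\cdot n\, dS_x$ to be too small to balance the right-hand side, which by Proposition \ref{limit of v-varepsilon} and Lemma \ref{properties of v-varepsilon} is of a definite (strictly positive) order in the natural scaling parameter. Since $\Omega$ is star-shaped, one may pick $y$ so that $(x-y)\cdot n \ge \delta_0 > 0$ on $\partial\Omega$, so the boundary term is comparable to $\int_{\partial\Omega}|\nabla u_\varepsilon|^2\, dS_x$ up to constants; the game is thus to estimate $\|\nabla u_\varepsilon\|_{L^2(\partial\Omega)}$ from above when $x_\varepsilon$ is close to the boundary.

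First I would set $d_\varepsilon := d(x_\varepsilon,\partial\Omega)$ and recall from Proposition \ref{limit of v-varepsilon} that $\mu_\varepsilon d_\varepsilon \to \infty$; the contradiction hypothesis $x_0\in\partial\Omega$ means $d_\varepsilon \to 0$. Next I would estimate $u_\varepsilon$ and $\nabla u_\varepsilon$ on $\partial\Omega$. Using Corollary \ref{upper estimate of u-varepsilon}, for $x\in\partial\Omega$ we have $|x - x_\varepsilon|\ge d_\varepsilon$, hence $u_\varepsilon(x) \le C (\mu_\varepsilon^{-1} d_\varepsilon^{-2})^{(N-2)/2}\,\mu_\varepsilon^{-(N-2)/2}\cdot(\text{const}) \le C \mu_\varepsilon^{-(N-2)/2} d_\varepsilon^{-(N-2)}$ roughly — more precisely $u_\varepsilon(x)\le C\mu_\varepsilon^{-(N-2)/2}(\mu_\varepsilon d_\varepsilon)^{-(N-2)}\mu_\varepsilon^{?}$; I would track the exponents carefully so that the bound reads $u_\varepsilon(x) \le C \mu_\varepsilon^{(N-2)/2} (\mu_\varepsilon d_\varepsilon)^{-(N-2)}$ on $\partial\Omega$. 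One then converts this into a gradient bound on $\partial\Omega$ via standard elliptic boundary estimates (rescaling to unit scale near a boundary point, applying interior-up-to-boundary Schauder/$L^p$ estimates to the equation $-\Delta u_\varepsilon = u_\varepsilon^{2^*-1}+\varepsilon u_\varepsilon^{q-1}$, and using that the right-hand side is controlled by the same pointwise bound), yielding $|\nabla u_\varepsilon(x)| \le C \mu_\varepsilon^{(N-2)/2} d_\varepsilon^{-(N-1)}(\mu_\varepsilon d_\varepsilon)^{-(N-2)+?}$. Plugging this into $\int_{\partial\Omega}|\nabla u_\varepsilon|^2\,dS_x \le C \|\nabla u_\varepsilon\|_{L^\infty(\partial\Omega)}^2 |\partial\Omega|$ gives an upper bound that decays like a positive power of $(\mu_\varepsilon d_\varepsilon)^{-1}$ times powers of $\mu_\varepsilon$ and $d_\varepsilon$. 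On the other hand, the right side of \eqref{Pohazaev-p-varepsion} equals $(\frac1q-\frac1{2^*})\varepsilon\|u_\varepsilon\|_q^q = (\frac1q-\frac1{2^*})\varepsilon \mu_\varepsilon^{-\frac{N-2}{2}(2^*-q)}\|v_\varepsilon\|_{L^q(\Omega_\varepsilon)}^q$, and $\|v_\varepsilon\|_{L^q}^q \to \|U\|_{L^q}^q \in (0,\infty)$ (finite since $q > \frac{4}{N-2}$ makes $U\in L^q(\R^N)$ — this is exactly where the hypothesis on $q$ enters, as it guarantees $q\frac{N-2}{2} > N$). Comparing the two sides and using $\mu_\varepsilon d_\varepsilon\to\infty$ and $d_\varepsilon\to 0$ should produce the desired contradiction, forcing $x_0\in\Omega$.

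The main obstacle I anticipate is obtaining a clean, correct gradient bound for $u_\varepsilon$ on $\partial\Omega$ with sharp enough exponents: the pointwise bound from Corollary \ref{upper estimate of u-varepsilon} must be upgraded to a gradient bound, and this requires a careful rescaled elliptic estimate near the boundary that is uniform in $\varepsilon$, taking into account that the coefficient $\varepsilon\mu_\varepsilon^{-\frac{N-2}{2}(2^*-q)}$ in front of the subcritical term stays bounded (indeed tends to $0$, as shown implicitly in the proof of Proposition \ref{uniform estimate of v-varepsilon}, where $\varepsilon^{N/2}\to 0$ type quantities appear). A secondary subtlety is that if $\mu_\varepsilon d_\varepsilon$ did not go to $\infty$ we could not even start, but this is already guaranteed by Proposition \ref{limit of v-varepsilon}; and one must make sure the choice of $y\in\R^N$ in the Poho\v{z}aev identity yielding $(x-y)\cdot n\ge\delta_0>0$ is admissible, which follows from star-shapedness (translate $y$ to a center of star-shapedness, possibly pushed slightly outside). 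Modulo these technical elliptic estimates, the scheme — upper bound on the boundary flux versus definite lower bound on the bulk term in Poho\v{z}aev — is the natural route and mirrors the classical Brezis–Nirenberg / Han / Rey arguments for $q=2$.
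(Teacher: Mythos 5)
There is a genuine gap in your scheme, and it is not merely the ``technical elliptic estimates'' you flag at the end. You propose to apply the \emph{global} dilation-type Poho\v{z}aev identity \eqref{Pohazaev-p-varepsion} and to derive a contradiction by showing the boundary flux $\int_{\partial\Omega}|\nabla u_\varepsilon|^2(x-y)\cdot n\,dS_x$ is ``too small to balance'' the right-hand side $(\tfrac1q-\tfrac1{2^*})\varepsilon\int_\Omega u_\varepsilon^q$. This comparison cannot work in either direction. First, the right-hand side equals $(\tfrac1q-\tfrac1{2^*})\varepsilon\,\mu_\varepsilon^{-\frac{N-2}{2}(2^*-q)}(\|U\|_q^q+o(1))$, which contains the factor $\varepsilon$; at this stage of the argument no relation between $\varepsilon$ and $\mu_\varepsilon$ (let alone $d_\varepsilon$) has been established — the bound $\varepsilon\le C\mu_\varepsilon^{-\frac{N-2}{2}(q+2-2^*)}$ is Lemma \ref{upper blow-up estimate}, proved \emph{after} this proposition and itself relying on interior concentration to get the uniform gradient bound $|\nabla u_\varepsilon|\le C\mu_\varepsilon^{-\frac{N-2}{2}}$ near $\partial\Omega$. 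So the right-hand side has no ``definite order'': it may tend to zero arbitrarily fast, and showing the left-hand side is small proves nothing. Second, and worse, when $d_\varepsilon\to 0$ the boundary flux is not small: near the boundary point closest to $x_\varepsilon$ the natural gradient bound is of size $\mu_\varepsilon^{-\frac{N-2}{2}}d_\varepsilon^{-(N-1)}$ on a cap of area $\sim d_\varepsilon^{N-1}$, so $\int_{\partial\Omega}|\nabla u_\varepsilon|^2\,dS_x$ can be as large as $\mu_\varepsilon^{-(N-2)}d_\varepsilon^{-(N-1)}$, which blows up relative to $\mu_\varepsilon^{-(N-2)}$. Your claimed upper bound therefore does not close, and trying to reverse the roles (lower bound on the flux versus upper bound on the bulk term) founders again on the uncontrolled factor $\varepsilon$.

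The mechanism that actually rules out boundary concentration is absent from your proposal: it is the blow-up of the regular part of the Green's function near $\partial\Omega$. The paper applies the \emph{translation-type local} Poho\v{z}aev identity on the small sphere $\partial B(x_\varepsilon,2d_\varepsilon)$, shows via the Green representation formula that on that sphere $u_\varepsilon=C_N\mu_\varepsilon^{-\frac{N-2}{2}}G(\cdot,x_\varepsilon)+o(\mu_\varepsilon^{-\frac{N-2}{2}}d_\varepsilon^{-(N-2)})$ (with the analogous expansion for $\nabla u_\varepsilon$), and thereby reduces the identity to
$\partial_{x_i}H(x,x_\varepsilon)\big|_{x=x_\varepsilon}=o(d_\varepsilon^{-(N-1)})+\cdots$,
which contradicts Lemma \ref{estimate of robin function up to doundary}, since $\nabla R(x_\varepsilon)$ must blow up exactly like $d_\varepsilon^{-(N-1)}$ in the direction normal to the boundary. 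If you want to repair your argument you need to bring in this quantity (or an equivalent boundary-repulsion estimate, e.g.\ via moving planes), not the global dilation identity.
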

\begin{proof}
To the contrary, we assume $d_{\varepsilon}:=\frac{1}{4}\text{dist}(x_{\varepsilon},\partial\Omega)\to 0$ as $\varepsilon\to 0$, then we shall use the local Poho\v{z}aev identity (Lemma \ref{Local Pohozaev identiy}) on the sphere $\partial B(x_{\varepsilon},2d_{\varepsilon})$ to reach a contradiction. Recall that
\begin{equation}\label{boundary estimate proof 1}
    -\int_{\partial B(x_{\varepsilon},2d_{\varepsilon})}\frac{\partial u_{\varepsilon}}{\partial x_{i}}\frac{\partial u_{\varepsilon}}{\partial n}+\frac{1}{2}\int_{\partial B(x_{\varepsilon},2d_{\varepsilon})}|\nabla u_{\varepsilon}|^{2}n_{i}=\frac{1}{2^*}\int_{\partial B(x_{\varepsilon},2d_{\varepsilon})}u_{\varepsilon}^{2^*}n_{i}+\frac{\varepsilon}{q}\int_{\partial B(x_{\varepsilon},2d_{\varepsilon})}u_{\varepsilon}^{q}n_{i},
\end{equation}
 where $i=1,\cdots ,N$ and $n$ is the unit outward normal of $\partial B(x_{\varepsilon},2d_{\varepsilon})$. For the sake of brevity, we let $L^{\varepsilon}_{i}$ and $R^{\varepsilon}_{i}$ be the left and right hand sides of the local Poho\v{z}aev identity.
 Firstly, we claim that for any $x\in\partial B(x_{\varepsilon},2d_{\varepsilon})$, we have
 \begin{equation}\label{boundary estimate proof 2}
 u_{\varepsilon}(x)=\mu_{\varepsilon}^{-\frac{N-2}{2}}C_{N}G(x,x_{\varepsilon})+o(\mu_{\varepsilon}^{-\frac{N-2}{2}}d_{\varepsilon}^{-(N-2)}),    
 \end{equation}
and
\begin{equation}\label{boundary estimate proof 3}
    \nabla u_{\varepsilon}=\mu_{\varepsilon}^{-\frac{N-2}{2}}C_{N}\nabla_{x}G(x,x_{\varepsilon})+o(\mu_{\varepsilon}^{-\frac{N-2}{2}}d_{\varepsilon}^{-(N-1)}),   
\end{equation}
where
\begin{equation}\label{boundary estimate proof 4}
    C_{N}:=\int_{\R^{N}}\left(\frac{N(N-2)}{N(N-2)+|x|^{2}}\right)^{\frac{N+2}{2}}dx=\frac{\alpha_{N}^{2^*}\omega_{N}}{N},
\end{equation}
and $o$ nation is understood as
\begin{equation}\label{boundary estimate proof 5}
    \lim_{\varepsilon\to 0}\sup_{x\in\partial B(x_{\varepsilon},2d_{\varepsilon})}d_{\varepsilon}^{k}\mu_{\varepsilon}^{\frac{N-2}{2}}|o(d_{\varepsilon}^{-k}\mu_{\varepsilon}^{-\frac{N-2}{2}})|=0 \text{~for~} k=N-1 \text{~or~} N-2.
\end{equation}

By (\ref{boundary estimate proof 1}), (\ref{boundary estimate proof 3}) and Lemma \ref{estimate of Green function}, we obtain that
\begin{equation}\label{boundary estimate proof 6}
\begin{aligned}
L^{\varepsilon}_{i}&=-\mu_{\varepsilon}^{-(N-2)}C_{N}^{2}\left(\int_{\partial B(x_{\varepsilon},2d_{\varepsilon})}\frac{\partial G(x,x_{\varepsilon})}{\partial x_{i}}\frac{\partial G(x,x_{\varepsilon})}{\partial n}-\frac{1}{2}\int_{\partial B(x_{\varepsilon},2d_{\varepsilon})}|\nabla G(x,x_{\varepsilon})|^{2}n_{i}\right)\\
&\quad+o(\mu_{\varepsilon}^{-(N-2)}d_{\varepsilon}^{-(N-1)}).
\end{aligned}
\end{equation}
Next, for any $x\in\partial B(x_{\varepsilon},2d_{\varepsilon}) $, by Corollary \ref{upper estimate of u-varepsilon}, we have
\begin{equation}
    u_{\varepsilon}(x)\leq C\mu_{\varepsilon}^{-\frac{N-2}{2}}d_{\varepsilon}^{-(N-2)},
\end{equation}
then 
\begin{equation}
\left|\frac{1}{2^*}\int_{\partial B(x_{\varepsilon},2d_{\varepsilon})}u_{\varepsilon}^{2^*}n_{i}\right|\leq C \mu_{\varepsilon}^{-N}d_{\varepsilon}^{-(N+1)},    
\end{equation}
and
\begin{equation}
\left|\frac{\varepsilon}{q}\int_{\partial B(x_{\varepsilon},2d_{\varepsilon})}u_{\varepsilon}^{q}n_{i}\right|\leq C\varepsilon\mu_{\varepsilon}^{-\frac{N-2}{2}q}d_{\varepsilon}^{(N-1)-(N-2)q}.    
\end{equation}
Hence
\begin{equation}\label{boundary estimate proof 10}
 R^{\varepsilon}_{i}=O\left(\mu_{\varepsilon}^{-N}d_{\varepsilon}^{-(N+1)}+\varepsilon\mu_{\varepsilon}^{-\frac{N-2}{2}q}d_{\varepsilon}^{(N-1)-(N-2)q}\right).  
\end{equation}
By (\ref{boundary estimate proof 6}) and (\ref{boundary estimate proof 10}), we obtain that
\begin{equation}
\begin{aligned}
 &\int_{\partial B(x_{\varepsilon},2d_{\varepsilon})}\frac{\partial G(x,x_{\varepsilon})}{\partial x_{i}}\frac{\partial G(x,x_{\varepsilon})}{\partial n}-\frac{1}{2}\int_{\partial B(x_{\varepsilon},2d_{\varepsilon})}|\nabla G(x,x_{\varepsilon})|^{2}n_{i}\\
&=o(d_{\varepsilon}^{-(N-1)})+O\left(\mu_{\varepsilon}^{-2}d_{\varepsilon}^{-(N+1)}+\varepsilon\mu_{\varepsilon}^{(N-2)-\frac{N-2}{2}q}d_{\varepsilon}^{(N-1)-(N-2)q}\right).    
\end{aligned}
\end{equation}
This together with Lemma \ref{Pohozaev identity of Green functon } gives
\begin{equation}
    \frac{\partial H(x,x_{\varepsilon})}{\partial x_{i}}\bigg|_{x=x_{\varepsilon}}=o(d_{\varepsilon}^{-(N-1)})+O\left(\mu_{\varepsilon}^{-2}d_{\varepsilon}^{-(N+1)}+\varepsilon\mu_{\varepsilon}^{-\frac{N-2}{2}(q-2)}d_{\varepsilon}^{(N-1)-(N-2)q}\right). 
\end{equation}
But this is impossible in view of Lemma \ref{estimate of robin function up to doundary}. 

Finally, we need only to show the claim hold. We first derive (\ref{boundary estimate proof 2}). By Green representation formula \ref{Green's representation formula}, we have
\begin{equation}\label{boundary estimate proof 13}
    u_{\varepsilon}(x)=G(x,x_{\varepsilon})\left(\int_{\Omega}u_{\varepsilon}^{2^*-1}(y)+\varepsilon u_{\varepsilon}^{q-1}(y)dy\right)+\int_{\Omega}(G(x,y)-G(x,x_{\varepsilon}))(u_{\varepsilon}^{2^*-1}(y)+\varepsilon u_{\varepsilon}^{q-1}(y))dy.
\end{equation}
By Proposition \ref{limit of v-varepsilon}, we deduce that
\begin{equation}\label{boundary estimate proof 14}
  \lim_{\varepsilon\to 0}\mu_{\varepsilon}^{\frac{N-2}{2}}\int_{\Omega}u_{\varepsilon}^{2^*-1}(y)+\varepsilon u_{\varepsilon}^{q-1}(y)dy=C_{N},
\end{equation}
which together with Lemma \ref{estimate of Green function}, we obtain
\begin{equation}\label{boundary estimate proof 15}
   G(x,x_{\varepsilon})\left(\int_{\Omega}u_{\varepsilon}^{2^*-1}(y)+\varepsilon u_{\varepsilon}^{q-1}(y)dy\right)=\mu_{\varepsilon}^{-\frac{N-2}{2}}C_{N}G(x,x_{\varepsilon})+o(\mu_{\varepsilon}^{-\frac{N-2}{2}}d_{\varepsilon}^{-(N-2)}), 
\end{equation}
for any $x\in \partial B(x_{\varepsilon},x_{2d_{\varepsilon}})$. To estimate the second integral in the right hand side of (\ref{boundary estimate proof 13}), we split it into three parts as follows
\begin{equation}
\begin{aligned}
&\int_{\Omega}(G(x,y)-G(x,x_{\varepsilon}))(u_{\varepsilon}^{2^*-1}(y)+\varepsilon u_{\varepsilon}^{q-1}(y))dy \\
&=\left(\int_{B(x_{\varepsilon},d_{\varepsilon})}+\int_
{B(x_{\varepsilon},4d_{\varepsilon})\setminus B(x_{\varepsilon},d_{\varepsilon})}+\int_{\Omega\setminus B(x_{\varepsilon},4d_{\varepsilon})}\right)(G(x,y)-G(x,x_{\varepsilon}))(u_{\varepsilon}^{2^*-1}(y)+\varepsilon u_{\varepsilon}^{q-1}(y))dy \\
&:=I_{1}(x)+I_{2}(x)+I_{3}(x).
\end{aligned}
\end{equation}
\textbf{Estimate of $I_{1}$}. For any $x\in\partial B(x_{\varepsilon},2d_{\varepsilon})$ and $y\in B(x_{\varepsilon},d_{\varepsilon})$, we have $|x-y|\geq d_{\varepsilon}$. Thus $|\nabla_{y}G(x,y)|\leq Cd_{\varepsilon}^{-(N-1)}$ and $|G(x,y)-G(x,x_{\varepsilon})|\leq Cd_{\varepsilon}^{-(N-1)}|y-x_{\varepsilon}|$, which together with Corollary \ref{upper estimate of u-varepsilon} shows that
\begin{equation}
\begin{aligned}
&\int_{B(x_{\varepsilon},d_{\varepsilon})}(G(x,y)-G(x,x_{\varepsilon}))u_{\varepsilon}^{q-1}dy\\
&\leq C d_{\varepsilon}^{-(N-1)} \int_{B(x_{\varepsilon},d_{\varepsilon})}|y-x_{\varepsilon}|u_{\varepsilon}^{q-1} dy\\
&\leq C d_{\varepsilon}^{-(N-1)}\mu_{\varepsilon}^{\frac{N-2}{2}(q-1)-(N+1)} \int_{B(0,\mu_{\varepsilon}d_{\varepsilon})}|y|\left(\frac{N(N+2)}{N(N+2)+|y|^{2}}\right)^{\frac{(N-2)(q-1)}{2}} dy.
\end{aligned}
\end{equation}
Note that
\begin{equation}
\int_{B(0,\mu_{\varepsilon}d_{\varepsilon})}|y|\left(\frac{N(N+2)}{N(N+2)+|y|^{2}}\right)^{\frac{(N-2)(q-1)}{2}} dy\leq \begin{cases}
        C, &\text{~if~} q>\frac{2N-1}{N-2},\\
        C\ln(\mu_{\varepsilon}d_{\varepsilon}), &\text{~if~} q=\frac{2N-1}{N-2},\\
        C(\mu_{\varepsilon}d_{\varepsilon})^{(N+1)-(N-2)(q-1)}, &\text{~if~} q<\frac{2N-1}{N-2},\\
    \end{cases}
\end{equation}
Hence
\begin{equation}
\begin{aligned}
\int_{B(x_{\varepsilon},d_{\varepsilon})}(G(x,y)-G(x,x_{\varepsilon}))u_{\varepsilon}^{q-1}dy=\begin{cases}
    O(d_{\varepsilon}^{-(N-1)}\mu_{\varepsilon}^{\frac{N-2}{2}(q-1)-(N+1)}), &\text{~if~} q>\frac{2N-1}{N-2},\\
     O(d_{\varepsilon}^{-(N-2)}\mu_{\varepsilon}^{\frac{N-2}{2}(q-1)-N}), &\text{~if~} q=\frac{2N-1}{N-2},\\
     O(d_{\varepsilon}^{2-(N-2)(q-1)}\mu_{\varepsilon}^{-\frac{N-2}{2}(q-1)}), &\text{~if~} q<\frac{2N-1}{N-2},\\
\end{cases}
\end{aligned}
\end{equation}
and
\begin{equation}\label{boundary estimate proof 20}
    |I_{1}(x)|=o(\mu_{\varepsilon}^{-\frac{N-2}{2}}d_{\varepsilon}^{-(N-2)}).
\end{equation}
\textbf{Estimate of $I_{2}$}. For any $y\in B(x_{\varepsilon},4d_{\varepsilon})\setminus B(x_{\varepsilon},d_{\varepsilon})$ and $x\in\partial B(x_{\varepsilon},2d_{\varepsilon})$, we have that
\begin{equation}
    |G(x,y)-G(x,x_{\varepsilon})|\leq |G(x,y)|+|G(x,x_{\varepsilon})|\leq \frac{C}{|x-y|^{N-2}}+\frac{C}{d_{\varepsilon}^{N-2}}\leq  \frac{C}{|x-y|^{N-2}}.
\end{equation}
Hence, we obtain 
\begin{equation}
\begin{aligned}
 \int_{B(x_{\varepsilon},4d_{\varepsilon})\setminus B(x_{\varepsilon},d_{\varepsilon})}u_{\varepsilon}^{q-1}dy&\leq C\int_{B(x_{\varepsilon},4d_{\varepsilon})\setminus B(x_{\varepsilon},d_{\varepsilon})}\frac{1}{|x-y|^{N-2}}u_{\varepsilon}^{q-1}dy\\
 &\leq C\int_{B(x_{\varepsilon},4d_{\varepsilon})\setminus B(x_{\varepsilon},d_{\varepsilon})}\frac{1}{|x-y|^{N-2}}\left(\frac{\mu_{\varepsilon}}{1+\mu_{\varepsilon}^{2}|y-x_{\varepsilon}|^{2}}\right)^{\frac{N-2}{2}(q-1)}dy\\
 &\leq C\mu_{\varepsilon}^{-\frac{N-2}{2}(q-1)}d_{\varepsilon}^{2-(N-2)(q-1)}.
\end{aligned}
\end{equation}
Thus
\begin{equation}\label{boundary estimate proof 23}
    |I_{2}(x)|=o(\mu_{\varepsilon}^{-\frac{N-2}{2}}d_{\varepsilon}^{-(N-2)}).
\end{equation}
\textbf{Estimate of $I_{3}$}. For any $x\in\partial B(x_{\varepsilon},2d_{\varepsilon})$ and $y\in \Omega\setminus B(x_{\varepsilon},4d_{\varepsilon})$, by Lemma \ref{estimate of Green function}, we have 
\begin{equation}
    |G(x,y)-G(x,x_{\varepsilon})|\leq Cd_{\varepsilon}^{-(N-2)}.
\end{equation}
Therefore 
\begin{equation}
    \begin{aligned}
        \int_{\Omega\setminus B(x_{\varepsilon},4d_{\varepsilon})}u_{\varepsilon}^{q-1}dy&\leq Cd_{\varepsilon}^{-(N-2)}\int_{\Omega\setminus B(x_{\varepsilon},4d_{\varepsilon})}u_{\varepsilon}^{q-1}dy\\
        &\leq Cd_{\varepsilon}^{-(N-2)}\mu_{\varepsilon}^{\frac{N-2}{2}(q-1)-N}\int_{B(0,\mu_{\varepsilon}R)\setminus B(0,\mu_{\varepsilon}d_{\varepsilon})}\left(\frac{1}{1+|y|^{2}}\right)^{\frac{N-2}{2}(q-1)}dy,
    \end{aligned}
\end{equation}
for some $R>0$. Note that
\begin{equation}
\int_{B(0,\mu_{\varepsilon}R)\setminus B(0,\mu_{\varepsilon}d_{\varepsilon})}\left(\frac{1}{1+|y|^{2}}\right)^{\frac{N-2}{2}(q-1)}dy=\begin{cases}
    O((\mu_{\varepsilon}d_{\varepsilon})^{N-(N-2)(q-1)}),&\text{~if~} q>\frac{2N-2}{N-2},\\
    O(\ln(\frac{1}{d_{\varepsilon}})), &\text{~if~} q=\frac{2N-2}{N-2},\\
    O(\mu_{\varepsilon}^{N-(N-2)(q-1)}), &\text{~if~} q<\frac{2N-2}{N-2}.
\end{cases}    
\end{equation}
we have

\begin{equation}
  \int_{\Omega\setminus B(x_{\varepsilon},4d_{\varepsilon})}u_{\varepsilon}^{q-1}dy=\begin{cases}
  O(d_{\varepsilon}^{2-(N-2)(q-1)}\mu_{\varepsilon}^{-\frac{N-2}{2}(q-1)}), &\text{~if~} q>\frac{2N-2}{N-2},\\
    O(d_{\varepsilon}^{-(N-1)}\mu_{\varepsilon}^{\frac{N-2}{2}(q-1)-N}), &\text{~if~} q=\frac{2N-2}{N-2},\\
    O(d_{\varepsilon}^{-(N-2)}\mu_{\varepsilon}^{-\frac{N-2}{2}(q-1)}), &\text{~if~} q<\frac{2N-2}{N-2},
    \end{cases}
\end{equation}    
Thus
\begin{equation}\label{boundary estimate proof 28}
    |I_{3}(x)|=o(\mu_{\varepsilon}^{-\frac{N-2}{2}}d_{\varepsilon}^{-(N-2)}),
\end{equation}
which together with (\ref{boundary estimate proof 20}) and (\ref{boundary estimate proof 23}), we obtain (\ref{boundary estimate proof 2}). Similarly, we can obtain (\ref{boundary estimate proof 3}), by using the Green representation formula and the following estimate
\begin{equation}
|\nabla_{x}G(x,y)-\nabla_{x}G(x,x_{\varepsilon})|\leq
    \begin{cases}
        Cd_{\varepsilon}^{-n}|y-x_{\varepsilon}|, &\text{~if~} y\in B(x_{\varepsilon},d_{\varepsilon}),\\
        C|x-y|^{-(N-1)}, &\text{~if~} y\in B(x_{\varepsilon},4d_{\varepsilon}),\\
        Cd_{\varepsilon}^{-(N-1)}, &\text{~if~} y\in \Omega\setminus B(x_{\varepsilon},4d_{\varepsilon}),\\
    \end{cases}
\end{equation}
which is hold for any $x\in\partial B(x_{\varepsilon},2d_{\varepsilon})$. Hence, we complete the proof.
\end{proof}

\begin{Lem}\label{upper blow-up estimate }
    $\varepsilon \leq C \mu_{\varepsilon}^{-\frac{N-2}{2}(q+2-2^*)}$.
\end{Lem}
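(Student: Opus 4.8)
The plan is to read off the estimate from the Poho\v{z}aev identity \eqref{Pohazaev-p-varepsion}. Its right-hand side equals the positive multiple $\bigl(\tfrac1q-\tfrac1{2^*}\bigr)\varepsilon\int_\Omega u_\varepsilon^q\,dx$ of $\varepsilon$, so it suffices to bound the boundary integral on the left from above and the bulk integral $\int_\Omega u_\varepsilon^q$ from below, both in terms of powers of $\mu_\varepsilon$, and then to compare the two powers.

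For the left-hand side I would first fix a neighbourhood $\omega$ of $\partial\Omega$ with $x_0\notin\overline\omega$. By the preceding Proposition, $x_\varepsilon\to x_0\in\Omega$, so for all small $\varepsilon$ we have $x_\varepsilon\notin\overline\omega$, hence $|x-x_\varepsilon|\ge c_0>0$ on $\overline\omega$; Corollary \ref{upper estimate of u-varepsilon} then gives $u_\varepsilon\le C\mu_\varepsilon^{-(N-2)/2}$ on $\omega$. Consequently the right-hand side of $-\Delta u_\varepsilon=u_\varepsilon^{2^*-1}+\varepsilon u_\varepsilon^{q-1}$ has $L^\infty(\omega)$-norm at most $C\mu_\varepsilon^{-(N+2)/2}+\varepsilon\,C\mu_\varepsilon^{-(N-2)(q-1)/2}\le C\mu_\varepsilon^{-(N-2)/2}$, where I use $\varepsilon\le 1$ and $q\ge 2$ so that no rate for $\varepsilon$ is presupposed. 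Interior and boundary elliptic estimates, together with $u_\varepsilon=0$ on $\partial\Omega$, then give $\|\nabla u_\varepsilon\|_{L^\infty(\partial\Omega)}\le C\mu_\varepsilon^{-(N-2)/2}$, so that
\[
\left|\frac{1}{2N}\int_{\partial\Omega}|\nabla u_\varepsilon|^2(x-y)\cdot n\,dS_x\right|\le C\mu_\varepsilon^{-(N-2)}.
\]

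For the right-hand side, by Lemma \ref{properties of v-varepsilon} and the rescaling defining $v_\varepsilon$ one has $\varepsilon\int_\Omega u_\varepsilon^q\,dx=\varepsilon\,\mu_\varepsilon^{\frac{N-2}{2}q-N}\int_{\Omega_\varepsilon}v_\varepsilon^q\,dx$; since $B_1\subset\Omega_\varepsilon$ for small $\varepsilon$ (because $\mu_\varepsilon\,d(x_\varepsilon,\partial\Omega)\to\infty$) and $v_\varepsilon\to U$ in $C^2_{loc}(\R^N)$ by Proposition \ref{limit of v-varepsilon}, we get $\int_{\Omega_\varepsilon}v_\varepsilon^q\ge\int_{B_1}v_\varepsilon^q\to\int_{B_1}U^q>0$, hence $\varepsilon\int_\Omega u_\varepsilon^q\ge c\,\varepsilon\,\mu_\varepsilon^{\frac{N-2}{2}q-N}$. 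Inserting both estimates into \eqref{Pohazaev-p-varepsion} gives $c\,\varepsilon\,\mu_\varepsilon^{\frac{N-2}{2}q-N}\le C\mu_\varepsilon^{-(N-2)}$, i.e. $\varepsilon\le C\mu_\varepsilon^{\,2-\frac{N-2}{2}q}$; and $2-\frac{N-2}{2}q=\frac{N-2}{2}\bigl(\tfrac4{N-2}-q\bigr)=-\frac{N-2}{2}(q+2-2^*)$, which is the claim. I expect the only delicate point to be the boundary gradient bound, specifically ensuring the $\varepsilon u_\varepsilon^{q-1}$ term does not reintroduce $\varepsilon$ in an uncontrolled way; this is handled by $\varepsilon\le1$ and $q\ge 2$, while the rest is a routine combination of Corollary \ref{upper estimate of u-varepsilon}, Proposition \ref{limit of v-varepsilon} and standard elliptic regularity.
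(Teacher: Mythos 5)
Your proposal is correct and follows essentially the same route as the paper: both sides of the global Poho\v{z}aev identity \eqref{Pohazaev-p-varepsion} are compared, the boundary term being bounded above by $C\mu_{\varepsilon}^{2-N}$ via the pointwise bound of Corollary \ref{upper estimate of u-varepsilon} plus elliptic regularity near $\partial\Omega$, and the volume term below by $c\,\varepsilon\mu_{\varepsilon}^{\frac{N-2}{2}q-N}$ via Proposition \ref{limit of v-varepsilon}. The only cosmetic difference is that the paper invokes the specific gradient estimate of Lemma \ref{gradient estimate} (controlling $\|\nabla u_\varepsilon\|_{C^{0,\alpha}}$ by the $L^1(\Omega)$ and $L^\infty(\omega)$ norms of the right-hand side), whereas you use standard boundary Schauder/$W^{2,p}$ estimates together with the $L^\infty(\omega)$ bound on $u_\varepsilon$ itself, which works equally well.
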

\begin{proof}
We will show this Lemma by using the Poho\v{z}aev identity (\ref{Pohazaev-p-varepsion}). First of all, we shall find an upper bound of right-hand of (\ref{Pohazaev-p-varepsion}) by using Lemma \ref{gradient estimate}. Assume $\omega^{'}\subset\subset\omega$ be two  neighborhoods of $\partial\Omega$ and don't contain $x_{0}$. For any $x\in \omega$, by Corollary \ref{upper estimate of u-varepsilon} we have
\begin{equation}
\begin{aligned}
\|u_{\varepsilon}^{2^*-1}(x)+\varepsilon u_{\varepsilon}^{q-1}(x)\|_{L^{\infty}(\omega)}&\leq C(U_{\mu_{\varepsilon},x_{\varepsilon}}^{2^*-1}(x)+\varepsilon U_{\mu_{\varepsilon},x_{\varepsilon}}^{q-1}(x))\\
&\leq C(\mu_{\varepsilon}^{-\frac{N+2}{2}}+\varepsilon\mu_{\varepsilon}^{-\frac{(N-2)(q-1)}{2}})\\
&\leq C \mu_{\varepsilon}^{-\frac{N-2}{2}},
\end{aligned}
\end{equation}
where $C$ is constant depending only on $\omega,N,q$. On the other hand, by Corollary \ref{upper estimate of u-varepsilon} we have
\begin{equation}
 \int_{\Omega}u_{\varepsilon}^{2^*-1}(x)dx\leq C   \mu_{\varepsilon}^{-\frac{N-2}{2}}\int_{\Omega_{\varepsilon}}\left(\frac{1}{1+|y|^2}\right)^{\frac{N+2}{2}}dy\leq C\mu_{\varepsilon}^{-\frac{N-2}{2}},
\end{equation}
and
\begin{equation}\label{blow-up rate-proof-3}
\begin{aligned}
\int_{\Omega} u_{\varepsilon}^{q-1}(x)dx&\leq C\mu_{\varepsilon}^{-\frac{N-2}{2}(2^*+1-q)}\int_{\Omega_{\varepsilon}}\left(\frac{1}{1+|y|^2}\right)^{\frac{(N-2)(q-1)}{2}}dy\\
&\leq C\mu_{\varepsilon}^{-\frac{N-2}{2}(2^*+1-q)} \int_{B(0,c\mu_{\varepsilon})}\left(\frac{1}{1+|y|^2}\right)^{\frac{(N-2)(q-1)}{2}}dy\\
&\leq \begin{cases}
    C\mu_{\varepsilon}^{-\frac{N-2}{2}(2^*+1-q)}, &\text{~if~}(N-2)(q-1)>N,\\
    C\mu_{\varepsilon}^{-\frac{N-2}{2}(2^*+1-q)}\ln\mu_{\varepsilon}, &\text{~if~}(N-2)(q-1)=N,\\
    C\mu_{\varepsilon}^{-\frac{N-2}{2}(q-1)}, &\text{~if~}(N-2)(q-1)<N,\\
\end{cases}\\
&\leq C\mu_{\varepsilon}^{-\frac{N-2}{2}},
\end{aligned}
\end{equation}
where constant $c>0$ and $C>0$ not depending on $\varepsilon$. Hence
\begin{equation}
\int_{\Omega}u_{\varepsilon}^{2^*-1}(x)+\varepsilon u_{\varepsilon}^{q-1}(x)dx\leq C  \mu_{\varepsilon}^{-\frac{N-2}{2}}.  
\end{equation}
Then by Lemma \ref{gradient estimate}, we obtain that
\begin{equation}\label{blow-up rate-proof-5}
\begin{aligned}
 |\nabla u_{\varepsilon}|_{C^{0,\alpha}(\omega^{\prime})}&\leq C (\|u_{\varepsilon}^{2^*-1}(x)+\varepsilon u_{\varepsilon}^{q-1}(x)\|_{L^{\infty}(\omega)}+\|u_{\varepsilon}^{2^*-1}(x)+\varepsilon u_{\varepsilon}^{q-1}(x)\|_{L^{1}(\Omega)}) \\
 &\leq C\mu_{\varepsilon}^{-\frac{N-2}{2}},
\end{aligned}
\end{equation}
which together with Poho\v{z}aev identity (\ref{Pohazaev-p-varepsion}) show that
\begin{equation}\label{blow-up rate-proof-6}
    \varepsilon \int_{\Omega}u_{\varepsilon}^{q}dx\leq C\mu_{\varepsilon}^{2-N}.
\end{equation}
Next, we shall find a lower bound of right-hand side of (\ref{Pohazaev-p-varepsion}). By Proposition \ref{limit of v-varepsilon}, we have
\begin{equation}\label{blow-up rate-proof-7}
    \begin{aligned}
       \varepsilon \int_{\Omega}u_{\varepsilon}^{q}dx&=\varepsilon\mu_{\varepsilon}^{-\frac{N-2}{2}(2^*-q)}\int_{\Omega_{\varepsilon}}v_{\varepsilon}^{q}dx\\
       &\geq \varepsilon\mu_{\varepsilon}^{-\frac{N-2}{2}(2^*-q)}\int_{B(0,1)}v_{\varepsilon}^{q}dx\\
       &\geq C\varepsilon\mu_{\varepsilon}^{-\frac{N-2}{2}(2^*-q)},
    \end{aligned}
\end{equation}
This together with (\ref{blow-up rate-proof-6}) complete the proof.
\end{proof}

\begin{Prop}\label{u-varepsilon-times-u-infty}
As $\varepsilon\to 0$, there holds
\begin{equation}
\|u_{\varepsilon}\|_{L^{\infty}(\Omega)}u_{\varepsilon}(x)\to \alpha_{N}^{2^*}\frac{1}{N}w_{N}G(x,x_{0}) \text{~in~}C^{1,\alpha}(\omega),    
\end{equation}
where $\alpha_{N}=(N(N-2))^{\frac{N-2}{4}}$, $\omega_{N}$ denotes the measure of unit sphere in $\R^{N}$ and $\omega$ is a neighborhood of $\partial \Omega$ not containing $x_{0}$.
\end{Prop}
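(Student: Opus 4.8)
The plan is to work with the rescaled function $\tilde u_\varepsilon:=\|u_\varepsilon\|_{L^\infty(\Omega)}u_\varepsilon=\mu_\varepsilon^{\frac{N-2}{2}}u_\varepsilon$, which solves $-\Delta\tilde u_\varepsilon=f_\varepsilon$ in $\Omega$ with $\tilde u_\varepsilon=0$ on $\partial\Omega$, where $f_\varepsilon:=\mu_\varepsilon^{\frac{N-2}{2}}\bigl(u_\varepsilon^{2^*-1}+\varepsilon u_\varepsilon^{q-1}\bigr)$; by the Green representation formula (see Lemma~\ref{Green's representation formula}) one has $\tilde u_\varepsilon(x)=\int_\Omega G(x,y)f_\varepsilon(y)\,dy$. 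I will show that $f_\varepsilon\,dy$ concentrates at $x_0$ with total mass $C_N:=\frac{\alpha_N^{2^*}\omega_N}{N}=\int_{\R^N}U^{2^*-1}\,dx$ (see \eqref{boundary estimate proof 4}), then pass this information through the Green kernel away from the diagonal to get $C^0$--convergence on a neighbourhood of $\partial\Omega$, and finally upgrade to $C^{1,\alpha}$ by elliptic regularity.

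\textbf{Concentration of the source.} The total mass satisfies $\int_\Omega f_\varepsilon\,dy\to C_N$: this is precisely \eqref{boundary estimate proof 14} (equivalently, the change of variables $y=\mu_\varepsilon^{-1}z+x_\varepsilon$ and Proposition~\ref{limit of v-varepsilon}, with the dominating bound $v_\varepsilon\le CU$ of Proposition~\ref{uniform estimate of v-varepsilon}, give $\mu_\varepsilon^{\frac{N-2}{2}}\int_\Omega u_\varepsilon^{2^*-1}\,dy=\int_{\Omega_\varepsilon}v_\varepsilon^{2^*-1}\,dz\to\int_{\R^N}U^{2^*-1}\,dz=C_N$, while the $\varepsilon u_\varepsilon^{q-1}$ contribution is $O(\varepsilon)$ by \eqref{blow-up rate-proof-3}). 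Next, fix $\delta>0$; since $x_\varepsilon\to x_0$, Corollary~\ref{upper estimate of u-varepsilon} gives for $|y-x_0|\ge\delta$ and $\varepsilon$ small that $\mu_\varepsilon^{\frac{N-2}{2}}u_\varepsilon^{2^*-1}(y)\le C\delta^{-(N+2)}\mu_\varepsilon^{-2}$ and $\mu_\varepsilon^{\frac{N-2}{2}}\varepsilon u_\varepsilon^{q-1}(y)\le C\delta^{-(N-2)(q-1)}\varepsilon\mu_\varepsilon^{\frac{N-2}{2}(2-q)}$; by Lemma~\ref{upper blow-up estimate } the latter is $O\bigl(\mu_\varepsilon^{\frac{N-2}{2}(2^*-2q)}\bigr)$, and $q>\max\{2,\tfrac{4}{N-2}\}$ forces $q>\tfrac{N}{N-2}=\tfrac{2^*}{2}$, so $2^*-2q<0$. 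Hence $\|f_\varepsilon\|_{L^\infty(\Omega\setminus B(x_0,\delta))}\to0$, and therefore $\int_{\Omega\setminus B(x_0,\delta)}f_\varepsilon\to0$, $\int_{B(x_0,\delta)}f_\varepsilon\to C_N$; i.e.\ $f_\varepsilon\,dy\stackrel{*}{\rightharpoonup}C_N\delta_{x_0}$ in the Radon measures on $\bar\Omega$.

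\textbf{$C^0$--convergence.} Let $\omega'$ be a neighbourhood of $\partial\Omega$ with $x_0\notin\bar\omega'$ and pick $\delta>0$ with $\bar B(x_0,\delta)\cap\bar\omega'=\emptyset$. Splitting $\tilde u_\varepsilon(x)=\int_{\Omega\setminus B(x_0,\delta)}G(x,y)f_\varepsilon\,dy+\int_{B(x_0,\delta)}G(x,y)f_\varepsilon\,dy$, the first term is $\le C\,\|f_\varepsilon\|_{L^\infty(\Omega\setminus B(x_0,\delta))}\to0$ uniformly in $x\in\Omega$, since $0\le G\le S$ and $\sup_{x\in\Omega}\|S(x,\cdot)\|_{L^1(\Omega)}<\infty$. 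For the second term, $G$ is uniformly continuous on the off-diagonal compact set $\bar\omega'\times\bar B(x_0,\delta)$, so $G(x,y)=G(x,x_0)+O(\sigma(\delta))$ there with $\sigma(\delta)\to0$, and hence $\int_{B(x_0,\delta)}G(x,y)f_\varepsilon\,dy=G(x,x_0)\int_{B(x_0,\delta)}f_\varepsilon+O(\sigma(\delta))$ uniformly on $\bar\omega'$. Letting $\varepsilon\to0$ and then $\delta\to0$ yields $\tilde u_\varepsilon\to C_N G(\cdot,x_0)=\tfrac1N\alpha_N^{2^*}\omega_N G(\cdot,x_0)$ uniformly on $\bar\omega'$.

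\textbf{Upgrade to $C^{1,\alpha}$; main difficulty.} On $\omega'\cap\bar\Omega$ the function $w_\varepsilon:=\tilde u_\varepsilon-C_N G(\cdot,x_0)$ solves $-\Delta w_\varepsilon=f_\varepsilon$ (because $G(\cdot,x_0)$ is harmonic in $\Omega\setminus\{x_0\}$) and vanishes on $\partial\Omega$. Using Corollary~\ref{upper estimate of u-varepsilon}, Lemma~\ref{upper blow-up estimate } and the interior H\"older bound for $\nabla u_\varepsilon$ of Lemma~\ref{gradient estimate} (cf.\ \eqref{blow-up rate-proof-5}), one checks $f_\varepsilon\to0$ in $C^{0,\alpha}(\omega'\cap\bar\Omega)$ — indeed $\mu_\varepsilon^{\frac{N-2}{2}}u_\varepsilon^{2^*-1}$ has $C^{0,\alpha}$--norm $O(\mu_\varepsilon^{-2})$ there and $\mu_\varepsilon^{\frac{N-2}{2}}\varepsilon u_\varepsilon^{q-1}$ has $C^{0,\alpha}$--norm $O(\mu_\varepsilon^{\frac{N-2}{2}(2^*-2q)})$ — while $\|w_\varepsilon\|_{L^\infty(\omega'\cap\bar\Omega)}\to0$ by the previous step. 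Boundary Schauder estimates (smooth $\partial\Omega$, zero boundary datum) on a smaller neighbourhood $\omega$ of $\partial\Omega$ with $\omega\subset\subset\omega'$ then give $\|w_\varepsilon\|_{C^{1,\alpha}(\omega)}\le C\bigl(\|w_\varepsilon\|_{L^\infty(\omega')}+\|f_\varepsilon\|_{C^{0,\alpha}(\omega')}\bigr)\to0$, which is the claim. (Equivalently, Lemma~\ref{gradient estimate} bounds $\tilde u_\varepsilon$ in $C^{1,\alpha}(\omega')$, and Arzel\`a--Ascoli together with the identification of the limit from the $C^0$--step gives the convergence for the whole family.) The one genuinely delicate point is that $f_\varepsilon$ is not compactly supported, so one must show its tail away from $x_0$ is uniformly negligible — both inside the Green potential and, together with a derivative, in the Schauder estimate; this is exactly where the bound $\varepsilon\le C\mu_\varepsilon^{-\frac{N-2}{2}(q+2-2^*)}$ of Lemma~\ref{upper blow-up estimate } and the restriction $q>\max\{2,\tfrac{4}{N-2}\}$ enter.
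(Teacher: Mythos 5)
Your proposal is correct and follows essentially the same route as the paper: concentration of the source $\mu_\varepsilon^{(N-2)/2}(u_\varepsilon^{2^*-1}+\varepsilon u_\varepsilon^{q-1})$ to $C_N\delta_{x_0}$ via the pointwise bound of Corollary~\ref{upper estimate of u-varepsilon} and Lemma~\ref{upper blow-up estimate }, identification of the limit through the Green representation formula, and the upgrade to $C^{1,\alpha}(\omega)$ via Lemma~\ref{gradient estimate} together with Arzel\`a--Ascoli. Your write-up is in fact somewhat more explicit than the paper's (the uniform $C^0$ step via the off-diagonal continuity of $G$, and the verification that $2^*-2q<0$ under the standing assumption on $q$), but no new idea is introduced.
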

\begin{proof}
 Let $\tilde{u}_{\varepsilon}(x)=\|u_{\varepsilon}\|_{L^{\infty}(\Omega)}u_{\varepsilon}(x)=\mu_{\varepsilon}^{\frac{N-2}{2}}u_{\varepsilon}(x)$. Then $\tilde u_{\varepsilon}$ satisfy that
 \begin{equation}
     \begin{cases}
         -\Delta\tilde{u}_{\varepsilon}(x)=\mu_{\varepsilon}^{\frac{N-2}{2}}u_{\varepsilon}^{2^*-1}(x)+\varepsilon\mu_{\varepsilon}^{\frac{N-2}{2}}u_{\varepsilon}^{q-1}(x), &\text{~in~}\Omega,\\
         \quad \ \ \tilde{u}_{\varepsilon}(x)=0, &\text{~on~}\partial\Omega.
     \end{cases}
 \end{equation}
First of all, by Proposition \ref{limit of v-varepsilon} and (\ref{blow-up rate-proof-3}), we have
\begin{equation}
\begin{aligned}
    \int_{\Omega}\mu_{\varepsilon}^{\frac{N-2}{2}}u_{\varepsilon}^{2^*-1}(x)+\varepsilon\mu_{\varepsilon}^{\frac{N-2}{2}}u_{\varepsilon}^{q-1}(x)dx&=\int_{\Omega_{\varepsilon}}v_{\varepsilon}^{2^*-1}(x)dx+\int_{\Omega}\varepsilon\mu_{\varepsilon}^{\frac{N-2}{2}}u_{\varepsilon}^{q-1}(x)dx\\
    &\to \alpha_{N}^{2^*}\frac{1}{N}\omega_{N}, \quad\text{~as~} \varepsilon\to 0.
\end{aligned}
\end{equation}
On the other hand, for any $x\neq x_{0}$
\begin{equation}
\begin{aligned}
\mu_{\varepsilon}^{\frac{N-2}{2}}u_{\varepsilon}^{2^*-1}(x)+\varepsilon\mu_{\varepsilon}^{\frac{N-2}{2}}u_{\varepsilon}^{q-1}(x)&\leq C\left(\mu_{\varepsilon}^{\frac{N-2}{2}}U_{\mu_{\varepsilon},x_{\varepsilon}}^{2^*-1}(x)+\varepsilon\mu_{\varepsilon}^{\frac{N-2}{2}}U_{\mu_{\varepsilon},x_{\varepsilon}}^{q-1}(x)\right)\\
    &\leq C \left(\mu_{\varepsilon}^{-2}+\varepsilon\mu_{\varepsilon}^{-\frac{N-2}{2}(q-2)}\right)|x-x_{0}|^{-N-2}\\
    &\to 0 \text{~as~}\varepsilon \to 0.    
\end{aligned}
\end{equation}
Hence
\begin{equation}
    -\Delta\tilde{u}_{\varepsilon}(x)\stackrel{*}{\rightharpoonup}\alpha_{N}^{2^*}\frac{1}{N}\omega_{N}\delta_{x_{0}} \text{~in the sense of Radon measure~},
\end{equation}
where $\delta_{x_{0}}$ is the Dirac function supported by $x_{0}$. Moreover, similar to the proof (\ref{blow-up rate-proof-5}), by Lemma \ref{gradient estimate} and Arezel\`a-Ascoli's theorem, there exist a function $P(x)$ such that
\begin{equation}
    \tilde{u}_{\varepsilon}(x)\to P(x) \text{~~in~~} C^{1,\alpha}(\omega),
\end{equation}
 where $\alpha\in (0,1)$ and $\omega$ is a neighborhood of $\partial\Omega$ not containing $x_{0}$. By the Green representation formula (\ref{Green's representation formula}), we obtain that
 \begin{equation}
     \tilde{u}_{\varepsilon}(x)=\int_{\Omega}G(x,y)(-\Delta \tilde{u}_{\varepsilon})(y)dy,
 \end{equation}
Hence 
 \begin{equation}
P(x)=\alpha_{N}^{2^*}\frac{1}{   N}\omega_{N}G(x,x_{0}).     
 \end{equation}
\end{proof}

\begin{Prop}\label{blow-up rate}
There holds the following exact blow-up rate
  \begin{equation}
    \lim_{\varepsilon\to 0}\varepsilon\|u_{\varepsilon}\|_{L^{\infty}(\Omega)}^{q+2-2^*}=\alpha_{N,q}R(x_{0}),
\end{equation}
where 
\begin{equation}\label{definition of alpha-N-q}
\alpha_{N,q}=\frac{2q}{2^*-q}\frac{\alpha_{N}^{2^*}\omega_{N}}{N^{2}}\frac{\Gamma(\frac{N-2}{2}q)}{\Gamma(\frac{N}{2})\Gamma(\frac{N-2}{2}q-\frac{N}{2})}.
\end{equation}
\end{Prop}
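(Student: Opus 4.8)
The plan is to combine the Pohožaev identity \eqref{Pohazaev-p-varepsion} with the $C^{1,\alpha}$-convergence of $\|u_\varepsilon\|_{L^\infty}u_\varepsilon$ to $\alpha_N^{2^*}N^{-1}\omega_N G(\cdot,x_0)$ near $\partial\Omega$ established in Proposition \ref{u-varepsilon-times-u-infty}, so that the boundary term in \eqref{Pohazaev-p-varepsion} is computed exactly in terms of the Robin function. First I would take $y=x_\varepsilon$ in \eqref{Pohazaev-p-varepsion} and multiply both sides by $\mu_\varepsilon^{N-2}=\|u_\varepsilon\|_{L^\infty(\Omega)}^2$. On the left, $\mu_\varepsilon^{N-2}|\nabla u_\varepsilon|^2 = |\nabla\tilde u_\varepsilon|^2 \to (\alpha_N^{2^*}N^{-1}\omega_N)^2|\nabla_x G(x,x_0)|^2$ uniformly on $\partial\Omega$, and $(x-x_\varepsilon)\cdot n \to (x-x_0)\cdot n$, so
\[
\frac{\mu_\varepsilon^{N-2}}{2N}\int_{\partial\Omega}|\nabla u_\varepsilon|^2 (x-x_\varepsilon)\cdot n\, dS_x \;\longrightarrow\; \frac{(\alpha_N^{2^*}\omega_N)^2}{2N^3}\int_{\partial\Omega}|\nabla_x G(x,x_0)|^2 (x-x_0)\cdot n\, dS_x.
\]
The standard Pohožaev identity for the Green's function (which should be among the appendix lemmas, cf.\ Lemma \ref{Pohozaev identity of Green functon }) evaluates this boundary integral as $-(N-2)\,R(x_0)$ up to the normalization constant, i.e.\ the whole left side converges to a positive multiple of $R(x_0)$; I would identify that multiple as $\tfrac{(N-2)(\alpha_N^{2^*}\omega_N)^2}{2N^3}R(x_0)$.

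Next I would handle the right-hand side $\left(\tfrac1q-\tfrac1{2^*}\right)\varepsilon\int_\Omega u_\varepsilon^q\,dx$. Using the rescaling $u_\varepsilon(x)=\mu_\varepsilon^{\frac{N-2}{2}}v_\varepsilon(\mu_\varepsilon(x-x_\varepsilon))$ and $\|v_\varepsilon\|_{L^q}^q = \mu_\varepsilon^{\frac{N-2}{2}(2^*-q)}\|u_\varepsilon\|_{L^q}^q$ from Lemma \ref{properties of v-varepsilon}, one gets
\[
\varepsilon\int_\Omega u_\varepsilon^q\,dx = \varepsilon\,\mu_\varepsilon^{-\frac{N-2}{2}(q-2)}\,\mu_\varepsilon^{-N}\!\!\int_{\Omega_\varepsilon} v_\varepsilon^q\,dx
= \varepsilon\,\mu_\varepsilon^{-\frac{N-2}{2}(q+2-2^*)}\,\mu_\varepsilon^{-(N-2)}\!\!\int_{\Omega_\varepsilon} v_\varepsilon^q\,dx .
\]
By Proposition \ref{limit of v-varepsilon}, Proposition \ref{uniform estimate of v-varepsilon} (the bound $v_\varepsilon\le CU$, which makes $U^q$ an integrable dominating function precisely because $\frac{N-2}{2}q>N$ when $q>\frac{2N}{N-2}\cdot\frac{N-2}{N}$... more carefully, $(N-2)q/2 > N/2$ follows from $q>N/(N-2)$, which holds since $q>\frac4{N-2}\ge\frac{N}{N-2}$ fails in general — so I must instead use Proposition \ref{uniform estimate of v-varepsilon} together with the explicit decay to justify $\int_{\Omega_\varepsilon}v_\varepsilon^q\to\int_{\R^N}U^q$ via dominated convergence, noting $U^q\in L^1(\R^N)$ iff $\frac{(N-2)q}{2}>\frac N2$, i.e.\ $q>\frac{N}{N-2}$, which is implied by $q>\frac4{N-2}$ only when $N\le 4$; for $N\ge5$ one has $\frac4{N-2}<\frac{N}{N-2}$, so integrability of $U^q$ must be checked separately and in fact $U^q\sim|x|^{-(N-2)q}$ is integrable precisely when $(N-2)q>N$, i.e.\ $q>\frac{N}{N-2}$, and since also $q>2$ this is automatic for $N\ge 4$; the borderline needs the case analysis already appearing in \eqref{blow-up rate-proof-3}). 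Granting this, $\int_{\Omega_\varepsilon}v_\varepsilon^q\,dx\to\int_{\R^N}U^q\,dx$, and the latter integral is a Beta-function value:
\[
\int_{\R^N}\left(\frac{N(N-2)}{N(N-2)+|x|^2}\right)^{\frac{(N-2)q}{2}}dx \;=\; (N(N-2))^{N/2}\,\omega_N\cdot\frac{\Gamma(\tfrac N2)\,\Gamma(\tfrac{(N-2)q}{2}-\tfrac N2)}{2\,\Gamma(\tfrac{(N-2)q}{2})}
\]
(using $\int_0^\infty \frac{r^{N-1}}{(1+r^2)^{s}}dr = \tfrac12 B(\tfrac N2, s-\tfrac N2)$ and $\alpha_N^2 = (N(N-2))^{(N-2)/2}$, $\alpha_N^{2^*}=(N(N-2))^{N/2}$).

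Putting the two sides together gives
\[
\varepsilon\,\mu_\varepsilon^{-\frac{N-2}{2}(q+2-2^*)}\;\longrightarrow\; \frac{(N-2)(\alpha_N^{2^*}\omega_N)^2/(2N^3)}{\left(\tfrac1q-\tfrac1{2^*}\right)\cdot \alpha_N^{2^*}\omega_N \cdot \frac{\Gamma(N/2)\Gamma((N-2)q/2-N/2)}{2\Gamma((N-2)q/2)}}\,R(x_0),
\]
and since $\|u_\varepsilon\|_{L^\infty(\Omega)}^{q+2-2^*}=\mu_\varepsilon^{\frac{N-2}{2}(q+2-2^*)}$, this is exactly the claimed limit once the constant is simplified; the factor $\tfrac1q-\tfrac1{2^*}=\tfrac{2^*-q}{q\,2^*}=\tfrac{(2^*-q)(N-2)}{2Nq}$ and $2^* = \tfrac{2N}{N-2}$ should collapse everything to $\alpha_{N,q}=\tfrac{2q}{2^*-q}\tfrac{\alpha_N^{2^*}\omega_N}{N^2}\tfrac{\Gamma((N-2)q/2)}{\Gamma(N/2)\Gamma((N-2)q/2-N/2)}$ as in \eqref{definition of alpha-N-q}. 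I expect the main obstacle to be twofold: (i) rigorously passing to the limit in the boundary integral — this requires the $C^1$ convergence up to $\partial\Omega$ from Proposition \ref{u-varepsilon-times-u-infty} and the exact Pohožaev identity for $G(\cdot,x_0)$ relating $\int_{\partial\Omega}|\nabla_x G|^2(x-x_0)\cdot n\,dS$ to $R(x_0)$, which I will quote from the appendix; and (ii) the bookkeeping of the constants, in particular checking integrability of $U^q$ (handled by the case distinction in \eqref{blow-up rate-proof-3} / Proposition \ref{uniform estimate of v-varepsilon}) and correctly normalizing $\alpha_N$, $\omega_N$, and the Beta integral so that the final constant matches $\alpha_{N,q}$ exactly.
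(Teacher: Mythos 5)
Your proposal follows the paper's proof essentially verbatim: rewrite the Poho\v{z}aev identity \eqref{Pohazaev-p-varepsion}, pass to the limit in the boundary term via Proposition \ref{u-varepsilon-times-u-infty} and the Green-function identity of Lemma \ref{Green function propertity 1}, and identify the right-hand side as $\varepsilon\|u_{\varepsilon}\|_{L^{\infty}(\Omega)}^{q+2-2^*}\|U\|_{L^{q}(\R^{N})}^{q}(1+o(1))$ via the rescaling and dominated convergence, with the same Beta-function bookkeeping leading to \eqref{definition of alpha-N-q}. The only blemish is a sign slip in your scaling exponents: the correct identity is $\varepsilon\|u_{\varepsilon}\|_{L^{\infty}(\Omega)}^{2}\int_{\Omega}u_{\varepsilon}^{q}\,dx=\varepsilon\,\mu_{\varepsilon}^{+\frac{N-2}{2}(q+2-2^*)}\int_{\Omega_{\varepsilon}}v_{\varepsilon}^{q}\,dx$ (plus sign in the exponent), which is harmless since $\|u_{\varepsilon}\|_{L^{\infty}(\Omega)}^{q+2-2^*}=\mu_{\varepsilon}^{\frac{N-2}{2}(q+2-2^*)}$ is exactly the factor the argument requires.
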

\begin{proof}
Rewrite the Poho\v{z}aev identity (\ref{Pohazaev-p-varepsion}) as
\begin{equation}\label{limit behaviour proof 1}
    \frac{1}{2N}\int_{\partial\Omega}(x-x_{0},n)\left(\frac{\partial}{\partial  n}(\|u_{\varepsilon}\|_{L^{\infty}(\Omega)}u_{\varepsilon})\right)^{2}dx=(\frac{1}{q}-\frac{1}{2^*})\varepsilon\|u_{\varepsilon}\|_{L^{\infty}(\Omega)}^{2}\int_{\Omega}u_{\varepsilon}^{q}dx.
\end{equation}
By Proposition \ref{u-varepsilon-times-u-infty}, the left-hand side of (\ref{limit behaviour proof 1}), we obtain that 
\begin{equation}
    \frac{1}{2N}\int_{\partial\Omega}(x-x_{0},n)\left(\frac{\partial}{\partial  n}(\|u_{\varepsilon}\|_{L^{\infty}(\Omega)}u_{\varepsilon})\right)^{2}dx\to \frac{\alpha_{N}^{22^*}w_{N}^{2}}{2N^{3}}\int_{\partial\Omega}(x-x_{0},n)\left(\frac{\partial}{\partial  n}G(x,x_{0})\right)^{2}dx.
\end{equation}
By Proposition \ref{limit of v-varepsilon}, we have the right-hand side of (\ref{limit behaviour proof 1})
\begin{equation}
 (\frac{1}{q}-\frac{1}{2^*})\varepsilon\|u_{\varepsilon}\|_{L^{\infty}(\Omega)}^{2}\int_{\Omega}u_{\varepsilon}^{q}dx=\lim_{\varepsilon\to 0}(\frac{1}{q}-\frac{1}{2^*}) \varepsilon \|U\|_{L^{q}(\R^{N})}^{q}\|u_{\varepsilon}\|^{q+2-2^*}_{L^{\infty}(\Omega)}.  
\end{equation}
Furthermore, by Lemma \ref{Green function propertity 1}, we have
\begin{equation}
    \lim_{\varepsilon\to 0}\varepsilon\|u_{\varepsilon}\|_{L^{\infty}(\Omega)}^{q+2-2^*}=\sigma_{N,q}\int_{\partial\Omega}(x-x_{0},n)\left(\frac{\partial}{\partial  n}G(x,x_{0})\right)^{2}dS_{x}=\sigma_{N,q}(N-2)R(x_{0}),
\end{equation}
where 
\begin{equation}
    \sigma_{N,q}=\frac{\alpha_{N}^{22^*}w_{N}^{2}}{2N^{3}}/(\frac{1}{q}-\frac{1}{2^*}) \|U\|_{L^{q}(\R^{N})}^{q}=\frac{\alpha_{N}^{2^*}\omega_{N}2q}{N^{2}(2N-(N-2)q)}\frac{\Gamma(\frac{N-2}{2}q)}{\Gamma(\frac{N}{2})\Gamma(\frac{N-2}{2}q-\frac{N}{2})}.
\end{equation}
which together with the definition of $\alpha_{N}$ concludes the proof.
\end{proof}

Finally, we can show that $x_{0}$ is a critical point of Robin function $R(x)$.
\begin{Prop}
   $\nabla R (x_{0})=0.$
\end{Prop}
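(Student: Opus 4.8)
The plan is to run a local Poho\v{z}aev identity on a sphere of \emph{fixed} radius around the blow-up point and then let $\varepsilon\to0$. Fix $\rho>0$ with $\overline{B(x_0,\rho)}\subset\Omega$; multiplying \eqref{p-varepsion} by $\partial_{x_i}u_\varepsilon$ and integrating over $B(x_0,\rho)$ produces, for $i=1,\dots,N$, the local Poho\v{z}aev identity (cf. \eqref{boundary estimate proof 1}), namely
\begin{equation*}
-\int_{\partial B(x_0,\rho)}\frac{\partial u_\varepsilon}{\partial x_i}\frac{\partial u_\varepsilon}{\partial n}+\frac12\int_{\partial B(x_0,\rho)}|\nabla u_\varepsilon|^{2}n_i=\frac{1}{2^{*}}\int_{\partial B(x_0,\rho)}u_\varepsilon^{2^{*}}n_i+\frac{\varepsilon}{q}\int_{\partial B(x_0,\rho)}u_\varepsilon^{q}n_i .
\end{equation*}
The idea is that the left-hand side detects $\nabla R(x_0)$ through the Green function, while the right-hand side is of strictly lower order.

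First I would pass to the limit on the left. By Proposition \ref{u-varepsilon-times-u-infty} --- or rather by the very argument used to prove it, which yields $C^{1}$-convergence on every compact subset of $\Omega\setminus\{x_0\}$, not just near $\partial\Omega$ --- one has, uniformly on the fixed sphere $\partial B(x_0,\rho)$,
\begin{equation*}
\mu_\varepsilon^{\frac{N-2}{2}}u_\varepsilon\to C_N\,G(\cdot,x_0),\qquad \mu_\varepsilon^{\frac{N-2}{2}}\nabla u_\varepsilon\to C_N\,\nabla_x G(\cdot,x_0),\qquad C_N=\frac{\alpha_N^{2^{*}}\omega_N}{N},
\end{equation*}
with $C_N$ as in \eqref{boundary estimate proof 4}. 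Since $G(\cdot,x_0)$ and $\nabla_xG(\cdot,x_0)$ are bounded on $\partial B(x_0,\rho)$, the cross terms are of lower order than the $O(\mu_\varepsilon^{-(N-2)/2})$ leading factor, so multiplying the left-hand side by $\mu_\varepsilon^{N-2}$ and passing to the limit gives $C_N^{2}$ times the corresponding boundary expression for $G(\cdot,x_0)$; by the Poho\v{z}aev identity for the Green function (Lemma \ref{Pohozaev identity of Green functon }; its value on $\partial B(x_0,\rho)$ is $\rho$-independent because $-\Delta_x G(\cdot,x_0)=0$ away from $x_0$) this limit equals $c_N\,\frac{\partial H(x,x_0)}{\partial x_i}\big|_{x=x_0}$ for a nonzero dimensional constant $c_N$.

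Next I would show the right-hand side is $o(\mu_\varepsilon^{-(N-2)})$. By Corollary \ref{upper estimate of u-varepsilon}, for $\varepsilon$ small every $x\in\partial B(x_0,\rho)$ satisfies $|x-x_\varepsilon|\ge\rho/2$, hence $u_\varepsilon(x)\le C\,U_{\mu_\varepsilon,x_\varepsilon}(x)\le C\mu_\varepsilon^{-\frac{N-2}{2}}$; therefore $\int_{\partial B(x_0,\rho)}u_\varepsilon^{2^{*}}n_i=O(\mu_\varepsilon^{-N})$ and $\varepsilon\int_{\partial B(x_0,\rho)}u_\varepsilon^{q}n_i=O\left(\varepsilon\mu_\varepsilon^{-\frac{N-2}{2}q}\right)=O\left(\mu_\varepsilon^{-\frac{N-2}{2}(2q+2-2^{*})}\right)$, the last step using the bound $\varepsilon\le C\mu_\varepsilon^{-\frac{N-2}{2}(q+2-2^{*})}$ of Lemma \ref{upper blow-up estimate }. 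Under Assumption \ref{assumption 1} one always has $q>\tfrac{2^{*}}{2}=\tfrac{N}{N-2}$ (for $N\ge4$, $q>2\ge\tfrac{N}{N-2}$; for $N=3$, $q>4>3$), so $\tfrac{N-2}{2}(2q+2-2^{*})>N-2$ and both terms are $o(\mu_\varepsilon^{-(N-2)})$. Multiplying the Poho\v{z}aev identity by $\mu_\varepsilon^{N-2}$ and letting $\varepsilon\to0$ then forces $\frac{\partial H(x,x_0)}{\partial x_i}\big|_{x=x_0}=0$ for every $i$; since $\frac{\partial H(x,x)}{\partial x_i}\big|_{x=x_0}=\tfrac12\frac{\partial R}{\partial x_i}(x_0)$ by the symmetry $H(x,y)=H(y,x)$, we conclude $\nabla R(x_0)=0$.

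In this scheme the profile passage on the fixed sphere is routine --- it is a repetition of the reasoning behind Proposition \ref{u-varepsilon-times-u-infty} --- and the identification of the $G$-boundary integral with $\nabla_x H(x,x_0)|_{x=x_0}$ is Lemma \ref{Pohozaev identity of Green functon }. The one place where the structure of the subcritical perturbation really matters is the estimate of the boundary term $\varepsilon\int_{\partial B}u_\varepsilon^{q}$: making it negligible against the scale $\mu_\varepsilon^{-(N-2)}$ of the principal terms is exactly what requires the blow-up rate bound of Lemma \ref{upper blow-up estimate } together with the admissible range $q>\max\{2,\tfrac{4}{N-2}\}$ (which forces $q>2^{*}/2$), and I expect this to be the main --- indeed essentially the only --- obstacle.
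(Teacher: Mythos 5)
Your argument is correct, but it is not the route the paper takes, and the comparison is instructive. The paper's proof is essentially two lines: multiplying \eqref{p-varepsion} by $\partial_{x_j}u_\varepsilon$ and integrating over \emph{all} of $\Omega$ gives the translation Poho\v{z}aev identity $\int_{\partial\Omega}\bigl(\partial_n u_\varepsilon\bigr)^2 n\,dS_x=0$, whose right-hand side vanishes \emph{identically} for every $\varepsilon$ (the nonlinearity is autonomous and $u_\varepsilon=0$ on $\partial\Omega$, so $\nabla u_\varepsilon=(\partial_n u_\varepsilon)n$ there and no error terms appear); one then normalizes by $\|u_\varepsilon\|_{L^\infty}^2$, invokes the boundary convergence of Proposition \ref{u-varepsilon-times-u-infty}, and concludes via the second identity of Lemma \ref{Green function propertity 1}, $\int_{\partial\Omega}(\partial_n G(x,x_0))^2 n\,dS_x=\nabla R(x_0)$. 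Your version localizes the same identity to a fixed interior sphere $\partial B(x_0,\rho)$; this introduces the surface terms $\int u_\varepsilon^{2^*}n_i$ and $\varepsilon\int u_\varepsilon^{q}n_i$, which you then have to beat against the scale $\mu_\varepsilon^{-(N-2)}$ using Corollary \ref{upper estimate of v-varepsilon}'s pointwise bound, Lemma \ref{upper blow-up estimate }, and the arithmetic $q>\tfrac{N}{N-2}$ — all of which checks out under Assumption \ref{assumption 1}, and is in fact the same bookkeeping the paper performs (on shrinking balls, which is harder) in the proof that $x_\varepsilon$ stays in the interior. What the global identity buys is that none of this is needed: no rate information on $\varepsilon$ versus $\mu_\varepsilon$ enters at all. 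What your local identity buys is independence from the behaviour of $u_\varepsilon$ near $\partial\Omega$.

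Two small points of rigor in your write-up. First, Proposition \ref{u-varepsilon-times-u-infty} as stated only gives $C^{1,\alpha}$ convergence in a neighbourhood of $\partial\Omega$; the $C^1$ convergence of $\mu_\varepsilon^{(N-2)/2}u_\varepsilon$ and its gradient on the fixed interior sphere needs a word of justification (uniform boundedness of $\mu_\varepsilon^{(N-2)/2}u_\varepsilon$ on compacta away from $x_0$ from Corollary \ref{upper estimate of u-varepsilon}, plus interior elliptic estimates and the Green representation), though this is entirely within the paper's toolkit. Second, your identification of the limit uses Lemma \ref{Pohozaev identity of Green functon } with the sphere centred at the singularity $x_0$ and the relation $\partial_{y_i}H(y,x_0)|_{y=x_0}=\tfrac12\partial_i R(x_0)$ from the symmetry of $H$; both are fine, and the nonvanishing of $C_N^2$ closes the argument.
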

\begin{proof}
In virtue of the Poho\v{z}aev identity, multiplying the (\ref{p-varepsion}) by $\frac{\partial u_{\varepsilon}}{\partial x_{j}}$ and integrating, we have
\begin{equation}\label{boundary estimate=0}
    \int_{\partial\Omega}\left(\frac{\partial u_{\varepsilon}}{\partial n}\right)^{2}n(x)dS_{x}=0,
\end{equation}
where $n=n(x)$ denotes the unit outward normal of $\partial\Omega$ at $x$. Then by Proposition \ref{u-varepsilon-times-u-infty} and Lemma \ref{Green function propertity 1}, we have
 \begin{equation}
     0=\int_{\partial\Omega}\left(\frac{\partial G(x,x_{0})}{\partial n}\right)^{2}n(x)dS_{x}=\nabla R(x_{0}).
 \end{equation}
\end{proof}

\subsection{Asymptotic behavior of \texorpdfstring{$S_\varepsilon$}{}}
In this subsection, we consider the asymptotic behavior on the least energy $S_{\varepsilon}$ and derive a precise energy expansion.

First of all, we give an orthogonal decomposition for least energy solution $u_{\varepsilon}$. Before that, we define a subspace of $H^{1}_{0}(\Omega)$. For any $a\in\Omega$ and $\lambda\in \R^{+}$, we define
\begin{equation}\label{definition of space E}
\begin{aligned}
E_{\lambda,a}:=\left\{w\in H^{1}_{0}(\Omega)\right.:0&=\int_{\Omega}\nabla w\cdot\nabla PU_{\lambda,a}dx=\int_{\Omega}\nabla w\cdot\nabla \left(\frac{\partial}{\partial \lambda}PU_{\lambda,a}\right)dx\\
&=\left.\int_{\Omega}\nabla w\cdot\nabla \left(\frac{\partial}{\partial a_{i}}PU_{\lambda,a}\right)dx, \text{~for~}i=1,\cdots,N\right\},    
\end{aligned}
\end{equation}
where $PU_{\lambda,a}$ is the  projection of $U_{\lambda,a}$, see Appendix \ref{Projection of bubbles} for more details.

\begin{Lem}\label{decomposition of u-varepsilon}
For $\varepsilon$ small enough, we have the following decomposition for $u_{\varepsilon}$
\begin{equation}
    u_{\varepsilon}=\alpha_{\varepsilon}PU_{\lambda_{\varepsilon},a_{\varepsilon}}+w_{\varepsilon},
\end{equation}
where $\alpha_{\varepsilon}\in\R^{+}$, $\lambda_{\varepsilon}\in\R^{+}$, $a_{\varepsilon}\in\Omega$ and $w_{\varepsilon}\in E_{\lambda_{\varepsilon},a_{\varepsilon}}$ satisfying 
\begin{equation}
    \alpha_{\varepsilon}\to\alpha_{N}, \lambda_{\varepsilon}\to \infty, a_{\varepsilon}\to x_{0}\in {\Omega}, \lambda_{\varepsilon}d_{\varepsilon}\to\infty,\|w_{\varepsilon}\|_{H^{1}_{0}(\Omega)}\to 0,\quad \text{~as~}\varepsilon\to 0,
\end{equation}
where $d_{\varepsilon}=\text{dist}(a_{\varepsilon},\partial\Omega)$ is the distance between $a_{\varepsilon}$ and the boundary $\partial\Omega$. Moreover, we have 
\begin{equation}\label{decomposition of u-varepsilon 3}
     \varepsilon=O\left(\frac{1}{\lambda_{\varepsilon}^{\frac{N-2}{2}(q+2-2^*)}}\right)\text{~~and~~}|\nabla u_{\varepsilon}|^{2}\stackrel{*}{\rightharpoonup}S^{\frac{N}{2}}\delta_{x_{0}}, \text{~in the sense of Radon measure~},
\end{equation}
where $\delta_{x_{0}}$ is the Dirac function at $x_{0}$ and $E_{\lambda_{\varepsilon},a_{\varepsilon}}$ defined in $(\ref{definition of space E})$.
\end{Lem}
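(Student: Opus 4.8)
The plan is to obtain the decomposition from the general finite-dimensional reduction / Lyapunov–Schmidt machinery for concentrating solutions, combined with the a priori information already extracted in the previous subsections. First I would recall the classical fact (see e.g. Bahri–Coron, Rey) that whenever a sequence $\{u_\varepsilon\}\subset H^1_0(\Omega)$ satisfies $\|\nabla u_\varepsilon\|_2^2\to S^{N/2}$, $\|u_\varepsilon\|_{2^*}^{2^*}\to S^{N/2}$, $u_\varepsilon\rightharpoonup 0$, and is a Palais–Smale–type sequence for $I_\varepsilon$ with $u_\varepsilon>0$, there exist parameters $\alpha_\varepsilon>0$, $\lambda_\varepsilon>0$, $a_\varepsilon\in\Omega$ such that, setting $w_\varepsilon:=u_\varepsilon-\alpha_\varepsilon PU_{\lambda_\varepsilon,a_\varepsilon}$, one has $\|w_\varepsilon\|_{H^1_0}\to 0$. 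The existence of such a near-best-decomposition is an application of the implicit function theorem to the map that minimizes $\|u_\varepsilon-\alpha PU_{\lambda,a}\|_{H^1_0}$ over $(\alpha,\lambda,a)$; the minimizing triple automatically makes the remainder $w_\varepsilon$ orthogonal, in the $H^1_0$ inner product, to $PU_{\lambda_\varepsilon,a_\varepsilon}$ and to its derivatives $\partial_\lambda PU_{\lambda_\varepsilon,a_\varepsilon}$, $\partial_{a_i}PU_{\lambda_\varepsilon,a_\varepsilon}$, which is exactly the condition $w_\varepsilon\in E_{\lambda_\varepsilon,a_\varepsilon}$ in \eqref{definition of space E}. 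The hypotheses needed to start this are supplied by Lemma \ref{limit of u-varepsilon}, Lemma \ref{limit of S-varepsilon}, Proposition \ref{limit of v-varepsilon} and its Corollary.

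Next I would identify the limiting values of the parameters. Since $u_\varepsilon$ blows up with $\|u_\varepsilon\|_{L^\infty(\Omega)}=\mu_\varepsilon^{(N-2)/2}$ and, by Proposition \ref{limit of v-varepsilon}, the rescalings $v_\varepsilon(x)=\mu_\varepsilon^{-(N-2)/2}u_\varepsilon(\mu_\varepsilon^{-1}x+x_\varepsilon)$ converge to the Aubin–Talenti bubble $U$ in $\mathcal D^{1,2}(\R^N)$ and in $C^2_{loc}$, the uniqueness of the bubble decomposition (the parameters $\alpha,\lambda,a$ extracted from $u_\varepsilon$ are unique up to $o(1)$ corrections once $\|w_\varepsilon\|_{H^1_0}\to0$) forces $\alpha_\varepsilon\to\alpha_N$, $\lambda_\varepsilon/\mu_\varepsilon\to 1$ (so $\lambda_\varepsilon\to\infty$), and $a_\varepsilon-x_\varepsilon=o(\mu_\varepsilon^{-1})$, hence $a_\varepsilon\to x_0\in\Omega$ by the Proposition asserting $x_\varepsilon\to x_0\in\Omega$. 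The property $\lambda_\varepsilon d_\varepsilon\to\infty$ follows from $d_\varepsilon=\mathrm{dist}(a_\varepsilon,\partial\Omega)\to\mathrm{dist}(x_0,\partial\Omega)>0$ together with $\lambda_\varepsilon\to\infty$ (it is in fact even easier here than the boundary case handled in Proposition \ref{limit of v-varepsilon}).

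For the two assertions in \eqref{decomposition of u-varepsilon 3}: the estimate $\varepsilon=O(\lambda_\varepsilon^{-\frac{N-2}{2}(q+2-2^*)})$ is an immediate translation of Lemma \ref{upper blow-up estimate } ($\varepsilon\le C\mu_\varepsilon^{-\frac{N-2}{2}(q+2-2^*)}$) via $\lambda_\varepsilon\sim\mu_\varepsilon$; one could alternatively re-derive it directly from the Pohožaev identity \eqref{Pohazaev-p-varepsion} as in that Lemma. The weak-$*$ convergence $|\nabla u_\varepsilon|^2\stackrel{*}{\rightharpoonup}S^{N/2}\delta_{x_0}$ follows from the concentration–compactness lemma of Lions: $\|\nabla u_\varepsilon\|_2^2\to S^{N/2}$ (Lemma \ref{limit of S-varepsilon} and the Nehari identity), $u_\varepsilon\to 0$ in $L^p(\Omega)$ for $p<2^*$ and a.e., so the measures $|\nabla u_\varepsilon|^2$ and $|u_\varepsilon|^{2^*}$ (each of total mass $\to S^{N/2}$) can only concentrate at points, and the subcritical perturbation term contributes nothing in the limit; the decay bound of Corollary \ref{upper estimate of u-varepsilon} localizes the concentration at the single point $x_\varepsilon\to x_0$, while $u_\varepsilon\to 0$ in $C^1_{loc}(\Omega\setminus\{x_0\})$ (which itself follows from the same decay estimate and elliptic regularity) rules out any mass away from $x_0$.

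The main obstacle, and the only point requiring genuine care, is making the Lyapunov–Schmidt decomposition rigorous in the present mountain-pass setting: unlike the $q=2$ case one cannot appeal to a constrained minimization characterization, so one must verify that $u_\varepsilon$ is an approximate critical point in the precise quantitative sense ($I_\varepsilon'(u_\varepsilon)=0$ exactly, but with $I_\varepsilon\to\frac1N S^{N/2}$) that powers the implicit-function-theorem argument, and that the error term genuinely lands in $E_{\lambda_\varepsilon,a_\varepsilon}$ rather than merely being small. I expect this to be handled by the standard argument verbatim (the subcritical term $\varepsilon u^{q-1}$ only helps, being lower order), citing \cite{Bahri1988CPAM,Rey1990}; the rest of the statement is bookkeeping over the estimates already established in this section.
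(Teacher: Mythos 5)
Your proposal is correct and follows essentially the same route as the paper: Proposition \ref{limit of v-varepsilon} gives $u_{\varepsilon}=\alpha_{N}U_{\tilde\mu_{\varepsilon},x_{\varepsilon}}+o(1)$ in $\mathcal{D}^{1,2}(\R^{N})$, the smallness of $\psi_{\lambda,a}$ (Lemma \ref{estimate of U-lambda-a and psi-lambda-a 3}) places $u_{\varepsilon}$ near the manifold of projected bubbles, and \cite[Proposition 7]{Bahri1988CPAM} then yields the orthogonal decomposition, after which the parameters are identified by comparing the two representations (the paper does this via the inner product $\int\nabla U_{\tilde\mu_{\varepsilon},x_{\varepsilon}}\cdot\nabla PU_{\lambda_{\varepsilon},a_{\varepsilon}}$ and Lemma \ref{estimate of two different bubbles}) and \eqref{decomposition of u-varepsilon 3} follows from the blow-up rate and the decomposition exactly as you say. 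The only remark worth making is that the "main obstacle" you flag at the end is not actually an obstacle: the Bahri--Coron projection is a purely geometric statement about functions close to the bubble manifold and needs no Palais--Smale or critical-point structure, so the mountain-pass versus constrained-minimization distinction plays no role here.
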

\begin{proof}
From Proposition \ref{limit of v-varepsilon}, we know that if we let $u_{\varepsilon}(x):=0$, when $x\in \R^{N}\setminus \Omega$, then there exist $\tilde{\mu}_{\varepsilon}>0$, $x_{\varepsilon}\in\Omega$ and $\delta_{\varepsilon}\in \mathcal{D}^{1,2}(\R^{N})$ such that 
\begin{equation}\label{decomposition of u-varepsilon proof 1}
u_{\varepsilon}=\alpha_{N}U_{\tilde{\mu}_{\varepsilon},x_{\varepsilon}}+\delta_{\varepsilon}, \text{~in~}\mathcal{D}^{1,2}(\R^{N}),
\end{equation}
with
\begin{equation}\label{decomposition of u-varepsilon proof 2}
   \tilde{\mu}_{\varepsilon}=(N(N-2))^{-\frac{1}{2}}\mu_{\varepsilon}\to\infty,\quad x_{\varepsilon}\to x_{0}\in{\Omega},  \quad\|\delta_{\varepsilon}\|_{\mathcal{D}^{1,2}(\R^{N})}\to 0,
\end{equation}
and
\begin{equation}\label{decomposition of u-varepsilon proof 3}
   \tilde{\mu}_{\varepsilon}d(x_{\varepsilon}, \partial\Omega)\to \infty. 
\end{equation}
Then by Lemma \ref{estimate of U-lambda-a and psi-lambda-a 3}, we have
\begin{equation}\label{decomposition of u-varepsilon proof 4}
    \frac{u_{\varepsilon}}{a_{N}}\to PU_{\tilde{\mu}_{\varepsilon},x_{\varepsilon}}, \text{~in~} H^{1}_{0}(\Omega),
\end{equation}
which together with  \cite[Proposition 7]{Bahri1988CPAM} show that there exist $\alpha_{\varepsilon}\in \R^{+}$, $\lambda_{\varepsilon}\in \R^{+}$, $a_{\varepsilon}\in \Omega$ and $w_{\varepsilon}\in E_{\lambda_{\varepsilon},a_{\varepsilon}}$ such that
\begin{equation}\label{decomposition of u-varepsilon proof 5}
u_{\varepsilon}=\alpha_{\varepsilon}PU_{\lambda_{\varepsilon},a_{\varepsilon}}+w_{\varepsilon}, \text{~in~}{H}^{1}_{0}(\Omega).  
\end{equation}
with $\alpha_{\varepsilon}$ bing  bounded and $\|w_{\varepsilon}\|_{H^{1}_{0}(\Omega)}\to 0$ as $\varepsilon\to 0$. Thus by (\ref{decomposition of u-varepsilon proof 1}), (\ref{decomposition of u-varepsilon proof 5}) and Lemma \ref{estimate of U-lambda-a and psi-lambda-a 3}, we deduce that
\begin{equation}
    \alpha_{\varepsilon}\to \alpha_{N},
\end{equation}
and
\begin{equation}\label{decomposition of u-varepsilon proof 6}
    \alpha_{N}\alpha_{\varepsilon}\int_{\Omega}\nabla U_{\tilde{\mu}_{\varepsilon},x_{\varepsilon}}\nabla PU_{\lambda_{\varepsilon},a_{\varepsilon}}=S^{\frac{N}{2}}+o(1).
\end{equation}
Thus  by using Green formula, Lemma \ref{estimate of U-lambda-a and psi-lambda-a 3} and Lemma \ref{estimate of two different bubbles}, we have
\begin{equation}
\begin{aligned}
\int_{\Omega}\nabla U_{\tilde{\mu}_{\varepsilon},x_{\varepsilon}}\nabla PU_{\lambda_{\varepsilon},a_{\varepsilon}}&=N(N-2)\int_{\Omega}U_{\tilde{\mu}_{\varepsilon},x_{\varepsilon}}^{2^*-1}PU_{\lambda_{\varepsilon},a_{\varepsilon}}  \\
&\leq N(N-2)\int_{\Omega}U_{\tilde{\mu}_{\varepsilon},x_{\varepsilon}}^{2^*-1}U_{\lambda_{\varepsilon},a_{\varepsilon}}  \\
&=O\left(\frac{\lambda_{\varepsilon}}{\tilde{\mu}_{\varepsilon}}+\frac{\tilde{\mu}_{\varepsilon}}{\lambda_{\varepsilon}}+\tilde{\mu}_{\varepsilon}\lambda_{\varepsilon}|x_{\varepsilon
}-a_{\varepsilon}|\right)^{-\frac{N-2}{2}}.
\end{aligned}
\end{equation}
Therefore, by (\ref{decomposition of u-varepsilon proof 6}), (\ref{decomposition of u-varepsilon proof 3}) and Proposition \ref{blow-up rate}, we obtain that
\begin{equation}
  \varepsilon=O\left(\frac{1}{\lambda_{\varepsilon}^{\frac{N-2}{2}(q+2-2^*)}}\right), \quad \lambda_{\varepsilon}\to \infty,\quad a_{\varepsilon}\to x_{0}\in{\Omega}, \quad\lambda_{\varepsilon}d_{\varepsilon}\to\infty,  \quad \text{~as~}\varepsilon\to 0.
\end{equation}
Finally, the second term in (\ref{decomposition of u-varepsilon 3}) can also be obtained by decomposition (\ref{decomposition of u-varepsilon proof 5}) and Lemma \ref{estimate of U-lambda-a and psi-lambda-a 3}. Hence, we complete the proof.
\end{proof}

Next, we consider the $H^{1}_{0}(\Omega)$ estimate for the perturbation part $w_{\varepsilon}$.

\begin{Lem}\label{perturbation estimate of w-varepsilon}
For small enough $\varepsilon$, we have the following $H^{1}_{0}(\Omega)$ estimate for the perturbation part $w_{\varepsilon}$

 \begin{equation}\label{perturbation estimate of w-varepsilon 1}
 \|w_{\varepsilon}\|_{H^{1}_{0}(\Omega)}=\begin{cases}
     O\left(\frac{1}{\lambda_{\varepsilon}}\right), &\text{~~if~~} N=3,\\

         O\left(\frac{1}{\lambda_{\varepsilon}}\right), &\text{~~if~~} N=4 \text{~and~} q\in (2,\frac{5}{2}],\\
     O\left(\frac{1}{\lambda_{\varepsilon}^{2}}\right), &\text{~~if~~} N=4 \text{~and~} q\in (\frac{5}{2},4),\\
     
      O\left(\frac{1}{\lambda_{\varepsilon}^{5/2}}\right), &\text{~~if~~} N=5 \text{~and~} q\in (2,\frac{13}{6}],\\
      O\left(\frac{1}{\lambda_{\varepsilon}^{3}}\right), &\text{~~if~~} N=5 \text{~and~} q\in (\frac{13}{6},\frac{10}{3}),\\
O\left(\frac{(\ln\lambda_{\varepsilon})^{2/3}}{\lambda_{\varepsilon}^{4}}\right), &\text{~~if~~} N=6,\\
      O\left(\frac{1}{\lambda_{\varepsilon}^{(N+2)/2}}\right), &\text{~~if~~} N\geq 7.\\
 \end{cases}   
\end{equation}
\end{Lem}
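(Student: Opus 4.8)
## Proof proposal for Lemma 2.15 (the $H^1_0$ estimate for $w_\varepsilon$)

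The plan is to exploit the equation satisfied by $w_\varepsilon$ together with the invertibility of the linearized operator on the space $E_{\lambda_\varepsilon,a_\varepsilon}$ (the standard coercivity estimate away from the kernel of $-\Delta - (2^*-1)U^{2^*-2}$). Starting from the decomposition $u_\varepsilon = \alpha_\varepsilon PU_{\lambda_\varepsilon,a_\varepsilon} + w_\varepsilon$ of Lemma \ref{decomposition of u-varepsilon}, I would plug it into \eqref{p-varepsion} and project onto $E_{\lambda_\varepsilon,a_\varepsilon}$. This gives an equation of the schematic form
\begin{equation*}
L_\varepsilon w_\varepsilon = N_\varepsilon(w_\varepsilon) + R_\varepsilon,
\end{equation*}
where $L_\varepsilon$ is the projected linearized operator (uniformly invertible on $E_{\lambda_\varepsilon,a_\varepsilon}$ with bounded inverse, by the nondegeneracy of the Aubin--Talenti bubble), $N_\varepsilon(w_\varepsilon)$ collects the superlinear/nonlinear remainders (at least quadratic in $w_\varepsilon$, hence negligible by a fixed-point / absorption argument once $\|w_\varepsilon\|$ is known to be small, which it is by Lemma \ref{decomposition of u-varepsilon}), and $R_\varepsilon$ is the \emph{error term}
\begin{equation*}
R_\varepsilon = \Delta(PU_{\lambda_\varepsilon,a_\varepsilon}) + \alpha_\varepsilon^{2^*-1}(PU_{\lambda_\varepsilon,a_\varepsilon})^{2^*-1} + \varepsilon\,\alpha_\varepsilon^{q-1}(PU_{\lambda_\varepsilon,a_\varepsilon})^{q-1} + \ (\text{cross terms}),
\end{equation*}
projected onto $E_{\lambda_\varepsilon,a_\varepsilon}$. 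The whole estimate then reduces to bounding $\|R_\varepsilon\|_{H^{-1}}$, since $\|w_\varepsilon\|_{H^1_0} \lesssim \|R_\varepsilon\|_{H^{-1}}$.

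The error $R_\varepsilon$ splits into two genuinely different contributions. The first is the \emph{geometric error}, $\Delta(PU_{\lambda_\varepsilon,a_\varepsilon}) + (PU_{\lambda_\varepsilon,a_\varepsilon})^{2^*-1}$ together with the discrepancy between $PU^{2^*-1}$ and $U^{2^*-1}$: by the standard projection estimates recalled in Appendix \ref{Projection of bubbles} (i.e.\ $\varphi_{\lambda,a} := U_{\lambda,a} - PU_{\lambda,a} = O(\lambda^{-(N-2)/2} R(a)^{1/2}\cdot\ \ldots)$ and the related $L^p$ bounds), this contributes a term of size $O(\lambda_\varepsilon^{-(N-2)/2})$ in $H^{-1}$ — this is what produces the $\lambda_\varepsilon^{-(N+2)/2}$ rate for $N\ge 7$ after one uses the sharper duality exponent, and the logarithmic correction in $N=6$. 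The second is the \emph{subcritical-perturbation error}, $\varepsilon\,(PU_{\lambda_\varepsilon,a_\varepsilon})^{q-1}$, whose $H^{-1}$-norm is $\varepsilon$ times an integral of a power of the bubble; computing $\|U_{\lambda,a}^{q-1}\|_{L^{2N/(N+2)}}$ (a pure scaling computation, with the three regimes $q \gtrless \frac{2N}{N-2}\cdot$something governing whether the integral converges, diverges logarithmically, or is dominated by the tail at scale $\lambda_\varepsilon$) gives $\varepsilon$ times a power of $\lambda_\varepsilon$, and then one substitutes the blow-up relation $\varepsilon = O(\lambda_\varepsilon^{-\frac{N-2}{2}(q+2-2^*)})$ from \eqref{decomposition of u-varepsilon 3} / Proposition \ref{blow-up rate}. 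Comparing the two contributions term by term in each dimension, and noting that for small $N$ (namely $N=3,4,5$) the $\varepsilon$-term dominates while for $N=6$ the two are comparable and for $N\ge7$ the geometric term dominates, yields exactly the case list in \eqref{perturbation estimate of w-varepsilon 1}; the internal thresholds $q=\frac52$ for $N=4$, $q=\frac{13}{6}$ for $N=5$ are precisely the $q$-values at which the $\varepsilon$-driven rate crosses the geometric rate $\lambda_\varepsilon^{-2}$, resp.\ $\lambda_\varepsilon^{-3}$.

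For the nonlinear remainder $N_\varepsilon(w_\varepsilon)$ I would use that $PU_{\lambda_\varepsilon,a_\varepsilon}$ is bounded in $L^{2^*}$ and $w_\varepsilon\to 0$ in $H^1_0$, so that the Sobolev / Hölder estimates on terms like $\big|(PU+w)^{2^*-1}-(PU)^{2^*-1}-(2^*-1)(PU)^{2^*-2}w\big|$ give a bound $o(\|w_\varepsilon\|_{H^1_0}) + O(\|w_\varepsilon\|_{H^1_0}^{\min\{2,2^*-1\}})$, which is absorbed into the left-hand side for $\varepsilon$ small (for $N\ge 6$, where $2^*-1 < 2$, one must be slightly careful and keep the sublinear power, but it is still absorbable since the coefficient is $o(1)$); the $\varepsilon\,((PU+w)^{q-1}-(PU)^{q-1})$ piece is handled the same way and is lower order. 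I expect the \textbf{main obstacle} to be the bookkeeping in the geometric error for the \emph{low dimensions} $N=3,4,5$: there the naive estimate of $\|R_\varepsilon^{\mathrm{geom}}\|_{H^{-1}}$ is only $O(\lambda_\varepsilon^{-(N-2)/2})$, which is \emph{worse} than the claimed rate, so one cannot simply take $H^{-1}$ norms — instead one must test $R_\varepsilon$ against $w_\varepsilon$ itself and exploit the orthogonality $w_\varepsilon\in E_{\lambda_\varepsilon,a_\varepsilon}$ (which kills the projection of $\Delta PU + (PU)^{2^*-1}$ against the bubble and its derivatives) to gain the extra power, i.e.\ one runs the estimate $\|w_\varepsilon\|^2 = \langle R_\varepsilon, w_\varepsilon\rangle + \langle N_\varepsilon(w_\varepsilon), w_\varepsilon\rangle$ and bounds $\langle R_\varepsilon, w_\varepsilon\rangle$ using the finer pointwise description of $\varphi_{\lambda_\varepsilon,a_\varepsilon}$ and of $PU^{2^*-1}-U^{2^*-1}$ near and far from $a_\varepsilon$. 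This is exactly the type of refined argument flagged in the introduction as being ``related to the choice of $q$'', and it is where the dimension-dependent case split is forced.
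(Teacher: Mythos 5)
Your proposal is correct and, after the pivot you make in the last paragraph (testing the equation against $w_\varepsilon$ and using the orthogonality $w_\varepsilon\in E_{\lambda_\varepsilon,a_\varepsilon}$), it lands on essentially the computation the paper carries out; the main difference is that your framework is heavier than what is actually needed. The paper never invokes the coercivity or uniform invertibility of the projected linearized operator $L_\varepsilon$: it multiplies \eqref{p-varepsion} by $w_\varepsilon$ to get $\int_\Omega|\nabla w_\varepsilon|^2=\int_\Omega(u_\varepsilon^{2^*-1}+\varepsilon u_\varepsilon^{q-1})w_\varepsilon$, splits the right-hand side with the crude inequality $(a+b)^t\le C(a^t+b^t)$ into $\int PU_{\lambda_\varepsilon,a_\varepsilon}^{2^*-1}w_\varepsilon$, $\int|w_\varepsilon|^{2^*}$, $\varepsilon\int PU_{\lambda_\varepsilon,a_\varepsilon}^{q-1}w_\varepsilon$ and $\varepsilon\int|w_\varepsilon|^{q}$, and absorbs the pure-$w_\varepsilon$ terms using only Sobolev and $\|w_\varepsilon\|_{H^1_0}\to0$; since no Taylor expansion of the nonlinearity is performed, your concern about the sublinear power $2^*-1<2$ when $N\ge6$ never arises. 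The two remaining terms are exactly your two error contributions, treated as you predict: the orthogonality kills $\int U_{\lambda_\varepsilon,a_\varepsilon}^{2^*-1}w_\varepsilon$ in \emph{all} dimensions (not only $N\le5$), and the pointwise bound $0\le U^{2^*-1}-PU^{2^*-1}\lesssim\psi_{\lambda,a}U^{2^*-2}$ plus Lemma \ref{estimate of U-lambda-a and psi-lambda-a 3} gives $(\lambda_\varepsilon d_\varepsilon)^{-(N-2)}$ for $N\le5$, the logarithm for $N=6$ and $(\lambda_\varepsilon d_\varepsilon)^{-(N+2)/2}$ for $N\ge7$, while the subcritical term is bounded by $\varepsilon\|U_{\lambda_\varepsilon,a_\varepsilon}^{q-1}\|_{L^{p}}\|w_\varepsilon\|_{H^1_0}$ with $p$ chosen according to $(N,q)$ and then $\varepsilon=O(\lambda_\varepsilon^{-\frac{N-2}{2}(q+2-2^*)})$ is substituted. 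One small correction to your heuristic for the thresholds: $q=\tfrac52$ for $N=4$ and $q=\tfrac{13}{6}$ for $N=5$ are the values at which $(N-2)(q-1)\cdot\tfrac{2N}{N+2}=N$, i.e.\ where $U_{\lambda,a}^{q-1}$ ceases to lie in the optimal dual space $L^{2N/(N+2)}$ uniformly in $\lambda$ and one is forced to a larger exponent $p$ with a worse rate; above the threshold the $\varepsilon$-contribution \emph{equals} the geometric rate $\lambda_\varepsilon^{-(N-2)}$ rather than dropping below it.
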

\begin{proof}
Multiplying (\ref{p-varepsion}) by $w_{\varepsilon}$ and integrating by parts, note the $w_{\varepsilon}\in E_{\lambda_{\varepsilon},a_{\varepsilon}}$, then we have
\begin{equation}\label{perturbation estimate of w-varepsilon proof 1}
    \int_{\Omega}|\nabla w_{\varepsilon}|^{2}dx=\int_{\Omega}(u_{\varepsilon}^{2^*-1}+\varepsilon u_{\varepsilon}^{q-1})w_{\varepsilon}dx.
\end{equation}
Hence, by decomposition $u_{\varepsilon}=\alpha_{\varepsilon}PU_{\lambda_{\varepsilon},a_{\varepsilon}}+w_{\varepsilon}$ and inequality $(a+b)^{t}\leq C(a^{t}+b^{t})$ for some $C>0$ with $a, b>0, t>1$, we obtain that
\begin{equation}\label{perturbation estimate of w-varepsilon proof 2}
\begin{aligned}
  \int_{\Omega}|\nabla w_{\varepsilon}|^{2}dx&\leq C_{1}\int_{\Omega}PU_{\lambda_{\varepsilon},a_{\varepsilon}}^{2^*-1}w_{\varepsilon}dx+C_{2}\int_{\Omega}w_{\varepsilon}^{2^*}dx+C_{3}\varepsilon\int_{\Omega}PU_{\lambda_{\varepsilon},a_{\varepsilon}}^{q-1}w_{\varepsilon}dx+C_{4}\varepsilon\int_{\Omega}w_{\varepsilon}^{q}dx \\ 
  &:=I_{1}+I_{2}+I_{3}+I_{4}.
\end{aligned}
\end{equation}

Now, we estimate the four terms on the right hand side separately. First, since $w_{\varepsilon}\in E_{\lambda_{\varepsilon},a_{\varepsilon}}$ and $-\Delta PU_{\lambda_{\varepsilon},a_{\varepsilon}}=N(N-2) U_{\lambda_{\varepsilon},a_{\varepsilon}}^{2^*-1}$, we deduce
\begin{equation}\label{perturbation estimate of w-varepsilon proof 3}
\begin{aligned}
I_{1}&=C_{1}\int_{\Omega}(PU_{\lambda_{\varepsilon},a_{\varepsilon}}^{2^*-1}-U_{\lambda_{\varepsilon},a_{\varepsilon}}^{2^*-1})w_{\varepsilon}dx \\
&=C_{1}\int_{B(a_{\varepsilon},d_{\varepsilon})}(PU_{\lambda_{\varepsilon},a_{\varepsilon}}^{2^*-1}-U_{\lambda_{\varepsilon},a_{\varepsilon}}^{2^*-1})w_{\varepsilon}dx+O\left(\int_{\Omega\setminus B(a_{\varepsilon},d_{\varepsilon})}U_{\lambda_{\varepsilon},a_{\varepsilon}}^{2^*-1}|w_{\varepsilon}|dx\right).
\end{aligned}
\end{equation}
From Lemma \ref{estimate of U-lambda-a and psi-lambda-a 1}-Lemma \ref{estimate of U-lambda-a and psi-lambda-a 3}, H\"older inequality and Sobolev inequaity, we have
\begin{equation}\label{perturbation estimate of w-varepsilon proof 4}
\begin{aligned}
O\left(\int_{\Omega\setminus B(a_{\varepsilon},d_{\varepsilon})}U_{\lambda_{\varepsilon},a_{\varepsilon}}^{2^*-1}|w_{\varepsilon}|dx\right)&=O\left(\left(\int_{\Omega\setminus B(a_{\varepsilon},d_{\varepsilon})}U_{\lambda_{\varepsilon},a_{\varepsilon}}^{2^*}dx\right)^{\frac{2^*-1}{2^*}}\|w_{\varepsilon}\|_{H^{1}_{0}}\right)\\
&=O\left(\frac{1}{(\lambda_{\varepsilon}d_{\varepsilon})^{\frac{N+2}{2}}}\|w_{\varepsilon}\|_{H^{1}_{0}}\right),
\end{aligned}
\end{equation}
and
\begin{equation}\label{perturbation estimate of w-varepsilon proof 5}
    \begin{aligned}
     \int_{B(a_{\varepsilon},d_{\varepsilon})}(PU_{\lambda_{\varepsilon},a_{\varepsilon}}^{2^*-1}-U_{\lambda_{\varepsilon},a_{\varepsilon}}^{2^*-1})w_{\varepsilon}dx&=O\left(\int_{B(a_{\varepsilon},d_{\varepsilon})}\psi_{\lambda_{\varepsilon},a_{\varepsilon}}U_{\lambda_{\varepsilon},a_{\varepsilon}}^{2^*-2}|w_{\varepsilon}|dx\right)\\
     &=O\left(\|\psi_{\lambda_{\varepsilon},a_{\varepsilon}}\|_{L^{\infty}}\left(\int_{B(a_{\varepsilon},d_{\varepsilon})}U_{\lambda_{\varepsilon},a_{\varepsilon}}^{\frac{(2^*-2)2^*}{(2^*-1)}}dx\right)^{\frac{2^*-1}{2^*}}\|w_{\varepsilon}\|_{H^{1}_{0}}\right)\\
     &=\begin{cases}
         O\left(\frac{1}{(\lambda_{\varepsilon}d_{\varepsilon})^{N-2}}\|w_{\varepsilon}\|_{H^{1}_{0}}\right), &\text{~~if~~} N\leq 5,\\
          O\left(\frac{(\ln(\lambda_{\varepsilon}d_{\varepsilon}))^{\frac{2}3}}{(\lambda_{\varepsilon}d_{\varepsilon})^{N-2}}\|w_{\varepsilon}\|_{H^{1}_{0}}\right), &\text{~~if~~} N=6,\\
          O\left(\frac{1}{(\lambda_{\varepsilon}d_{\varepsilon})^{\frac{N+2}{2}}}\|w_{\varepsilon}\|_{H^{1}_{0}}\right), &\text{~~if~~} N\geq 7.
     \end{cases}
    \end{aligned}
\end{equation}
Thus by Cauchy inequality with $\varepsilon$, we have
\begin{equation}\label{perturbation estimate of w-varepsilon proof 6}
    I_{1}\leq \begin{cases}
         O\left(\frac{1}{(\lambda_{\varepsilon}d_{\varepsilon})^{2(N-2)}}\right), &\text{~~if~~} N\leq 5,\\
          O\left(\frac{(\ln(\lambda_{\varepsilon}d_{\varepsilon}))^{\frac{4}3}}{(\lambda_{\varepsilon}d_{\varepsilon})^{2(N-2)}}\right), &\text{~~if~~} N=6,\\
          O\left(\frac{1}{(\lambda_{\varepsilon}d_{\varepsilon})^{{N+2}}}\right), &\text{~~if~~} N\geq 7,
    \end{cases}+\frac{1}{8}\int_{\Omega}|\nabla w_{\varepsilon}|^{2}.
\end{equation}

Next, for sufficiently small $\varepsilon>0$, we have
\begin{equation}\label{perturbation estimate of w-varepsilon proof 7}
    I_{2}\leq C\left(\int_{\Omega}|\nabla w_{\varepsilon}|^{2}dx\right)^{\frac{2^*}{2}}\leq \frac{1}{8}\int_{\Omega}|\nabla w_{\varepsilon}|^{2},
\end{equation}
and
\begin{equation}\label{perturbation estimate of w-varepsilon proof 8}
    I_{4}\leq C\varepsilon\left(\int_{\Omega}|\nabla w_{\varepsilon}|^{2}dx\right)^{\frac{q}{2}}\leq \frac{1}{8}\int_{\Omega}|\nabla w_{\varepsilon}|^{2}.
\end{equation}

Finally, we estimate $I_{3}$. First, from H\"older inequality and Sobolev embedding, we obtain that
\begin{equation}\label{perturbation estimate of w-varepsilon proof 9}
\begin{aligned}
  \int_{\Omega}(PU_{\lambda_{\varepsilon},a_{\varepsilon}})^{q-1}w_{\varepsilon}dx&\leq \|PU_{\lambda_{\varepsilon},a_{\varepsilon}}^{q-1}\|_{L^{p}}\|w_{\varepsilon}\|_{L^{p_{1}}}\leq C \|U_{\lambda_{\varepsilon},a_{\varepsilon}}^{q-1}\|_{L^{p}}\|w_{\varepsilon}\|_{H^{1}_{0}}.
\end{aligned}
\end{equation}
where $\frac{1}{p}+\frac{1}{p_{1}}=1$ and $p\in [\frac{2N}{N+2},+\infty]$. Moreover, there exists constants $C,R>0$ such that
\begin{equation}\label{perturbation estimate of w-varepsilon proof 10}
\begin{aligned}
 \left(\int_{\Omega}U_{\lambda_{\varepsilon},a_{\varepsilon}}^{(q-1)p}dx\right)^{\frac{1}{p}}&=\lambda_{\varepsilon}^{\frac{(N-2)(q-1)}{2}-\frac{N}{p}}\left(\int_{\lambda_{\varepsilon}(\Omega-a_{\varepsilon})}\left(\frac{1}{1+|y|^{2}}\right)^{\frac{(N-2)(q-1)p}{2}}dy\right)^{\frac{1}{p}}\\
 &\leq C\lambda_{\varepsilon}^{\frac{(N-2)(q-1)}{2}-\frac{N}{p}}\left(\int_{0}^{\lambda_{\varepsilon}R}\frac{r^{N-1}}{(1+r^2)^{\frac{(N-2)(q-1)p}{2}}}dr\right)^{\frac{1}{p}}.
\end{aligned}
\end{equation}
Next, we will choose the appropriate $p$ used in \eqref{perturbation estimate of w-varepsilon proof 9} to estimate term $I_{3}$, which  depends on the selection of $(N,q)$. First, when $(N,q)$ satisfy the following condition
\begin{equation}\label{perturbation estimate of w-varepsilon proof 11}
    \begin{cases}
        q\in (4,6),&\quad \text{~~if~~} N= 3,\\
        q\in (\frac{5}{2},4), &\quad \text{~~if~~} N= 4,\\
        q\in (\frac{13}{6},\frac{10}{3}), &\quad \text{~~if~~} N= 5,\\
        q\in (2,2^*), &\quad \text{~~if~~} N\geq 6,\\
    \end{cases}
\end{equation}
we take $p=\frac{2N}{N+2}$. Then $(N-2)(q-1)p>N$ and by \eqref{perturbation estimate of w-varepsilon proof 9} and \eqref{perturbation estimate of w-varepsilon proof 10}, we can obtain that
\begin{equation}\label{perturbation estimate of w-varepsilon proof 12}
  \int_{\Omega}(PU_{\lambda_{\varepsilon},a_{\varepsilon}})^{q-1}w_{\varepsilon}dx\leq C\frac{1}{\lambda_{\varepsilon}^{N-\frac{N-2}{2}q}}\|w_{\varepsilon}\|_{H^{1}_{0}}.  
\end{equation}
On the other hand, when $N=4$ and $q\in(2,\frac{5}{2}]$, we take $p=2$. Thus $(N-2)(q-1)p>N$ and by \eqref{perturbation estimate of w-varepsilon proof 9} and \eqref{perturbation estimate of w-varepsilon proof 10}
\begin{equation}\label{perturbation estimate of w-varepsilon proof 13}
   \int_{\Omega}(PU_{\lambda_{\varepsilon},a_{\varepsilon}})^{q-1}w_{\varepsilon}dx\leq C\frac{1}{\lambda_{\varepsilon}^{3-q}} \|w_{\varepsilon}\|_{H^{1}_{0}}. 
\end{equation}
Finally, when $N=5$ and $q\in(2,\frac{13}{6}]$, we take $p=\frac{5}{3}$. Similarly, we can conclude that
\begin{equation}\label{perturbation estimate of w-varepsilon proof 14}
  \int_{\Omega}(PU_{\lambda_{\varepsilon},a_{\varepsilon}})^{q-1}w_{\varepsilon}dx\leq C\frac{1}{\lambda_{\varepsilon}^{\frac{9-3q}{2}}}\|w_{\varepsilon}\|_{H^{1}_{0}}.  
\end{equation}
Now, combining the above argument with Cauchy inequality, the term $I_{3}$ can be estimated as
\begin{equation}\label{perturbation estimate of w-varepsilon proof 15}
\begin{aligned}
     I_{3}=\begin{cases}
        O\left(\frac{\varepsilon^{2}}{\lambda_{\varepsilon}^{6-q}}\right), &\text{~~if~~} N=3,\\

         O\left(\frac{\varepsilon^{2}}{\lambda_{\varepsilon}^{6-2q}}\right), &\text{~~if~~} N=4 \text{~and~} q\in (2,\frac{5}{2}],\\
     O\left(\frac{\varepsilon^{2}}{\lambda_{\varepsilon}^{8-2q}}\right), &\text{~~if~~} N=4, \text{~and~} q\in (\frac{5}{2},4),\\
     
      O\left(\frac{\varepsilon^{2}}{\lambda_{\varepsilon}^{9-3q}}\right), &\text{~~if~~} N=5 \text{~and~} q\in (2,\frac{13}{6}],\\
      O\left(\frac{\varepsilon^{2}}{\lambda_{\varepsilon}^{10-3q}}\right), &\text{~~if~~} N=5 \text{~and~} q\in (\frac{13}{6},\frac{10}{3}),\\
O\left(\frac{\varepsilon^{2}}{\lambda_{\varepsilon}^{12-4q}}\right), &\text{~~if~~} N=6,\\
      O\left(\frac{\varepsilon^{2}}{\lambda_{\varepsilon}^{2N-(N-2)q}}\right), &\text{~~if~~} N\geq 7,\\ 
     \end{cases}+\frac{1}{8}\int_{\Omega}|\nabla w_{\varepsilon}|^{2}.
\end{aligned}
\end{equation}

Putting the estimates (\ref{perturbation estimate of w-varepsilon proof 6}), (\ref{perturbation estimate of w-varepsilon proof 7}), (\ref{perturbation estimate of w-varepsilon proof 8}) and (\ref{perturbation estimate of w-varepsilon proof 15}) together, we deduce from (\ref{perturbation estimate of w-varepsilon proof 2}) that
\begin{equation}
 \|w_{\varepsilon}\|_{H^{1}_{0}(\Omega)}=\begin{cases}
     O\left(\frac{1}{\lambda_{\varepsilon}d_{\varepsilon}}+\frac{\varepsilon}{\lambda_{\varepsilon}^{3-\frac{q}{2}}}\right), &\text{~~if~~} N=3,\\

         O\left(\frac{1}{(\lambda_{\varepsilon}d_{\varepsilon})^{2}}+\frac{\varepsilon}{\lambda_{\varepsilon}^{3-q}}\right), &\text{~~if~~} N=4 \text{~and~} q\in (2,\frac{5}{2}],\\
     O\left(\frac{1}{(\lambda_{\varepsilon}d_{\varepsilon})^{2}}+\frac{\varepsilon}{\lambda_{\varepsilon}^{4-q}}\right), &\text{~~if~~} N=4 \text{~and~} q\in (\frac{5}{2},4),\\
     
      O\left(\frac{1}{(\lambda_{\varepsilon}d_{\varepsilon})^{3}}+\frac{\varepsilon}{\lambda_{\varepsilon}^{\frac{9}{2}-\frac{3}{2}q}}\right), &\text{~~if~~} N=5 \text{~and~} q\in (2,\frac{13}{6}],\\
      O\left(\frac{1}{(\lambda_{\varepsilon}d_{\varepsilon})^{3}}+\frac{\varepsilon}{\lambda_{\varepsilon}^{5-\frac{3}{2}q}}\right), &\text{~~if~~} N=5 \text{~and~} q\in (\frac{13}{6},\frac{10}{3}),\\
O\left(\frac{(\ln(\lambda_{\varepsilon}d_{\varepsilon}))^{\frac{2}{3}}}{(\lambda_{\varepsilon}d_{\varepsilon})^{4}}+\frac{\varepsilon}{\lambda_{\varepsilon}^{6-2q}}\right), &\text{~~if~~} N=6,\\
      O\left(\frac{1}{(\lambda_{\varepsilon}d_{\varepsilon})^{\frac{N+2}{2}}}+\frac{\varepsilon}{\lambda_{\varepsilon}^{N-\frac{N-2}{2}q}}\right), &\text{~~if~~} N\geq 7.\\
 \end{cases}   
\end{equation}
Due to the fact
\begin{equation}
        \varepsilon=O\left(\frac{1}{\lambda_{\varepsilon}^{\frac{N-2}{2}(q+2-2^*)}}\right)\text{~~and~~} a_{\varepsilon}\to x_{0}\in\Omega,
    \end{equation}
thus, (\ref{perturbation estimate of w-varepsilon 1}) follows.
\end{proof}

Finally, we can obtain a precise asymptotic expansion on least energy $S_{\varepsilon}$.
\begin{Prop}\label{energy estimate of S-varepsilon}
As $\varepsilon\to 0 $, we have
\begin{equation}
    S_{\varepsilon}=\frac{1}{N}S^{\frac{N}{2}}-\left(\frac{1}{2^*-q}-\frac{1}{2}\right)\frac{\alpha_{N}^{22^*-2}}{N^{2}}\omega_{N}^{2}R(a_{\varepsilon})\frac{1}{\lambda_{\varepsilon}^{N-2}}+o\left(\frac{1}{\lambda_{\varepsilon}^{N-2}}\right). 
\end{equation}
\end{Prop}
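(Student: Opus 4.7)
The plan is to compute $S_\varepsilon = I_\varepsilon(u_\varepsilon)$ by substituting the orthogonal decomposition $u_\varepsilon = \alpha_\varepsilon PU_{\lambda_\varepsilon, a_\varepsilon} + w_\varepsilon$ from Lemma~\ref{decomposition of u-varepsilon}. Since $u_\varepsilon$ lies on the Nehari manifold, I would first rewrite
$$S_\varepsilon = \frac{1}{N}\int_\Omega u_\varepsilon^{2^*}\,dx - \frac{q-2}{2q}\,\varepsilon\int_\Omega u_\varepsilon^q\,dx,$$
so that the leading term $\tfrac{1}{N}S^{N/2}$ comes entirely from $\tfrac{1}{N}\int u_\varepsilon^{2^*}$, while the $\lambda_\varepsilon^{-(N-2)}$-correction will arise from two different sources that both end up proportional to $R(a_\varepsilon)$ and must be combined carefully.

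For the critical piece, I would Taylor-expand $(\alpha_\varepsilon PU + w_\varepsilon)^{2^*}$. The linear term $2^*\alpha_\varepsilon^{2^*-1}\int (PU)^{2^*-1}w_\varepsilon$ is nearly killed by the orthogonality $\int U_{\lambda_\varepsilon, a_\varepsilon}^{2^*-1} w_\varepsilon = 0$ (a consequence of $w_\varepsilon \in E_{\lambda_\varepsilon, a_\varepsilon}$ together with $-\Delta PU = N(N-2)U^{2^*-1}$); what survives is the projection-defect cross-term $\int \bigl((PU)^{2^*-1} - U^{2^*-1}\bigr)w_\varepsilon$, which Lemma~\ref{perturbation estimate of w-varepsilon} combined with the pointwise control of $\psi_{\lambda, a}$ from the Appendix shows to be $o(\lambda_\varepsilon^{-(N-2)})$; the quadratic and higher terms are absorbed similarly by Lemma~\ref{perturbation estimate of w-varepsilon}. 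The main bubble term $\alpha_\varepsilon^{2^*}\int (PU_{\lambda_\varepsilon, a_\varepsilon})^{2^*}$ is then handled via the standard asymptotic
$$\int (PU_{\lambda, a})^{2^*} = \int U_{\lambda, a}^{2^*} - c_N \,R(a)/\lambda^{N-2} + o(\lambda^{-(N-2)})$$
from the Appendix, giving the first Robin-function contribution. A parallel expansion of $\|PU_{\lambda, a}\|_{H^1_0}^2$, together with the Nehari identity, fixes $\alpha_\varepsilon$ up to the same order.

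For the subcritical piece, I would write $\int u_\varepsilon^q = \alpha_\varepsilon^q \int PU^q + O(\|w_\varepsilon\|_{H^1_0})$ and use the rescaling $\int U_{\lambda,a}^q \sim \lambda^{-(N - (N-2)q/2)} \int_{\R^N} U_{1,0}^q$, where $\int U_{1,0}^q$ evaluates in beta/$\Gamma$-function form, matching precisely the constants in the definition of $\alpha_{N,q}$. Invoking the sharp blow-up rate $\varepsilon \lambda_\varepsilon^{(N-2)(q+2-2^*)/2} \to \alpha_{N,q} R(x_0)$ of Proposition~\ref{blow-up rate}, and noting the arithmetic identity $(N-2)(q+2-2^*)/2 + (N - (N-2)q/2) = N - 2$, one sees that $\varepsilon \int u_\varepsilon^q$ is itself of order $R(a_\varepsilon)/\lambda_\varepsilon^{N-2}$. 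Adding the two Robin-function contributions, with the $\Gamma$-function factors in $\alpha_{N,q}$ cancelling cleanly against those coming from $\|U\|_q^q$, should produce the prefactor $\bigl(\tfrac{1}{2^*-q} - \tfrac{1}{2}\bigr)\alpha_N^{2(2^*-1)}\omega_N^2/N^2$.

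The principal obstacle is twofold. First, one must verify that every term involving $w_\varepsilon$ is genuinely $o(\lambda_\varepsilon^{-(N-2)})$ across the full admissible range of $(N,q)$; this is delicate in low dimensions $N=3,4,5$, where Lemma~\ref{perturbation estimate of w-varepsilon} only barely beats the target rate, and one must use orthogonality to save a power as well as control carefully the quadratic term $\int PU^{2^*-2}w_\varepsilon^2$ when $2^*-2$ exceeds one. Second, the coefficient $\bigl(\tfrac{1}{2^*-q} - \tfrac{1}{2}\bigr)$ is not visible from either contribution in isolation; it appears only after merging the two $R(a_\varepsilon)/\lambda_\varepsilon^{N-2}$ pieces via Proposition~\ref{blow-up rate}, so the algebraic bookkeeping — in particular keeping $R(a_\varepsilon)$ (and not yet $R(x_0)$) throughout, and simplifying the $\Gamma$-ratio in $\alpha_{N,q}$ against $\|U\|_q^q$ — is the most error-prone part of the argument.
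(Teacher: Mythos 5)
Your strategy is essentially the paper's: decompose $u_{\varepsilon}=\alpha_{\varepsilon}PU_{\lambda_{\varepsilon},a_{\varepsilon}}+w_{\varepsilon}$, feed in the appendix expansions of $\int|\nabla PU|^{2}$ and $\int PU^{2^*}$ (each carrying an $\omega_N^2 R(a_\varepsilon)\lambda_\varepsilon^{-(N-2)}$ defect), compute $\int PU^{q}$ in $\Gamma$-function form, convert $\varepsilon\int u_\varepsilon^q$ into a second $R(a_\varepsilon)\lambda_\varepsilon^{-(N-2)}$ term via Proposition \ref{blow-up rate}, and fix $\alpha_\varepsilon$ to order $\lambda_\varepsilon^{-(N-2)}$ from the Nehari identity; the only organizational difference is that you first reduce $S_\varepsilon$ on the Nehari manifold instead of expanding all three integrals in $I_\varepsilon$ as the paper does, which is cosmetic. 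You also correctly identify the two genuinely delicate points (the borderline $w_\varepsilon$-error in low dimensions and the merging of the two Robin contributions), which the paper itself treats rather briskly.

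One slip to fix: on the Nehari manifold
\begin{equation*}
S_{\varepsilon}=\Big(\tfrac12-\tfrac{1}{2^*}\Big)\int_{\Omega}u_{\varepsilon}^{2^*}\,dx+\Big(\tfrac12-\tfrac1q\Big)\varepsilon\int_{\Omega}u_{\varepsilon}^{q}\,dx
=\frac1N\int_{\Omega}u_{\varepsilon}^{2^*}\,dx+\frac{q-2}{2q}\,\varepsilon\int_{\Omega}u_{\varepsilon}^{q}\,dx,
\end{equation*}
with a \emph{plus} sign, since $\tfrac12-\tfrac1q>0$ for $q>2$. With your minus sign the final prefactor would come out as $\tfrac{q-1}{2^*-q}-\tfrac12$ rather than $\tfrac{1}{2^*-q}-\tfrac12$, so the error is not harmless; with the corrected sign your bookkeeping reproduces the stated coefficient.
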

\begin{proof}
  Using the decomposition 
$U_{\lambda_{\varepsilon},a_{\varepsilon}}=PU_{\lambda_{\varepsilon},a_{\varepsilon}}+\psi_{\lambda_{\varepsilon},a_{\varepsilon}}$, Lemma \ref{estimate of U-lambda-a and psi-lambda-a 2} and Lemma \ref{estimate of U-lambda-a and psi-lambda-a 3} with a similar argument as  in \cite[Lemma 2.1]{FunkcialajEkvacioj2004}, we have
\begin{equation}
\int_{\Omega} |\nabla PU_{\lambda_{\varepsilon},a_{\varepsilon}}|^{2}d x=\alpha_{N}^{-2}S^{\frac{N}{2}}-(N-2)^{2}\omega_{N}^{2}R(a_{\varepsilon})\frac{1}{\lambda_{\varepsilon}^{N-2}}+o\left(\frac{1}{\lambda_{\varepsilon}^{N-2}}\right),
\end{equation}
and
\begin{equation}
    \int_{\Omega} PU_{\lambda_{\varepsilon},a_{\varepsilon}}^{2^*}dx=\alpha_{N}^{-2^*}S^{\frac{N}{2}}-2\omega_{N}^{2}R(a_{\varepsilon})\frac{1}{\lambda_{\varepsilon}^{N-2}}+o\left(\frac{1}{\lambda_{\varepsilon}^{N-2}}\right).
\end{equation}
Next, we estimate the sub-critical term. By Lemma \ref{Elementary estimate} and decomposition 
$U_{\lambda_{\varepsilon},a_{\varepsilon}}=PU_{\lambda_{\varepsilon},a_{\varepsilon}}+\psi_{\lambda_{\varepsilon},a_{\varepsilon}}$
\begin{equation}\label{estimate of sub-critical term PU}
    \int_{\Omega}U_{\lambda_{\varepsilon},a_{\varepsilon}}^{q}dx=\int_{\Omega}PU_{\lambda_{\varepsilon},a_{\varepsilon}}^{q}dx+q\int_{\Omega}PU_{\lambda_{\varepsilon},a_{\varepsilon}}^{q-1}\psi_{\lambda_{\varepsilon},a_{\varepsilon}}dx+O\left(\int_{\Omega}PU_{\lambda_{\varepsilon},a_{\varepsilon}}^{q-2}\psi^{2}_{\lambda_{\varepsilon},a_{\varepsilon}}dx\right).
\end{equation}
The right-hand side of above equation can be  estimated  as follows
\begin{equation}\label{estimate of sub-critical term PU-1}
\begin{aligned}
\int_{\Omega}U_{\lambda_{\varepsilon},a_{\varepsilon}}^{q}&=\int_{\R^{N}}U_{\lambda_{\varepsilon},a_{\varepsilon}}^{q}-\int_{\R^{N}\setminus\Omega}U_{\lambda_{\varepsilon},a_{\varepsilon}}^{q}\\
&=\frac{\omega_{N}C_{N,q}}{\lambda_{\varepsilon}^{N-\frac{N-2}{2}q}}+O\left(\frac{1}{{\lambda_{\varepsilon}^{N-\frac{N-2}{2}q}}}\int_{\lambda_{\varepsilon}d_{\varepsilon}}^{\infty}\frac{r^{N-1}}{(1+r^2)^{\frac{N-2}{2}q}}\right)\\
&=\frac{\omega_{N}C_{N,q}}{\lambda_{\varepsilon}^{N-\frac{N-2}{2}q}}+
O\left(\frac{1}{\lambda_{\varepsilon}^{\frac{(N-2)q}{2}}d_{\varepsilon}^{(N-2)q-N}}\right),     
\end{aligned}
\end{equation}
where
\begin{equation}
 C_{N,q}=\int_{0}^{\infty}\frac{r^{N-1}}{(1+r^2)^{\frac{N-2}{2}q}}dr=\frac{\Gamma(\frac{N}{2})\Gamma(\frac{N-2}{2}q-\frac{N}{2})}{2\Gamma(\frac{N-2}{2}q)}.   
\end{equation}
The second and third term of \eqref{estimate of sub-critical term PU} can be bound by
\begin{equation}\label{estimate of sub-critical term PU-2}
\begin{aligned}
  \int_{\Omega}U_{\lambda_{\varepsilon},a_{\varepsilon}}^{q-1}\psi_{\lambda_{\varepsilon},a_{\varepsilon}}dx&\leq  \left(\int_{\Omega}U_{\lambda_{\varepsilon},a_{\varepsilon}}^{q}dx\right)^{\frac{q-1}{q}}\left(\int_{\Omega}\psi_{\lambda_{\varepsilon},a_{\varepsilon}}^{q}dx\right)^{\frac{1}{q}}\\
  &\lesssim \frac{1}{\lambda_{\varepsilon}^{(N-\frac{N-2}{2}q)\frac{q-1}{q}}}\frac{1}{\lambda_{\varepsilon}^{\frac{N-2}{2}}}=o\left(\frac{1}{\lambda_{\varepsilon}^{N-\frac{N-2}{2}q}}\right),
\end{aligned}
\end{equation}  
where H\"older inequality, Lemma \ref{estimate of U-lambda-a and psi-lambda-a 3} and $q>\frac{N}{N-2}$ has been used. Thus, by \eqref{estimate of sub-critical term PU}-\eqref{estimate of sub-critical term PU-2}
\begin{equation}
 \int_{\Omega}PU_{\lambda_{\varepsilon},a_{\varepsilon}}^{q}dx=C_{N,q}\omega_{N}\frac{1}{\lambda_{\varepsilon}^{N-\frac{N-2}{2}q}}+o\left(\frac{1}{\lambda_{\varepsilon}^{N-\frac{N-2}{2}q}}\right).
\end{equation}
Finally, by Proposition \ref{blow-up rate} and Lemma \ref{decomposition of u-varepsilon}, we have the following estimate on $\varepsilon$
\begin{equation}
 \varepsilon=\frac{q}{2^*-q}\frac{\alpha_{N}^{22^*-2-q}}{N^{2}}\omega_{N}C_{N,q}^{-1}R(a_{\varepsilon})\frac{1}{\lambda_{\varepsilon}^{\frac{N-2}{2}q-2}}+o\left(\frac{1}{\lambda_{\varepsilon}^{\frac{N-2}{2}q-2}}\right).   
\end{equation}

Moreover, using the orthogonal decomposition $u_{\varepsilon}=\alpha_{\varepsilon}PU_{\lambda_{\varepsilon},a_{\varepsilon}}+w_{\varepsilon}$, Lemma \ref{perturbation estimate of w-varepsilon} and above results, it's easy to obtain
\begin{equation}
    \begin{aligned}
        \int_{\Omega} |\nabla u_{\varepsilon}|^{2}d x=\alpha_{\varepsilon}^{2}\alpha_{N}^{-2}S^{\frac{N}{2}}-\alpha_{\varepsilon}^{2}(N-2)^{2}\omega_{N}^{2}R(a_{\varepsilon})\frac{1}{\lambda_{\varepsilon}^{N-2}}+o\left(\frac{1}{\lambda_{\varepsilon}^{N-2}}\right)
    \end{aligned}
\end{equation}
\begin{equation}
    \int_{\Omega}  u_{\varepsilon}^{2^*} d x=\alpha_{\varepsilon}^{2^*}\alpha_{N}^{-2^*}S^{\frac{N}{2}}-2\alpha_{\varepsilon}^{2^*}\omega_{N}^{2}R(a_{\varepsilon})\frac{1}{\lambda_{\varepsilon}^{N-2}}+o\left(\frac{1}{\lambda_{\varepsilon}^{N-2}}\right),
\end{equation}
and
\begin{equation}
   \varepsilon \int_{\Omega} u_{\varepsilon}^{q} d x= \frac{q}{2^*-q}\frac{\alpha_{N}^{22^*-2-q}\alpha_{\varepsilon}^{q}}{N^{2}}\omega_{N}^{2}R(a_{\varepsilon})\frac{1}{\lambda_{\varepsilon}^{N-2}}+o\left(\frac{1}{\lambda_{\varepsilon}^{N-2}}\right).
\end{equation}

Next, we estimate $\alpha_{\varepsilon}$. Recall the following Nehari identity
\begin{equation}
    \int_{\Omega} |\nabla u_{\varepsilon}|^{2}d x=\int_{\Omega}  u_{\varepsilon}^{2^*} d x+\varepsilon \int_{\Omega} u_{\varepsilon}^{q} d x.
\end{equation}
Hence
\begin{equation}
    \alpha_{\varepsilon}=\alpha_{N}+c_{\varepsilon}\frac{1}{\lambda_{\varepsilon}^{N-2}}+o\left(\frac{1}{\lambda_{\varepsilon}^{N-2}}\right),
\end{equation}
where $c_{\varepsilon}$ satisfy that
\begin{equation}
    (2^*-2)\alpha_{N}^{-1}S^{\frac{N}{2}}c_{\varepsilon}=\left(2\alpha_{N}^{2^*}-\alpha_{N}^{2}(N-2)^{2}-\frac{q}{2^*-q}\frac{\alpha_{N}^{22^*-2}}{N^{2}}\right)\omega_{N}^{2}R(a_{\varepsilon}).
\end{equation}

Combining every thing, we have
\begin{equation}
\begin{aligned}
    \int_{\Omega} |\nabla u_{\varepsilon}|^{2}d x&=S^{\frac{N}{2}}+\left(2\alpha_{N}^{-1}S^{\frac{N}{2}}c_{\varepsilon}-\alpha_{N}^{2}(N-2)^{2}\omega_{N}^{2}R(a_{\varepsilon})\right)\frac{1}{\lambda_{\varepsilon}^{N-2}}+o\left(\frac{1}{\lambda_{\varepsilon}^{N-2}}\right) \\
    &=\left(\frac{4}{2^*-2}\alpha_{N}^{2^*}-\frac{2^*}{2^*-2}\alpha_{N}^{2}(N-2)^{2}-\frac{2q}{(2^*-2)(2^*-q)}\frac{\alpha_{N}^{22^*-2}}{N^{2}}\right)\omega_{N}^{2}R(a_{\varepsilon})\frac{1}{\lambda_{\varepsilon}^{N-2}}\\
    &\quad +S^{\frac{N}{2}}+o\left(\frac{1}{\lambda_{\varepsilon}^{N-2}}\right) \\
    &=S^{\frac{N}{2}}+\left(\frac{N}{2}-\frac{2q}{(2^*-2)(2^*-q)}\right)\frac{\alpha_{N}^{22^*-2}}{N^{2}}\omega_{N}^{2}R(a_{\varepsilon})\frac{1}{\lambda_{\varepsilon}^{N-2}}+o\left(\frac{1}{\lambda_{\varepsilon}^{N-2}}\right), 
\end{aligned}
\end{equation}

\begin{equation}
    \begin{aligned}
        \int_{\Omega} u_{\varepsilon}^{2^*}d x&=S^{\frac{N}{2}}+\left(2^{*}\alpha_{N}^{-1}S^{\frac{N}{2}}c_{\varepsilon}-2\alpha_{N}^{2^*}\omega_{N}^{2}R(a_{\varepsilon})\right)\frac{1}{\lambda_{\varepsilon}^{N-2}}+o\left(\frac{1}{\lambda_{\varepsilon}^{N-2}}\right)\\
         &=\left(\frac{4}{2^*-2}\alpha_{N}^{2^*}-\frac{2^*}{2^*-2}\alpha_{N}^{2}(N-2)^{2}-\frac{2^{*}q}{(2^*-2)(2^*-q)}\frac{\alpha_{N}^{22^*-2}}{N^{2}}\right)\omega_{N}^{2}R(a_{\varepsilon})\frac{1}{\lambda_{\varepsilon}^{N-2}}\\
    &\quad +S^{\frac{N}{2}}+o\left(\frac{1}{\lambda_{\varepsilon}^{N-2}}\right) \\
     &=S^{\frac{N}{2}}+\left(\frac{N}{2}-\frac{2^{*}q}{(2^*-2)(2^*-q)}\right)\frac{\alpha_{N}^{22^*-2}}{N^{2}}\omega_{N}^{2}R(a_{\varepsilon})\frac{1}{\lambda_{\varepsilon}^{N-2}}+o\left(\frac{1}{\lambda_{\varepsilon}^{N-2}}\right), 
    \end{aligned}
\end{equation}
and
\begin{equation}
   \varepsilon \int_{\Omega} u_{\varepsilon}^{q} d x= \frac{q}{2^*-q}\frac{\alpha_{N}^{22^*-2}}{N^{2}}\omega_{N}^{2}R(a_{\varepsilon})\frac{1}{\lambda_{\varepsilon}^{N-2}}+o\left(\frac{1}{\lambda_{\varepsilon}^{N-2}}\right).
\end{equation}
Then, the estimate on $S_{\varepsilon}$ is stated as follows
\begin{equation}
    S_{\varepsilon}=\frac{1}{N}S^{\frac{N}{2}}-\left(\frac{1}{2^*-q}-\frac{1}{2}\right)\frac{\alpha_{N}^{22^*-2}}{N^{2}}\omega_{N}^{2}R(a_{\varepsilon})\frac{1}{\lambda_{\varepsilon}^{N-2}}+o\left(\frac{1}{\lambda_{\varepsilon}^{N-2}}\right), 
\end{equation}
and by assumption $q>2^*-2$, it's easy to see that 
\begin{equation}
    \frac{1}{2^*-q}-\frac{1}{2}>0.
\end{equation}
\end{proof}

\begin{proof}[\textrm{Proof of Theorem} $\ref{main theorem-1}$]    
From the results in above subsections, it only remains to show $(\ref{contentrate speed})$ hold. In fact, it can be obtained by a similar proof as Claim $1$ and Claim $2$ in Theorem $\ref{local uniqueness-Robin function}$ below and we omit it.  
\end{proof}

\begin{Rem}
It's worth note that the orthogonal decomposition of the least energy solution $u_{\varepsilon}$ in Lemma \ref{decomposition of u-varepsilon} is very useful. In fact, we can use this decomposition to give another proof of blow-up rate and location of blow-up point, see (\cite{Rey1989ProofOT}) for the main idea.     
\end{Rem}

\section{Asymptotic uniqueness}\label{Asymptotic uniqueness}
\subsection{Convex domain}
In this subsection, we will prove the asymptotic uniqueness of the least energy solution, when $\Omega$ is a convex domain. 

\begin{Assum}\label{Assum-1}
    $\Omega$ is convex in the $x_{i}$ directions and symmetric with respect to the hyperplanes $\{x_{i}=0\}$ for $i=1,\cdots, N$.
\end{Assum}

\begin{Thm}\label{asymptotic uniqueness-1}
Let $\Omega$ be a smooth bounded star-shape domain of $\R^{N}$ satisfying assumption $\ref{Assum-1}$, $N\geq 5$ and $q\in (2,2^*)$. Suppose that $u_{\varepsilon}$ and $v_{\varepsilon}$ are two least energy solutions of (\ref{p-varepsion}), then there exists $\varepsilon_{0}>0$ such that for any $\varepsilon<\varepsilon_{0}$
\begin{equation}
    u_{\varepsilon}\equiv v_{\varepsilon}\quad \text{~in~}\Omega.
\end{equation}
\end{Thm}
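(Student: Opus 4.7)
The plan is to run a blow-up argument on the normalized difference of two putative least energy solutions and deduce a contradiction through a local Pohozaev identity, exploiting the reflection symmetry of $\Omega$ in each coordinate.

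Suppose by contradiction that along a sequence $\varepsilon_n \to 0$ one has $u_n := u_{\varepsilon_n} \not\equiv v_n := v_{\varepsilon_n}$. The Gidas--Ni--Nirenberg moving plane method, applicable because of the symmetry and convexity of $\Omega$ in each direction, forces both $u_n$ and $v_n$ to be symmetric with respect to each hyperplane $\{x_i = 0\}$ and strictly decreasing in each $|x_i|$. In particular their maxima lie at $0$, and Theorem \ref{main theorem-1} combined with the fact that $0$ is the unique critical point of $R$ in this symmetric convex setting shows that both $u_n$ and $v_n$ concentrate at $0$ with the common scale $\mu_n := u_n(0)^{2/(N-2)} \sim v_n(0)^{2/(N-2)}$. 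Define the symmetric normalized difference
\[
\xi_n := \frac{u_n - v_n}{\|u_n - v_n\|_{L^\infty(\Omega)}}, \qquad \|\xi_n\|_{L^\infty(\Omega)} = 1,
\]
which by the mean value theorem satisfies a linearized problem $-\Delta \xi_n = V_n(x)\, \xi_n$ in $\Omega$ with $\xi_n = 0$ on $\partial\Omega$, where $V_n$ is built from integrated powers of $t u_n + (1-t) v_n$ for $t \in [0,1]$.

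I would then perform blow-up analysis on $\xi_n$. Setting $\tilde\xi_n(y) := \xi_n(y/\mu_n)$ on $\Omega_n := \mu_n \Omega$, the rescaled coefficient $\mu_n^{-2} V_n(y/\mu_n)$ converges locally to $(2^*-1) U^{2^*-2}$ thanks to the $C^2_{loc}$ convergence of the rescaled $u_n,v_n$ to the standard bubble $U$ given by Proposition \ref{limit of v-varepsilon}. Standard elliptic regularity yields, up to a subsequence, a bounded limit $\xi_\infty$ solving the linearized bubble equation $-\Delta \xi_\infty = (2^*-1) U^{2^*-2} \xi_\infty$ on $\mathbb R^N$. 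By the classical non-degeneracy result of Bianchi--Egnell / Rey, the space of bounded solutions is spanned by the translations $\partial_{x_i} U$, $i = 1,\dots,N$, and the dilation $\Psi := \frac{N-2}{2}U + x \cdot \nabla U$. Since $\tilde\xi_n$ inherits the reflection symmetry of $\xi_n$, the translation modes are eliminated and one is left with $\xi_\infty = c\, \Psi$ for a single unknown constant $c$.

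To force $c = 0$, I would apply a local Pohozaev identity with the dilation field $x\cdot\nabla$ on a fixed ball $B_\delta(0) \Subset \Omega$ to both $u_n$ and $v_n$, subtract, and divide by $\|u_n - v_n\|_{L^\infty}$. After rescaling, the bulk term produces (at leading order) a nonzero multiple of $c \int_{\mathbb R^N} q\, U^{q-1} \Psi$ times the correct power of $\mu_n$ dictated by the sharp rate \eqref{exact blow-up rate}, while the boundary integrals on $\partial B_\delta$ can be computed using the $C^{1,\alpha}$ expansion $\mu_n^{(N-2)/2} u_n, \mu_n^{(N-2)/2} v_n \to \tfrac{1}{N}\alpha_N^{2^*}\omega_N G(\cdot, 0)$ from Proposition \ref{u-varepsilon-times-u-infty}, so their difference divided by $\|u_n-v_n\|_\infty$ gives a controlled contribution proportional to $c$ as well. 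Matching the two sides via Proposition \ref{blow-up rate} forces a single scalar equation that a direct computation shows has only the trivial solution $c = 0$. With $\xi_\infty \equiv 0$, I would finally use the Green representation $\xi_n(x) = \int_\Omega G(x,y) V_n(y) \xi_n(y)\, dy$, splitting into the blow-up zone $B(0, R/\mu_n)$ and its complement: the inner integral vanishes by $C^2_{loc}$ convergence $\tilde\xi_n \to 0$, and the outer one is controlled by the pointwise decay of Corollary \ref{upper estimate of u-varepsilon}, yielding $\|\xi_n\|_\infty \to 0$ and contradicting the normalization.

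The main obstacle is the Pohozaev step: one must carefully extract the leading-order coefficient of $c$ from the bulk subcritical term $\varepsilon_n \int_\Omega (u_n^q - v_n^q)/\|u_n - v_n\|_\infty$ and check that it does not cancel against boundary contributions, which requires the sharp blow-up rate of Theorem \ref{main theorem-1} rather than crude bounds. This is also where the restriction $N \ge 5$ enters, through the integrability at infinity of $\Psi$ and $U^{q-1}\Psi$ needed to make sense of the limiting Pohozaev integrals; the argument is therefore genuinely dimension-dependent even though no separate constraint on $q$ beyond $q \in (2,2^*)$ is needed.
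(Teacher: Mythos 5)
Your overall strategy coincides with the paper's: normalize the difference, blow up, use the coordinate reflections to kill the translation modes of the linearized bubble equation, kill the remaining dilation mode with a Pohozaev identity, and finish by showing the normalized difference cannot attain the value $1$. (The paper uses the global Pohozaev identity on $\Omega$ rather than a local one on $B_\delta(0)$, a cosmetic difference.) There are, however, two genuine gaps. The first is that the classification of the limit $\xi_\infty$ as a span of $\partial_{x_i}U$ and the dilation mode $\Psi$ (Lemma \ref{Kernel of Emden-Fowler equation}, Bianchi--Egnell) is a statement about solutions in $\mathcal{D}^{1,2}(\R^{N})$, not about merely bounded solutions; you must first prove $\xi_\infty\in\mathcal{D}^{1,2}(\R^{N})$. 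The paper does this in Step~2 of its proof by testing the linearized equation against $w_\varepsilon$, absorbing the subcritical term through the Poincar\'e inequality together with the blow-up rate (6) of Theorem \ref{main theorem-1}, and controlling the critical coefficient by H\"older.

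The second gap is more serious: your evaluation of the Pohozaev boundary terms does not close as written. Knowing that $\mu_n^{(N-2)/2}u_n$ and $\mu_n^{(N-2)/2}v_n$ both converge in $C^{1,\alpha}$ to the same multiple of $G(\cdot,0)$ away from the origin gives no information about $(u_n-v_n)/\|u_n-v_n\|_{L^\infty}$ there — you are subtracting two quantities with identical limits and dividing by something that tends to zero. If one only uses $|\xi_n|\le 1$ on $\partial B_\delta$, the boundary integrals are of size $\mu_n^{-(N-2)/2}$ and swamp the bulk term. What is required is a uniform decay estimate for the normalized difference itself, $|w_\varepsilon(x)|\le C|x|^{2-N}$ in the rescaled variable, which the paper obtains by a Kelvin transform plus Moser iteration (Step~3); this, and not the integrability of $\Psi$ (which belongs to $\mathcal{D}^{1,2}(\R^N)$ for every $N\ge 3$), is where the restriction $N\ge 5$ actually enters, through the requirement that the Kelvin-transformed subcritical coefficient lie in $L^{\alpha}$ for some $\alpha>N/2$. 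From that decay one then shows via the Green representation that $\|u_\varepsilon\|_{L^\infty}^{2}(u_\varepsilon-v_\varepsilon)/\|u_\varepsilon-v_\varepsilon\|_{L^\infty}$ converges in $C^{1}$ away from the origin to an explicit multiple of $G(\cdot,0)$ with coefficient proportional to $c$, and only then can the two sides of the Pohozaev identity be matched; the resulting scalar equation reads $q=\frac{4}{N-2}$, contradicting $q>2$. These two ingredients — the decay estimate for the difference and the $C^{1}$ identification of its outer profile — are the technical core of the proof and are exactly the pieces your outline leaves unproved.
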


\begin{proof}[\textrm{Proof of Theorem} $\ref{asymptotic uniqueness-1}$]    
By Gidas-Ni-Nirenberg theorem \cite{Gidas1979Symmetry}, $u_{\varepsilon}$ and $v_{\varepsilon}$ are symmetric with respect to the hyperplanes $\{x_{i}=0\}$ for $i=1,\cdots,N$ and $u_{\varepsilon}(0)=\|u_{\varepsilon}\|_{L^{\infty}(\Omega)},v_{\varepsilon}(0)=\|v_{\varepsilon}\|_{L^{\infty}(\Omega)}$. Next, we will show Theorem \ref{asymptotic uniqueness-1} hold by several steps.

\textbf{~Step~}1. we define 
\begin{equation}
    \tilde{w}_{\varepsilon}(x):=u_{\varepsilon}\left(\frac{x}{\|u_{\varepsilon}\|_{L^{\infty}(\Omega)}^{(2^*-2)/2}}\right)-v_{\varepsilon}\left(\frac{x}{\|u_{\varepsilon}\|_{L^{\infty}(\Omega)}^{(2^*-2)/2}}\right), \quad x\in\Omega_{\varepsilon}:=\|u_{\varepsilon}\|_{L^{\infty}(\Omega)}^{(2^*-2)/2}\Omega,
\end{equation}
and
\begin{equation}
    w_{\varepsilon}:=\frac{\tilde{w}_{\varepsilon}}{\|\tilde{w}_{\varepsilon}\|_{L^{\infty}(\Omega_{\varepsilon})}}=\frac{\tilde{w}_{\varepsilon}}{\|u_{\varepsilon}-v_{\varepsilon}\|_{L^{\infty}(\Omega)}}.
\end{equation}
Then $\|w_{\varepsilon}\|_{L^{\infty}(\Omega_{\varepsilon})}=1$ and $w_{\varepsilon}$ satisfy
\begin{equation}\label{asymptotic uniqueness-1-proof-step-1-3}
    -\Delta w_{\varepsilon}=\left(c_{2^*,\varepsilon}(x)+\frac{\varepsilon}{\|u_{\varepsilon}\|_{L^{\infty}(\Omega)}^{2^*-q}}c_{q,\varepsilon}(x)\right)w_{\varepsilon},
\end{equation}
where
\begin{equation}
    c_{2^*,\varepsilon}=(2^*-1)\int_{0}^{1}\left[t\frac{u_{\varepsilon}}{\|u_{\varepsilon}\|_{L^{\infty}(\Omega)}}\left(\frac{x}{\|u_{\varepsilon}\|_{L^{\infty}(\Omega)}^{(2^*-2)/2}}\right)+(1-t)\frac{v_{\varepsilon}}{\|u_{\varepsilon}\|_{L^{\infty}(\Omega)}}\left(\frac{x}{\|u_{\varepsilon}\|_{L^{\infty}(\Omega)}^{(2^*-2)/2}}\right)\right]^{2^*-2}dt
\end{equation}
and
\begin{equation}
    c_{q,\varepsilon}=(q-1)\int_{0}^{1}\left[t\frac{u_{\varepsilon}}{\|u_{\varepsilon}\|_{L^{\infty}(\Omega)}}\left(\frac{x}{\|u_{\varepsilon}\|_{L^{\infty}(\Omega)}^{(2^*-2)/2}}\right)+(1-t)\frac{v_{\varepsilon}}{\|u_{\varepsilon}\|_{L^{\infty}(\Omega)}}\left(\frac{x}{\|u_{\varepsilon}\|_{L^{\infty}(\Omega)}^{(2^*-2)/2}}\right)\right]^{q-2}dt.
\end{equation}
Thus by (2) and (6) in Theorem \ref{main theorem-1}, we deduce that
\begin{equation}
    c_{2^*,\varepsilon}(x)\to (2^*-1)U^{2^*-2}(x), \quad C_{q,\varepsilon}(x)\to (q-1)U^{q-2}(x),
\end{equation}
uniformly on compact sets of $\R^{N}$. Note the $\|w_{\varepsilon}\|_{L^{\infty}(\Omega)}=1$ and $\frac{\varepsilon}{\|u_{\varepsilon}\|_{L^{\infty}(\Omega)}^{2^*-q}}\to 0$, then by elliptic regularity theory in \cite{gilbarg1977elliptic}, there exist a function $w(x)\in C^{1,\alpha}_{loc}(\R^{N})$ such that up to a sub-sequence $w_{\varepsilon}\to w$ in $C^{1}_{loc}(\R^{N})$. Moreover, $w(x)$ is symmetric and satisfies
\begin{equation}\label{asymptotic uniqueness-1-proof-step-1-7}
    \begin{cases}
        -\Delta w=(2^*-1)U^{2^*-2}w, \text{~~in~~}\R^{N},\\
        \|w\|_{L^{\infty}(\R^{N})}\leq 1.     
    \end{cases}
\end{equation}

\textbf{Step} 2. In this step, we will show that $w\in\mathcal{D}^{1,2}(\R^{N})$. Firstly, by (\ref{asymptotic uniqueness-1-proof-step-1-3}) we have
\begin{equation}\label{asymptotic uniqueness-1-proof-step-2-1}
    \int_{\Omega_{\varepsilon}}|\nabla w_{\varepsilon}|^{2}dx=\int_{\Omega_{\varepsilon}}\left(c_{2^*,\varepsilon}(x)+\frac{\varepsilon}{\|u_{\varepsilon}\|_{L^{\infty}(\Omega)}^{2^*-q}}c_{q,\varepsilon}(x)\right)w_{\varepsilon}^{2}dx.
\end{equation}
In addition, by (3) and (6) in Theorem \ref{main theorem-1}, we have that there exist constants $C>0$ and $\varepsilon_{1}>0$ such $c_{q,\varepsilon}(x)\leq C$, for any $\varepsilon<\varepsilon_{1}$ and $x\in \Omega_{\varepsilon}$. This together with Poincar\'{e} inequality, we deduce that
\begin{equation}\label{asymptotic uniqueness-1-proof-step-2-2}
    \int_{\Omega_{\varepsilon}}\frac{\varepsilon}{\|u_{\varepsilon}\|_{L^{\infty}(\Omega)}^{2^*-q}}c_{q,\varepsilon}(x)w_{\varepsilon}^{2}dx\leq C\frac{\varepsilon}{\|u_{\varepsilon}\|_{L^{\infty}(\Omega)}^{2^*-q}}\int_{\Omega_{\varepsilon}}w_{\varepsilon}^{2}dx\leq \varepsilon\|u_{\varepsilon}\|_{L^{\infty}(\Omega)}^{q-2}\frac{C}{\lambda_{1}(\Omega)}\int_{\Omega_{\varepsilon}}|\nabla w_{\varepsilon}|^{2}dx,
\end{equation}
for any $\varepsilon<\varepsilon_{1}$, where $\lambda_{1}(\Omega)$ is the first eigenvalue of the Laplace operator in $H^{1}_{0}(\Omega)$. Thus, by (\ref{asymptotic uniqueness-1-proof-step-2-1}), (\ref{asymptotic uniqueness-1-proof-step-2-2}) and (6) in Theorem \ref{main theorem-1}, we obtain that
\begin{equation}\label{asymptotic uniqueness-1-proof-step-2-3}
    (1+o(1))\int_{\Omega_{\varepsilon}}|\nabla w_{\varepsilon}|^{2}\leq \int_{\Omega_{\varepsilon}}c_{2^*,\varepsilon}(x)w_{\varepsilon}^{2}(x)dx,
\end{equation}
which together with Sobolev inequality, we have
\begin{equation}\label{asymptotic uniqueness-1-proof-step-2-4}
    (S+o(1))\left(\int_{\Omega_{\varepsilon}}|w_{\varepsilon}^{2^*}|dx\right)^{\frac{2}{2^*}}\leq \int_{\Omega_{\varepsilon}}c_{2^*,\varepsilon}(x)w_{\varepsilon}^{2}(x)dx.
\end{equation}
Next, we let $0<\delta<2^*-2$, since $\|w_{\varepsilon}\|_{L^{\infty}(\Omega_{\varepsilon})}=1$, by H\"older inequality we have
\begin{equation}\label{asymptotic uniqueness-1-proof-step-2-5}
    \begin{aligned}
        \int_{\Omega_{\varepsilon}}c_{2^*,\varepsilon}(x)w_{\varepsilon}^{2}dx&\leq \int_{\Omega_{\varepsilon}}c_{2^*,\varepsilon}(x)w_{\varepsilon}^{2-\delta}dx\\
        &\leq \left(\int_{\Omega_{\varepsilon}}|w_{\varepsilon}|^{2^*}dx\right)^{\frac{(2-\delta)}{2^*}}\left(\int_{\Omega_{\varepsilon}}c_{2^*,\varepsilon}(x)^{\frac{2^*}{2^*-2+\delta}}dx\right)^{\frac{2^*-2+\delta}{2^*}}.
    \end{aligned}
\end{equation}
By (\ref{asymptotic uniqueness-1-proof-step-2-4}), (\ref{asymptotic uniqueness-1-proof-step-2-5}) and Theorem \ref{main theorem-1}, we obtain that for small enough $\varepsilon$
\begin{equation}\label{asymptotic uniqueness-1-proof-step-2-6}
\begin{aligned}
\left(\int_{\Omega_{\varepsilon}}|w_{\varepsilon}^{2^*}|dx\right)^{\frac{\delta}{2^*}}&\leq  C \left(\int_{\Omega_{\varepsilon}}c_{2^*,\varepsilon}(x)^{\frac{2^*}{2^*-2+\delta}}dx\right)^{\frac{2^*-2+\delta}{2^*}}\\
&\leq C\left (\int_{\R^{N}}\frac{dx}{(N(N-2)+|x|^{2})^{22^*/(2^*-2+\delta)}}dx\right)^{(2^*-2+\delta)/2^*}\leq C,    
\end{aligned}
\end{equation}
Finally, by (\ref{asymptotic uniqueness-1-proof-step-2-3}) and (\ref{asymptotic uniqueness-1-proof-step-2-6}) for small $\varepsilon>0$ we have
\begin{equation}\label{asymptotic uniqueness-1-proof-step-2-7}
\begin{aligned}
    \int_{\Omega_{\varepsilon}}|\nabla w_{\varepsilon}|^{2}dx &\leq C \int_{\Omega_{\varepsilon}}c_{2^*,\varepsilon}(x)w_{\varepsilon}^{2}dx\leq \left(\int_{\Omega_{\varepsilon}}c_{2^*,\varepsilon}(x)^{\frac{N}{2}}\right)^{\frac{2}{N}}\left(\int_{\Omega_{\varepsilon}}w_{\varepsilon}^{2^*}dx\right)^{\frac{2}{2^*}}\\
    &\leq C\int_{\R^{N}}\frac{dx}{(N(N-2)+|x|^{2})^{N}}\leq C,
\end{aligned} 
\end{equation}
Thus 
\begin{equation}\label{asymptotic uniqueness-1-proof-step-2-8}
    \int_{\R^{N}}|\nabla w|^{2}dx\leq  \liminf_{\varepsilon\to 0} \int_{\Omega_{\varepsilon}}|\nabla w_{\varepsilon}|^{2}dx\leq C.
\end{equation}

By Lemma \ref{Kernel of Emden-Fowler equation} together with (\ref{asymptotic uniqueness-1-proof-step-1-7}) and (\ref{asymptotic uniqueness-1-proof-step-2-8}), we know that $w$ can be written as
\begin{equation}
w(x)=\sum_{i=1}^{N}\frac{a_{i}x_{i}}{(N(N-2)+|x|^{2})^{\frac{N}{2}}}+b\frac{N(N-2)-|x|^{2}}{(N(N-2)+|x|^{2})^{\frac{N}{2}}},
\end{equation}
for some $a_{i},b\in\R$.
since $w$ is symmetric, so $a_{i}=0$ for any $i=1,\cdots,N$. Next we will show that $b=0$. Firstly, we give the following estimate for $w_{\varepsilon}$.

\textbf{Step} 3. Let $\omega_{0}$ be a neighbourhood of the origin, we will show that there exist some constant $C>0$ such for $\varepsilon$ small enough, we have
\begin{equation}\label{asymptotic uniqueness-1-proof-step-3-1}
    |w_{\varepsilon}(x)|\leq \frac{C}{|x|^{N-2}},\quad x\in\Omega_{\varepsilon}\setminus\omega_{0}.
\end{equation}
We consider the Kelvin transform of $w_{\varepsilon}$ 
\begin{equation}
    z_{\varepsilon}(x):=\frac{1}{|x|^{N-2}}w_{\varepsilon}\left(\frac{x}{|x|^{2}}\right),\quad x\in \Omega_{\varepsilon}^{*}:=\left\{x\in\R^{N}:\frac{x}{|x|^{2}}\in\Omega_{\varepsilon}\right\}.
\end{equation}
Note that $\|w_{\varepsilon}\|_{L^{\infty}(\Omega_{\varepsilon})}=1$, thus it is enough to prove that there exist $C>0$ such that for any $\varepsilon>0$ small enough
\begin{equation}
    \sup_{\Omega_{\varepsilon}^*\cap B(0,1)}z_{\varepsilon}(x)\leq C.
\end{equation}
By (\ref{asymptotic uniqueness-1-proof-step-1-3}) we get that
\begin{equation}
    \begin{cases}
        -\Delta z_{\varepsilon}(x)=a_{\varepsilon}(x)z_{\varepsilon}(x),&\quad \text{~in~} \Omega_{\varepsilon}^*,\\
        z_{\varepsilon}(x)=0,&\quad \text{~on~}\partial\Omega_{\varepsilon}^*,
    \end{cases}
\end{equation}
where
\begin{equation}
    a_{\varepsilon}(x)=\frac{1}{|x|^{4}}\left(c_{2^*,\varepsilon}\left(\frac{x}{|x|^{2}}\right)+\frac{\varepsilon}{\|u_{\varepsilon}\|_{L^{\infty}(\Omega)}^{2^*-q}}c_{q,\varepsilon}\left(\frac{x}{|x|^{2}}\right)\right).
\end{equation}
Next, we estimate $a_{\varepsilon}(x)$. By (3) and (6) in Theorem \ref{main theorem-1}, we get that for $\varepsilon>0$ small enough 
\begin{equation}
    \frac{1}{|x|^{4}}c_{2^*,\varepsilon}\left(\frac{x}{|x|^{2}}\right)\leq C\frac{1}{(N(N-2)+|x|^{2})^{2}},
\end{equation}
and
\begin{equation}
\frac{1}{|x|^{4}}c_{q,\varepsilon}\left(\frac{x}{|x|^{2}}\right)\leq C\frac{1}{|x|^{4-(N-2)(q-2)}}\frac{1}{(N(N-2)+|x|^{2})^{\frac{N-2}{2}(q-2)}},    
\end{equation}
then for $\alpha>\frac{N}{2}$, we have
\begin{equation}
    \int_{\Omega_{\varepsilon}^{*}}\frac{1}{|x|^{4\alpha}}c_{2^*,\varepsilon}^{\alpha}\left(\frac{x}{|x|^{2}}\right)\leq \int_{\R^{N}}\frac{1}{(N(N-2)+|x|^{2})^{2\alpha}}\leq C.
\end{equation}
Let $\gamma>0$ such that $\Omega\subset B(0,\gamma)$, then $\Omega_{\varepsilon}^*\subset \R^{N}\setminus B(0,\frac{1}{\gamma\|u_{\varepsilon}\|_{L^{\infty}(\Omega)}^{(2^*-2)/2}})$, then
    \begin{align}
      &\frac{\varepsilon}{\|u_{\varepsilon}\|_{L^{\infty}(\Omega)}^{2^*-q}}\left(\int_{\Omega_{\varepsilon}^{*}}\frac{1}{|x|^{4\alpha}}c_{q,\varepsilon}^{\alpha}\left(\frac{x}{|x|^{2}}\right)dx\right)^{\frac{1}{\alpha}}\\
      &\leq \frac{\varepsilon}{\|u_{\varepsilon}\|_{L^{\infty}(\Omega)}^{2^*-q}}\left(\int_{|x|\geq \frac{1}{\gamma\|u_{\varepsilon}\|_{L^{\infty}(\Omega)}^{(2^*-2)/2}}}\frac{1}{|x|^{4\alpha}}c_{q,\varepsilon}^{\alpha}\left(\frac{x}{|x|^{2}}\right)dx\right)^{\frac{1}{\alpha}}\\
      &\leq C\left(\int_{|x|\geq \frac{1}{\gamma}}\frac{1}{|x|^{4\alpha-(N-2)(q-2)\alpha}}\frac{1}{(N(N-2)\|u_{\varepsilon}\|_{L^{\infty}(\Omega)}^{2^*-2}+|x|^{2})^{\frac{N-2}{2}(q-2)\alpha}}dx\right)^{\frac{1}{\alpha}}\\
      &\quad\times \varepsilon\|u_{\varepsilon}\|_{L^{\infty}(\Omega)}^{\frac{8}{N-2}+q-2^*-\frac{2^*}{\alpha}}.
    \end{align}
By (6) in Theorem \ref{main theorem-1} we have
\begin{equation}
    \varepsilon\|u_{\varepsilon}\|_{L^{\infty}(\Omega)}^{\frac{8}{N-2}+q-2^*-\frac{2^*}{\alpha}}\leq C{\|u_{\varepsilon}\|_{L^{\infty}(\Omega)}^{\frac{8}{N-2}-\frac{2^*}{\alpha}-2}}.
\end{equation}
Hence if $N\geq 6$ and $\alpha>\frac{N}{2}$, while if $N=5$ and $\frac{5}{2}<\alpha<5$ , we have for $\varepsilon$ small enough
\begin{equation}
    a_{\varepsilon}(x)\in L^{\alpha}(\Omega_{\varepsilon}^*).
\end{equation}
for some $\alpha>\frac{N}{2}$, which together with Moser iteration (see Lemma \ref{moser iteration}), we can obtain (\ref{asymptotic uniqueness-1-proof-step-3-1}).

\textbf{Step} 4. Now we will show that $b=0$. Suppose for the contrary that $b\neq 0$. Indeed, we have
\begin{equation}\label{asymptotic uniqueness-1-proof-step-4-1}
\begin{aligned}
    -\Delta \left(\|u_{\varepsilon}\|_{L^{\infty}(\Omega)}^{2}\frac{u_{\varepsilon}(x)-v_{\varepsilon}(x)}{\|u_{\varepsilon}-v_{\varepsilon}\|_{L^{\infty}(\Omega)}}\right)&=\frac{\|u_{\varepsilon}\|_{L^{\infty}(\Omega)}^{2}}{\|u_{\varepsilon}-v_{\varepsilon}\|_{L^{\infty}(\Omega)}}[(u_{\varepsilon}^{2^*-1}-v_{\varepsilon}^{2^*-1})+\varepsilon (u_{\varepsilon}^{q-1}-v_{\varepsilon}^{q-1})]\\
    &=\frac{\|u_{\varepsilon}\|_{L^{\infty}(\Omega)}^{2}}{\|u_{\varepsilon}-v_{\varepsilon}\|_{L^{\infty}(\Omega)}}[d_{2^*,\varepsilon}(x)+\varepsilon d_{q,\varepsilon}(x)](u_{\varepsilon}-v_{\varepsilon}):=f_{\varepsilon}(x),
\end{aligned} 
\end{equation}
where
\begin{equation}
    d_{2^*,\varepsilon}(x)=(2^*-1)\int_{0}^{1}[t{u_{\varepsilon}(x)}+(1-t)v_{\varepsilon}(x)]^{2^*-2}dt,
\end{equation}
and
\begin{equation}
    d_{q,\varepsilon}(x)=(q-1)\int_{0}^{1}[t{u_{\varepsilon}(x)}+(1-t)v_{\varepsilon}(x)]^{q-2}dt.
\end{equation}
Next, we consider function $f_{\varepsilon}(x)$. By a change of variable, we have
\begin{equation}
\begin{aligned}
    \int_{\Omega}f_{\varepsilon}(x)dx&=\int_{\Omega}\frac{\|u_{\varepsilon}\|_{L^{\infty}(\Omega)}^{2}}{\|u_{\varepsilon}-v_{\varepsilon}\|_{L^{\infty}(\Omega)}}\left[d_{2^*,\varepsilon}(x)+\varepsilon d_{q,\varepsilon}(x)\right](u_{\varepsilon}-v_{\varepsilon})dx\\
    &=\int_{\Omega_{\varepsilon}}[c_{2^*,\varepsilon}(x)+\varepsilon \|u_{\varepsilon}\|_{L^{\infty}(\Omega)}^{q-2^*} c_{q,\varepsilon}(x)]w_{\varepsilon}(x).
\end{aligned}
\end{equation}
By (\ref{asymptotic uniqueness-1-proof-step-3-1}), for $\varepsilon$ small enough, we deduce that
\begin{equation}
\begin{aligned}
   \frac{\varepsilon }{\|u_{\varepsilon}\|_{L^{\infty}(\Omega)}^{2^*-q}}\int_{\Omega_{\varepsilon}}c_{q,\varepsilon}(x)w_{\varepsilon}(x)&\leq C\frac{\varepsilon }{\|u_{\varepsilon}\|_{L^{\infty}(\Omega)}^{2^*-q}} \int_{B(0,1)}w_{\varepsilon}dx+\int_{\Omega\setminus B(0,1)}\frac{1}{|x|^{N-2}}dx\\
   &\leq  C\frac{\varepsilon }{\|u_{\varepsilon}\|_{L^{\infty}(\Omega)}^{2^*-q}}+ C\frac{\varepsilon }{\|u_{\varepsilon}\|_{L^{\infty}(\Omega)}^{2^*-q}}(\|u_{\varepsilon}\|_{L^{\infty}(\Omega)}^{2^*-2}-1).
\end{aligned}   
\end{equation}
thus by (6) in Theorem \ref{main theorem-1} we have
\begin{equation}
    \lim_{\varepsilon\to 0}\frac{\varepsilon }{\|u_{\varepsilon}\|_{L^{\infty}(\Omega)}^{2^*-q}}\int_{\Omega_{\varepsilon}}c_{q,\varepsilon}(x)w_{\varepsilon}(x)=0.
\end{equation}
then by dominate convergence theorem, we deduce that
\begin{equation}
\begin{aligned}
\lim_{\varepsilon\to 0}\int_{\Omega_{\varepsilon}}c_{2^*,\varepsilon}(x)w_{\varepsilon}(x)&=\int_{\R^{N}}(2^*-1)U^{2^*-2}(x)w(x)dx=N(N+2)b\int_{\R^{N}}\frac{1-|x|^{2}}{(1+|x|^{2})^{2+\frac{N}{2}}}dx\\
&=-b(N-2)\omega_{N},
\end{aligned}
\end{equation}
Thus
\begin{equation}
    \lim_{\varepsilon\to 0}\int_{\Omega}f_{\varepsilon}(x)dx=-b(N-2)\omega_{N}.
\end{equation}
Next, we have
\begin{equation}
\begin{aligned}
     f_{\varepsilon}(x)&=\|u_{\varepsilon}\|_{L^{\infty}(\Omega)}^{2}[d_{2^*,\varepsilon}(x)+\varepsilon d_{q,\varepsilon}(x)]w_{\varepsilon}(x\|u_{\varepsilon}\|_{L^{\infty}(\Omega)}^{(2^*-2)/2})\\
     &\leq C\|u_{\varepsilon}\|^{2}_{L^{\infty}(\Omega)}\left(\frac{1}{\|u_{\varepsilon}\|_{L^{\infty}(\Omega)}^{4/(N-2)}|x|^{4}}+\frac{\varepsilon}{\|u_{\varepsilon}\|_{L^{\infty}(\Omega)}^{q-2}|x|^{(N-2)(q-2)}}\right)\frac{1}{\|u_{\varepsilon}\|_{L^{\infty}(\Omega)}^{2}|x|^{N-2}}\\
     &\to 0,
\end{aligned}
\end{equation}
uniformly in any neighbourhood $\omega$ of $\partial\Omega$ not containing the origin. Then by (\ref{asymptotic uniqueness-1-proof-step-4-1}) we have 
\begin{equation}\label{asymptotic uniqueness-1-proof-step-4-10}
    \|u_{\varepsilon}\|_{L^{\infty}(\Omega)}^{2}\frac{u_{\varepsilon}(x)-v_{\varepsilon}(x)}{\|u_{\varepsilon}-v_{\varepsilon}\|_{L^{\infty}(\Omega)}}\to -b(N-2)\omega_{N}G(x,0)\text{~~in~~} C^{1}(\omega),
\end{equation}
for any neighbourhood $\omega$ of $\partial\Omega$ not containing the origin. Now, we consider the Poho\v{z}aev identity (\ref{Pohazaev-p-varepsion}) for $u_{\varepsilon}$ and $v_{\varepsilon}$
\begin{equation}\label{asymptotic uniqueness-1-proof-step-4-11}
\frac{1}{2N}\int_{\partial\Omega}|\nabla u_{\varepsilon}|^{2}(x\cdot n)dS_{x}=(\frac{1}{q}-\frac{1}{2^*})\varepsilon\int_{\Omega}|u_{\varepsilon}|^{q}dx
\end{equation}
and
\begin{equation}\label{asymptotic uniqueness-1-proof-step-4-12}
\frac{1}{2N}\int_{\partial\Omega}|\nabla v_{\varepsilon}|^{2}(x\cdot n)dS_{x}=(\frac{1}{q}-\frac{1}{2^*})\varepsilon\int_{\Omega}|v_{\varepsilon}|^{q}dx.
\end{equation}
by (\ref{asymptotic uniqueness-1-proof-step-4-11}) and (\ref{asymptotic uniqueness-1-proof-step-4-12}), we have
\begin{equation}\label{asymptotic uniqueness-1-proof-step-4-13}
\begin{aligned}
 \frac{1}{2N}&\int_{\partial\Omega} \nabla \left(\frac{\|u_{\varepsilon}\|_{L^{\infty}(\Omega)}^{2}}{\|u_{\varepsilon}-v_{\varepsilon}\|_{L^{\infty}(\Omega)}}(u_{\varepsilon}-v_{\varepsilon})\right)\nabla\left(\|u_{\varepsilon}\|_{L^{\infty}(\Omega)}(u_{\varepsilon}+v_{\varepsilon})\right)  (x\cdot n)dS_{x}\\
 &=(\frac{1}{q}-\frac{1}{2^*})\varepsilon\int_{\Omega}\tilde{d}_{q,\varepsilon}(x)\|u_{\varepsilon}\|_{L^{\infty}(\Omega)}\|u_{\varepsilon}\|_{L^{\infty}(\Omega)}^{2}\frac{(u_{\varepsilon}-v_{\varepsilon})}{\|u_{\varepsilon}-v_{\varepsilon}\|_{L^{\infty}(\Omega)}}dx,
\end{aligned} 
\end{equation}
where
\begin{equation}
    \tilde{d}_{q,\varepsilon}(x)=q\int_{0}^{1}[t{u_{\varepsilon}(x)}+(1-t)v_{\varepsilon}(x)]^{q-1}dt.
\end{equation}
Then by (\ref{asymptotic uniqueness-1-proof-step-4-10}), (5) in Theorem \ref{main theorem-1} and Lemma \ref{Green function propertity 1}, the left-hand side (LHS) of (\ref{asymptotic uniqueness-1-proof-step-4-13}) tends to 
\begin{equation}\label{asymptotic uniqueness-1-proof-step-4-15}
   \text{LHS}\to -b(N-2)\omega_{N}^{2}\alpha_{N}^{2^*}\frac{1}{N^{2}}\int_{\partial\Omega}|\nabla G(x,0)|^{2}(x\cdot n)dS_{x}=-b(N-2)^{2}\frac{\omega_{N}^{2}\alpha_{N}^{2^*}}{N^{2}}R(0).
\end{equation}
On the other hand, by a change of variable, the right hand side (RHS) of (\ref{asymptotic uniqueness-1-proof-step-4-13}) becomes
\begin{equation}
    (\frac{1}{q}-\frac{1}{2^*})\varepsilon\|u_{\varepsilon}\|_{L^{\infty}(\Omega)}^{q+2-2^*}\int_{\Omega_{\varepsilon}}\tilde{c}_{q,\varepsilon}(x)w_{\varepsilon}(x)dx,
\end{equation}
where
\begin{equation}
    \tilde{c}_{q,\varepsilon}=q\int_{0}^{1}\left[t\frac{u_{\varepsilon}}{\|u_{\varepsilon}\|_{L^{\infty}(\Omega)}}\left(\frac{x}{\|u_{\varepsilon}\|_{L^{\infty}(\Omega)}^{(2^*-2)/2}}\right)+(1-t)\frac{v_{\varepsilon}}{\|u_{\varepsilon}\|_{L^{\infty}(\Omega)}}\left(\frac{x}{\|u_{\varepsilon}\|_{L^{\infty}(\Omega)}^{(2^*-2)/2}}\right)\right]^{q-1}dt.
\end{equation}
Then by (2) and (6) in Theorem \ref{main theorem-1} we have
\begin{equation}\label{asymptotic uniqueness-1-proof-step-4-18}
\begin{aligned}
 \text{RHS}&\to (\frac{1}{q}-\frac{1}{2^*})q\alpha_{N,q}R(0)\int_{\R^{N}}w(x)U^{q-1}(x)dx\\
 &=(\frac{1}{q}-\frac{1}{2^*})q\alpha_{N,q}R(0)N(N-2)b\int_{\R^{N}}\frac{1-|x|^{2}}{(1+|x|^{2})^{\frac{N-2}{2}(q-1)+\frac{N}{2}}}dx \\
 &=(\frac{1}{q}-\frac{1}{2^*})q\alpha_{N,q}R(0)N(N-2)\omega_{N}b\int_{0}^{1}\frac{1-t^{2}}{(1+t^{2})^{\frac{N-2}{2}(q-1)+\frac{N}{2}}}(t^{N-1}-t^{(N-2)(q-1)-3})dt\\
&=(\frac{1}{q}-\frac{1}{2^*})q\alpha_{N,q}R(0)N(N-2)\omega_{N}b(\frac{N-2}{2}(q-1)-\frac{N+2}{2})\frac{\Gamma(\frac{N}{2})\Gamma(\frac{N-2}{2}(q-1)-1)}{2\Gamma(\frac{N-2}{2}(q-1)+\frac{N}{2})}\\
&=(\frac{1}{q}-\frac{1}{2^*})q R(0)N(N-2)\omega_{N}b(\frac{N-2}{2}(q-1)-\frac{N+2}{2})\frac{2q\alpha_{N}^{\frac{2N-8}{N-2}}\omega_{N}(N-2)^{3}}{2N-(N-2)q}\\
&\quad \times\frac{\Gamma(\frac{N-2}{2}q)}{\Gamma(\frac{N}{2})\Gamma(\frac{N-2}{2}q-\frac{N}{2})}\frac{\Gamma(\frac{N}{2})\Gamma(\frac{N-2}{2}q-\frac{N}{2})}{(N-2)q\Gamma(\frac{N-2}{2}q)}\\
&=(\frac{1}{q}-\frac{1}{2^*})q R(0)N(N-2)\omega_{N}b(\frac{N-2}{2}(q-1)-\frac{N+2}{2})\frac{2q\alpha_{N}^{\frac{2N-8}{N-2}}\omega_{N}(N-2)^{3}}{2N-(N-2)q}\frac{1}{(N-2)q}\\
&=-b\frac{2N-(N-2)q}{2}(N-2)\frac{\omega_{N}^{2}\alpha_{N}^{2^*}}{N^{2}}R(0).
\end{aligned}
\end{equation}
Hence by (\ref{asymptotic uniqueness-1-proof-step-4-15}) and (\ref{asymptotic uniqueness-1-proof-step-4-18}), we conclude that
\begin{equation}
    q=\frac{4}{N-2},
\end{equation}
then we get a contradiction with the choice of $q$, which implies $b=0$, and so $w=0$. 

Finally, Let $x_{\varepsilon}\in\R^{N}$ such that 
\begin{equation}
    \|w_{\varepsilon}\|_{L^{\infty}(\Omega_{\varepsilon})}=w_{\varepsilon}(x_{\varepsilon})=1.
\end{equation}
since $w_{\varepsilon}(x)\to 0$, as $\varepsilon\to 0$, uniformly on compact set of $\R^{N}$, thus up to a subsequence, $x_{\varepsilon}\to 0$ as $\varepsilon\to 0$, which make a contraction with (\ref{asymptotic uniqueness-1-proof-step-3-1}), and we complete the proof.
\end{proof}

\subsection{Non-convex domain}
In this subsection, we consider the asymptotic uniqueness of the least energy solution under the assumption that the Robin function is nondegenerate. The proof mainly depends on the decomposition on the least energy solutions and local Poho\v{z}aev identity. The result in this subsection states as follows.
\begin{Assum}\label{Assum-2}
 $x_{0}$ is a nondegenerate critical point of Robin function $R(x)$.   
\end{Assum}
\begin{Thm}\label{local uniqueness-Robin function}
Let $\Omega$ be a smooth bounded star-shape domain of $\R^{N}$, $N\geq 4$, $q\in (2,2^*)$ and $q\geq \frac{N+2}{N-2}$. Suppose that $u_{\varepsilon}$ and $v_{\varepsilon}$ are two least energy solutions of (\ref{p-varepsion}) concentrate on the same point $x_{0}\in\Omega$
and $x_{0}$ satisfies Assumption $\ref{Assum-2}$, then there exists $\varepsilon_{0}>0$ such that for any $\varepsilon<\varepsilon_{0}$
\begin{equation}
    u_{\varepsilon}\equiv v_{\varepsilon}\quad \text{~in~}\Omega.
\end{equation}
\end{Thm}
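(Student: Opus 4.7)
The plan is to argue by contradiction along the same blow-up scheme as in Theorem~\ref{asymptotic uniqueness-1}, but with the convexity/symmetry input replaced by the nondegeneracy of the Robin function and the restriction $q\geq\frac{N+2}{N-2}$. Assume along a subsequence $u_\varepsilon\not\equiv v_\varepsilon$ and set
$$\xi_\varepsilon(x):=\frac{u_\varepsilon(x)-v_\varepsilon(x)}{\|u_\varepsilon-v_\varepsilon\|_{L^\infty(\Omega)}},$$
so $\|\xi_\varepsilon\|_{L^\infty(\Omega)}=1$ and $\xi_\varepsilon$ solves a linear equation whose coefficients are integrated means of $u_\varepsilon^{2^*-2},v_\varepsilon^{2^*-2},\varepsilon u_\varepsilon^{q-2},\varepsilon v_\varepsilon^{q-2}$. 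Let $x_\varepsilon$ be the maximum point of $u_\varepsilon$, so that $x_\varepsilon\to x_0$ by Theorem~\ref{main theorem-1}, and rescale
$$\eta_\varepsilon(y):=\xi_\varepsilon\!\left(\|u_\varepsilon\|_{L^\infty(\Omega)}^{-(2^*-2)/2}\,y+x_\varepsilon\right).$$
As in Steps 1--2 of the proof of Theorem~\ref{asymptotic uniqueness-1}, one obtains $\eta_\varepsilon\to\eta$ in $C^1_{loc}(\R^N)$, where $\eta\in\mathcal{D}^{1,2}(\R^N)\cap L^\infty(\R^N)$ satisfies the linearised bubble equation $-\Delta\eta=(2^*-1)U^{2^*-2}\eta$ in $\R^N$; by the classification of its kernel,
$$\eta(x)=c_0\,\frac{N(N-2)-|x|^2}{(N(N-2)+|x|^2)^{N/2}}+\sum_{i=1}^N c_i\,\frac{x_i}{(N(N-2)+|x|^2)^{N/2}}.$$

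The core of the proof is to show $c_0=c_1=\cdots=c_N=0$. Symmetry being unavailable, I propose to remove the translational components $c_j$ through local Pohozaev identities on a fixed small ball $B(x_0,\rho)\subset\Omega$. Writing the identity with pivot $x_\varepsilon$ and test direction $e_j$ for each of $u_\varepsilon$ and $v_\varepsilon$, subtracting, renormalising by $\|u_\varepsilon\|_{L^\infty}^2/\|u_\varepsilon-v_\varepsilon\|_{L^\infty}$ and passing to the limit yields boundary integrals controlled by the $C^{1,\alpha}$-convergence of Proposition~\ref{u-varepsilon-times-u-infty}, since after multiplying by $\|u_\varepsilon\|_{L^\infty}$ the quotient $(u_\varepsilon-v_\varepsilon)/\|u_\varepsilon-v_\varepsilon\|_{L^\infty}$ converges to a linear combination of derivatives of $G(\cdot,x_0)$ whose coefficients are the $c_i$; a Green-function Pohozaev identity then produces the Hessian of $R$, so that for each $j$
$$\sum_{i=1}^N c_i\,\partial_{ij}R(x_0)=0,$$
and the nondegeneracy of $x_0$ forces $c_i=0$ for every $i=1,\dots,N$.

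The dilation coefficient $c_0$ is eliminated by replaying Step~4 of the proof of Theorem~\ref{asymptotic uniqueness-1} with \eqref{Pohazaev-p-varepsion} subtracted for $u_\varepsilon$ and $v_\varepsilon$. After renormalising, Proposition~\ref{u-varepsilon-times-u-infty} together with Proposition~\ref{blow-up rate} show that the left-hand side tends to a positive multiple of $c_0 R(x_0)$, while the right-hand side tends to a multiple of $c_0 R(x_0)\bigl[\tfrac{N-2}{2}(q-1)-\tfrac{N+2}{2}\bigr]$. The assumption $q\geq\frac{N+2}{N-2}$, equivalently the strict positivity of $\mathcal{D}^2_kF(k_0,x_0)$ recorded in the remark following Theorem~\ref{main theorem 2}, makes this factor nonzero and of the right sign, so the balance of the two sides is possible only when $c_0=0$. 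Consequently $\eta\equiv 0$.

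The contradiction is then reached exactly as in Step~3 of Theorem~\ref{asymptotic uniqueness-1}: a Kelvin transform of $\eta_\varepsilon$ combined with Moser iteration gives $|\eta_\varepsilon(y)|\leq C|y|^{-(N-2)}$ uniformly in $\varepsilon$ on $\{|y|\geq 1\}$, while a point $y_\varepsilon$ with $|\eta_\varepsilon(y_\varepsilon)|=1$ must escape to infinity because $\eta_\varepsilon\to 0$ in $C^1_{loc}$, a direct contradiction. The step I expect to be the main obstacle is the quantitative comparison of the parameter vectors $(\alpha^{(u)}_\varepsilon,\lambda^{(u)}_\varepsilon,a^{(u)}_\varepsilon)$ and $(\alpha^{(v)}_\varepsilon,\lambda^{(v)}_\varepsilon,a^{(v)}_\varepsilon)$ coming from Lemma~\ref{decomposition of u-varepsilon}, together with the precise Green-function expansion of the renormalised boundary Pohozaev integrals on $\partial B(x_0,\rho)$; both hypotheses on $(N,q,x_0)$ are consumed precisely at these two steps, since the nondegeneracy of $R$ pins down the centres and the lower bound on $q$ pins down the heights.
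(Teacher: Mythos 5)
Your strategy is, at the level of its skeleton, the same as the paper's: blow up $z_\varepsilon=(u_\varepsilon-v_\varepsilon)/\|u_\varepsilon-v_\varepsilon\|_{L^\infty(\Omega)}$, identify its local limit with a kernel element $a_0\psi_0+\sum_{h}a_h\psi_h$ of the linearised bubble equation, kill the translational modes with local Poho\v{z}aev identities plus the nondegeneracy of $R$, kill the dilational mode with the dilational Poho\v{z}aev identity, and conclude from a uniform decay bound. However, the mechanism you describe for the translational modes would fail as stated. Away from the peak one has $z_\varepsilon=B_\varepsilon G(x_{1,\varepsilon},\cdot)+\lambda_{1,\varepsilon}^{-(N-1)}\sum_h B_{\varepsilon,h}\,\partial_h G(x_{1,\varepsilon},\cdot)+O(\ln\lambda_{1,\varepsilon}/\lambda_{1,\varepsilon}^{N})$, with $B_\varepsilon=O(\lambda_{1,\varepsilon}^{-(N-2)})$ proportional to $a_0$ and $B_{\varepsilon,h}\to Na_h\int_{\R^N}U_{1,0}^{2^*-1}$: the translational part lives one full order \emph{below} the radial part. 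Renormalising by $\|u_\varepsilon\|_{L^\infty}^{2}\sim\lambda_{1,\varepsilon}^{N-2}$ and invoking the $C^{1,\alpha}$ convergence of Proposition \ref{u-varepsilon-times-u-infty} therefore cannot see the $a_h$. To extract them one must (i) expand $z_\varepsilon$ to order $\lambda_{1,\varepsilon}^{-(N-1)}$, which requires the sharp decay $|z_\varepsilon|\le C(1+\lambda_{1,\varepsilon}^2|x-x_{1,\varepsilon}|^2)^{-(N-2)/2}$, itself resting on the quantitative matching $|x_{1,\varepsilon}-x_{2,\varepsilon}|=O(\ln\lambda_{1,\varepsilon}/\lambda_{1,\varepsilon}^{2})$, $\lambda_{1,\varepsilon}/\lambda_{2,\varepsilon}\to1$; and (ii) show that the leading contribution $B_\varepsilon\,\partial_i R(x_{1,\varepsilon})$ in the translational identity is $o(\lambda_{1,\varepsilon}^{-(N-1)})$, which forces the refined estimates $\nabla R(x_{1,\varepsilon})=O(\ln\lambda_{1,\varepsilon}/\lambda_{1,\varepsilon}^{2})$ and $|x_{1,\varepsilon}-x_0|=O(\ln\lambda_{1,\varepsilon}/\lambda_{1,\varepsilon}^{2})$, obtained from a pointwise Green-representation expansion of $u_\varepsilon$ and the nondegeneracy. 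You flag this as ``the main obstacle,'' but it is the actual content of the proof, not a finishing detail, so the argument as written has a genuine gap here.

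You also misplace the hypothesis $q\ge\frac{N+2}{N-2}$. In the $a_0$-step the limiting balance reduces to $(N-2)a_0=(N-\frac{N-2}{2}q)a_0$, which fails to force $a_0=0$ only when $q=\frac{4}{N-2}$ -- automatically excluded for $N\ge4$, $q>2$; the factor $\frac{N-2}{2}(q-1)-\frac{N+2}{2}$ you invoke vanishes only at $q=2^*$. The lower bound on $q$ is in fact consumed in the remainder estimates: since $\varepsilon\sim\lambda_{1,\varepsilon}^{-(\frac{N-2}{2}q-2)}$, the $\varepsilon$-weighted error terms in the Green representation of $z_\varepsilon$ and in the Poho\v{z}aev identities are of size $\varepsilon\,\lambda_{1,\varepsilon}^{-\frac{N-2}{2}q}\sim\lambda_{1,\varepsilon}^{-((N-2)q-2)}$, and requiring this to be $O(\ln\lambda_{1,\varepsilon}/\lambda_{1,\varepsilon}^{N})$ is exactly $(N-2)q\ge N+2$. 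Your $c_0$-step and the final Kelvin-transform contradiction are otherwise consistent with the paper's argument.
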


In the following, we will prove this theorem by several Claims. First, from Lemma \ref{decomposition of u-varepsilon}, we know that $u_{\varepsilon}$ and $v_{\varepsilon}$ can be written as
\begin{equation}\label{local uniqueness proof 1}
    u_{\varepsilon}(x)=\alpha_{1,\varepsilon}PU_{\lambda_{1,\varepsilon},x_{1,\varepsilon}}+w_{1,\varepsilon}(x) \text{~~and~~}v_{\varepsilon}(x)=\alpha_{2,\varepsilon}PU_{\lambda_{2,\varepsilon},x_{2,\varepsilon}}+w_{2,\varepsilon}(x),
\end{equation}
with $w_{i,\varepsilon}\in E_{\lambda_{i,\varepsilon},x_{i,\varepsilon}}$, for $i=1,2$ and
\begin{equation}\label{local uniqueness proof 2}
  \alpha_{i,\varepsilon}\to\alpha_{N},\quad  x_{i,\varepsilon}\to x_{0},\quad \lambda_{i,\varepsilon}\to\infty, \quad\|w_{i,\varepsilon}\|_{H^{1}_{0}(\Omega)}\to 0. 
\end{equation}
Moreover, we have
\begin{equation}\label{local uniqueness proof 3}
    \varepsilon=O(\lambda_{i,\varepsilon}^{-\frac{N-2}{2}(q+2-2^*)}),
\end{equation}
and
\begin{equation}\label{local uniqueness proof 4}
    u_{\varepsilon}(x)\leq CU_{\lambda_{1,\varepsilon},x_{1,\varepsilon}}(x),\quad v_{\varepsilon}(x)\leq CU_{\lambda_{2,\varepsilon},x_{2,\varepsilon}}(x).
\end{equation}

The first claim give the refined estimate on $u_{\varepsilon}(x)$ and the results for $v_{\varepsilon}$ is similar.
\begin{Claim} For any $x\in\Omega\setminus B(x_{1,\varepsilon},\delta)$, we have
\begin{equation}\label{local uniqueness proof step 1-1}
u_{\varepsilon}(x)=A_{1,\varepsilon}\lambda_{1,\varepsilon}^{-\frac{N-2}{2}}G(x,x_{1,\varepsilon})+O\left(\frac{\ln \lambda_{1,\varepsilon}}{\lambda_{1,\varepsilon}^{\frac{N+2}{2}}}\right),
\end{equation}
and
\begin{equation}\label{local uniqueness proof step 1-2}
    \nabla u_{\varepsilon}(x)=A_{1,\varepsilon}\lambda_{1,\varepsilon}^{-\frac{N-2}{2}}\nabla G(x,x_{1,\varepsilon})+O\left(\frac{\ln \lambda_{1,\varepsilon}}{\lambda_{1,\varepsilon}^{\frac{N+2}{2}}}\right),
\end{equation}
where $A_{1,\varepsilon}=\int_{B(0,\frac{1}{2}\delta\lambda_{1,\varepsilon})}\tilde{u}_{\varepsilon}^{2^*-1}dx$ and  $\tilde{u}_{\varepsilon}(x)=\lambda_{1,\varepsilon}^{-\frac{N-2}{2}}u_{\varepsilon}(\lambda_{1,\varepsilon}^{-1}x+x_{1,\varepsilon})$.  
\end{Claim}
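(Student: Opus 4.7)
The plan is to start from the Green representation formula
\[
u_{\varepsilon}(x) = \int_{\Omega} G(x,y)\bigl[u_{\varepsilon}^{2^*-1}(y)+\varepsilon u_{\varepsilon}^{q-1}(y)\bigr]\,dy,
\]
split the integral into the ball $B(x_{1,\varepsilon},\delta/2)$ and its complement, and exploit the concentration of $u_{\varepsilon}^{2^*-1}$ near $x_{1,\varepsilon}$. On the ball I would change variables $z=\lambda_{1,\varepsilon}(y-x_{1,\varepsilon})$ (so that $u_{\varepsilon}^{2^*-1}(y)\,dy = \lambda_{1,\varepsilon}^{-(N-2)/2}\tilde u_{\varepsilon}^{2^*-1}(z)\,dz$) and Taylor expand the Green function in its second argument about $x_{1,\varepsilon}$:
\[
G(x,\lambda_{1,\varepsilon}^{-1}z+x_{1,\varepsilon}) = G(x,x_{1,\varepsilon}) + \lambda_{1,\varepsilon}^{-1}z\cdot\nabla_{y} G(x,x_{1,\varepsilon}) + O\bigl(\lambda_{1,\varepsilon}^{-2}|z|^{2}\bigr),
\]
uniformly for $x\in\Omega\setminus B(x_{1,\varepsilon},\delta)$ and $|z|\le\delta\lambda_{1,\varepsilon}/2$. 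The constant term produces precisely $A_{1,\varepsilon}\lambda_{1,\varepsilon}^{-(N-2)/2}G(x,x_{1,\varepsilon})$.

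The quadratic remainder contributes at most
\[
\lambda_{1,\varepsilon}^{-(N-2)/2}\cdot\lambda_{1,\varepsilon}^{-2}\int_{|z|\le\delta\lambda_{1,\varepsilon}/2} |z|^{2}\tilde u_{\varepsilon}^{2^*-1}(z)\,dz,
\]
and since $\tilde u_{\varepsilon}\le CU$ by Corollary \ref{upper estimate of u-varepsilon} while $|z|^{2}U^{2^*-1}(z)\sim|z|^{-N}$ at infinity, the inner integral is of order $\ln\lambda_{1,\varepsilon}$; this is the source of the logarithmic factor. For the linear Taylor term the limit profile $U$ is radial, so that $\int_{\mathbb{R}^{N}} zU^{2^*-1}(z)\,dz=0$; I would quantify the deviation by inserting the orthogonal decomposition $u_{\varepsilon}=\alpha_{1,\varepsilon}PU_{\lambda_{1,\varepsilon},x_{1,\varepsilon}}+w_{1,\varepsilon}$ of Lemma \ref{decomposition of u-varepsilon}, exploiting the orthogonality $\int\nabla w_{1,\varepsilon}\cdot\nabla(\partial_{a_{i}}PU_{\lambda,a})=0$ built into $E_{\lambda,a}$, together with the pointwise estimates on $\psi_{\lambda,a}$ of Appendix \ref{Projection of bubbles} and the perturbation bound \eqref{perturbation estimate of w-varepsilon 1}, to show the linear term is absorbed into the stated $O(\ln\lambda_{1,\varepsilon}/\lambda_{1,\varepsilon}^{(N+2)/2})$ error.

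For the far region $\Omega\setminus B(x_{1,\varepsilon},\delta/2)$, Corollary \ref{upper estimate of u-varepsilon} yields $u_{\varepsilon}(y)\le CU_{\lambda_{1,\varepsilon},x_{1,\varepsilon}}(y)$ and a direct computation bounds the corresponding integral by $\lambda_{1,\varepsilon}^{-(N+2)/2}$, well within the desired error. The subcritical contribution $\varepsilon\int G(x,y)u_{\varepsilon}^{q-1}(y)\,dy$ is treated by the same near/far split; the smallness $\varepsilon=O(\lambda_{1,\varepsilon}^{-(N-2)(q+2-2^*)/2})$ from Lemma \ref{decomposition of u-varepsilon}, combined with the case analysis on the integrability of $U^{q-1}$ already appearing in \eqref{blow-up rate-proof-3}, keeps this piece inside the error as well.

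The gradient identity \eqref{local uniqueness proof step 1-2} follows by differentiating the Green representation in $x$ and repeating the argument, using the smoothness of $\nabla_{x}G(x,\cdot)$ away from $x$ for the Taylor step and the analogous tail estimates outside the ball. I expect the main obstacle to be the linear Taylor term: the naive bound based solely on $u_{\varepsilon}\le CU_{\lambda,x_{1,\varepsilon}}$ is insufficient because $\int zU^{2^*-1}$ vanishes only in the radial limit, and obtaining the sharp $\ln\lambda_{1,\varepsilon}/\lambda_{1,\varepsilon}^{(N+2)/2}$ order requires the orthogonality of $w_{1,\varepsilon}$ to $\partial_{a_{i}}PU$ together with the quantitative closeness $\tilde u_{\varepsilon}\to\alpha_{N}U$ from Proposition \ref{limit of v-varepsilon}.
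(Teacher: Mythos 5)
Your proposal is correct and follows essentially the same route as the paper: Green's representation, a near/far splitting about $x_{1,\varepsilon}$, a second-order Taylor expansion of $G(x,\cdot)$ whose quadratic remainder produces the $\ln\lambda_{1,\varepsilon}$ factor, radial symmetry of the bubble to kill the linear term, and the decomposition $u_{\varepsilon}=\alpha_{1,\varepsilon}PU_{\lambda_{1,\varepsilon},x_{1,\varepsilon}}+w_{1,\varepsilon}$ together with $\varepsilon=O(\lambda_{1,\varepsilon}^{-\frac{N-2}{2}(q+2-2^*)})$ to absorb the deviation and the subcritical term. The only cosmetic difference is that the paper controls the linear-term deviation by H\"older's inequality against $\|w_{1,\varepsilon}\|_{H^{1}_{0}}$ (via Lemma \ref{perturbation estimate of w-varepsilon}) rather than invoking the orthogonality conditions defining $E_{\lambda,a}$, but this does not change the argument in substance.
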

\begin{proof}
By Green representation formula (\ref{Green's representation formula}), we have
\begin{equation}\label{local uniqueness proof step 1-3}
    u_{\varepsilon}(x)=\int_{\Omega}G(x,y)(u_{\varepsilon}^{2^*-1}(y)+\varepsilon u_{\varepsilon}^{q-1}(y))dy,
\end{equation}
and it's easy to see that
\begin{equation}\label{local uniqueness proof step 1-4}
    \begin{aligned}
        \int_{\Omega}G(x,y)u_{\varepsilon}^{q-1}(y)dy\leq \int_{\Omega\setminus B(x_{1,\varepsilon},\frac{1}{2}\delta)}G(x,y)u_{\varepsilon}^{q-1}(y)dy +\int_{B(x_{1,\varepsilon},\frac{1}{2}\delta)}G(x,y)u_{\varepsilon}^{q-1}(y)dy.
    \end{aligned}
    \end{equation}
    
First, by (\ref{local uniqueness proof 4}) and Lemma \ref{estimate of Green function}, the first term in the right-hand side of the above equation can be estimate as
\begin{equation}\label{local uniqueness proof step 1-5}
\begin{aligned}
 \int_{\Omega\setminus B(x_{1,\varepsilon},\frac{1}{2}\delta)}G(x,y)u_{\varepsilon}^{q-1}(y)dy&\leq C \int_{\Omega\setminus B(x_{1,\varepsilon},\frac{1}{2}\delta)}G(x,y)U_{\lambda_{1,\varepsilon},x_{1,\varepsilon}}^{q-1}(y)dy\\
 &\leq C\int_{\Omega\setminus B(x_{1,\varepsilon},\frac{1}{2}\delta)}\frac{1}{|x-y|^{N-2}}\frac{1}{\lambda_{1,\varepsilon}^{\frac{N-2}{2}(q-1)}|y-x_{1,\varepsilon}|^{(N-2)(q-1)}}dy\\
  &\leq \frac{C}{\lambda_{1,\varepsilon}^{\frac{N-2}{2}(q-1)}}.  
\end{aligned}
\end{equation}   

Next, we consider the second term in the right-hand side of \eqref{local uniqueness proof step 1-4}. When $(q-1)<\frac{N}{N-2}$, we can obtain that
\begin{equation}\label{local uniqueness proof step 1-6}
\begin{aligned}
\int_{B(x_{1,\varepsilon},\frac{1}{2}\delta)}G(x,y)u_{\varepsilon}^{q-1}(y)dy&\leq C \int_{B(x_{1,\varepsilon},\frac{1}{2}\delta)}G(x,y)U_{\lambda_{1,\varepsilon},x_{1\varepsilon}}^{q-1}(y)dy\\
&\leq\int_{B(x_{1,\varepsilon},\frac{1}{2}\delta)}\frac{1}{|x-y|^{N-2}}\frac{1}{\lambda_{1,\varepsilon}^{\frac{N-2}{2}(q-1)}|y-x_{1,\varepsilon}|^{(N-2)(q-1)}}dy \\
&\leq  \frac{C}{\lambda_{1,\varepsilon}^{\frac{N-2}{2}(q-1)}}.  
\end{aligned}  
\end{equation}
To consider the other case, we will use the following Taylor expansion
\begin{equation}\label{local uniqueness proof step 1-7}
    G(x,y)=G(x,x_{1,\varepsilon})+\langle\nabla G(x,x_{1,\varepsilon}),y-x_{1,\varepsilon}\rangle+O(|y-x_{1,\varepsilon}|^{2}),
\end{equation}
and we have
\begin{equation}\label{local uniqueness proof step 1-8}
\begin{aligned}
&\int_{B(x_{1,\varepsilon},\frac{1}{2}\delta)}G(x,y)u^{q-1}_{\varepsilon}(y)dy \\
&=G(x,x_{1,\varepsilon})\int_{B(x_{1,\varepsilon},\frac{1}{2}\delta)}u^{q-1}_{\varepsilon}(y)dy+\int_{B(x_{1,\varepsilon},\frac{1}{2}\delta)}\langle\nabla G(x,x_{1,\varepsilon}),y-x_{1,\varepsilon}\rangle u^{q-1}_{\varepsilon}(y)dy\\
&\quad+ O\left(\int_{B(x_{1,\varepsilon},\frac{1}{2}\delta)}|y-x_{1,\varepsilon}|^{2}u^{q-1}_{\varepsilon}(y)dy\right).
\end{aligned}
\end{equation}
The first term in the right-hand side of the above equation can be estimated as
\begin{equation}\label{local uniqueness proof step 1-9}
\begin{aligned}
  \int_{B(x_{1,\varepsilon},\frac{1}{2}\delta)}u^{q-1}_{\varepsilon}(y)dy&=\lambda_{1,\varepsilon}^{-N+\frac{N-2}{2}(q-1)}\int_{B(0,\frac{1}{2}\delta\lambda_{1,\varepsilon})}\tilde{u}_{\varepsilon}^{q-1}(x)dx\\
  &\leq C \lambda_{1,\varepsilon}^{-N+\frac{N-2}{2}(q-1)}\int_{B(0,\frac{1}{2}\delta\lambda_{1,\varepsilon})}\frac{1}{(1+|x|^{2})^{\frac{N-2}{2}(q-1)}}dx\\
  &=\begin{cases}
      \frac{C}{\lambda_{1,\varepsilon}^{N-\frac{N-2}{2}(q-1)}}, &\text{~if~} (N-2)(q-1)>N,\\
      \frac{C\ln \lambda_{1,\varepsilon}}{\lambda_{1,\varepsilon}^{N-\frac{N-2}{2}(q-1)}}, &\text{~if~} (N-2)(q-1)=N,\\
      \frac{C}{\lambda_{1,\varepsilon}^{\frac{N-2}{2}(q-1)}}, &\text{~if~} (N-2)(q-1)<N,
  \end{cases}
\end{aligned}
\end{equation}
and the third term in \eqref{local uniqueness proof step 1-8} bound by
\begin{equation}\label{local uniqueness proof step 1-10}
    \begin{aligned}
      \int_{B(x_{1,\varepsilon},\frac{1}{2}\delta)}|y-x_{1,\varepsilon}|^{2}u^{q-1}_{\varepsilon}(y)dy&\leq C \int_{B(x_{1,\varepsilon},\frac{1}{2}\delta)}|y-x_{1,\varepsilon}|^{2}U_{\lambda_{1,\varepsilon},x_{1,\varepsilon}}^{q-1}(y)dy\\
      &=\frac{C}{\lambda_{1,\varepsilon}^{(2+N)-\frac{N-2}{2}(q-1)}}\int_{B(0,\frac{1}{2}\delta\lambda_{1,\varepsilon})}|y|^{2}U_{1,0}^{q-1}(y)dy\\
      &\leq \frac{C}{\lambda_{1,\varepsilon}^{\frac{N-2}{2}(q-1)}}.
    \end{aligned}
\end{equation}
Moreover, by symmetry, Lemma \ref{estimate of Green function} and Lemma \ref{perturbation estimate of w-varepsilon}, we have 
\begin{equation}\label{local uniqueness proof step 1-11}
    \begin{aligned}
       &\int_{B(x_{1,\varepsilon},\frac{1}{2}\delta)}\langle\nabla G(x,x_{1,\varepsilon}),y-x_{1,\varepsilon}\rangle u^{q-1}_{\varepsilon}(y)dy\\
       &\leq C  \int_{B(x_{1,\varepsilon},\frac{1}{2}\delta)}\langle\nabla G(x,x_{1,\varepsilon}),y-x_{1,\varepsilon}\rangle U^{q-1}_{\lambda_{1,\varepsilon},x_{1,\varepsilon}}(y)dy\\
       &\quad+O\left(\int_{B(x_{1,\varepsilon},\frac{1}{2}\delta)}|y-x_{1,\varepsilon}| U^{q-2}_{\lambda_{1,\varepsilon},x_{1,\varepsilon}}(y)|w_{1,\varepsilon}(y)|dy\right)\\
       &=O\left(\int_{B(x_{1,\varepsilon},\frac{1}{2}\delta)}|y-x_{1,\varepsilon}| U^{q-2}_{\lambda_{1,\varepsilon},x_{1,\varepsilon}}(y)|w_{1,\varepsilon}(y)|dy\right)\\
       &=O\left(\int_{B(x_{1,\varepsilon},\frac{1}{2}\delta)}\frac{1}{\lambda_{\varepsilon}^{\frac{N-2}{2}(q-2)}|y-x_{1,\varepsilon}|^{(N-2)(q-2)-1}}|w_{1,\varepsilon}|dy\right)\\
       &=O\left(\frac{1}{\lambda_{\varepsilon}^{\frac{N-2}{2}(q-2)}}\right)\|w_{1,\varepsilon}\|_{H^{1}_{0}(\Omega)}=O\left(\frac{1}{\lambda_{1,\varepsilon}^{\frac{N-2}{2}(q-1)}}\right).
    \end{aligned}
\end{equation}
Finally, by (\ref{local uniqueness proof step 1-4})-(\ref{local uniqueness proof step 1-11}), we conclude that
\begin{equation}\label{local uniqueness proof step 1-12}
    \int_{\Omega}G(x,y)u^{q-1}_{\lambda_{1,\varepsilon}}(y)dy\leq \begin{cases}
      \frac{C}{\lambda_{1,\varepsilon}^{N-\frac{N-2}{2}(q-1)}}, &\text{~if~} (N-2)(q-1)>N,\\
      \frac{C\ln \lambda_{1,\varepsilon}}{\lambda_{1,\varepsilon}^{N/2}}, &\text{~if~} (N-2)(q-1)=N,\\
      \frac{C}{\lambda_{1,\varepsilon}^{\frac{N-2}{2}(q-1)}}, &\text{~if~} (N-2)(q-1)<N.\\
      \end{cases}
\end{equation}
Similarly, we have
\begin{equation}\label{local uniqueness proof step 1-13}
     \int_{\Omega}G(x,y)u^{2^*-1}_{\lambda_{1,\varepsilon}}(y)dy=A_{1,\varepsilon}\lambda_{1,\varepsilon}^{-\frac{N-2}{2}}G(x,x_{1,\varepsilon})+O\left(\frac{\ln \lambda_{1,\varepsilon}}{\lambda_{1,\varepsilon}^{\frac{N+2}{2}}}\right). 
\end{equation}
Then by (\ref{local uniqueness proof step 1-3}), (\ref{local uniqueness proof 3}), (\ref{local uniqueness proof step 1-12}) and (\ref{local uniqueness proof step 1-13}), we obtain (\ref{local uniqueness proof step 1-1}).
Similarly, from Lemma \ref{estimate of Green function} and 
\begin{equation}
    \nabla u_{\varepsilon}(x)=\int_{\Omega}\nabla_{x}G(x,y)(u_{\varepsilon}^{2^*-1}(y)+\varepsilon u_{\varepsilon}^{q-1}(y))dy,
\end{equation}
we can prove (\ref{local uniqueness proof step 1-2}).
\end{proof}

Next, we give a exact estimate on location and height of bubble.
\begin{Claim} There exist a constant $c_{0}$ such that
\begin{equation}\label{local uniqueness proof step 2-1}
    \nabla R(x_{1,\varepsilon})=O\left(\frac{\ln\lambda_{1,\varepsilon}}{\lambda_{1,\varepsilon}^{2}}\right),\quad |x_{1,\varepsilon}-x_{0}|=O\left(\frac{\ln\lambda_{1,\varepsilon}}{\lambda_{1,\varepsilon}^{2}}\right),\quad \varepsilon=\lambda_{1,\varepsilon}^{-\frac{N-2}{2}(q+2-2^*)}(c_{0}+o(1)).
\end{equation}    
\end{Claim}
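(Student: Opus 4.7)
The plan is to apply the local Pohožaev identity on a small ball $B(x_{1,\varepsilon},\delta)$ (with $\delta>0$ fixed so that $B(x_{0},2\delta)\Subset\Omega$) and then substitute the boundary asymptotics supplied by (\ref{local uniqueness proof step 1-1})--(\ref{local uniqueness proof step 1-2}). Writing $i=1,\dots,N$, Lemma \ref{Local Pohozaev identiy} reads
\begin{equation*}
-\int_{\partial B(x_{1,\varepsilon},\delta)}\frac{\partial u_{\varepsilon}}{\partial x_{i}}\frac{\partial u_{\varepsilon}}{\partial n}\,dS+\frac{1}{2}\int_{\partial B(x_{1,\varepsilon},\delta)}|\nabla u_{\varepsilon}|^{2}n_{i}\,dS=\frac{1}{2^{*}}\int_{\partial B(x_{1,\varepsilon},\delta)}u_{\varepsilon}^{2^{*}}n_{i}\,dS+\frac{\varepsilon}{q}\int_{\partial B(x_{1,\varepsilon},\delta)}u_{\varepsilon}^{q}n_{i}\,dS.
\end{equation*}
Inserting the expansions, the left-hand side factorises as $A_{1,\varepsilon}^{2}\lambda_{1,\varepsilon}^{-(N-2)}$ times the Pohožaev quadratic form built from $G(\cdot,x_{1,\varepsilon})$, which by the Green-function Pohožaev identity (Lemma \ref{Pohozaev identity of Green functon }) equals a universal nonzero multiple of $\partial_{x_{i}}R(x_{1,\varepsilon})$, plus cross terms pairing the leading $\lambda_{1,\varepsilon}^{-(N-2)/2}$ factor with the $\lambda_{1,\varepsilon}^{-(N+2)/2}\ln\lambda_{1,\varepsilon}$ remainder, of order $O(\lambda_{1,\varepsilon}^{-N}\ln\lambda_{1,\varepsilon})$. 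On the right-hand side, Corollary \ref{upper estimate of u-varepsilon} gives $u_{\varepsilon}|_{\partial B(x_{1,\varepsilon},\delta)}=O(\lambda_{1,\varepsilon}^{-(N-2)/2})$, so the critical integral is $O(\lambda_{1,\varepsilon}^{-N})$, while the subcritical one is $O(\varepsilon\lambda_{1,\varepsilon}^{-(N-2)q/2})=O(\lambda_{1,\varepsilon}^{-(N-2)(2q+2-2^{*})/2})$; the hypothesis $q\geq(N+2)/(N-2)$ makes this exponent at least $N$. Dividing the identity by the prefactor $A_{1,\varepsilon}^{2}\lambda_{1,\varepsilon}^{-(N-2)}$ (recall $A_{1,\varepsilon}\to C_{N}>0$) then yields $\nabla R(x_{1,\varepsilon})=O(\ln\lambda_{1,\varepsilon}/\lambda_{1,\varepsilon}^{2})$.

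For the second assertion I invoke Assumption \ref{Assum-2}: since $\nabla R(x_{0})=0$ and $D^{2}R(x_{0})$ is invertible, Taylor expansion gives $\nabla R(x_{1,\varepsilon})=D^{2}R(x_{0})(x_{1,\varepsilon}-x_{0})+O(|x_{1,\varepsilon}-x_{0}|^{2})$, so inverting $D^{2}R(x_{0})$ converts the first estimate into $|x_{1,\varepsilon}-x_{0}|=O(\ln\lambda_{1,\varepsilon}/\lambda_{1,\varepsilon}^{2})$. For the third, evaluate the decomposition (\ref{local uniqueness proof 1}) at the maximum point of $u_{\varepsilon}$: using $PU_{\lambda,a}(a)=\lambda^{(N-2)/2}+O(\lambda^{-(N-2)/2})$ (via the pointwise bound on $\psi_{\lambda,a}$ from the Appendix), $\alpha_{\varepsilon}\to\alpha_{N}$, and the pointwise bound on $w_{1,\varepsilon}$ inherited from Corollary \ref{upper estimate of u-varepsilon}, one obtains $\|u_{\varepsilon}\|_{L^{\infty}(\Omega)}=(\alpha_{N}+o(1))\lambda_{1,\varepsilon}^{(N-2)/2}$. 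Combining with Proposition \ref{blow-up rate} gives
\begin{equation*}
\varepsilon\,\lambda_{1,\varepsilon}^{(N-2)(q+2-2^{*})/2}\longrightarrow c_{0}:=\alpha_{N,q}R(x_{0})\,\alpha_{N}^{-(q+2-2^{*})},
\end{equation*}
which is exactly the third estimate.

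The main obstacle is the Pohožaev expansion in the first step: one must verify that every cross term between the leading Green's-function part and the $O(\ln\lambda_{1,\varepsilon}/\lambda_{1,\varepsilon}^{(N+2)/2})$ remainder in (\ref{local uniqueness proof step 1-1})--(\ref{local uniqueness proof step 1-2}) is of order at most $\lambda_{1,\varepsilon}^{-N}\ln\lambda_{1,\varepsilon}$, so that divided by the $\lambda_{1,\varepsilon}^{-(N-2)}$ prefactor it produces exactly the claimed $\ln\lambda_{1,\varepsilon}/\lambda_{1,\varepsilon}^{2}$ rate. It is precisely here that the hypothesis $q\geq (N+2)/(N-2)$ is essential, since otherwise the subcritical boundary integral $\varepsilon\int u_{\varepsilon}^{q}n_{i}$ would exceed $\lambda_{1,\varepsilon}^{-N}$ and swamp the $\nabla R$ contribution; once the first step is in place, the remaining two assertions reduce to elementary manipulations of the already-established asymptotics.
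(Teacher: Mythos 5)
Your derivation of the first two estimates is essentially the paper's own argument: apply the translational local Poho\v{z}aev identity of Lemma \ref{Local Pohozaev identiy} on $\partial B(x_{1,\varepsilon},\delta)$, insert the expansions \eqref{local uniqueness proof step 1-1}--\eqref{local uniqueness proof step 1-2}, identify the leading quadratic form with $\partial_{x_i}R(x_{1,\varepsilon})$ via the Green-function Poho\v{z}aev identity, check that the boundary terms $\int u_{\varepsilon}^{2^*}n_i$ and $\varepsilon\int u_{\varepsilon}^{q}n_i$ are $O(\lambda_{1,\varepsilon}^{-N})$ (the latter using $q\geq\frac{N+2}{N-2}$ together with $\varepsilon=O(\lambda_{1,\varepsilon}^{-\frac{N-2}{2}(q+2-2^*)})$), and then invert $D^2R(x_0)$. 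That part is correct and matches the paper.

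The third estimate is where you diverge, and your shortcut has a genuine gap. You deduce $\varepsilon\lambda_{1,\varepsilon}^{\frac{N-2}{2}(q+2-2^*)}\to c_0$ from Proposition \ref{blow-up rate} by asserting $\|u_{\varepsilon}\|_{L^{\infty}(\Omega)}=(\alpha_N+o(1))\lambda_{1,\varepsilon}^{\frac{N-2}{2}}$, justified by evaluating the decomposition \eqref{local uniqueness proof 1} at the maximum point and invoking ``the pointwise bound on $w_{1,\varepsilon}$ inherited from Corollary \ref{upper estimate of u-varepsilon}.'' No such pointwise bound is available: Lemma \ref{decomposition of u-varepsilon} and Lemma \ref{perturbation estimate of w-varepsilon} control $w_{1,\varepsilon}$ only in $H^1_0(\Omega)$, which says nothing about its value at a single point at the scale $\lambda_{1,\varepsilon}^{\frac{N-2}{2}}$; and Corollary \ref{upper estimate of u-varepsilon} is phrased in terms of $\mu_{\varepsilon}=\|u_{\varepsilon}\|_{L^{\infty}(\Omega)}^{2/(N-2)}$, the very quantity you are trying to compare with $\lambda_{1,\varepsilon}$, so using it here is circular. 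What the proof of Lemma \ref{decomposition of u-varepsilon} actually yields (via the upper bound of Lemma \ref{estimate of two different bubbles}) is only that $\mu_{\varepsilon}/\lambda_{1,\varepsilon}$ is bounded above and below, not that the ratio converges — and convergence is exactly what the ``$c_0+o(1)$'' form requires. The paper closes this by a different device: it applies the \emph{dilation-type} local Poho\v{z}aev identity on $B(x_{1,\varepsilon},\delta)$, inserts the refined boundary expansions with error $O(\lambda_{1,\varepsilon}^{-N/2}+\varepsilon\lambda_{1,\varepsilon}^{-(N-2)/2})$, and obtains the balance relation \eqref{local uniqueness proof step 2-11},
\begin{equation*}
\frac{A_{1,\varepsilon}^{2}(N-2)R(x_{1,\varepsilon})}{2\lambda_{1,\varepsilon}^{N-2}}
=\varepsilon\Big(\frac{N}{q}-\frac{N-2}{2}\Big)\int_{B(x_{1,\varepsilon},\delta)}u_{\varepsilon}^{q}
+O\Big(\frac{\varepsilon}{\lambda_{1,\varepsilon}^{N-2}}+\frac{1}{\lambda_{1,\varepsilon}^{N-1}}\Big),
\end{equation*}
in which both sides are expressed through the converging quantities $A_{1,\varepsilon}$, $R(x_{1,\varepsilon})$ and $\lambda_{1,\varepsilon}^{N-\frac{N-2}{2}q}\int_{B}u_{\varepsilon}^{q}$; this determines $c_0$ without ever comparing $\mu_{\varepsilon}$ with $\lambda_{1,\varepsilon}$. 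Note also that \eqref{local uniqueness proof step 2-11} is reused verbatim in the final step of the proof of Theorem \ref{local uniqueness-Robin function} to conclude $a_0=0$, so this identity cannot be bypassed even if your shortcut for the asymptotics of $\varepsilon$ were repaired.
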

\begin{proof}
We recall the local Poho\v{z}aev identity for $u_{\varepsilon}$ (see Lemma \ref{Local Pohozaev identiy})
\begin{equation}
    \int_{\partial B(x_{1,\varepsilon},\delta)}\frac{\partial u_{\varepsilon}}{\partial x_{i}}\frac{\partial u_{\varepsilon}}{\partial n}-\frac{1}{2}|\nabla u_{\varepsilon}|^{2}n_{i}+\left(\frac{1}{2^*}|u_{\varepsilon}|^{2^*}+\frac{\varepsilon}{q}|u_{\varepsilon}|^{q}\right)n_{i}dS_{x}=0,
\end{equation}
for $i=1,\cdots,N$. Then by (\ref{local uniqueness proof step 1-1}), (\ref{local uniqueness proof step 1-2}) in Claim 1 and Lemma \ref{Pohozaev identity of Green functon }, we can prove that
\begin{equation}
    \nabla R(x_{1,\varepsilon})=O\left(\frac{\ln\lambda_{1,\varepsilon}}{\lambda_{1,\varepsilon}^{2}}\right).
\end{equation}
Moreover, by $x_{0}$ is a nondegenerate critical point of $R(x)$, we have
\begin{equation}
    |x_{1,\varepsilon}-x_{0}|=O\left(\frac{\ln\lambda_{1,\varepsilon}}{\lambda_{1,\varepsilon}^{2}}\right).
\end{equation}

Next, by the proof of Claim 1, we have that for $x\in\Omega\setminus B(x_{1,\varepsilon},\delta)$
\begin{equation}
u_{\varepsilon}(x)=A_{1,\varepsilon}\lambda_{1,\varepsilon}^{-\frac{N-2}{2}}G(x,x_{1,\varepsilon})+O\left(\frac{1}{\lambda_{1,\varepsilon}^{\frac{N}{2}}}+\frac{\varepsilon}{\lambda_{1,\varepsilon}^{\frac{N-2}{2}}}\right),
\end{equation}
and
\begin{equation}
    \nabla u_{\varepsilon}(x)=A_{1,\varepsilon}\lambda_{1,\varepsilon}^{-\frac{N-2}{2}}\nabla G(x,x_{1,\varepsilon})+O\left(\frac{1}{\lambda_{1,\varepsilon}^{\frac{N}{2}}}+\frac{\varepsilon}{\lambda_{1,\varepsilon}^{\frac{N-2}{2}}}\right).
\end{equation}
Then by Lemma \ref{Pohozaev identity of Green functon } and the local Poho\v{z}aev identity (see Lemma \ref{Pohozaev identity}),
\begin{equation}
  \begin{aligned}
      &\int_{\partial B(x_{1,\varepsilon},\delta)}\left((\nabla u_{\varepsilon}\cdot n)((x-x_{1,\varepsilon})\cdot\nabla u_{\varepsilon})-\frac{|\nabla u_{\varepsilon}|^{2}}{2}(x-x_{1,\varepsilon})\cdot n\right)+\frac{N-2}{2}\int_{\partial B(x_{1,\varepsilon},\delta)}u_{\varepsilon}\frac{\partial u_{\varepsilon}}{\partial n}\\
      &\quad+\int_{\partial B(x_{1,\varepsilon},\delta)} \left(\frac{1}{2^*}u_{\varepsilon}^{2^*}+\frac{\varepsilon}{q}u_{\varepsilon}^{q}\right)\left((x-x_{1,\varepsilon})\cdot n\right)\\
      &=\varepsilon\left(\frac{N}{q}-\frac{N-2}{2}\right)\int_{ B(x_{1,\varepsilon},\delta)}u_{\varepsilon}^{q}, 
  \end{aligned}   
 \end{equation}
we have
\begin{equation}
    \frac{A_{1,\varepsilon}^{2}(N-2)R(x_{1,\varepsilon})}{2\lambda_{1,\varepsilon}^{N-2}}=\varepsilon\left(\frac{N}{q}-\frac{N-2}{2}\right)\int_{B(x_{1,\varepsilon},\delta)}u_{\varepsilon}^{q}+O\left(\frac{\varepsilon}{\lambda_{1,\varepsilon}^{N-2}}+\frac{1}{\lambda_{1,\varepsilon}^{N-1}}\right).
\end{equation}
On the other hand, it's easy to see that
\begin{equation}
    A_{1,\varepsilon}=\alpha_{N}^{2^*-1}\int_{\R^{N}}U_{1,0}^{2^*-1}+o,
\end{equation}
and
\begin{equation}
\int_{B(x_{1,\varepsilon},\delta)}u_{\varepsilon}^{q}=\frac{1}{\lambda_{1,\varepsilon}^{N-\frac{N-2}{2}q}}\left(\alpha_{N}^{q}\int_{\R^{N}}U_{1,0}^{q}+o(1)\right).
\end{equation}
Thus we obtain that
\begin{equation}\label{local uniqueness proof step 2-11}
\begin{aligned}
\frac{(N-2)R(x_{1,\varepsilon})\alpha_{N}^{22^*-2}(\int_{\R^{N}}U_{1,0}^{2^*-1}+o(1))^{2}}{2\lambda_{1,\varepsilon}^{N-2}}&=\frac{\varepsilon}{\lambda_{1,\varepsilon}^{N-\frac{N-2}{2}q}}(\frac{N}{q}-\frac{N-2}{2})\alpha_{N}^{q}\left(\int_{\R^{N}}U_{1,0}^{q}+o(1)\right)\\
&\quad+O\left(\frac{\varepsilon}{\lambda_{1,\varepsilon}^{N-2}}+\frac{1}{\lambda_{1,\varepsilon}^{N-1}}\right),    
\end{aligned}  
\end{equation}
which gives that
\begin{equation}
 \varepsilon=\lambda_{1,\varepsilon}^{-\frac{N-2}{2}(q+2-2^*)}(c_{0}+o(1)).   
\end{equation}
where $c_{0}$ is a constant only depend on $N,q$ and $R(x_{0})$.
\end{proof}
Next, we assume that $u_{\varepsilon}\not\equiv v_{\varepsilon}$ and 
set
\begin{equation}\label{local uniqueness proof step 3-1}
    z_{\varepsilon}(x)=\frac{u_{\varepsilon}(x)-v_{\varepsilon}(x)}{\|u_{\varepsilon}-v_{\varepsilon}\|_{L^{\infty}(\Omega)}},
\end{equation}
then $\|z_{\varepsilon}(x)\|_{L^{\infty}(\Omega)}=1$ and $z_{\varepsilon}(x)$ satisfy that
\begin{equation}\label{local uniqueness proof step 3-2}
    -\Delta z_{\varepsilon}(x)=k_{\varepsilon}(x)z_{\varepsilon}(x),
\end{equation}
where
\begin{equation}
\begin{aligned}
   k_{\varepsilon}(x)&=(2^*-1)\int_{0}^{1}(tu_{\varepsilon}(x)+(1-t)v_{\varepsilon}(x))^{2^*-2}dt+\varepsilon(q-1) \int_{0}^{1}(tu_{\varepsilon}(x)+(1-t)v_{\varepsilon}(x))^{q-2}dt \\
   &:=k_{2^*,\varepsilon}(x)+\varepsilon k_{q,\varepsilon}(x).
\end{aligned}
\end{equation}
On the other hand, by Lemma \ref{Local Pohozaev identiy}, we have the following Poho\v{z}aev identity for $z_{\varepsilon}$
\begin{equation}\label{local uniqueness proof step 3-pohozaev identity-1}
\begin{aligned}
    &-\int_{\partial B(x_{1,\varepsilon},\delta)}\frac{\partial z_{\varepsilon}}{\partial n}\frac{\partial u_{\varepsilon}}{\partial x_{i}}-\int_{\partial B(x_{1,\varepsilon},\delta)}\frac{\partial z_{\varepsilon}}{\partial x_{i}}\frac{\partial v_{\varepsilon}}{\partial n}\\
    &+\frac{1}{2}\int_{\partial B(x_{1,\varepsilon},\delta)}\langle\nabla(u_{\varepsilon}+v_{\varepsilon}),\nabla z_{\varepsilon}\rangle n_{i}=\int_{\partial B(x_{1,\varepsilon},\delta)}C_{\varepsilon}(x)z_{\varepsilon}n_{i},
\end{aligned}
\end{equation}
and
\begin{equation}\label{local uniqueness proof step 3-pohozaev identity-2}
    \begin{aligned}
        &\int_{\partial B(x_{1,\varepsilon},\delta)}\frac{\partial z_{\varepsilon}}{\partial n}\langle x-x_{1,\varepsilon},\nabla u_{\varepsilon}\rangle+\int_{\partial B(x_{1,\varepsilon},\delta)}\frac{\partial v_{\varepsilon}}{\partial n}\langle x-x_{1,\varepsilon},\nabla z_{\varepsilon}\rangle\\
        &-\frac{1}{2}\int_{\partial B(x_{1,\varepsilon},\delta)}\langle\nabla(u_{\varepsilon}+v_{\varepsilon},\nabla z_{\varepsilon})\rangle\langle x-x_{1,\varepsilon},n\rangle\\
        &+\frac{N-2}{2}\int_{\partial B(x_{1,\varepsilon},\delta)}\frac{\partial z_{\varepsilon}}{\partial n}u_{\varepsilon}+\frac{N-2}{2}\int_{\partial B(x_{1,\varepsilon},\delta)}\frac{\partial v_{\varepsilon}}{\partial n}z_{\varepsilon}\\
        &=-\int_{\partial B(x_{1,\varepsilon},\delta)} C_{\varepsilon}(x)z_{\varepsilon}(x)\langle x-x_{1,\varepsilon},n\rangle+\varepsilon\int_{B(x_{1,\varepsilon},\delta)}D_{\varepsilon}(x)z_{\varepsilon}(x),
    \end{aligned}
\end{equation}
where
\begin{equation}
    C_{\varepsilon}(x)=\int_{0}^{1}(t u_{\varepsilon}(x)+(1-t)v_{\varepsilon}(x))^{2^*-1}dt+\varepsilon \int_{0}^{1}(t u_{\varepsilon}(x)+(1-t)v_{\varepsilon}(x))^{q-1}dt,
\end{equation}
and
\begin{equation}
    D_{\varepsilon}(x)=(\frac{N}{q}-\frac{N-2}{2})q\int_{0}^{1}(t u_{\varepsilon}(x)+(1-t)v_{\varepsilon}(x))^{q-1}dt.
\end{equation}
\begin{Claim}we have the following estimate   on $z_{\varepsilon}(x)$.
  \begin{equation}\label{local uniqueness proof step 3-4}
    |z_{\varepsilon}(x)|\leq \frac{C}{(1+\lambda_{1,\varepsilon}^{2}|x-x_{1,\varepsilon}|^{2})^{\frac{N-2}{2}}}.
\end{equation}  
\end{Claim}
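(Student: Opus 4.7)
The plan is to prove this pointwise decay estimate by contradiction, rescaling $z_\varepsilon$ around the concentration point, passing to a limit in the kernel of the linearized critical equation, and then killing this limit using the orthogonality inherent in the decomposition (\ref{local uniqueness proof 1}) together with the location estimates of Claim 2.

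Set $\varphi_\varepsilon(x) := (1+\lambda_{1,\varepsilon}^{2}|x-x_{1,\varepsilon}|^{2})^{-(N-2)/2}$ and $M_\varepsilon := \sup_{\Omega} |z_\varepsilon|/\varphi_\varepsilon$. Since $\|z_\varepsilon\|_{L^\infty(\Omega)}=1$, we have $M_\varepsilon\ge 1$, and the claim is equivalent to $M_\varepsilon=O(1)$. Suppose by contradiction $M_\varepsilon\to\infty$ and let $y_\varepsilon\in\Omega$ attain the supremum. The Green's representation applied to (\ref{local uniqueness proof step 3-2}), together with the pointwise bound $|k_\varepsilon(y)|\le C\,U^{2^*-2}_{\lambda_{1,\varepsilon},x_{1,\varepsilon}}(y)+C\varepsilon\,U^{q-2}_{\lambda_{1,\varepsilon},x_{1,\varepsilon}}(y)$ coming from (\ref{local uniqueness proof 4}), and the trivial bound $|z_\varepsilon(y)|\le M_\varepsilon\varphi_\varepsilon(y)$, yields
\[
\varphi_\varepsilon(y_\varepsilon)\;\le\;\int_\Omega G(y_\varepsilon,y)\bigl(U^{2^*-2}_{\lambda_{1,\varepsilon},x_{1,\varepsilon}}+\varepsilon\,U^{q-2}_{\lambda_{1,\varepsilon},x_{1,\varepsilon}}\bigr)(y)\,\varphi_\varepsilon(y)\,dy.\qquad(\ast)
\]
Using the Aubin--Talenti identity $-\Delta U=N(N-2)U^{2^*-1}$ and arguments analogous to those in Claim 1 (splitting into $B(x_{1,\varepsilon},R/\lambda_{1,\varepsilon})$ and its complement), one checks that the right-hand side of $(\ast)$ is bounded by $C\varphi_\varepsilon(y_\varepsilon)$ in general and, using $\varepsilon=O(\lambda_{1,\varepsilon}^{-(N-2)(q+2-2^*)/2})$ from (\ref{local uniqueness proof 3}) and $q\ge(N+2)/(N-2)$, is $o(\varphi_\varepsilon(y_\varepsilon))$ whenever $\lambda_{1,\varepsilon}|y_\varepsilon-x_{1,\varepsilon}|\to\infty$. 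Hence $(\ast)$ forces $\xi_\varepsilon:=\lambda_{1,\varepsilon}(y_\varepsilon-x_{1,\varepsilon})$ to stay bounded; up to a subsequence, $\xi_\varepsilon\to\xi_\infty\in\R^N$.

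Next, introduce the rescaling
\[
\tilde z_\varepsilon(\xi):=\frac{1}{M_\varepsilon}\,z_\varepsilon\!\left(x_{1,\varepsilon}+\tfrac{\xi}{\lambda_{1,\varepsilon}}\right),\qquad |\tilde z_\varepsilon(\xi)|\le (1+|\xi|^2)^{-(N-2)/2},
\]
with equality at $\xi=\xi_\varepsilon$. From (\ref{local uniqueness proof step 3-2}) and Theorem \ref{main theorem-1}(2), $\tilde z_\varepsilon$ satisfies $-\Delta \tilde z_\varepsilon=\tilde k_\varepsilon\tilde z_\varepsilon$ with $\tilde k_\varepsilon\to(2^*-1)U^{2^*-2}$ in $C^0_{\mathrm{loc}}(\R^N)$ (the subcritical remainder being negligible thanks to $\varepsilon\|u_\varepsilon\|_{L^\infty(\Omega)}^{q-2^*}\to 0$). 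By standard elliptic regularity, up to a subsequence $\tilde z_\varepsilon\to\tilde z$ in $C^1_{\mathrm{loc}}(\R^N)$, where $\tilde z\in L^\infty(\R^N)$ solves the linearized critical equation
\[
-\Delta \tilde z=(2^*-1)U^{2^*-2}\tilde z\quad\text{in }\R^N,
\]
and $\tilde z(\xi_\infty)\ne 0$ since $|\tilde z_\varepsilon(\xi_\varepsilon)|=\varphi(\xi_\varepsilon)\to\varphi(\xi_\infty)>0$. By the classification of the kernel of this operator (Lemma \ref{Kernel of Emden-Fowler equation} in the appendix),
\[
\tilde z(\xi)=b_0\frac{N(N-2)-|\xi|^2}{(N(N-2)+|\xi|^2)^{N/2}}+\sum_{i=1}^{N}b_i\frac{\xi_i}{(N(N-2)+|\xi|^2)^{N/2}}.
\]

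Finally, the orthogonality relations built into the decomposition (\ref{local uniqueness proof 1})--(\ref{definition of space E}) give, after subtracting the decompositions of $u_\varepsilon$ and $v_\varepsilon$ and dividing by $M_\varepsilon\|u_\varepsilon-v_\varepsilon\|_{L^\infty(\Omega)}$, identities of the form $\int_\Omega\nabla \tilde z_\varepsilon\cdot\nabla(\partial_\lambda PU_{\lambda_{1,\varepsilon},x_{1,\varepsilon}})=o(1)$ and $\int_\Omega\nabla \tilde z_\varepsilon\cdot\nabla(\partial_{a_i}PU_{\lambda_{1,\varepsilon},x_{1,\varepsilon}})=o(1)$, where the error $o(1)$ absorbs both $\|w_{j,\varepsilon}\|_{H^1_0}$ from Lemma \ref{perturbation estimate of w-varepsilon} and the gaps $|\lambda_{1,\varepsilon}-\lambda_{2,\varepsilon}|$, $|x_{1,\varepsilon}-x_{2,\varepsilon}|$ controlled by Claim 2. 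Passing to the limit, these become orthogonality conditions of $\tilde z$ in $\mathcal{D}^{1,2}(\R^N)$ against the full basis $\partial_\lambda U,\partial_{x_1^i}U$ of the kernel, forcing $b_0=b_1=\cdots=b_N=0$, i.e.\ $\tilde z\equiv 0$. This contradicts $\tilde z(\xi_\infty)\ne 0$, so $M_\varepsilon$ is bounded and the claim follows.

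The main obstacle I foresee is Step 2, namely converting $(\ast)$ into a genuine smallness statement when $\lambda_{1,\varepsilon}|y_\varepsilon-x_{1,\varepsilon}|\to\infty$: the critical convolution $\int G\cdot U^{2^*-2}\varphi_\varepsilon$ is only bounded by $C\varphi_\varepsilon(y_\varepsilon)$ with a non-small constant unless one splits the integration region carefully and exploits the $(1+\lambda_{1,\varepsilon}|y_\varepsilon-x_{1,\varepsilon}|)^{-\sigma}$ gain from the far-field decay of $U^{2^*-2}$. A safer alternative, avoiding this delicate splitting, is to carry the entire rescaling and kernel-killing argument through without ruling out the case $\lambda_{1,\varepsilon}|y_\varepsilon-x_{1,\varepsilon}|\to\infty$ first, using instead the general rescaling $\xi\mapsto\xi_\varepsilon+\cdot$ and showing the limit equation is still the linearized critical problem and the limit must be a translation-dilation mode that is killed by the orthogonality.
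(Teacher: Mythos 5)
Your proposal takes a genuinely different route from the paper, but it has gaps that prevent it from closing. The paper's proof is a direct bootstrap of the integral equation: starting from $\|z_{\varepsilon}\|_{L^{\infty}(\Omega)}=1$, it plugs the pointwise bounds $k_{2^*,\varepsilon}\lesssim \lambda_{1,\varepsilon}^{2}(1+\lambda_{1,\varepsilon}^{2}|x-x_{1,\varepsilon}|^{2})^{-2}$ and $\varepsilon k_{q,\varepsilon}\lesssim\lambda_{1,\varepsilon}^{-(N-4)}\lambda_{1,\varepsilon}^{2}(1+\lambda_{1,\varepsilon}|x-x_{1,\varepsilon}|)^{-(N-2)(q-2)}$ into $z_{\varepsilon}(x)=\int_{\Omega}G(x,y)k_{\varepsilon}(y)z_{\varepsilon}(y)\,dy$ and uses Lemma \ref{useful estimate} to obtain $|z_{\varepsilon}|\lesssim (1+\lambda_{1,\varepsilon}|x-x_{1,\varepsilon}|)^{-2}$; re-inserting this improved bound and iterating raises the decay exponent step by step until it saturates at $N-2$. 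No contradiction, no blow-up limit, no kernel classification is needed at this stage.

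Your contradiction scheme fails at exactly the point you flag as the "main obstacle," and the proposed fallback does not repair it. First, the smallness of the right-hand side of $(\ast)$ when $\lambda_{1,\varepsilon}|y_{\varepsilon}-x_{1,\varepsilon}|\to\infty$ is false: the dominant contribution to $\int G(y_{\varepsilon},y)U_{\lambda_{1,\varepsilon},x_{1,\varepsilon}}^{2^*-2}\varphi_{\varepsilon}(y)\,dy$ comes from the core region $|y-x_{1,\varepsilon}|\lesssim\lambda_{1,\varepsilon}^{-1}$, where $\varphi_{\varepsilon}\approx 1$ and $G(y_{\varepsilon},\cdot)\approx|y_{\varepsilon}-x_{1,\varepsilon}|^{2-N}$, giving a term comparable to $\varphi_{\varepsilon}(y_{\varepsilon})$ with a constant that does not tend to zero; this is precisely why a bootstrap on the exponent (rather than a one-shot smallness estimate) is required. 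Second, the fallback cannot produce a contradiction: if $M_{\varepsilon}\to\infty$, then $M_{\varepsilon}\varphi_{\varepsilon}(y_{\varepsilon})=|z_{\varepsilon}(y_{\varepsilon})|\le 1$ forces $|\xi_{\varepsilon}|\to\infty$ automatically, so your rescaled functions $\tilde z_{\varepsilon}=M_{\varepsilon}^{-1}z_{\varepsilon}(x_{1,\varepsilon}+\cdot/\lambda_{1,\varepsilon})$ converge to $0$ locally uniformly and the limit carries no information about the escaping maximum point; there is no $\xi_{\infty}$ and no nonzero kernel element to kill. Third, even where the kernel argument would apply, the orthogonality you invoke is not available: $u_{\varepsilon}-v_{\varepsilon}$ contains, to leading order, exactly the modes $\partial_{\lambda}PU$ and $\partial_{a_{i}}PU$ with coefficients of order $|\lambda_{1,\varepsilon}-\lambda_{2,\varepsilon}|$, $|x_{1,\varepsilon}-x_{2,\varepsilon}|$, $|\alpha_{1,\varepsilon}-\alpha_{2,\varepsilon}|$, which after division by $\|u_{\varepsilon}-v_{\varepsilon}\|_{L^{\infty}(\Omega)}$ need not be small. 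Indeed, in the paper the rescaled limit of $z_{\varepsilon}$ is a priori a nontrivial combination $\sum_{i=0}^{N}a_{i}\psi_{i}$, and proving $a_{i}=0$ occupies the remainder of the uniqueness argument via the local Poho\v{z}aev identities and the nondegeneracy of $x_{0}$ — it is not a consequence of the decomposition.
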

\begin{proof}
Recall the Green representation formula (\ref{Green's representation formula}) 
\begin{equation}\label{local uniqueness proof step 3-5}
    z_{\varepsilon}(x)=\int_{\Omega}G(x,y)k_{\varepsilon}(y)z_{\varepsilon}(y)dy.
\end{equation}
Note that, by (\ref{local uniqueness proof step 2-1}), we have
\begin{equation}
    |x_{1,\varepsilon}-x_{2,\varepsilon}|=O\left(\frac{\ln\lambda_{1,\varepsilon}}{\lambda_{1,\varepsilon}^{2}}\right),\quad \frac{\lambda_{1,\varepsilon}}{\lambda_{2,\varepsilon}}\to 1,
\end{equation}
Thus
\begin{equation}
    |U_{\lambda_{1,\varepsilon},x_{1,\varepsilon}}-U_{\lambda_{2,\varepsilon},x_{2,\varepsilon}}|\leq CU_{\lambda_{1,\varepsilon},x_{1,\varepsilon}}(\lambda_{1,\varepsilon}|x_{1,\varepsilon}-x_{2,\varepsilon}|+\lambda_{1,\varepsilon}^{-1}|\lambda_{1,\varepsilon}-\lambda_{2,\varepsilon}|)=o(1)U_{\lambda_{1,\varepsilon},x_{1,\varepsilon}}.
\end{equation}
Then there exist constant $c_{1}$ and $c_{2}$ independent on $\varepsilon$ such that for $\varepsilon$ small enough
\begin{equation}
    U_{\lambda_{1,\varepsilon},x_{1,\varepsilon}}\leq c_{1}U_{\lambda_{1,\varepsilon},x_{1,\varepsilon}}\leq c_{2} U_{\lambda_{1,\varepsilon},x_{1,\varepsilon}}.
\end{equation}
Moreover, by $u_{\varepsilon}\leq CU_{\lambda_{1,\varepsilon},x_{1,\varepsilon}}$ and $v_{\varepsilon}\leq CU_{\lambda_{2,\varepsilon},x_{2,\varepsilon}}$, we obtain that
\begin{equation}
    k_{2^*,\varepsilon}(x)\leq C\frac{\lambda_{1,\varepsilon}^{2}}{(1+\lambda_{1,\varepsilon}^{2}|x-x_{1,\varepsilon}|^{2})^{2}},\quad k_{q,\varepsilon}(x)\leq C\frac{\lambda_{1,\varepsilon}^{\frac{(N-2)(q-2)}{2}}}{(1+\lambda_{1,\varepsilon}^{2}|x-x_{1,\varepsilon}|^{2})^{\frac{(N-2)(q-2)}{2}}}.
\end{equation}
Now, we estimate the right hand side of (\ref{local uniqueness proof step 3-5}), by lemma \ref{Green function propertity 1} and Lemma \ref{useful estimate}, we have
\begin{equation}
\begin{aligned}
 \int_{\Omega}G(x,y)k_{q,\varepsilon}(y)&\leq C\int_{\Omega}\frac{1}{|x-y|^{N-2}} \frac{\lambda_{1,\varepsilon}^{\frac{(N-2)(q-2)}{2}}}{(1+\lambda_{1,\varepsilon}^{2}|y-x_{1,\varepsilon}|^{2})^{\frac{(N-2)(q-2)}{2}}} dy\\
 &\leq C\lambda_{1,\varepsilon}^{\frac{(N-2)(q-2)}{2}-2}\int_{\lambda_{1,\varepsilon}(\Omega-x_{1,\varepsilon})}\frac{1}{|\lambda_{1,\varepsilon}(x-x_{1,\varepsilon})-y|^{N-2}}\frac{1}{(1+|y|^{2})^{\frac{(N-2)(q-2)}{2}}}dy\\
 &\leq  C\lambda_{1,\varepsilon}^{\frac{(N-2)(q-2)}{2}}\int_{\lambda_{1,\varepsilon}(\Omega-x_{1,\varepsilon})}\frac{1}{|\lambda_{1,\varepsilon}(x-x_{1,\varepsilon})-y|^{N-2}}\frac{1}{(1+|y|^{2})^{\frac{(N-2)(q-2)+2}{2}}}dy\\
 &\leq C\frac{\lambda_{1,\varepsilon}^{\frac{(N-2)(q-2)}{2}}}{(1+\lambda_{1,\varepsilon}|x-x_{1,\varepsilon}|)^{(N-2)(q-2)}}.
\end{aligned}
\end{equation}
Moreover, by (\ref{local uniqueness proof step 2-1}), we obtain that
\begin{equation}\label{local uniqueness proof step 3-11}
    \varepsilon\int_{\Omega}G(x,y)k_{q,\varepsilon}(y)\leq C\frac{1}{\lambda_{1,\varepsilon}^{N-4}(1+\lambda_{1,\varepsilon}|x-x_{1,\varepsilon}|)^{(N-2)(q-2)}}.
\end{equation}
Similarly, we have
   \begin{equation}\label{local uniqueness proof step 3-12}
    \int_{\Omega}G(x,y)k_{2^*,\varepsilon}(x)\leq C\frac{1}{(1+\lambda_{1,\varepsilon}|x-x_{1,\varepsilon}|)^{2}}.
\end{equation} 
Note that $\|z_{\varepsilon}\|_{L^{\infty}(\Omega)}=1$, then by (\ref{local uniqueness proof step 3-5}), (\ref{local uniqueness proof step 3-11}) and (\ref{local uniqueness proof step 3-12}), we conclude 
\begin{equation}
    z_{\varepsilon}(x)=O\left(\frac{1}{(1+\lambda_{1,\varepsilon}|x-x_{1,\varepsilon}|)^{2}}\right).
\end{equation}
Now, by repeating the above process, we can obtain that
the claim (\ref{local uniqueness proof step 3-4}) hold. 
\end{proof}
Furthermore, we claim that the following estimate of $z_{\varepsilon}(x)$ hold.
\begin{Claim}
\begin{equation}\label{local uniqueness proof step 3-13}
    z_{\varepsilon}(x)=B_{\varepsilon}G(x_{1,\varepsilon},x)+\frac{1}{\lambda_{1,\varepsilon}^{N-1}}\sum_{h=1}^{N}B_{\varepsilon,h}\partial_{h} G(x_{1,\varepsilon},x)+O\left(\frac{\ln \lambda_{1,\varepsilon}}{\lambda_{1,\varepsilon}^{N}}\right),\quad \text{~in~}C^{1}(\Omega\setminus B(x_{1,\varepsilon},2\theta)),
\end{equation}
where $\theta>0$ is any fixed small constant, $\partial_{h} G(y,x)=\frac{\partial G(y,x)}{\partial y_{h}}$,
\begin{equation}
    B_{\varepsilon}=\int_{B(x_{1,\varepsilon},\theta)}k_{\varepsilon}(y)z_{\varepsilon}(y),
\end{equation}
\begin{equation}
    B_{\varepsilon,h}=\frac{1}{\lambda_{1,\varepsilon}^{2}}\int_{B(0,\lambda_{1,\varepsilon}\theta)}y_{h}k_{\varepsilon}\left(\frac{1}{\lambda_{1,\varepsilon}}y+x_{1,\varepsilon}\right)z_{\varepsilon}\left(\frac{1}{\lambda_{1,\varepsilon}}y+x_{1,\varepsilon}\right).
\end{equation}
\end{Claim}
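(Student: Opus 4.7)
The starting point will be the Green's representation formula
$$z_{\varepsilon}(x)=\int_{\Omega}G(x,y)\,k_{\varepsilon}(y)\,z_{\varepsilon}(y)\,dy,$$
valid since $-\Delta z_{\varepsilon}=k_{\varepsilon}z_{\varepsilon}$ in $\Omega$ with $z_{\varepsilon}=0$ on $\partial\Omega$. I will split this integral into the near-concentration part $\int_{B(x_{1,\varepsilon},\theta)}$ and the far part $\int_{\Omega\setminus B(x_{1,\varepsilon},\theta)}$. On the near part I Taylor expand the Green's function with respect to its second variable around $y=x_{1,\varepsilon}$ to second order,
$$G(x,y)=G(x,x_{1,\varepsilon})+\sum_{h=1}^{N}\partial_{h}G(x,x_{1,\varepsilon})\,(y_{h}-x_{1,\varepsilon,h})+O(|y-x_{1,\varepsilon}|^{2}),$$
the remainder being controlled uniformly for $x\in\Omega\setminus B(x_{1,\varepsilon},2\theta)$ because $\nabla_{y}^{2}G(x,\cdot)$ is bounded on $B(x_{1,\varepsilon},\theta)$. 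By the symmetry $G(x,x_{1,\varepsilon})=G(x_{1,\varepsilon},x)$ (and the analogous identity for the first derivative), the constant term contributes exactly $B_{\varepsilon}G(x_{1,\varepsilon},x)$, while the linear term, after the rescaling $y=x_{1,\varepsilon}+z/\lambda_{1,\varepsilon}$ which pulls out a factor $\lambda_{1,\varepsilon}^{-(N+1)}$, matches the stated $\lambda_{1,\varepsilon}^{-(N-1)}\sum_{h}B_{\varepsilon,h}\,\partial_{h}G(x_{1,\varepsilon},x)$ piece exactly.

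To estimate the quadratic remainder I will combine the pointwise decay $|z_{\varepsilon}(y)|\le C(1+\lambda_{1,\varepsilon}^{2}|y-x_{1,\varepsilon}|^{2})^{-(N-2)/2}$ from Claim~3 with the bounds
$$k_{2^{*},\varepsilon}(y)\le C\frac{\lambda_{1,\varepsilon}^{2}}{(1+\lambda_{1,\varepsilon}^{2}|y-x_{1,\varepsilon}|^{2})^{2}},\qquad \varepsilon\,k_{q,\varepsilon}(y)\le C\varepsilon\frac{\lambda_{1,\varepsilon}^{(N-2)(q-2)/2}}{(1+\lambda_{1,\varepsilon}^{2}|y-x_{1,\varepsilon}|^{2})^{(N-2)(q-2)/2}}.$$
After the same rescaling, the critical-exponent part of the remainder reduces to the model integral
$$C\lambda_{1,\varepsilon}^{-N}\int_{B(0,\lambda_{1,\varepsilon}\theta)}\frac{|z|^{2}}{(1+|z|^{2})^{(N+2)/2}}\,dz=O\!\left(\frac{\ln\lambda_{1,\varepsilon}}{\lambda_{1,\varepsilon}^{N}}\right),$$
since the integrand has the critical tail exponent $-N$. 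The subcritical part, using $\varepsilon=O(\lambda_{1,\varepsilon}^{-(N-2)(q+2-2^{*})/2})$ from Claim~2 together with the hypothesis $q\ge(N+2)/(N-2)$, yields a contribution of strictly smaller order than $\ln\lambda_{1,\varepsilon}/\lambda_{1,\varepsilon}^{N}$.

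For the far-field contribution I will use that on $\Omega\setminus B(x_{1,\varepsilon},\theta)$ the product $k_{\varepsilon}(y)|z_{\varepsilon}(y)|$ is pointwise $O(\lambda_{1,\varepsilon}^{-N})$, so that $\int_{\Omega\setminus B(x_{1,\varepsilon},\theta)}|G(x,y)|\,k_{\varepsilon}(y)|z_{\varepsilon}(y)|\,dy=O(\lambda_{1,\varepsilon}^{-N})$ uniformly in $x$, the remaining kernel $G(x,\cdot)$ being locally integrable. The $C^{1}$ convergence is then obtained by differentiating the representation formula, $\nabla_{x}z_{\varepsilon}(x)=\int_{\Omega}\nabla_{x}G(x,y)k_{\varepsilon}(y)z_{\varepsilon}(y)\,dy$, and repeating exactly the same splitting and Taylor expansion; this is legitimate because for $x\in\Omega\setminus B(x_{1,\varepsilon},2\theta)$ the kernel $\nabla_{x}G(x,\cdot)$ and its $y$-gradient satisfy pointwise estimates of the same type as $G(x,\cdot)$, so every estimate above carries over verbatim.

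The main obstacle will be keeping the error at the sharp threshold $\ln\lambda_{1,\varepsilon}/\lambda_{1,\varepsilon}^{N}$: a crude H\"older application on the quadratic remainder loses a power of $\lambda_{1,\varepsilon}$, so the rescaling and the explicit dimensional computation of the model integral $\int|z|^{2}(1+|z|^{2})^{-(N+2)/2}\,dz$ (logarithmically divergent because of the exact critical decay) are essential. Equally delicate is verifying that the subcritical piece driven by $\varepsilon\,k_{q,\varepsilon}$ does not dominate, and this is precisely where the hypothesis $q\ge(N+2)/(N-2)$ enters.
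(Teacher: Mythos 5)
Your proposal is correct and follows essentially the same route as the paper: Green's representation, splitting at $B(x_{1,\varepsilon},\theta)$, a second-order Taylor expansion of $G(\cdot,x)$ in its first (concentration-side) variable, the decay of $z_{\varepsilon}$ from the previous claim to control the critically scaling quadratic remainder (producing the $\ln\lambda_{1,\varepsilon}/\lambda_{1,\varepsilon}^{N}$ error), and the bound $\varepsilon=O(\lambda_{1,\varepsilon}^{-\frac{N-2}{2}(q+2-2^*)})$ together with $q\ge\frac{N+2}{N-2}$ to absorb the subcritical term. The bookkeeping of the rescaling factor $\lambda_{1,\varepsilon}^{-(N+1)}$ against the $\lambda_{1,\varepsilon}^{-2}$ built into $B_{\varepsilon,h}$, and the differentiation of the representation formula for the $C^{1}$ statement, also match the paper's argument.
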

\begin{proof}
For any $x\in \Omega\setminus B(x_{1,\varepsilon},\theta)$, by Green representation formula, we obtain that
\begin{equation}
\begin{aligned}
 z_{\varepsilon}(x)&=\int_{\Omega}G(x,y)k_{\varepsilon}(y)z_{\varepsilon}(y)dy\\
&=\int_{\Omega\setminus B(x_{1,\varepsilon},\theta)}G(x,y)k_{\varepsilon}(y)z_{\varepsilon}(y)dy+\int_{B(x_{1,\varepsilon},\theta)}G(x,y)k_{\varepsilon}(y)z_{\varepsilon}(y)dy.    
\end{aligned}
\end{equation}
Then, it follows from Lemma \ref{estimate of Green function}, (\ref{local uniqueness proof step 1-1}) (\ref{local uniqueness proof step 3-4}) and (\ref{local uniqueness proof step 2-1}), the first term in the right-hand side of the above equation can be estimate as
\begin{equation}
\begin{aligned}
\int_{\Omega\setminus B(x_{1,\varepsilon},\theta)}G(x,y)k_{\varepsilon}(y)z_{\varepsilon}(y)dy&\leq C\frac{1}{\lambda_{1,\varepsilon}^{N-2}} \int_{\Omega\setminus B(x_{1,\varepsilon},\theta)}\frac{1}{|x-y|^{N-2}}k_{\varepsilon}(y)dy\\     
&\leq C\left(\frac{1}{\lambda_{1,\varepsilon}^{N}}+\frac{\varepsilon}{\lambda_{1,\varepsilon}^{\frac{N-2}{2}q}}\right)\leq  C\left(\frac{1}{\lambda_{1,\varepsilon}^{N}}+\frac{1}{\lambda_{1,\varepsilon}^{(N-2)q-2}}\right)\\
&\leq C\left(\frac{\ln\lambda_{1,\varepsilon}}{\lambda_{1,\varepsilon}^{N}}\right),
\end{aligned} 
\end{equation}
and the second term can be bound by
\begin{align}
&\int_{B(x_{1,\varepsilon},\theta)}G(x,y)k_{\varepsilon}(y)z_{\varepsilon}(y)dy\nonumber\\
&=G(x_{1,\varepsilon},x)\int_{B(x_{1,\varepsilon},\theta)}k_{\varepsilon}(y)z_{\varepsilon}(y)dy+\int_{B(x_{1,\varepsilon},\theta)}\left(G(y,x)-G(x_{1,\varepsilon},x)\right)k_{\varepsilon}(y)z_{\varepsilon}(y)dy\nonumber\\
&=G(x_{1,\varepsilon},x)\int_{B(x_{1,\varepsilon},\theta)}k_{\varepsilon}(y)z_{\varepsilon}(y)dy+\int_{B(x_{1,\varepsilon},\theta)}\langle \partial G(x_{1,\varepsilon},x),y-x_{1,\varepsilon}\rangle k_{\varepsilon}(y)z_{\varepsilon}(y)dy\nonumber\\
&\quad +O\left(\frac{1}{|x-x_{1,\varepsilon}|^{N}}\int_{B(x_{1,\varepsilon},\theta)}|y-x_{1,\varepsilon}|^{2}|k_{\varepsilon}(y)||z_{\varepsilon}(y)|dy\right)\\
&=G(x_{1,\varepsilon},x)\int_{B(x_{1,\varepsilon},\theta)}k_{\varepsilon}(y)z_{\varepsilon}(y)dy+\frac{1}{\lambda_{1,\varepsilon}^{N-1}}\sum_{h=1}^{N}B_{\varepsilon,h}\partial_{h}G(x_{1,\varepsilon},x)\nonumber\\
&\quad+O\left(\frac{\ln\lambda_{1,\varepsilon}}{\lambda_{1,\varepsilon}^{N}|x-x_{1,\varepsilon}|^{N}}+\frac{\varepsilon}{\lambda_{1,\varepsilon}^{\frac{N-2}{2}q}|x-x_{1,\varepsilon}|^{N}}\right)\nonumber\\
&=G(x_{1,\varepsilon},x)\int_{B(x_{1,\varepsilon},\theta)}k_{\varepsilon}(y)z_{\varepsilon}(y)dy+\frac{1}{\lambda_{1,\varepsilon}^{N-1}}\sum_{h=1}^{N}B_{\varepsilon,h}\partial_{h}G(x_{1,\varepsilon},x)+O\left(\frac{\ln\lambda_{1,\varepsilon}}{\lambda_{1,\varepsilon}^{N}}\right)\nonumber.
\end{align}
Similarly, we can prove that (\ref{local uniqueness proof step 3-13}) holds in $C^{1}(\Omega\setminus B(x_{1,\varepsilon},2\theta))$.
\end{proof}

Let $\tilde{z}_{\varepsilon}(y)=z_{\varepsilon}\left(\frac{1}{\lambda_{1,\varepsilon}}y+x_{1,\varepsilon}\right)$ , then by (\ref{local uniqueness proof step 3-1}), we have 
\begin{equation}\label{local uniqueness proof step 4-1}
    \|\tilde{z}_{\varepsilon}(x)\|_{L^{\infty}(\tilde{\Omega})}=1.
\end{equation}
where $\tilde{\Omega}:=\lambda_{1,\varepsilon}(\Omega-x_{1,\varepsilon})$, then by standard elliptic regularity theory in \cite{gilbarg1977elliptic} with Arzel\'a-Ascoli theorem, we know that there exist a function $\tilde{z}_{0}(x)$ such that
\begin{equation}
    \tilde{z}_{\varepsilon}(x)\to z_{0}(x)\quad \text{~in~}C^{1}(B_{R}(0)),
\end{equation}
for any $R>0$. Moreover, by (\ref{local uniqueness proof step 3-2}) we have
\begin{equation}
    -\Delta \tilde{z}_{0}(x)=N(N+2)U_{1,0}^{2^*-2}\tilde{z}_{0}(x),\quad\text{~in~}\R^{N},
\end{equation}
which together with Lemma \ref{Kernel of Emden-Fowler equation} we have
\begin{equation}
\begin{aligned}
\tilde{z}_{0}(x)
&=a_{0}\frac{1-|x|^{2}}{(1+|x|^{2})^{\frac{N}{2}}} +\sum_{i=1}^{N}\frac{a_{i}x_{i}}{(1+|x|^{2})^{\frac{N}{2}}}\\
&:=\sum_{i=0}^{N}a_{i}\psi_{i}(x),\\
\end{aligned} 
\end{equation}
for some $a_{i}\in\R$. Then we have
\begin{equation}\label{local uniqueness proof step 4-5}
    B_{\varepsilon}=\frac{N(N+2)}{\lambda_{1,\varepsilon}^{N-2}}\left(\int_{\R^{N}}U_{1,0}^{2^*-2}a_{0}\psi_{0}+o(1)\right)
\end{equation}
and
\begin{equation}\label{local uniqueness proof step 4-6}
\begin{aligned}
      B_{\varepsilon,h}&\to N(N+2)\int_{\R^{N}}y_{h}U_{1,0}^{2^*-2}(\sum_{k=0}^{N}a_{k}\psi_{k}(y))=N(N+2)a_{h}\int_{\R^{N}}y_{h}U_{1,0}^{2^*-2}\psi_{h}(y)\\
      &=Na_{h}\int_{\R^{N}}U_{1,0}^{2^*-1},
\end{aligned}
\end{equation}
for $h=1,\cdots,N$. 

\begin{proof}[\text{Proof of Theorem} $\ref{local uniqueness-Robin function}$] 
We will obtain a contradiction by proving $a_{i}=0$ for any $i=0,\cdots,N$. First, we define that
\begin{equation}
    \begin{aligned}
        Q(z,u,\delta)=-\int_{\partial B(x_{1,\varepsilon},\delta)}\frac{\partial z}{\partial n}\frac{\partial u}{\partial x_{i}}-\int_{\partial B(x_{1,\varepsilon},\delta)}\frac{\partial u}{\partial n}\frac{\partial z}{\partial x_{i}}+\int_{\partial B(x_{1,\varepsilon},\delta)}\langle\nabla u,\nabla z\rangle n_{i}.
    \end{aligned}
\end{equation}
Inserting (\ref{local uniqueness proof step 1-1}), (\ref{local uniqueness proof step 1-2}), (\ref{local uniqueness proof step 3-4}) and (\ref{local uniqueness proof step 3-13}) to (\ref{local uniqueness proof step 3-pohozaev identity-1}), we have
\begin{equation}\label{local uniqueness proof step 4-8}
    \begin{aligned}
        &Q\left(B_{\varepsilon}G(x_{1,\varepsilon},x)+\sum_{h=1}^{N}\frac{B_{\varepsilon,h}\partial h G(x_{1,\varepsilon},x)}{\lambda_{1,\varepsilon}^{N-1}},\frac{A_{1,\varepsilon}G(x,x_{1,\varepsilon})}{\lambda_{1,\varepsilon}^{\frac{N-2}{2}}},\delta\right)\\
        &=\frac{1}{\lambda_{1,\varepsilon}^{N-2}}O\left(\frac{\ln \lambda_{1,\varepsilon}}{\lambda_{1,\varepsilon}^{\frac{N+2}{2}}}+\frac{\varepsilon}{\lambda_{1,\varepsilon}^{\frac{(N-2)(q-1)}{2}}}\right).
    \end{aligned}
\end{equation}
Since 
\begin{equation}\label{local uniqueness proof step 4-A-varepsilon}
   A_{1,\varepsilon}=\alpha_{N}^{2^*-1}\int_{\R^{N}}U_{1,0}^{2^*-1}+o(1)>0, 
\end{equation}
Then, it follows from (\ref{local uniqueness proof step 4-8}) and  (\ref{local uniqueness proof step 2-1}), we obtain
\begin{equation}\label{local uniqueness proof step 4-9}
    \begin{aligned}
      &Q\left(B_{\varepsilon}G(x_{1,\varepsilon},x)+\sum_{h=1}^{N}\frac{B_{\varepsilon,h}\partial_{h} G(x_{1,\varepsilon},x)}{\lambda_{1,\varepsilon}^{N-1}},G(x,x_{1,\varepsilon}),\delta\right)\\
      &=\frac{1}{\lambda_{1,\varepsilon}^{N-2}}O\left(\frac{\ln \lambda_{1,\varepsilon}}{\lambda_{1,\varepsilon}^{2}}+\frac{\varepsilon}{\lambda_{1,\varepsilon}^{\frac{(N-2)(q-2)}{2}}}\right)=O\left(\frac{\ln \lambda_{1,\varepsilon}}{\lambda_{1,\varepsilon}^{N}}\right).  
    \end{aligned}
\end{equation}
On the other hand, by Lemma \ref{Pohozaev identity of Green functon }, (\ref{local uniqueness proof step 4-5}) and (\ref{local uniqueness proof step 2-1}), we have
\begin{equation}\label{local uniqueness proof step 4-10}
    Q(B_{\varepsilon}G(x_{1,\varepsilon},x),G(x,x_{1,\varepsilon}),\delta)=B_{\varepsilon}Q(G(x_{1,\varepsilon},x),G(x,x_{1,\varepsilon}),\delta)=B_{\varepsilon}\frac{\partial R(x_{1,\varepsilon})}{\partial x_{i}}=O\left(\frac{\ln\lambda_{1,\varepsilon}}{\lambda_{1,\varepsilon}^{N}}\right).
\end{equation}
Then by (\ref{local uniqueness proof step 4-9}) and (\ref{local uniqueness proof step 4-10}), we deduce that
\begin{equation}
 \sum_{h=1}^{N}B_{\varepsilon,h}Q(\partial_{h}G(x_{1,\varepsilon},x),G(x,x_{1,\varepsilon}),\delta)=O\left(\frac{\ln\lambda_{1,\varepsilon}}{\lambda_{1,\varepsilon}}\right),
\end{equation}
which together with Lemma \ref{Pohozaev identity of Green functon } and the nondegeneracy of the critical point $x_{0}$, we get
\begin{equation}
    B_{\varepsilon,h}=O\left(\frac{\ln\lambda_{1,\varepsilon}}{\lambda_{1,\varepsilon}}\right).
\end{equation}
Therefore, by (\ref{local uniqueness proof step 4-6}), we have $a_{i}=0$, for $i=1,\cdots,N$. 

Next, we insert (\ref{local uniqueness proof step 1-1}), (\ref{local uniqueness proof step 1-2}), (\ref{local uniqueness proof step 3-4}) and (\ref{local uniqueness proof step 3-13}) to (\ref{local uniqueness proof step 3-pohozaev identity-2}), we have
\begin{equation}\label{local uniqueness proof step 4-13}
    \begin{aligned}
    &2A_{1,\varepsilon}B_{\varepsilon}\int_{B(x_{1,\varepsilon},\delta)}\frac{\partial G(x_{1,\varepsilon},x)}{\partial n}\langle x-x_{1,\varepsilon},\nabla G(x_{1,\varepsilon},x)\rangle\\
    &-A_{1,\varepsilon}B_{\varepsilon}\int_{B(x_{1,\varepsilon},\delta)}\langle\nabla G(x_{1,\varepsilon},x),\nabla G(x_{1,\varepsilon},x)\rangle\langle x-x_{1,\varepsilon},n\rangle\\
    &+(N-2)A_{1,\varepsilon}B_{\varepsilon}\int_{B(x_{1,\varepsilon},\delta)}\frac{\partial G(x_{1,\varepsilon},x)}{\partial n}G(x_{1,\varepsilon},x)\\
    &=\frac{1}{\lambda_{1,\varepsilon}^{N-2}}O\left(\frac{\ln\lambda_{1,\varepsilon}}{\lambda_{1,\varepsilon}^{2}}+\frac{\varepsilon}{\lambda_{1,\varepsilon}^{\frac{(N-2)(q-2)}{2}}}\right)+\varepsilon\lambda_{1,\varepsilon}^{\frac{N-2}{2}}\int_{B(x_{1,\varepsilon},\delta)}D_{\varepsilon}(x)z_{\varepsilon}(x).
    \end{aligned}
\end{equation}
Furthermore, we obtain
\begin{equation}
\varepsilon\lambda_{1,\varepsilon}^{\frac{N-2}{2}}\int_{B(x_{1,\varepsilon},\delta)}D_{\varepsilon}(x)z_{\varepsilon}(x)=  \frac{\varepsilon}{\lambda_{1,\varepsilon}^{(N-\frac{N-2}{2}q)}}(N-\frac{N-2}{2}q)\alpha_{N}^{q-1}\left(\int_{\R^{N}}U_{1,0}^{q-1}a_{0}\psi_{0}+o(1)\right), 
\end{equation}
which together with (\ref{local uniqueness proof step 4-13}), (\ref{local uniqueness proof step 4-5}), (\ref{local uniqueness proof step 4-A-varepsilon}) and Lemma \ref{Pohozaev identity of Green functon }, we have
\begin{equation}
\begin{aligned}
&\frac{(N-2)(2^*-1)R(x_{1,\varepsilon})}{\lambda_{1,\varepsilon}^{N-2}}\alpha_{N}^{22^*-3}\int_{\R^{N}}U_{1,0}^{2^*-1}\left(\int_{\R^{N}}U_{1,0}^{2^*-2}a_{0}\psi_{0}+o(1)\right)\\
&=\frac{\varepsilon}{\lambda_{1,\varepsilon}^{(N-\frac{N-2}{2}q)}}(N-\frac{N-2}{2}q)\alpha_{N}^{q-1}\left(\int_{\R^{N}}U_{1,0}^{q-1}a_{0}\psi_{0}+o(1)\right).    
\end{aligned}
\end{equation}
Note that
\begin{equation}
    \int_{\R^{N}}U_{1,0}^{q-1}\psi_{0}=\left(1-\frac{2^*}{q}\right)\int_{\R^{N}}U_{1,0}^{q},\quad \int_{\R^{N}}U_{1,0}^{2^*-2}\psi_{0}=\left(1-\frac{2^*}{2^*-1}\right)\int_{\R^{N}}U_{1,0}^{2^*-1},
\end{equation}
Thus
\begin{equation}
\begin{aligned}
&\frac{(N-2)^{2}}{2}\frac{R(x_{1,\varepsilon})}{\lambda_{1,\varepsilon}^{N-2}}\alpha_{N}^{22^*-2}\left(\int_{\R^{N}}U_{1,0}^{2^*-1}\right)^{2}a_{0}\\
&=\frac{\varepsilon}{\lambda_{1,\varepsilon}^{(N-\frac{N-2}{2}q)}}a_{0}\frac{(N-\frac{N-2}{2}q)^{2}}{q}\alpha_{N}^{q}\int_{B(x_{1,\varepsilon},\delta)}U_{1,0}^{q}+o\left(\frac{1}{\lambda_{1,\varepsilon}^{N-2}}\right).
\end{aligned}  
\end{equation}
Finally, by (\ref{local uniqueness proof step 2-11}), we can conclude that
\begin{equation}
    (N-2)a_{0}=(N-\frac{N-2}{2}q)a_{0},
\end{equation}
which together with the choice of $q$, we have $a_{0}=0$. Hence $\tilde{z}_{\varepsilon}\to 0$ uniformly in $B(0,R)$ for any $R>0$. 

Let $x_{\varepsilon}\in\Omega$ such that $|z_{\varepsilon}(x_{\varepsilon})|=\|z_{\varepsilon}\|_{L^{\infty}(\Omega)}=1$, then $|\tilde{z}_{\varepsilon}(\lambda_{1,\varepsilon}(x_{\varepsilon}-x_{1,\varepsilon}))|=1$ and $\lambda_{1,\varepsilon}(x_{\varepsilon}-x_{1,\varepsilon})\to\infty$ as $\varepsilon\to \infty$. But by (\ref{local uniqueness proof step 3-4}), we have
\begin{equation}
    |\tilde{z}_{\varepsilon}(x)|\leq \frac{C}{(1+|x|^{2})^{\frac{N-2}{2}}},
\end{equation}
then $|\tilde{z}_{\varepsilon}(\lambda_{1,\varepsilon}(x_{\varepsilon}-x_{1,\varepsilon}))|\to 0$ as $\varepsilon\to 0$, this is a contraction, thus $u_{\varepsilon}\equiv v_{\varepsilon}$.
\end{proof}

\section{Asymptotic nondegeneracy}\label{Asymptotic nondegeneracy}
In this section, we use an unified method to prove the asymptotic nondegeneracy of the least energy solution under two different assumptions.

\begin{proof}[Proof of Theorem $\ref{asymptotic nondegeneracy-domain and Robin function}$]
By contradiction, we suppose that there exists $v_{\varepsilon}\not\equiv 0$ such that $\|v_{\varepsilon}\|_{L^{\infty}}=\|u_{\varepsilon}\|_{L^{\infty}}$ and $v_{\varepsilon}$ solves (\ref{nondegenerate-1}). We define
\begin{equation}
     \tilde{u}_{\varepsilon}(x)=\frac{1}{\|u_{\varepsilon}\|_{L^{\infty}}}u_{\varepsilon}\left(\frac{x}{\|u_{\varepsilon}\|_{L^{\infty}}^{\frac{2}{N-2}}}+x_{\varepsilon}\right),\tilde{v}_{\varepsilon}(x)=\frac{1}{\|u_{\varepsilon}\|_{L^{\infty}}}v_{\varepsilon}\left(\frac{x}{\|u_{\varepsilon}\|_{L^{\infty}}^{\frac{2}{N-2}}}+x_{\varepsilon}\right),\quad x\in{\Omega_{\varepsilon}},
\end{equation}
where $x_{\varepsilon}$ is the maximum point of $u_{\varepsilon}$ and $\Omega_{\varepsilon}:=\|u_{\varepsilon}\|_{L^{\infty}}^{\frac{2}{N-2}}(\Omega-x_{\varepsilon})$. It is easy to verify that $\tilde{v}_{\varepsilon}$ satisfies that
\begin{equation}\label{nondegeneracy-1-proof-3}
    \begin{cases}
        -\Delta \tilde{v}_{\varepsilon}=(2^*-1)\tilde{u}^{2^*-2}_{\varepsilon}\tilde{v}_{\varepsilon}+\frac{\varepsilon(q-1)}{\|u_{\varepsilon}\|_{L^{\infty}}^{2^*-q}}\tilde{u}_{\varepsilon}^{q-2}\tilde{v}_{\varepsilon},&\quad\text{~~in~~}\Omega_{\varepsilon},\\
        \|\tilde{v}_{\varepsilon}\|_{L^{\infty}(\Omega_{\varepsilon})}=1,\\
        \tilde{v}_{\varepsilon}(x)=0,&\quad\text{~~on~~}\partial\Omega_{\varepsilon}.
    \end{cases}
\end{equation}
Since $\|\tilde{v}_{\varepsilon}\|_{L^{\infty}(\Omega_{\varepsilon})}=1$, by standard elliptic regularity theory \cite{gilbarg1977elliptic}, there exist a function $v_{0}$ such  that $\tilde{v}_{\varepsilon}\to v_{0}$ in $C^{1}_{loc}(\R^{N})$. Passing to the limit and by (2) in Theorem \ref{main theorem-1} we have
\begin{equation}\label{nondegeneracy-1-proof-4}
\begin{cases}
   -\Delta v_{0}=(2^*-1)U^{2^*-2}v_{0},\quad \text{~~in~~}\R^{N},\\
   \|v_{0}\|_{L^{\infty}(\Omega_{\varepsilon})}\leq 1.
\end{cases}
\end{equation}
Next, By (\ref{nondegeneracy-1-proof-3}) together with a similar proof of Step 2 in Theorem \ref{asymptotic uniqueness-1}, we have that $v_{0}\in \mathcal{D}^{1,2}(\R^{N})$, hence by Lemma \ref{Kernel of Emden-Fowler equation}, we obtain that
\begin{equation}\label{nondegeneracy-1-proof-4-1}
    v_{0}(x)=\sum_{i=1}^{N}\frac{a_{i}x_{i}}{(N(N-2)+|x|^{2})^{\frac{N}{2}}}+b\frac{N(N-2)-|x|^{2}}{(N(N-2)+|x|^{2})^{\frac{N}{2}}},
\end{equation}
for some $a_{i},b\in\R$. Moreover, for any $w_{0}$ be a neighbourhood of the origin, by the same proof of  step 3 in Theorem \ref{asymptotic uniqueness-1}, we have that there exist a positive constant C independent on $\varepsilon$ such that
\begin{equation}\label{nondegeneracy-1-proof-5}
    |\tilde{v}_{\varepsilon}(x)|\leq \frac{C}{|x|^{N-2}},\text{~~for any ~~} x\in\Omega_{\varepsilon}\setminus w_{0}.
\end{equation}

Next, we shall show that $a_{i}$ and $b=0$ by several steps.

\textbf{Step} 1. we claim $b=0$. By contraction, we suppose that $b\neq 0$. Then by a similar proof of Step 4 in Theorem \ref{asymptotic uniqueness-1}, we have that
\begin{equation}\label{nondegeneracy-1-proof-step-1-1}
    \|u_{\varepsilon}\|_{L^{\infty}}v_{\varepsilon}\to -b(N-2)\omega_{N}G(x,x_{0}),\quad \text{~~in~~}C^{1,\alpha}(\omega),
\end{equation}
where $\omega$ be a neighborhood of the boundary $\partial\Omega$, not containing the point $x_{0}$. And by (\ref{p-varepsion}) and (\ref{nondegenerate-1}) together with Green formula, we have
\begin{equation}\label{nondegeneracy-1-proof-step-1-2}
    \int_{\partial \Omega}\frac{\partial v_{\varepsilon}}{\partial n}\frac{\partial u_{\varepsilon}}{\partial n}(x-x_{0},n)dS=\varepsilon(N-\frac{N-2}{2}q)\int_{\Omega}u_{\varepsilon}^{q-1}v_{\varepsilon}dx.
\end{equation}
Thus 
\begin{equation}\label{nondegeneracy-1-proof-step-1-3}
\begin{aligned}
&\int_{\partial \Omega}\frac{\partial}{\partial n}(\|u_{\varepsilon}\|_{L^{\infty}}^{2}v_{\varepsilon})\frac{\partial }{\partial n}(u_{\varepsilon}\|u_{\varepsilon}\|_{L^{\infty}})(x-x_{0},n)dS\\
&=\varepsilon\|u_{\varepsilon}\|_{L^{\infty}}^{q+2-2^*}(N-\frac{N-2}{2}q)\int_{\Omega_{\varepsilon}}\tilde{u}_{\varepsilon}^{q-1}w_{\varepsilon}dx .    
\end{aligned}
\end{equation}
Passing to the limit of (\ref{nondegeneracy-1-proof-step-1-3}) and using (\ref{nondegeneracy-1-proof-step-1-1}), (\ref{nondegeneracy-1-proof-4}) and Theorem \ref{main theorem-1}, we have the left hand side of (\ref{nondegeneracy-1-proof-step-1-3})
\begin{equation}\label{nondegeneracy-1-proof-step-1-4}
    LHS\to -b\frac{(N-2)}{N}\alpha_{N}^{2^*}\omega_{N}^{2}\int_{\partial\Omega}|\frac{\partial G(x,x_{0})}{\partial n}|^{2}(x-x_{0},n)dS=-b\frac{(N-2)^{2}}{N}\alpha_{N}^{2^*}\omega_{N}^{2}R(x_{0}),
\end{equation}
and the right hand side of (\ref{nondegeneracy-1-proof-step-1-3})
\begin{equation}\label{nondegeneracy-1-proof-step-1-5}
\begin{aligned}
  RHS&\to (N-\frac{N-2}{2}q)\alpha_{N,q}R(x_{0})b\int_{\R^{N}}(\frac{N(N-2)}{(N(N-2)+|x|^{2})})^{\frac{(N-2)(q-1)}{2}}\frac{N(N-2)-|x|^{2}}{(N(N-2)+|x|^{2})^{\frac{N}{2}}}\\
  &=-b\frac{2N-(N-2)q}{2}(N-2)\frac{\omega_{N}^{2}\alpha_{N}^{2^*}}{N}R(x_{0}).
\end{aligned}
\end{equation}
Thus by (\ref{nondegeneracy-1-proof-step-1-4}) and (\ref{nondegeneracy-1-proof-step-1-5}), we conclude that
\begin{equation}
   q=\frac{4}{N-2},
\end{equation}
which make a contraction with the choice of $q$, thus $b=0$.

\textbf{Step} 2. $a_{i}=0$, for $i=1,\cdots,N$. If domain $\Omega$ satisfy assumption \ref{Assum-1}, it is easy to see that $a_{i}=0$, for $i=1,\cdots,N$. Next, we shall consider the case that $x_{0}$ is a nondegenerate critical point of Robin function $R(x)$. Firstly, from (\ref{p-varepsion}) and (\ref{nondegenerate-1}) together with Green formula (see \cite[Lemma 2.7]{Grossi2005ANR} for details), we have that 
\begin{equation}\label{nondegenerate-1-proof-step-2-1}
    \int_{\partial\Omega}\frac{\partial u_{\varepsilon}}{\partial x_{i}}\frac{\partial v_{\varepsilon}}{\partial n}dS_{x}=0.
\end{equation}
Next, we claim that 
\begin{equation}\label{nondegenerate-1-proof-step-2-2}
    \|u_{\varepsilon}\|_{L^{\infty}}^{\frac{N}{N-2}}v_{\varepsilon}(x)\to \omega_{N}\sum_{j=1}^{N}a_{j}\left(\frac{\partial G}{\partial z_{j} }(x,z)\right)\bigg|_{z=x_{0}},\quad\text{~~in~~}C^{1}_{loc}(\bar{\Omega}\setminus\{x_{0}\}).
\end{equation}
Indeed, for any $x\in C^{1}_{loc}(\bar{\Omega}\setminus\{x_{0}\})$ by Green representation formula \ref{Green's representation formula}, we obtain that
\begin{equation}
\begin{aligned}
v_{\varepsilon}(x)&=(2^*-1)\int_{\Omega}G(x,z)u_{\varepsilon}^{2^*-2}(z)v_{\varepsilon}(z)dz+\varepsilon (q-1)\int_{\Omega}G(x,z)u_{\varepsilon}^{q-2}(z)v_{\varepsilon}(z)dz\\&:=I_{1,\varepsilon}(x)+I_{2,\varepsilon}(x).    
\end{aligned} 
\end{equation}

Firstly, we consider the $I_{1,\varepsilon}(x)$. By a change of variables, we get
\begin{equation}
    I_{1,\varepsilon}(x)=\frac{(2^*-1)}{\|u_{\varepsilon}\|_{L^{\infty}}}\int_{\Omega_{\varepsilon}}G_{\varepsilon}(x,y)\tilde{u}_{\varepsilon}^{2^*-2}(y)\tilde{v}_{\varepsilon}(y)dy,
\end{equation}
and $G_{\varepsilon}(x,y)=G(x,\frac{y}{\|u_{\varepsilon}\|_{L^{\infty}}^{\frac{2}{N-2}}}+x_{\varepsilon})$.
Note that by Theorem 1, (\ref{nondegeneracy-1-proof-4-1}) and step 1, we have
\begin{equation}
    \tilde{u}_{\varepsilon}^{2^*-2}(y)\to U^{2^*-2}(y),
\end{equation}
and
\begin{equation}
    \tilde{v}_{\varepsilon}(y)\to v_{0}(y)=\sum_{j=1}^{N}a_{j}\frac{y_{j}}{(N(N-2)+|y|^{2})^{\frac{N}{2}}}=-\frac{1}{N^{\frac{N-2}{2}}(N-2)^{\frac{N}{2}}}\sum_{j=1}^{N}a_{j}\frac{\partial U}{\partial y_{j}},
\end{equation}
uniformly on the compact subsets on $\R^{N}$. Thus 
\begin{equation}
\tilde{u}_{\varepsilon}^{2^*-2}(y)\tilde{v}_{\varepsilon}(y)\to \sum_{j=1}^{N}a_{j}\left(\frac{\partial}{\partial y_{j}}\frac{-1}{(N(N-2))^{\frac{N-2}{2}}(N+2)}U^{2^*-1}\right).    
\end{equation}
Now, we consider the linear first order PDE
\begin{equation}
    \sum_{j=1}^{N}a_{j}\frac{\partial w}{\partial y_{j}}=\tilde{u}_{\varepsilon}^{2^*-2}\tilde{v}_{\varepsilon},\quad y\in\R^{N},
\end{equation}
with the initial condition $w|_{\Gamma_{a}}=\frac{-1}{(N(N-2))^{\frac{N-2}{2}}(N+2)}U^{2^*-1}(y)$, where $\Gamma_{a}=\{x\in\R^{N}|x\cdot a=0\}$. By  \cite[Lemma 2.4]{Takahashi2008nondegeneracy}, we have a solution $w_{\varepsilon}$ with
\begin{equation}
    w_{\varepsilon}\to \frac{-1}{(N(N-2))^{\frac{N-2}{2}}(N+2)}U^{2^*-1},
\end{equation}
uniformly on compact subsets on $\R^{N}$, and
\begin{equation}
    \int_{\R^{N}}w_{\varepsilon}(y)dy\to\frac{-1}{(N(N-2))^{\frac{N-2}{2}}(N+2)}\int_{\R^{N}}U^{2^*-1}dy =-\frac{1}{2^*-1}\omega_{N}.
\end{equation}
Thus
\begin{equation}
\begin{aligned}
I_{1,\varepsilon}(x)&=\frac{(2^*-1)}{\|u_{\varepsilon}\|_{L^{\infty}}}\int_{\Omega_{\varepsilon}}G_{\varepsilon}(x,y)\tilde{u}_{\varepsilon}^{2^*-2}(y)\tilde{v}_{\varepsilon}(y)dy\\
&=\frac{(2^*-1)}{\|u_{\varepsilon}\|_{L^{\infty}}}\int_{\Omega_{\varepsilon}}G_{\varepsilon}(x,y)\sum_{j=1}^{N}a_{j}\frac{\partial w_{\varepsilon}}{\partial y_{j}}dy\\
&=-\frac{(2^*-1)}{\|u_{\varepsilon}\|_{L^{\infty}}}\sum_{j=1}^{N}a_{j}\int_{\Omega_{\varepsilon}}\frac{\partial}{\partial y_{j}}G_{\varepsilon}(x,y)w_{\varepsilon}(y)dy\\
&=-\frac{(2^*-1)}{\|u_{\varepsilon}\|_{L^{\infty}}^{\frac{N}{N-2}}}\sum_{j=1}^{N}a_{j}\int_{\Omega_{\varepsilon}}\left(\frac{\partial G}{\partial y_{j}}\right)(x,\frac{y}{\|u_{\varepsilon}\|_{L^{\infty}}^{\frac{2}{N-2}}}+x_{\varepsilon})w_{\varepsilon}(y)dy,
\end{aligned}
\end{equation}
and
\begin{equation}
\|u_{\varepsilon}\|_{L^{\infty}}^{\frac{N}{N-2}}I_{1,\varepsilon}(x)\to \omega_{N}\sum_{j=1}^{N}a_{j}\left(\frac{\partial G}{\partial z_{j} }(x,z)\right)\bigg|_{z=x_{0}},
\end{equation}
for any $x\in C^{1}_{loc}(\Omega\setminus\{x_{0}\})$.

Next, we consider $I_{2,\varepsilon}(x)$ 
By a change of variables, we obtain
\begin{equation}
    I_{2,\varepsilon}(x)=\frac{\varepsilon(q-1)}{\|u_{\varepsilon}\|_{L^{\infty}}^{2^*+1-q}}\int_{\Omega_{\varepsilon}}G_{\varepsilon}(x,y)\tilde{u}_{\varepsilon}^{q-2}(y)\tilde{v}_{\varepsilon}(y)dy.
\end{equation}
Note that by Theorem 1, (\ref{nondegeneracy-1-proof-4-1}) and step 1, we have
\begin{equation}
    \tilde{u}_{\varepsilon}^{q-2}(y)\to U^{q-2}(y),
\end{equation}
and
\begin{equation}
    \tilde{v}_{\varepsilon}(y)\to v_{0}(y)=\sum_{j=1}^{N}a_{j}\frac{y_{j}}{(N(N-2)+|y|^{2})^{\frac{N}{2}}}=-\frac{1}{N^{\frac{N-2}{2}}(N-2)^{\frac{N}{2}}}\sum_{j=1}^{N}a_{j}\frac{\partial U}{\partial y_{j}},
\end{equation}
uniformly on the compact subsets on $\R^{N}$. Thus 
\begin{equation}
\tilde{u}_{\varepsilon}^{q-2}(y)\tilde{v}_{\varepsilon}(y)\to \sum_{j=1}^{N}a_{j}\left(\frac{\partial}{\partial y_{j}}\frac{-1}{N^{\frac{N-2}{2}}(N-2)^{\frac{N}{2}}(q-1)}U^{q-1}\right).  
\end{equation}
Now we consider the linear first order PDE
\begin{equation}
    \sum_{j=1}^{N}a_{j}\frac{\partial w}{\partial y_{j}}=\tilde{u}_{\varepsilon}^{q-2}\tilde{v}_{\varepsilon},\quad y\in\R^{N},
\end{equation}
with the initial condition $w|_{\Gamma_{a}}=\frac{-1}{N^{\frac{N-2}{2}}(N-2)^{\frac{N}{2}}(q-1)}U^{q-1}$, where $\Gamma_{a}=\{x\in\R^{N}|x\cdot a=0\}$. By \cite[Lemma 2.4]{Takahashi2008nondegeneracy}, there exist a solution $w_{\varepsilon}$ with
\begin{equation}\label{nondegeneracy-1-proof-step-2-18}
    w_{\varepsilon}\to \frac{-1}{N^{\frac{N-2}{2}}(N-2)^{\frac{N}{2}}(q-1)}U^{q-1},
\end{equation}
uniformly on compact subsets on $\R^{N}$ and by Theorem \ref{main theorem-1} and (\ref{nondegeneracy-1-proof-5}), we have
\begin{equation}\label{nondegeneracy-1-proof-step-2-19}
    |w_{\varepsilon}(y)|\leq C\frac{1}{|y|^{(N-2)(q-1)-1}},\quad \text{~as~}|y|\to\infty.
\end{equation}
Thus
\begin{equation}\label{nondegeneracy-1-proof-step-2-20}
\begin{aligned}
I_{2,\varepsilon}(x)&=\frac{\varepsilon(q-1)}{\|u_{\varepsilon}\|_{L^{\infty}}^{2^*+1-q}}\int_{\Omega_{\varepsilon}}G_{\varepsilon}(x,y)\tilde{u}_{\varepsilon}^{q-2}(y)\tilde{v}_{\varepsilon}(y)dy\\
&=\frac{\varepsilon(q-1)}{\|u_{\varepsilon}\|_{L^{\infty}}^{2^*+1-q}}\int_{\Omega_{\varepsilon}}G_{\varepsilon}(x,y)\sum_{j=1}^{N}a_{j}\frac{\partial w_{\varepsilon}}{\partial y_{j}}dy\\
&=-\frac{\varepsilon(q-1)}{\|u_{\varepsilon}\|_{L^{\infty}}^{2^*-q+\frac{N}{N-2}}}\sum_{j=1}^{N}a_{j}\int_{\Omega_{\varepsilon}}\left(\frac{\partial G}{\partial y_{j}}\right)(x,\frac{y}{\|u_{\varepsilon}\|_{L^{\infty}}^{\frac{2}{N-2}}}+x_{\varepsilon})w_{\varepsilon}(y)dy.
\end{aligned}
\end{equation}
Note that, when $(N-2)(q-1)>N$, by (\ref{nondegeneracy-1-proof-step-2-18}) we deduce that
\begin{equation}\label{nondegeneracy-1-proof-step-2-21}
    \int_{\Omega_{\varepsilon}}w_{\varepsilon}(y)dy\to  \frac{-1}{N^{\frac{N-2}{2}}(N-2)^{\frac{N}{2}}(q-1)}\int_{\R^{N}}U^{q-1}(y)=C<\infty
\end{equation}
and for $(N-2)(q-1)\leq N$, by (\ref{nondegeneracy-1-proof-step-2-19}) we obtain
\begin{equation}\label{nondegeneracy-1-proof-step-2-22}
\begin{aligned}
\int_{\Omega_{\varepsilon}}w_{\varepsilon}(y)dy&\leq C+C\int_{B(0,\|u_{\varepsilon}\|_{L^{\infty}}^{\frac{2}{N-2}}\gamma)\setminus\{B(0,1)\}}\frac{1}{|y|^{(N-2)(q-1)-1}}dy\\
&\leq C+C\int_{1}^{\|u_{\varepsilon}\|_{L^{\infty}}^{\frac{2}{N-2}}\gamma}\frac{1}{r^{(N-2)(q-1)-N}}dr\\
&\leq C\|u_{\varepsilon}\|_{L^{\infty}}^{\frac{2N+2}{N-2}-2(q-1)},
\end{aligned}
\end{equation}
for some constant $C$ and $\gamma$. Then by (\ref{nondegeneracy-1-proof-step-2-20}), (\ref{nondegeneracy-1-proof-step-2-21}), (\ref{nondegeneracy-1-proof-step-2-22}) and Theorem \ref{main theorem-1} we have
\begin{equation}
\begin{aligned}
\|u_{\varepsilon}\|_{L^{\infty}}^{\frac{N}{N-2}}I_{2,\varepsilon}(x)&\leq \frac{C}{\|u_{\varepsilon}\|_{L^{\infty}}^{2}}|\int_{\Omega_{\varepsilon}}w_{\varepsilon}(y)dy|\\
&\leq \begin{cases}
    \frac{C}{\|u_{\varepsilon}\|_{L^{\infty}}^{2}},&\quad\text{~if~} (N-2)(q-1)>N,\\
    \frac{C}{\|u_{\varepsilon}\|_{L^{\infty}}^{2q-\frac{2N+2}{N-2}}},&\quad\text{~if~}(N-2)(q-1)\leq N,\\
\end{cases}\\
&\to 0.
\end{aligned}
\end{equation}
Hence claim (\ref{nondegenerate-1-proof-step-2-2}) hold. Moreover by (\ref{nondegenerate-1-proof-step-2-2}), Theorem \ref{main theorem-1} and Lemma \ref{Green function propertity 1}, we conclude that
\begin{equation}
\begin{aligned}
\int_{\partial\Omega}\frac{\partial \|u_{\varepsilon}\|_{L^{\infty}}u_{\varepsilon}}{\partial x_{i}}\frac{\partial \|u_{\varepsilon}\|_{L^{\infty}}^{\frac{N}{N-2}}v_{\varepsilon}}{\partial n}dS_{x}&\to \frac{\alpha_{N}^{2^*}\omega_{N}^{2}}{N}\int_{\partial\Omega}\frac{\partial G}{\partial x_{i}}(x,x_{0})\frac{\partial}{\partial n}\left(\sum_{j=1}^{N}a_{j}\frac{\partial G}{\partial z_{j}}(x,x_{0})\right)dS\\
    &=\frac{\alpha_{N}^{2^*}\omega_{N}^{2}}{2N}\sum_{j=1}^{N}a_{j}\frac{\partial^{2}R}{\partial x_{i}\partial x_{j}}(x_{0}),   
\end{aligned}    
\end{equation}
which together with (\ref{nondegenerate-1-proof-step-2-1}) and the nondegeneracy assumption of $x_{0}$, we have $a_{i}=0$ for $i=1,\cdots,N$.

Now, by the proof of Step 1 and Step 2, we know that $\tilde{v}_{\varepsilon}\to 0$ uniformly on compact subsets of $\R^{N}$. Let $\tilde{x}_{\varepsilon}$ be the point such $|\tilde{v}_{\varepsilon}(\tilde{x}_{\varepsilon})|=\|\tilde{v}_{\varepsilon}\|_{L^{\infty}(\Omega_{\varepsilon})}=1$, then $\tilde{x}_{\varepsilon}\to \infty$ as $\varepsilon \to 0$, but this make a contraction with (\ref{nondegeneracy-1-proof-5}), thus we complete the proof.
\end{proof}


\section{appendix}


\subsection{Some elementary estimates}
 From \cite[Lemma 6.1.1]{Cao_Peng_Yan_2021} and \cite[Lemma B.2]{Wei2010InfinitelyMS}, we have the following elementary estimates.
 
 \begin{Lem}\label{Elementary estimate}
  For $a,b>0$, we have the following estimate
  \begin{equation}
 \begin{aligned}
     &(a+b)^{p}=a^{p}+pa^{p-1}b+O(b^{p}),\text{~if~} p\in(1,2],\\
    &(a+b)^{p}=a^{p}+pa^{p-1}b+\frac{p(p-1)}{2}a^{p-2}b^{2}+O(b^{p}),\text{~if~} p>2.
 \end{aligned}
 \end{equation}
\end{Lem}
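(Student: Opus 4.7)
The plan is to reduce both estimates to a one-variable bound by scaling, and then dispatch the resulting inequality by Taylor's theorem together with a split of the $t$-axis into a small-$t$ region and a tail. By homogeneity, dividing through by $a^{p}$ and setting $t:=b/a>0$ converts the two claims into the uniform inequalities
\begin{equation*}
|(1+t)^{p}-1-pt|\le C(p)\,t^{p}\quad\text{for } p\in(1,2],
\end{equation*}
\begin{equation*}
\bigl|(1+t)^{p}-1-pt-\tfrac{p(p-1)}{2}t^{2}\bigr|\le C(p)\,t^{p}\quad\text{for } p>2,
\end{equation*}
valid on all of $(0,\infty)$.

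On the tail $t\ge 1$, the crude bound $(1+t)^{p}\le 2^{p}t^{p}$ combined with the fact that every subtracted polynomial term $1,\ pt,\ \tfrac{p(p-1)}{2}t^{2}$ is bounded by $C(p)t^{p}$ (since $t\ge 1$ and $p\ge 2$ in the relevant cases) settles both inequalities at once. On the near-zero region $t\in(0,1]$ I would apply Taylor's theorem with integral remainder: the first-order residual satisfies $(1+t)^{p}-1-pt=p(p-1)\int_{0}^{t}(1+s)^{p-2}(t-s)\,ds=O(t^{2})$, and the second-order residual is analogously of size $O(t^{3})$. In the first case one has $t^{2}\le t^{p}$ on $(0,1]$ because $p\le 2$; in the second case one has $t^{3}\le t^{p}$ on $(0,1]$ provided $p\le 3$.

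The main subtlety, and the only place where extra care is required, is the second claim in the range $p>3$, where the bound $t^{3}$ on $(0,1]$ is not dominated by $t^{p}$. I would handle this by following the convention of the cited references \cite{Cao_Peng_Yan_2021,Wei2010InfinitelyMS}: the $O(b^{p})$ symbol stands for an error of the form $O(a^{p-3}b^{3}+b^{p})$, and in the regime $b\le a$ the cubic contribution is absorbed, up to a multiplicative constant depending only on $p$, by the retained quadratic term $\tfrac{p(p-1)}{2}a^{p-2}b^{2}$ (indeed $a^{p-3}b^{3}=a^{p-2}b^{2}\cdot(b/a)\le a^{p-2}b^{2}$), whereas in the regime $b>a$ the cubic term is itself $O(b^{p})$. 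Either reading suffices for every application of the lemma made in the main body of the paper.
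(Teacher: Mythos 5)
The paper does not actually prove this lemma: it is stated as a known fact with a citation to \cite{Cao_Peng_Yan_2021} and \cite{Wei2010InfinitelyMS}, so there is no in-paper argument to compare against. Your proof is the standard one and is correct where the statement itself is correct: the reduction by homogeneity to $t=b/a$, the crude bound on the tail $t\ge 1$, and the integral-form Taylor remainder on $(0,1]$ (where $(1+s)^{p-2}\le 1$ for $p\le 2$, resp. $(1+s)^{p-3}\le 2^{p-3}$ for the second claim) give exactly the claimed $O(t^{p})$ for $p\in(1,2]$ and for $2<p\le 3$. Your flag on the range $p>3$ is not a defect of your proof but a genuine defect of the lemma as stated: taking $p=4$, $a=1$, $b=t\to 0$, the residual after subtracting the quadratic term is $4t^{3}+t^{4}$, which is not $O(t^{4})$. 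Your repair --- reading the error as $O(a^{p-2}b^{2}+b^{p})$ (equivalently $O(a^{p-3}b^{3}+b^{p})$, absorbed into the quadratic term when $b\le a$) --- is the right one, and it is consistent with the only place the lemma is invoked in the paper (the expansion of $\int_{\Omega}U_{\lambda_{\varepsilon},a_{\varepsilon}}^{q}$ in Proposition \ref{energy estimate of S-varepsilon}, whose error term is written precisely as $O\left(\int PU^{q-2}\psi^{2}\right)$); note the case $p=q>3$ does occur there when $N=3$, so this is not a vacuous caveat. In short: your argument is complete and correct for $p\le 3$, and for $p>3$ you have correctly identified that the displayed statement must be read with the weaker (and actually used) error term.
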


\begin{Lem}\label{useful estimate}
    For any constant $0<\theta\leq N-2$, there exist a constant $C>0$ such that
    \begin{equation}
        \int_{\R^{N}}\frac{1}{|y-z|^{N-2}}\frac{1}{(1+|z|)^{2+\theta}}dz\leq \begin{cases}
            C(1+|y|)^{-\theta},\quad \theta<N-2,\\
            C|\log|y||(1+|y|)^{-\theta}, \quad\theta=N-2.\\
        \end{cases}
    \end{equation}
\end{Lem}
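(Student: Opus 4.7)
The plan is to reduce the estimate to a case analysis based on the relative position of the integration variable $z$ with respect to the two competing centers: the pole of the Riesz kernel at $z=y$ and the decay center of $(1+|z|)^{-(2+\theta)}$ at the origin. First I would dispose of the easy range $|y|\le 1$, where the integral is uniformly bounded (the kernel $|y-z|^{-(N-2)}$ is locally integrable and $(1+|z|)^{-(2+\theta)}$ is integrable at infinity since $\theta>0$). So I may assume $r:=|y|\ge 1$ and need to establish the bound $\lesssim r^{-\theta}$ (resp.\ $r^{-\theta}\log r$).

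Then I would split $\R^N$ into three pieces adapted to the two scales:
\begin{equation*}
A_1=\{|z-y|\le r/2\},\qquad A_2=\{|z|\le r/2\},\qquad A_3=\R^N\setminus(A_1\cup A_2).
\end{equation*}
On $A_1$ we have $|z|\sim r$, so pulling $(1+|z|)^{-(2+\theta)}\sim r^{-(2+\theta)}$ out of the integral and using $\int_{|z-y|\le r/2}|y-z|^{-(N-2)}dz\sim r^{2}$ (polar coordinates centered at $y$) gives the contribution $r^{-\theta}$. On $A_2$ we have $|y-z|\sim r$, and the remaining integral
$\int_{|z|\le r/2}(1+|z|)^{-(2+\theta)}dz$ evaluates by polar coordinates to $O(r^{(N-2)-\theta})$ when $\theta<N-2$ and to $O(\log r)$ in the borderline case $\theta=N-2$; multiplying by $r^{-(N-2)}$ reproduces exactly the two bounds in the statement. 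On $A_3$ both $|z|\ge r/2$ and $|y-z|\ge r/2$ hold, so a dyadic shell decomposition $\{2^k r\le |z|<2^{k+1}r\}$ combined with $|y-z|\ge c\,|z|$ on these shells yields a geometric series $\sum_{k\ge 0}2^{-k\theta}$, which converges since $\theta>0$ and contributes $O(r^{-\theta})$.

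The only mildly delicate step is the logarithmic borderline $\theta=N-2$ in region $A_2$, where one must carefully match the polar integral
$\int_1^{r/2}\rho^{N-1}(1+\rho)^{-N}d\rho\sim\log r$
to obtain the correct power of the logarithm; the other two regions give a clean $r^{-\theta}$ without a log correction, so the dominant contribution in this borderline case comes precisely from $A_2$. No further ingredients beyond polar coordinates and elementary comparison are needed, so I expect the proof to be short; the main bookkeeping obstacle is simply keeping track of the three regions and the borderline case simultaneously.
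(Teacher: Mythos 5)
The paper does not actually prove this lemma: it is quoted from the literature (it is essentially \cite[Lemma B.2]{Wei2010InfinitelyMS}, see also \cite[Lemma 6.1.1]{Cao_Peng_Yan_2021}), so your self-contained argument is the only proof on the table. For $|y|\ge 1$ your three-region decomposition is the standard one and all three estimates check out: on $A_1$ one has $|z|\sim |y|=r$ and $\int_{|z-y|\le r/2}|y-z|^{-(N-2)}dz\sim r^{2}$; on $A_2$ one has $|y-z|\ge r/2$ and the polar integral $\int_0^{r/2}\rho^{N-1}(1+\rho)^{-(2+\theta)}d\rho$ produces exactly the dichotomy $r^{N-2-\theta}$ versus $\log r$ at $\theta=N-2$; on $A_3$ the bound $|y-z|\ge c|z|$ reduces everything to $\int_{r/2}^{\infty}\rho^{-1-\theta}d\rho\lesssim r^{-\theta}$, convergent precisely because $\theta>0$.

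There is, however, one incorrect justification you should repair, in the supposedly easy range $|y|\le 1$. You claim the integral is uniformly bounded there because ``$(1+|z|)^{-(2+\theta)}$ is integrable at infinity since $\theta>0$''. It is not: integrability of $(1+|z|)^{-(2+\theta)}$ at infinity requires $2+\theta>N$, i.e.\ $\theta>N-2$, which is exactly excluded by the hypothesis $\theta\le N-2$. The integral is nevertheless finite for $|y|\le 1$, but for a different reason: for $|z|\ge 2$ one has $|y-z|\ge |z|-1\ge |z|/2$, so the full integrand is $O(|z|^{-(N-2)-(2+\theta)})=O(|z|^{-N-\theta})$ at infinity, and $\theta>0$ then gives convergence --- the same mechanism as in your region $A_3$. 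A second, cosmetic point: as stated the borderline bound $C|\log|y||\,(1+|y|)^{-\theta}$ degenerates as $|y|\to 1$, where the left-hand side stays bounded away from zero; what your argument actually yields is $C(1+|\log|y||)(1+|y|)^{-\theta}$, which is the intended reading in the cited sources, so no harm is done, but you should record the bound in that corrected form.
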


\subsection{Green function and Robin function}

From  \cite[Proposition 6.7.1]{Cao_Peng_Yan_2021}, we have the following estimate for Robin function $R(x)$.
\begin{Lem}\label{estimate of robin function up to doundary}
Let $d=d(x,\partial\Omega)$ for $x\in \Omega$. Then as $d\to 0$, we have
\begin{equation}
R(x)=\frac{1}{(N-2)\omega_{N}}\frac{1}{(2d)^{N-2}}(1+O(d))    
\end{equation}
and 
\begin{equation}
    \nabla R(x)=\frac{2}{\omega_{N}}\frac{1}{(2d)^{N-1}}\frac{x^{\prime}-x}{d}+O\left(\frac{1}{d^{N-2}}\right),
    \end{equation}
where $x'\in\partial\Omega$ is the unique point, satisfying $d(x,\partial\Omega)=|x-x'|$.
\end{Lem}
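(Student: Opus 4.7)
The plan is to use the method of images, comparing $H(\cdot,x)$ to a fundamental solution with singularity at an exterior reflection point. Given $x\in\Omega$ with $d:=d(x,\partial\Omega)$ small, the uniqueness of the nearest point $x'\in\partial\Omega$ and the inward unit normal $\nu$ at $x'$ follow from the $C^2$-regularity of $\partial\Omega$. I would set $\tilde{x}:=2x'-x=x'-d\nu$, the reflection of $x$ across the tangent hyperplane at $x'$; for $d$ small this point lies outside $\bar\Omega$. The half-space identity motivates taking $S(y,\tilde{x})=\frac{1}{(N-2)\omega_N|y-\tilde{x}|^{N-2}}$ as the leading ansatz for $H(y,x)$, since $|x-\tilde{x}|=2d$ delivers the leading term $\frac{1}{(N-2)\omega_N (2d)^{N-2}}$ automatically.

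Next I would analyze the remainder $\psi(y):=H(y,x)-S(y,\tilde{x})$, which is harmonic in $\Omega$ (both summands are harmonic, the second because $\tilde{x}\notin\bar\Omega$) with boundary value $\psi|_{\partial\Omega}=S(\cdot,x)-S(\cdot,\tilde{x})$. Parametrizing $\partial\Omega$ locally near $x'$ as a $C^2$-graph $y=x'+y'+\phi(y')\nu$ with $\phi(y')=O(|y'|^2)$, a direct computation gives $|y-\tilde{x}|^2-|y-x|^2=4d\,\phi(y')$, so the mean value theorem yields
\begin{equation*}
|S(y,x)-S(y,\tilde{x})|\leq Cd\,|y'|^{2}\bigl(|y'|^{2}+d^{2}\bigr)^{-N/2}
\end{equation*}
in a fixed neighborhood of $x'$, and a much smaller $O(d)$ bound elsewhere on $\partial\Omega$. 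The pointwise maximum occurs at the scale $|y'|\sim d$ and is of order $d^{3-N}$. The maximum principle then gives $|\psi(x)|\leq\|\psi\|_{L^{\infty}(\partial\Omega)}=O(d^{3-N})$, hence $R(x)=H(x,x)=S(x,\tilde{x})+O(d^{3-N})=\frac{1}{(N-2)\omega_N(2d)^{N-2}}(1+O(d))$.

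For the gradient estimate I would first compute the derivative of the explicit leading term using $\nabla d(x)=\nu=(x-x')/d$, obtaining $\nabla\bigl[(2d)^{-(N-2)}\bigr]=-2(N-2)(2d)^{-(N-1)}\nu$, which rearranges to $\frac{2}{\omega_N(2d)^{N-1}}\cdot\frac{x'-x}{d}$, the claimed leading term. To control the remainder, I would use the symmetry $H(x,y)=H(y,x)$ to write $\nabla R(x)=2\nabla_1 H(x,x)$, thereby avoiding differentiation of the $x$-dependent reflection map. The contribution of $\psi$ to $\nabla_1 H(x,x)$ is then bounded by the standard interior gradient estimate for harmonic functions, $|\nabla\psi(x)|\leq Cd^{-1}\|\psi\|_{L^{\infty}(B_{d/2}(x))}$, combined with the maximum-principle bound $\|\psi\|_{L^{\infty}(B_{d/2}(x))}=O(d^{3-N})$ obtained by repeating the earlier reflection argument on the shrunk domain $\Omega\cap B_{d/2}(x)^c$, giving an error of order $d^{2-N}=O(d^{-(N-2)})$, exactly the remainder stated.

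The main obstacle will be making the boundary estimate of $\psi$ sharp at the correct scale: one needs both that $d\,|y'|^{2}(|y'|^{2}+d^{2})^{-N/2}$ is maximized at $|y'|\sim d$ with value $\sim d^{3-N}$, and that this local maximum is not improved upon by regions of $\partial\Omega$ farther from $x'$, where the $C^2$ graph description fails but the bound is clearly subdominant. A secondary technical point is propagating the pointwise boundary bound into an $L^{\infty}(B_{d/2}(x))$ bound suitable for feeding the interior gradient estimate; this requires either applying the maximum principle on a well-chosen sub-domain or carrying the reflection analysis through at a slightly perturbed base point.
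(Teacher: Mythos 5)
Your reflection argument is correct and complete in outline; note that the paper does not prove this lemma at all but simply quotes it from \cite[Proposition 6.7.1]{Cao_Peng_Yan_2021}, where essentially the same method of images is used, so there is no divergence of approach to report. The key computations check out: $|y-\tilde{x}|^{2}-|y-x|^{2}=4d\,\phi(y')$ on the graph patch, the mean value theorem bound $Cd|y'|^{2}(|y'|^{2}+d^{2})^{-N/2}$ (which requires, and you can easily supply, the lower bound $|y-x|^{2},|y-\tilde{x}|^{2}\geq\frac12(|y'|^{2}+d^{2})$ valid for $d$ small since $\phi=O(|y'|^{2})$), the peak of order $d^{3-N}$ at $|y'|\sim d$, and the passage to the gradient via the symmetry $\nabla R(x)=2\nabla_{1}H(x,x)$, which correctly reproduces the stated leading term $\frac{2}{\omega_N}(2d)^{1-N}\frac{x'-x}{d}$ and yields the $O(d^{2-N})$ remainder from the interior gradient estimate. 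One small simplification: the final step does not require re-running the reflection argument on a ``shrunk domain'' --- since $\psi$ is harmonic on all of $\Omega$ and continuous up to $\partial\Omega$, the global maximum principle already gives $\sup_{\Omega}|\psi|=O(d^{3-N})$, and $B_{d/2}(x)\subset\Omega$, so the $L^{\infty}$ input to the gradient estimate is immediate.
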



\begin{Rem}
In particularly, when $\Omega=B(0,R)$ and $N\geq 3$
    \begin{equation}
        R(x)=\frac{1}{(N-2)\omega_{N}}\left(\frac{R^2-|x|^2}{R}\right)^{2-N}.
    \end{equation}
 \end{Rem}
 
Next, from \cite[Theorem 4.3--4.4]{Brezispletier1989} and  \cite[Lemma 2.1]{Takahashi2008nondegeneracy}
\begin{Lem}\label{Green function propertity 1}
    For any $y\in \Omega$, we have
    \begin{equation}
        \int_{\partial\Omega}(x-y,n)\left(\frac{\partial G(x,y)}{\partial n}\right)^{2}dS_{x}=(N-2)R(y),
    \end{equation}
    \begin{equation}
        \int_{\partial\Omega}\left(\frac{\partial G(x,y)}{\partial n}\right)^{2}n(x)dS_{x}=\nabla R(y), 
    \end{equation}
    and
    \begin{equation}
        \int_{\partial\Omega}\left(\frac{\partial G}{\partial x_{i}}\right)\frac{\partial}{\partial n}\left(\frac{\partial G}{\partial y_{j}}\right)(x,y)dS_{x}=\frac{1}{2}\frac{\partial^{2}R}{\partial x_{i}\partial x_{j}}(y),
    \end{equation}
where $n=n(x)$ denote the unit outward normal to $\partial\Omega$ at $x$.
\end{Lem}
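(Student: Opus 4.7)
The plan is to establish all three identities via Pohozaev-type integrations on the punctured domain $\Omega_\epsilon := \Omega \setminus \overline{B(y,\epsilon)}$, on which $G(\cdot,y)$ is harmonic, and to extract the $R(y)$-contributions in the limit $\epsilon \to 0$ using the decomposition $G(x,y) = \frac{1}{(N-2)\omega_N |x-y|^{N-2}} - H(x,y)$ from the excerpt, together with the Taylor expansion $H(x,y) = R(y) + \nabla_x H(y,y) \cdot (x-y) + O(|x-y|^2)$ near the diagonal.

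For identity (1), I would apply the classical Pohozaev multiplier combination $((x-y) \cdot \nabla_x G + \tfrac{N-2}{2} G)$: since $\Delta_x G \equiv 0$ on $\Omega_\epsilon$, the divergence theorem yields the purely boundary identity
\begin{equation*}
\int_{\partial \Omega_\epsilon}\left[\left((x-y)\cdot \nabla_x G + \tfrac{N-2}{2} G\right) \partial_n G - \tfrac{1}{2} |\nabla_x G|^2 (x-y) \cdot n \right]dS_x = 0.
\end{equation*}
On $\partial \Omega$, the conditions $G \equiv 0$ and $\nabla_x G = (\partial_n G)\, n$ collapse the integrand to $\tfrac{1}{2}(x-y)\cdot n\, (\partial_n G)^2$. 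On $\partial B(y,\epsilon)$, with outward normal $n = -(x-y)/\epsilon$, the leading singular contributions of order $\epsilon^{-(2N-3)}$ cancel exactly; the next-order terms of order $\epsilon^{-(N-1)}$, after spherical integration on $\partial B(y,\epsilon)$, integrate to $-\tfrac{N-2}{2} H(y,y) + o(1) = -\tfrac{N-2}{2} R(y) + o(1)$. Letting $\epsilon \to 0$ and multiplying through by $2$ yields identity (1).

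For identity (2), I would use the constant Pohozaev multiplier $e_i$, giving the conservation law
\begin{equation*}
\int_{\partial \Omega_\epsilon}\left[\partial_{x_i} G \,\partial_n G - \tfrac{1}{2}|\nabla_x G|^2 n_i\right]dS_x = 0.
\end{equation*}
The boundary piece on $\partial \Omega$ simplifies (via $G \equiv 0$, $\nabla_x G = (\partial_n G) n$) to $\tfrac{1}{2}\int_{\partial \Omega} n_i (\partial_n G)^2\, dS_x$. On $\partial B(y,\epsilon)$, the $\epsilon^{-(2N-1)}$ singular contributions integrate to $0$ by odd symmetry, while the $\epsilon^{-(N-1)}$ residual yields $-\partial_{x_i} H(y,y) + o(1)$. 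The identity $\nabla R(y) = 2\nabla_x H(x,y)|_{x=y}$, which follows from $R(y) = H(y,y)$ and the symmetry $H(x,y)=H(y,x)$, converts this into $\nabla R(y)$ as claimed. Identity (3) I would then derive by differentiating identity (2) in $y_j$ and using $\partial_{x_i} G(x,y) = n_i(x)\, \partial_n G(x,y)$ on $\partial\Omega$ (from $G(\cdot,y)|_{\partial\Omega} = 0$) to recognize the resulting left-hand side as $\int_{\partial\Omega} \partial_{x_i} G \cdot \partial_n(\partial_{y_j} G)\, dS_x$, matching the stated formula up to the factor $\tfrac{1}{2}$.

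The main technical obstacle is confined to identity (1): while the underlying divergence identity is elementary, the asymptotic bookkeeping on $\partial B(y,\epsilon)$ requires tracking cancellations between several singular terms (of orders $\epsilon^{-(2N-3)}$ and $\epsilon^{-(N-1)}$) coming from the interaction of the singular part $S(x,y)$ with itself and with the regular part $H(x,y)$. The key mechanism is that $(x-y)$-odd integrands such as $\int_{\partial B(y,\epsilon)} (x-y)_i\, dS_x = 0$, and moments like $\int_{\partial B(y,\epsilon)} (x-y)_i(x-y)_j\, dS_x = \tfrac{\epsilon^{N+1}\omega_N}{N}\delta_{ij}$, reduce quadratic tensorial expressions in $\nabla_x H(y,y)$ to scalars that contribute either to the $R(y)$ term or to vanishing traces; once these algebraic identities are verified, the limit passages in (2)-(3) follow by the same template with only minor adjustments.
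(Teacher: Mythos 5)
Your proposal is correct and follows essentially the same route as the sources the paper cites for this lemma (\cite[Theorems 4.3--4.4]{Brezispletier1989} and \cite[Lemma 2.1]{Takahashi2008nondegeneracy}): a Poho\v{z}aev/Rellich identity for the harmonic function $G(\cdot,y)$ on $\Omega\setminus\overline{B(y,\varepsilon)}$, exact pointwise cancellation of the singular--singular terms, and extraction of $R(y)$ (resp.\ $\nabla_xH(y,y)$) from the cross terms between $S$ and $H$, combined with $\nabla R(y)=2\nabla_xH(x,y)|_{x=y}$ from the symmetry of $H$. Your derivation of the third identity by differentiating the second in $y_j$ and using $\partial_{x_i}G=n_i\,\partial_nG$ on $\partial\Omega$ is also sound.
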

By strong maximum principle and Lemma A.1 in \cite{ACKERMANN20134168}
\begin{Lem}\label{estimate of Green function}
 For any $x\neq y\in\Omega$, there exists a constant $C>0$ such that
 \begin{equation}
     0<G(x,y)<\frac{C}{|x-y|^{N-2}} \text{~~and~~} |\nabla_{x}G(x,y)|\leq \frac{C}{|x-y|^{N-1}}. 
 \end{equation}
\end{Lem}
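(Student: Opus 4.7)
The plan is to prove the two inequalities separately, relying on the decomposition $G(x,y) = S(x,y) - H(x,y)$ stated in the introduction, together with the maximum principle and standard interior estimates for harmonic functions.

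For the two-sided bound on $G$, I would first observe that, for fixed $y \in \Omega$, the regular part $H(\cdot,y)$ is harmonic in $\Omega$ with boundary data $S(\cdot,y) > 0$ on $\partial\Omega$. The weak and strong maximum principles for harmonic functions then yield $H(x,y) > 0$ for all $x \in \Omega$, which immediately gives the upper bound
\begin{equation*}
0 \le G(x,y) = S(x,y) - H(x,y) \le S(x,y) = \frac{1}{(N-2)\omega_N \abs{x-y}^{N-2}}.
\end{equation*}
Strict positivity $G(x,y) > 0$ follows because $G(\cdot,y)$ is a non-negative distributional solution of $-\Delta u = \delta_y$, which near $y$ blows up like $S$, hence is not identically zero; the strong maximum principle applied to $G(\cdot,y)$ on $\Omega \setminus \{y\}$, where it is harmonic, forbids an interior zero.

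For the gradient estimate, the main obstacle is that one cannot just differentiate $S$ and $H$ separately in a naive way near the singularity; the bound must be uniform up to the diagonal behaviour $\abs{x-y}^{1-N}$. I would proceed by a scaling/rescaling argument: fix $x \neq y$, let $r = \tfrac{1}{2}\abs{x-y}$, and consider the rescaled function
\begin{equation*}
v(z) := r^{N-2}\, G(x + rz, y), \qquad z \in B_1(0),
\end{equation*}
which is harmonic on $B_1(0)$ (since $y \notin B_r(x)$) and, by the previous step, satisfies $0 \le v(z) \le C$ uniformly for $z \in B_1(0)$. Interior gradient estimates for harmonic functions then give $\abs{\nabla v(0)} \le C$, and unwinding the scaling yields $\abs{\nabla_x G(x,y)} \le C\, r^{1-N} \le C\abs{x-y}^{1-N}$, as desired.

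Finally, to make the argument self-contained (or, equivalently, to justify citing Ackermann–Chang–Sim, Lemma A.1), I would point out that the constant $C$ can be chosen to depend only on $N$ and on the geometry of $\Omega$ (through an interior/boundary comparison), since the only non-trivial input besides the explicit formula for $S$ is the boundedness of $H$ on compact subsets of $\bar\Omega \times \bar\Omega$, combined with standard boundary regularity for the Dirichlet problem in a smooth bounded domain. The scaling step is the main technical point; everything else is classical potential theory.
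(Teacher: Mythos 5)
The paper offers no proof of this lemma at all: it simply cites the strong maximum principle together with Lemma A.1 of \cite{ACKERMANN20134168} (Ackermann--Clapp--Pistoia, not ``Ackermann--Chang--Sim''). Your first half is the standard argument and is essentially correct: $H(\cdot,y)$ is harmonic with strictly positive boundary data $S(\cdot,y)|_{\partial\Omega}$, hence $H>0$ in $\Omega$ and $G<S$; positivity of $G$ follows from the minimum principle on $\Omega\setminus B_\rho(y)$ (you should state this step explicitly, since the inequality $0\le G$ in your display does not follow from $H>0$) and then the strong maximum principle.

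The gradient estimate, however, has a genuine gap. Your rescaled function $v(z)=r^{N-2}G(x+rz,y)$ with $r=\tfrac12|x-y|$ is harmonic on $B_1(0)$ only if $B_r(x)\subset\Omega$, i.e.\ only if $\operatorname{dist}(x,\partial\Omega)\ge \tfrac12|x-y|$. You verified $y\notin B_r(x)$ but not this containment, and it genuinely fails in the regime where the lemma is used in the paper: for instance in the boundary-concentration argument one needs $|\nabla_xG(x,y)|\le C|x-y|^{1-N}$ for $x$ at distance $O(d_\varepsilon)$ from $\partial\Omega$ and $y$ ranging over all of $\Omega\setminus B(x_\varepsilon,4d_\varepsilon)$, so $|x-y|$ can be of order $\operatorname{diam}(\Omega)$ while $\operatorname{dist}(x,\partial\Omega)\to0$. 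In that case $G(\cdot,y)$ cannot be extended harmonically across $\partial\Omega$ (extending by zero destroys harmonicity), so the interior gradient estimate does not apply. The fix is classical but must be said: on $\Omega\cap B_{3r/2}(x)$ the function $G(\cdot,y)$ is harmonic, vanishes on $\partial\Omega\cap B_{3r/2}(x)$, and is bounded by $Cr^{2-N}$ by your first step; a boundary gradient estimate for harmonic functions vanishing on a smooth boundary portion (barrier construction or Schauder estimates up to the boundary, with constants controlled under the rescaling because $\partial\Omega$ flattens at small scales) then gives $|\nabla G(x,y)|\le Cr^{-1}\sup|G|\le C|x-y|^{1-N}$. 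With that case added, your argument recovers the full statement; as written it only proves the gradient bound for $x$ with $\operatorname{dist}(x,\partial\Omega)\ge\tfrac12|x-y|$.
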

 By  \cite[Proposition 6.23--6.2.5]{Cao_Peng_Yan_2021}, we have.
\begin{Lem}\label{Pohozaev identity of Green functon }
Let $x\in\Omega$ and $B(x,d)\subset\subset \Omega$, then we have
\begin{equation}
\begin{aligned}
&-\int_{\partial B(x,d)}\frac{\partial G(y,x)}{\partial n}(y-x)\cdot\nabla G(y,x)+\frac{1}{2}\int_{\partial B(x,d)}(y-x)\cdot n|\nabla G(y,x)|^{2}\\ 
&-\frac{N-2}{2}\int_{\partial B(x,d)}G(y,x)\frac{\partial G(y,x)}{\partial n}\\
=&-\frac{N-2}{2}H(x,x),
\end{aligned}
\end{equation}
and
\begin{equation}
\int_{\partial B(x,d)}\frac{\partial G(y,x)}{\partial n} \frac{\partial G(y,x)}{\partial y_{i}}-\frac{1}{2}\int_{\partial B(x,d)} |\nabla G(y,x)|^{2}n_{i}=\frac{\partial H(y,x)}{\partial y_{i}}\bigg|_{y=x}.   
\end{equation}
Moreover, we define
\begin{equation}
    Q(u,v,B(x,d))=-\int_{\partial B(x,d)}\frac{\partial u}{\partial x_{i}} \frac{\partial v}{\partial n} -\int_{\partial B(x,d)}\frac{\partial v}{\partial x_{i}} \frac{\partial u}{\partial n}+\int_{\partial B(x,d)}\langle \nabla u,\nabla v\rangle n_{i},
\end{equation}
then we have
\begin{equation}
\begin{aligned}
Q(G(x,y),\partial_{h}G(x,y),B(x,d))=-\frac{1}{2}\frac{\partial^{2}R(x)}{\partial x_{i}\partial x_{h}},
\end{aligned}
\end{equation}
where $n(x)$ denote the unit outward normal of $\partial B(x,d)$ at $x$ and $\partial_{h}G(x,y)=\frac{\partial G(x,y)}{\partial x_{h}}$.
\end{Lem}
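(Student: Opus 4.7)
The three identities share a common strategy: since $G(\cdot,x)$ is harmonic on $\Omega\setminus\{x\}$, I will apply classical Pohozaev-type multiplier identities on the punctured annulus $D_{\epsilon}:=B(x,d)\setminus B(x,\epsilon)$ and then extract the Robin-function terms by decomposing $G=S-H$ and sending $\epsilon\to 0^{+}$. The skeleton identities arise from multiplying $\Delta u\equiv 0$ (for $u$ harmonic on $D_{\epsilon}$) by $(y-x)\cdot\nabla u$, by $\partial_{y_{i}}u$, or (in a polarised version) by combinations of a second harmonic $v$, and integrating by parts to produce
\begin{equation*}
\int_{\partial D_{\epsilon}}\!\!\Bigl[((y-x)\cdot\nabla u)\,\partial_{\nu}u-\tfrac{1}{2}|\nabla u|^{2}(y-x)\cdot\nu+\tfrac{N-2}{2}u\,\partial_{\nu}u\Bigr]dS=0,\qquad \int_{\partial D_{\epsilon}}\!\!\Bigl[(\partial_{y_{i}}u)\,\partial_{\nu}u-\tfrac{1}{2}|\nabla u|^{2}\nu_{i}\Bigr]dS=0,
\end{equation*}
together with the analogous bilinear identity whose integrand is exactly the integrand defining $Q(u,v,\cdot)$. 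Splitting each boundary integral into $\partial B(x,d)$ (outward normal $n=(y-x)/d$) and $\partial B(x,\epsilon)$ (outward-of-annulus normal $\nu_{2}=-(y-x)/\epsilon$) reduces each claim to computing the $\partial B(x,\epsilon)$ contribution in the limit $\epsilon\to 0$.

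Substituting $u=G=S-H$ and, for the third identity, $v=\partial_{x_{h}}G=\partial_{x_{h}}S-\partial_{x_{h}}H$, the $\partial B(x,\epsilon)$ integrand splits into singular-singular, cross, and regular-regular pieces. The regular-regular pieces vanish, because $H$ and $\partial_{x_{h}}H$ are harmonic on the solid ball $B(x,\epsilon)$, so applying the same multiplier identities there gives zero boundary contribution. The singular-singular pieces either cancel pointwise after inserting $S(y,x)=\tfrac{1}{(N-2)\omega_{N}\epsilon^{N-2}}$, $\nabla_{y}S=-(y-x)/(\omega_{N}\epsilon^{N})$, $\partial_{\nu_{2}}S=1/(\omega_{N}\epsilon^{N-1})$ (this is how the first identity's singular terms exactly balance), or vanish by spherical symmetry about $x$ (oddness of $y_{i}-x_{i}$ after integration). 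The cross pieces are the surviving ones; using the Taylor expansion $H(y,x)=H(x,x)+\langle\nabla_{y}H(y,x)|_{y=x},\,y-x\rangle+O(\epsilon^{2})$, the spherical averages $\int_{\partial B(x,\epsilon)}(y_{i}-x_{i})(y_{j}-x_{j})\,dS=\tfrac{\omega_{N}\epsilon^{N+1}}{N}\delta_{ij}$, and the harmonicity trace identity $\Delta_{y}H(y,x)|_{y=x}=0$, they contribute $-\tfrac{N-2}{2}H(x,x)=-\tfrac{N-2}{2}R(x)$ for the first identity and $\partial_{y_{i}}H(y,x)|_{y=x}$ for the second, which are precisely the claimed right-hand sides after accounting for the orientation flip that transfers the computation from $\partial B(x,\epsilon)$ back to $\partial B(x,d)$.

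The main obstacle lies in the third identity, where $v=\partial_{x_{h}}G$ carries a dipole singularity one order stronger than the monopole of $G$: the leading singular-singular piece no longer vanishes for free, and one has to track dipole-monopole together with monopole-dipole pairings carefully. The essential extra input is the Green-function symmetry $G(x,y)=G(y,x)$, hence $H(x,y)=H(y,x)$, which permits converting $x$-derivatives of $H$ along the diagonal into $y$-derivatives and, combined with the chain-rule identity $\partial_{x_{i}}R(x)=2\,\partial_{y_{i}}H(y,x)|_{y=x}$, isolates the mixed partial $\partial^{2}R/\partial x_{i}\partial x_{h}$ with the announced coefficient $-\tfrac{1}{2}$. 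Once this symmetrisation is in place, organising the four classes of cross products on $\partial B(x,\epsilon)$ and invoking the same spherical-average identities complete the proof.
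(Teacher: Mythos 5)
Your plan is correct: the paper itself gives no proof of this lemma but simply cites \cite[Propositions 6.2.3--6.2.5]{Cao_Peng_Yan_2021}, and your argument — excising $B(x,\epsilon)$, applying the translation/dilation/polarised multiplier identities to the harmonic function $G(\cdot,x)$ on the annulus, splitting $G=S-H$, and evaluating the inner-sphere limit via the explicit formulas for $S$, the mean-value property and spherical moment identities for $H$, and the symmetry $H(x,y)=H(y,x)$ for the second-derivative identity — is exactly the standard computation carried out in that reference. I checked the sign bookkeeping and the cancellation mechanisms you describe (pointwise cancellation of the singular--singular terms in the first identity, cancellation after spherical averaging in the second and third, vanishing of the regular--regular terms by harmonicity of $H$ on the solid ball), and they are all sound.
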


Finally, We recall the following Green's representation formula \cite{evans2022partial}
\begin{equation}\label{Green's representation formula}
    u(x)=-\int_{\Omega}G(x,y)\Delta u(y)dy-\int_{\partial\Omega}u(y)\frac{\partial G}{\partial n}(x,y)d S_{y}.
\end{equation}

\subsection{Projection of bubbles}\label{Projection of bubbles}
For any $a\in\Omega$ and $\lambda\in \R^{+}$, we define $PU_{\lambda,a}$ is the projection of $U_{\lambda,a}$ onto $H^{1}_{0}(\Omega)$, i.e. 
$PU_{\lambda,a}:=U_{\lambda,a}-\psi_{\lambda,a}\in H^{1}_{0}(\Omega)$, where $\psi_{\lambda,a}$ is the harmonic extension of $U_{\lambda,a}|_{\partial\Omega}$ to $\Omega$
\begin{equation}
  \begin{cases}
-\Delta \psi_{\lambda,a}=0, \quad{\text{~in~}\Omega},\\
\psi_{\lambda,a}|_{\partial\Omega}=U_{\lambda,a}|_{\partial\Omega}.  
\end{cases}
\end{equation}
Hence, by strong maximum principle, we have
 \begin{equation}
  \psi_{\lambda,a}\leq U_{\lambda,a}(x):=\left(\frac{\lambda}{1+\lambda^{2}|x-a|^{2}}\right)^{\frac{N-2}{2}}.
\end{equation}

Now, we recall some useful estimates obtained in \cite{bahri1988,Rey1990}.

\begin{Lem}\label{estimate of two different bubbles}
For any $\lambda_{1}, \lambda_{2}\in \R^{+}$ and $x_{1},x_{2}\in\R$, we have
\begin{equation}
    \int_{\R^{N}}U_{\lambda_{1},x_{1}}^{2^*-1}U_{\lambda_{2},x_{2}}=O\left(\frac{\lambda_{1}}{\lambda_{2}}+\frac{\lambda_{2}}{\lambda_{1}}+\lambda_{1}\lambda_{2}|x_{1}-x_{2}|\right)^{-\frac{N-2}{2}}.
\end{equation}
\end{Lem}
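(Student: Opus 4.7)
The plan is to exploit the scaling invariance of the Aubin-Talenti profile to reduce the two-parameter integral to a one-parameter one, and then split $\R^N$ into regions according to which bubble dominates.

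First I would change variables $z=\lambda_1(y-x_1)$. Using $U_{\lambda,x}(y)=\lambda^{(N-2)/2}U_{1,0}(\lambda(y-x))$, the combined $\lambda_1$-powers cancel exactly, and one obtains
\begin{equation*}
\int_{\R^{N}} U_{\lambda_1,x_1}^{2^*-1}\,U_{\lambda_2,x_2}\,dy \;=\; \int_{\R^{N}} U_{1,0}^{2^*-1}(z)\,U_{\mu,a}(z)\,dz \;=:\; I(\mu,a),
\end{equation*}
where $\mu:=\lambda_2/\lambda_1$ and $a:=\lambda_1(x_2-x_1)$, and the interaction parameter on the right-hand side rewrites as $\mu+\mu^{-1}+\mu|a|^2 = \lambda_2/\lambda_1+\lambda_1/\lambda_2+\lambda_1\lambda_2|x_1-x_2|^2$. (The statement as displayed, with $|x_1-x_2|$ un-squared, appears to be a typographical slip; the scaling computation naturally produces $|x_1-x_2|^2$.) Integration by parts on $\R^N$ together with $-\Delta U_{1,0}=N(N-2)U_{1,0}^{2^*-1}$ yields the symmetry $\int U_{\lambda_1,x_1}^{2^*-1}U_{\lambda_2,x_2}=\int U_{\lambda_1,x_1}U_{\lambda_2,x_2}^{2^*-1}$, so after swapping indices if necessary I may assume $\mu\ge 1$. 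The claim then reduces to $I(\mu,a)\lesssim \mu^{-(N-2)/2}(1+|a|^2)^{-(N-2)/2}$.

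Next I would decompose $\R^N$ into $\Omega_1:=\{|z-a|\le R/2\}$, $\Omega_2:=\{|z|\le R/2\}$ and $\Omega_3:=\R^N\setminus(\Omega_1\cup\Omega_2)$, where $R:=1+|a|$. On $\Omega_1$ the factor $U_{1,0}^{2^*-1}(z)\lesssim R^{-(N+2)}$ is essentially constant and can be pulled out; the remaining integral $\int_{\Omega_1} U_{\mu,a}\,dz$, after the change of variables $w=\mu(z-a)$, is controlled by $\mu^{-(N+2)/2}$ times a bounded factor, producing a contribution $\lesssim \mu^{-(N-2)/2}R^{-(N-2)}$ after collecting powers. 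On $\Omega_2$ one has $U_{\mu,a}(z)\lesssim \mu^{-(N-2)/2}R^{-(N-2)}$ (its value near $z=0$), while $\int_{\R^N} U_{1,0}^{2^*-1}$ is a finite constant, giving the same bound. On the far field $\Omega_3$ both $|z|$ and $|z-a|$ exceed $R/2$, so the polynomial decay $U_{1,0}(z)\lesssim|z|^{-(N-2)}$ and $U_{\mu,a}(z)\lesssim\mu^{-(N-2)/2}|z-a|^{-(N-2)}$, combined with Lemma~\ref{useful estimate} (or a direct radial computation), close the estimate.

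The main obstacle I expect is the bookkeeping inside the domain decomposition: the dominant regional contribution to $I(\mu,a)$ switches depending on whether $\mu|a|^2$ is $\ll 1$, $\sim 1$, or $\gg 1$, and one must verify in each regime that the three regional pieces assemble into the single clean bound $\mu^{-(N-2)/2}(1+|a|^2)^{-(N-2)/2}\asymp (\mu+\mu^{-1}+\mu|a|^2)^{-(N-2)/2}$, with particular care needed for the borderline $N=4$ (or low $N$) cases where the tail integrals flirt with logarithmic divergences. Once this is done, unwinding $\mu=\lambda_2/\lambda_1$ and $a=\lambda_1(x_2-x_1)$ recovers the claimed estimate.
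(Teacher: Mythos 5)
The paper never proves this lemma: it is stated in the appendix as a fact recalled from \cite{bahri1988,Rey1990}, so the only benchmark is the classical interaction estimate of Bahri and Rey, which your sketch essentially reproduces. The skeleton is sound: the rescaling $z=\lambda_1(y-x_1)$ does make all $\lambda_1$-powers cancel; the symmetry $\int U_{\lambda_1,x_1}^{2^*-1}U_{\lambda_2,x_2}=\int U_{\lambda_1,x_1}U_{\lambda_2,x_2}^{2^*-1}$ follows from $-\Delta U_{\lambda,x}=N(N-2)U_{\lambda,x}^{2^*-1}$ and legitimately reduces matters to $\mu=\lambda_2/\lambda_1\ge1$, where the target becomes $I(\mu,a)\lesssim\mu^{-(N-2)/2}(1+|a|^2)^{-(N-2)/2}$; and the three-region splitting at scale $R=1+|a|$ is the standard way to close it. You are also right that the printed exponent on the distance is a typo: with $\lambda_1\lambda_2|x_1-x_2|$ un-squared the statement is actually false (take $\lambda_1=\lambda_2=\lambda$ and $|x_1-x_2|=\lambda^{-1}$: the left side is the fixed positive constant $\int_{\R^N}U_{1,0}^{2^*-1}U_{1,e}$ while the printed right side is $(2+\lambda)^{-(N-2)/2}\to0$), and the squared version is what the application in Lemma \ref{decomposition of u-varepsilon} requires.

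Two pieces of bookkeeping in your sketch need repair, both of the type you already flag as the ``main obstacle.'' First, on $\Omega_1$ the factor $\int_{|w|\le\mu R/2}(1+|w|^2)^{-(N-2)/2}dw$ is not bounded for any $N\ge3$; it grows like $(\mu R)^2$. Carrying the correct growth through still gives the regional contribution $R^{-(N+2)}\cdot\mu^{-(N-2)/2}R^{2}=\mu^{-(N-2)/2}R^{-N}$, which is better than needed, so the conclusion survives but not for the reason stated. Second, the pointwise bound $U_{\mu,a}\lesssim\mu^{-(N-2)/2}R^{-(N-2)}$ on $\Omega_2$ only holds when $\mu|a|\gtrsim1$; when the centres nearly coincide ($|a|\lesssim1$, so $R\asymp1$ and the balls $\Omega_1$, $\Omega_2$ overlap substantially) $U_{\mu,a}$ reaches size $\mu^{(N-2)/2}$ inside $\Omega_2$, and that regime must instead be handled by splitting at $|z-a|=1/\mu$ as in the $\Omega_1$ computation. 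Finally, for the far field note that Lemma \ref{useful estimate} applied with $\theta=N-2$ costs a spurious logarithm, so the direct radial computation you mention in parentheses is the right choice; once these adjustments are made the three regimes do assemble into the clean bound with no logarithmic loss in any dimension $N\ge3$.
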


\begin{Lem}\label{estimate of U-lambda-a and psi-lambda-a 1}
Assume that $a\in\Omega$ and $\lambda\in \R^{+}$, then we have the following properties    \begin{equation*}\begin{aligned}
&\frac{\partial U_{\lambda,a}(x)}{\partial a_j} =(N-2)\lambda^{\frac{N+2}{2}}\frac{x_{j}-a_{j}}{\left(1+\lambda^{2}|x-a|^{2}\right)^{\frac{N}{2}}}=O\big(\lambda U_{a,\lambda}\big), \\
&\frac{\partial U_{\lambda,a}(x)}{\partial\lambda} =\frac{N-2}2\lambda^{\frac{N-4}2}\frac{1-\lambda^2|x-a|^2}{(1+\lambda^2|x-a|^2)^{\frac N2}}=O\Big(\frac{U_{a,\lambda}}\lambda\Big),
\end{aligned}\end{equation*}
where $d=\text{dist}(a,\partial\Omega)$ is the distance between $a$ and boundary $\partial\Omega$.
\end{Lem}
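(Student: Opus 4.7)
The plan is to verify both derivative identities by a direct calculation from the explicit form
\begin{equation*}
U_{\lambda,a}(x) = \lambda^{\frac{N-2}{2}}\bigl(1+\lambda^{2}|x-a|^{2}\bigr)^{-\frac{N-2}{2}},
\end{equation*}
and then extract the order estimates by factoring $U_{\lambda,a}$ out of the resulting expressions and bounding the remaining dimensionless quotients.

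First I will handle $\partial/\partial a_j$. Using $\partial_{a_j}|x-a|^{2} = -2(x_j-a_j)$ and the chain rule,
\begin{equation*}
\frac{\partial U_{\lambda,a}}{\partial a_j} = \lambda^{\frac{N-2}{2}}\cdot\Bigl(-\tfrac{N-2}{2}\Bigr)\bigl(1+\lambda^{2}|x-a|^{2}\bigr)^{-\frac{N}{2}}\cdot\lambda^{2}\cdot(-2)(x_j-a_j),
\end{equation*}
which simplifies to $(N-2)\lambda^{\frac{N+2}{2}}(x_j-a_j)/(1+\lambda^{2}|x-a|^{2})^{N/2}$, matching the claim. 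To get the $O(\lambda U_{\lambda,a})$ bound, I rewrite this as
\begin{equation*}
\frac{\partial U_{\lambda,a}}{\partial a_j} = (N-2)\,\lambda\,U_{\lambda,a}\cdot\frac{\lambda(x_j-a_j)}{1+\lambda^{2}|x-a|^{2}},
\end{equation*}
and then observe that the last factor is uniformly bounded since $|\lambda(x_j-a_j)| \le \lambda|x-a|$ and $2t \le 1+t^{2}$ gives $\lambda|x-a|/(1+\lambda^{2}|x-a|^{2}) \le 1/2$.

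Next I will compute $\partial/\partial\lambda$ by applying the product rule to $\lambda^{(N-2)/2}$ and $(1+\lambda^{2}|x-a|^{2})^{-(N-2)/2}$. Collecting terms with a common factor $\lambda^{(N-4)/2}(1+\lambda^{2}|x-a|^{2})^{-N/2}$ produces the bracket $(1+\lambda^{2}|x-a|^{2})-2\lambda^{2}|x-a|^{2} = 1-\lambda^{2}|x-a|^{2}$, recovering the stated closed form. Rewriting it as
\begin{equation*}
\frac{\partial U_{\lambda,a}}{\partial\lambda} = \frac{N-2}{2}\cdot\frac{U_{\lambda,a}}{\lambda}\cdot\frac{1-\lambda^{2}|x-a|^{2}}{1+\lambda^{2}|x-a|^{2}},
\end{equation*}
the $O(U_{\lambda,a}/\lambda)$ estimate is immediate because the trailing quotient lies in $[-1,1]$.

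There is no real obstacle in this lemma: both statements reduce to one-line differentiations followed by elementary algebraic manipulations, the only quantitative input being the inequality $1+t^{2}\ge 2t$ used to control the homogeneous quotients. I will simply present the calculation cleanly and then read off the $O(\cdot)$ bounds.
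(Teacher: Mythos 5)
Your computation is correct: both closed forms follow from the stated differentiation, and the $O(\lambda U_{\lambda,a})$ and $O(U_{\lambda,a}/\lambda)$ bounds follow from $2t\le 1+t^{2}$ and $|1-t|\le 1+t$ exactly as you argue. The paper itself gives no proof of this lemma (it is recalled from Bahri and Rey), and your direct verification is the standard one, so there is nothing to compare beyond noting that your argument fills in the omitted routine calculation.
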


\begin{Lem}\label{estimate of U-lambda-a and psi-lambda-a 2}
Assume that $a\in\Omega$ and $\lambda\in \R^{+}$, then we have the following properties
\begin{equation*}
 \begin{aligned}
PU_{\lambda,a}& =U_{\lambda,a}-\psi_{\lambda,a}\geq 0, \\
\psi_{\lambda,a}(x)& =\frac{(N-2)\omega_N}{\lambda^{\frac{N-2}2}}H(a,x)+O\Big(\frac1{\lambda^{\frac{N+2}2}d^N}\Big), \\
\frac{\partial\psi_{\lambda,a}(x)}{\partial\lambda}& =-\frac{(N-2)\omega_N}{\lambda^{\frac N2}}H(a,x)+O\Big(\frac1{\lambda^{\frac{N+4}2}d^N}\Big), \\
\frac{\partial\psi_{\lambda,a}(x)}{\partial a_j}& =\frac{(N-2)\omega_N}{\lambda^{\frac{N-2}2}}\frac{\partial H(a,x)}{\partial a_j}+O\Big(\frac{1}{\lambda^{\frac{N+2}2}d^{N+1}}\Big),
\end{aligned}      
\end{equation*}
 where $d=\text{dist}(a,\partial\Omega)$ is the distance between $a$ and boundary $\partial\Omega$.
\end{Lem}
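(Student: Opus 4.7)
My plan is to prove the four assertions separately but by a common strategy: reduce everything to asymptotic expansions of $U_{\lambda,a}$ (or its parameter derivatives) restricted to $\partial\Omega$, then invoke the Poisson extension together with the maximum principle. The first assertion $PU_{\lambda,a}\geq 0$ is immediate: $PU_{\lambda,a}$ solves $-\Delta w = -\Delta U_{\lambda,a} = N(N-2)U_{\lambda,a}^{2^*-1} > 0$ in $\Omega$ with $w=0$ on $\partial\Omega$, so the strong maximum principle gives $PU_{\lambda,a}>0$ inside $\Omega$.

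For the remaining three expansions, the key observation is that since $\psi_{\lambda,a}$ is defined by $\Delta\psi_{\lambda,a}=0$ in $\Omega$ with $\psi_{\lambda,a}|_{\partial\Omega}=U_{\lambda,a}|_{\partial\Omega}$, the Poisson extension operator $P:C(\partial\Omega)\to C(\overline{\Omega})$ commutes with differentiation in the parameters $\lambda$ and $a_j$. Hence $\partial_\lambda\psi_{\lambda,a}$ and $\partial_{a_j}\psi_{\lambda,a}$ are themselves harmonic in $\Omega$, with boundary values $\partial_\lambda U_{\lambda,a}|_{\partial\Omega}$ and $\partial_{a_j}U_{\lambda,a}|_{\partial\Omega}$ respectively. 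For each quantity I would then (a) expand the boundary trace using that $\lambda|x-a|\ge\lambda d\gg 1$ for $x\in\partial\Omega$, identify the leading term as a multiple of $S(a,x)=\frac{1}{(N-2)\omega_N|x-a|^{N-2}}$ or a derivative thereof; (b) recognize that the harmonic extension of such a leading term is the corresponding multiple of $H(a,x)$ (or its derivative), by the very definition of the regular part of the Green function; (c) estimate the remainder by the maximum principle, since the difference between $\psi_{\lambda,a}$ and the leading piece is harmonic in $\Omega$ with boundary values controlled by an explicit negative power of $\lambda d$.

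Concretely, for $x\in\partial\Omega$ I would write
\begin{equation*}
U_{\lambda,a}(x)=\frac{\lambda^{(N-2)/2}}{(1+\lambda^{2}|x-a|^{2})^{(N-2)/2}}=\frac{1}{\lambda^{(N-2)/2}|x-a|^{N-2}}\Bigl(1+\tfrac{1}{\lambda^{2}|x-a|^{2}}\Bigr)^{-(N-2)/2},
\end{equation*}
Taylor-expand the last factor as $1+O(\lambda^{-2}|x-a|^{-2})$, and thereby obtain $U_{\lambda,a}(x)=\frac{(N-2)\omega_N}{\lambda^{(N-2)/2}}S(a,x)+O(\lambda^{-(N+2)/2}d^{-N})$ on $\partial\Omega$. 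The maximum principle applied to $\psi_{\lambda,a}-\frac{(N-2)\omega_N}{\lambda^{(N-2)/2}}H(a,\cdot)$ then yields the claimed expansion of $\psi_{\lambda,a}$. For $\partial_{a_j}\psi_{\lambda,a}$, using Lemma~\ref{estimate of U-lambda-a and psi-lambda-a 1} I would expand
\begin{equation*}
\frac{\partial U_{\lambda,a}(x)}{\partial a_j}=(N-2)\lambda^{(N+2)/2}\frac{x_j-a_j}{(1+\lambda^{2}|x-a|^{2})^{N/2}}=\frac{(N-2)\omega_N}{\lambda^{(N-2)/2}}\frac{\partial S(a,x)}{\partial a_j}+O(\lambda^{-(N+2)/2}d^{-(N+1)})
\end{equation*}
on $\partial\Omega$, then extend harmonically; the extra factor $d^{-1}$ in the error reflects the differentiation in $a$. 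For $\partial_\lambda\psi_{\lambda,a}$ I would expand $\partial_\lambda U_{\lambda,a}=\frac{N-2}{2}\lambda^{(N-4)/2}(1-\lambda^{2}|x-a|^{2})(1+\lambda^{2}|x-a|^{2})^{-N/2}$ and repeat the same procedure.

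The routine obstacle is the bookkeeping of the remainder term uniformly in $d$: because $a$ is allowed to approach $\partial\Omega$, the factors $d^{-N}$ and $d^{-(N+1)}$ must be tracked explicitly, and one must verify that the sup of the boundary remainder divided by the appropriate power of $d$ is controlled by the next order $\lambda^{-2}|x-a|^{-2}$ term uniformly in $x\in\partial\Omega$. Once this is checked for each of the three boundary traces, the maximum principle delivers the three asymptotic identities at once, so no part of the argument is genuinely delicate beyond careful Taylor expansion.
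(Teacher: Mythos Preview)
Your proposal is correct and follows exactly the standard argument; the paper itself does not give a proof of this lemma but merely cites it from \cite{bahri1988,Rey1990}, and your sketch is precisely how those references establish the estimates. The only remark is that if you carry out the $\partial_\lambda$ expansion as you describe, the leading constant you will obtain is $-\tfrac{(N-2)^2\omega_N}{2}$ rather than $-(N-2)\omega_N$ (just differentiate the first line in $\lambda$); this is a harmless typographical slip in the stated lemma and does not affect any application in the paper.
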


\begin{Lem}\label{estimate of U-lambda-a and psi-lambda-a 3}
Assume that $a\in\Omega$ and $\lambda\in \R^{+}$, then we have the following properties
    \begin{equation*}
        \begin{aligned}
\int_{\R^{N}\setminus\Omega}|\nabla U_{\lambda,a}|^{2}=O\left(\frac{1}{(\lambda d)^{N-2}}\right)&,\int_{\R^{N}\setminus\Omega}|U_{\lambda,a}|^{2^*}=O\left(\frac{1}{(\lambda d)^{N}}\right),\\
\int_{\Omega}|\nabla U_{\lambda,a}|^{2}=\alpha_{N}^{-2}S^{\frac{N}{2}}+O\left(\frac{1}{(\lambda d)^{N-2}}\right),&\int_{\Omega}|\nabla PU_{\lambda,a}|^{2}=\alpha_{N}^{-2}S^{\frac{N}{2}}+O\left(\frac{1}{(\lambda d)^{N-2}}\right),\\
\|\psi_{\lambda,a}\|_{L^{\infty}} =O\left(\frac1{\lambda^{\frac{N-2}2}d^{N-2}}\right), \parallel\psi_{\lambda,a}\parallel_{L^{2^{*}}}&=O\Big(\frac{1}{(\lambda d)^{\frac{N-2}{2}}}\Big), \|\nabla \psi_{\lambda,a}\|_{L^{2}}=O\left(\frac{1}{(\lambda d)^{N-2}}\right),\\
        \end{aligned}
    \end{equation*}
where $d=\text{dist}(a,\partial\Omega)$ is the distance between $a$ and boundary $\partial\Omega$.
\end{Lem}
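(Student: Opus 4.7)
\textbf{Proof plan for Lemma \ref{estimate of U-lambda-a and psi-lambda-a 3}.} The plan is to derive the six estimates in a fixed order so that each one feeds into the next: first the two integrals of $U_{\lambda,a}$ over $\mathbb R^N\setminus\Omega$, then $\int_\Omega|\nabla U_{\lambda,a}|^2$, then the bounds on $\psi_{\lambda,a}$ in $L^\infty$, $L^{2^*}$ and $H^1_0$, and finally $\int_\Omega|\nabla PU_{\lambda,a}|^2$. Throughout I use the fact (standard for the Aubin--Talenti bubble) that $\int_{\mathbb R^N}|\nabla U_{\lambda,a}|^2 = \int_{\mathbb R^N}U_{\lambda,a}^{2^*} = \alpha_N^{-2}S^{N/2}$ and that $U_{\lambda,a}$ is radially decreasing about $a$.

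\textbf{Step 1: decay of $U_{\lambda,a}$ outside $\Omega$.} Because $a\in\Omega$ and $d=\mathrm{dist}(a,\partial\Omega)$, one has $\mathbb R^N\setminus\Omega\subset\mathbb R^N\setminus B(a,d)$. The change of variables $y=\lambda(x-a)$ transforms $\int_{\mathbb R^N\setminus B(a,d)}U_{\lambda,a}^{2^*}\,dx$ into $\int_{\mathbb R^N\setminus B(0,\lambda d)}(1+|y|^2)^{-N}\,dy$, which is $O((\lambda d)^{-N})$; the same rescaling applied to $|\nabla U_{\lambda,a}|^2=\lambda^{N}(N-2)^2|y|^2(1+|y|^2)^{-N}$ after the substitution gives $\int_{\mathbb R^N\setminus B(0,\lambda d)}|y|^2(1+|y|^2)^{-N}\,dy = O((\lambda d)^{-(N-2)})$. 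This produces the first two estimates.

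\textbf{Step 2: $\int_\Omega|\nabla U_{\lambda,a}|^2$.} Subtract: $\int_\Omega|\nabla U_{\lambda,a}|^2 = \int_{\mathbb R^N}|\nabla U_{\lambda,a}|^2 - \int_{\mathbb R^N\setminus\Omega}|\nabla U_{\lambda,a}|^2 = \alpha_N^{-2}S^{N/2}+O((\lambda d)^{-(N-2)})$ by Step 1.

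\textbf{Step 3: $L^\infty$ and $L^{2^*}$ bounds on $\psi_{\lambda,a}$.} From Lemma \ref{estimate of U-lambda-a and psi-lambda-a 2}, $\psi_{\lambda,a}(x)=\frac{(N-2)\omega_N}{\lambda^{(N-2)/2}}H(a,x)+O(\lambda^{-(N+2)/2}d^{-N})$. The regular part $H(\cdot,\cdot)$ is harmonic in its second argument in $\Omega$ with boundary data $S(a,y)=\frac{1}{(N-2)\omega_N}|a-y|^{-(N-2)}$; since $|a-y|\geq d$ for $y\in\partial\Omega$, the maximum principle yields $\sup_{x\in\Omega}|H(a,x)|\leq C d^{-(N-2)}$. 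Combining gives $\|\psi_{\lambda,a}\|_{L^\infty(\Omega)} = O(\lambda^{-(N-2)/2}d^{-(N-2)})$. For the $L^{2^*}$ bound, use $0\leq\psi_{\lambda,a}\leq U_{\lambda,a}$ on $\overline\Omega$ (by the maximum principle applied to $U_{\lambda,a}-\psi_{\lambda,a}=PU_{\lambda,a}\geq 0$, which is also stated in Lemma \ref{estimate of U-lambda-a and psi-lambda-a 2}) and split $\int_\Omega\psi_{\lambda,a}^{2^*}=\int_{B(a,d)}+\int_{\Omega\setminus B(a,d)}$; on the outer region bound $\psi_{\lambda,a}\leq U_{\lambda,a}$ and invoke the decay from Step 1; on the inner region use the $L^\infty$ bound together with $|B(a,d)|=O(d^N)$. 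Both contributions are $O((\lambda d)^{-N})$, yielding the $L^{2^*}$ estimate.

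\textbf{Step 4: $\|\nabla\psi_{\lambda,a}\|_{L^2}$ and $\int_\Omega|\nabla PU_{\lambda,a}|^2$.} Since $\psi_{\lambda,a}$ is harmonic in $\Omega$ and $PU_{\lambda,a}\in H^1_0(\Omega)$, integration by parts gives $\int_\Omega\nabla PU_{\lambda,a}\cdot\nabla\psi_{\lambda,a}\,dx=0$, so from $U_{\lambda,a}=PU_{\lambda,a}+\psi_{\lambda,a}$ we obtain the Pythagorean identity
\begin{equation*}
\int_\Omega|\nabla U_{\lambda,a}|^2 = \int_\Omega|\nabla PU_{\lambda,a}|^2 + \int_\Omega|\nabla\psi_{\lambda,a}|^2.
\end{equation*}
It therefore suffices to bound $\|\nabla\psi_{\lambda,a}\|_{L^2}$. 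For this, note that $\psi_{\lambda,a}$ minimizes the Dirichlet energy among functions with its boundary trace, so $\int_\Omega|\nabla\psi_{\lambda,a}|^2 \leq \int_\Omega|\nabla v|^2$ for any $v\in H^1(\Omega)$ with $v=U_{\lambda,a}$ on $\partial\Omega$; a convenient choice is $v=\eta\, U_{\lambda,a}$ where $\eta$ is a smooth cutoff equal to $1$ in a neighborhood of $\partial\Omega$ and vanishing on $B(a,d/2)$. Expanding $|\nabla(\eta U_{\lambda,a})|^2\leq 2\eta^2|\nabla U_{\lambda,a}|^2+2|\nabla\eta|^2 U_{\lambda,a}^2$ and restricting integration to $\Omega\setminus B(a,d/2)$ produces, after the same rescaling as in Step 1, the bound $O((\lambda d)^{-(N-2)})$. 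Substituting into the Pythagorean identity yields $\int_\Omega|\nabla PU_{\lambda,a}|^2=\alpha_N^{-2}S^{N/2}+O((\lambda d)^{-(N-2)})$.

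\textbf{Main obstacle.} The only delicate point is the $L^\infty$ bound in Step 3; the leading-order expansion from Lemma \ref{estimate of U-lambda-a and psi-lambda-a 2} already reduces it to controlling $H(a,\cdot)$, but one must verify that the implicit constants in the remainder of that expansion are admissible uniformly in $x\in\Omega$ (as opposed to only on compact subsets). Once this uniform control is granted, every remaining estimate reduces to either a direct rescaling of the bubble integral or a one-line Dirichlet-principle argument.
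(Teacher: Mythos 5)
The paper does not actually prove this lemma: it is recalled from Bahri's and Rey's work without argument, so your write-up is a self-contained proof where the paper has none. Your argument is correct and is essentially the standard one (rescaling for the tail integrals, maximum principle for $\psi_{\lambda,a}$, and the orthogonality $\int_\Omega\nabla PU_{\lambda,a}\cdot\nabla\psi_{\lambda,a}=0$ plus the Dirichlet principle for the energy of $\psi_{\lambda,a}$). Two remarks. First, your ``main obstacle'' is avoidable: you do not need the expansion of Lemma \ref{estimate of U-lambda-a and psi-lambda-a 2} at all, since $\psi_{\lambda,a}$ is harmonic with boundary data $U_{\lambda,a}|_{\partial\Omega}\le\bigl(\lambda/(\lambda^2d^2)\bigr)^{(N-2)/2}=\lambda^{-(N-2)/2}d^{-(N-2)}$, so the maximum principle gives the $L^\infty$ bound directly and uniformly, with no question about remainders. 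Second, what your Step 4 actually establishes is $\int_\Omega|\nabla\psi_{\lambda,a}|^2=O\bigl((\lambda d)^{-(N-2)}\bigr)$, i.e.\ $\|\nabla\psi_{\lambda,a}\|_{L^2}=O\bigl((\lambda d)^{-(N-2)/2}\bigr)$; this is the correct statement, and the unsquared bound $\|\nabla\psi_{\lambda,a}\|_{L^2}=O\bigl((\lambda d)^{-(N-2)}\bigr)$ printed in the lemma is in fact false (for $a$ fixed in the interior, $\psi_{\lambda,a}\approx(N-2)\omega_N\lambda^{-(N-2)/2}H(a,\cdot)$, so $\|\nabla\psi_{\lambda,a}\|_{L^2}\sim c\,\lambda^{-(N-2)/2}$, not $O(\lambda^{-(N-2)})$). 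So you have proved the right version and incidentally exposed a typo in the exponent. A last cosmetic point: with the paper's normalization $\int_{\R^N}U_{\lambda,a}^{2^*}=\alpha_N^{-2^*}S^{N/2}$, not $\alpha_N^{-2}S^{N/2}$ as in your preamble; only $\int_{\R^N}|\nabla U_{\lambda,a}|^2=\alpha_N^{-2}S^{N/2}$ holds, and that is the only constant your proof actually uses, so nothing breaks.
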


\subsection{Some elliptic estimates}
In this subsection, we present some elliptic estimates that will be used frequently in our
 proof.
\begin{Lem}\label{gradient estimate}\cite{Brezispletier1989,Han1991}
    Let $u$ be a solution of the following problem
    \begin{equation}
        \begin{cases}
            -\Delta u=f, &\text{~in~} \Omega,\\
           \quad \ \ u=0, &\text{~on~} \partial \Omega.
        \end{cases}
    \end{equation}
    Then, for any $q<\frac{N}{N-1}$, $\alpha\in (0,1)$ and $\omega'\subset\subset \omega$ be two  neighborhoods of $\partial\Omega$ , there exists $C>0$ not depending on $u$ and $f$ such that
    \begin{equation}
        \|u\|_{W^{1,q}(\Omega)}\leq C \|f\|_{L^{1}(\Omega)},
    \end{equation}
and
\begin{equation}
\|\nabla u\|_{C^{0,\alpha}(\omega^{\prime})}\leq C(\|f\|_{L^{1}(\Omega)}+\|f\|_{L^{\infty}(\omega)}). 
\end{equation}
\end{Lem}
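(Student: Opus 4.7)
The plan is to base both inequalities on the Green's representation formula $u(x) = \int_\Omega G(x,y) f(y)\, dy$ together with the pointwise bounds of Lemma \ref{estimate of Green function}, namely $|G(x,y)| \leq C|x-y|^{2-N}$ and $|\nabla_x G(x,y)| \leq C|x-y|^{1-N}$.

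For the first inequality, I would differentiate under the integral sign to obtain $\nabla u(x) = \int_\Omega \nabla_x G(x,y) f(y)\, dy$ and bound pointwise $|\nabla u(x)| \leq C \int_\Omega |x-y|^{1-N} |f(y)|\,dy$. Viewing the right-hand side as the convolution of $|f|$ (extended by zero) with the kernel $|\cdot|^{1-N}\mathbbm{1}_{B_R}$, where $R$ is chosen so that $\Omega - \Omega \subset B_R$, Young's convolution inequality $\|k * g\|_{L^q} \leq \|k\|_{L^q}\|g\|_{L^1}$ yields $\|\nabla u\|_{L^q(\Omega)} \leq C \|f\|_{L^1(\Omega)}$ provided the kernel $|\cdot|^{1-N}$ belongs to $L^q(B_R)$, which happens precisely when $q(N-1) < N$, i.e. $q < N/(N-1)$. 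The analogous estimate for $u$ itself uses the weaker singularity $|\cdot|^{2-N}$, which lies in $L^q(B_R)$ for every $q < N/(N-1)$, giving $\|u\|_{L^q(\Omega)} \leq C \|f\|_{L^1(\Omega)}$ and completing the $W^{1,q}$ bound.

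For the local $C^{1,\alpha}$ estimate, I would pick a cut-off function $\chi \in C_c^\infty(\omega)$ with $\chi \equiv 1$ on an intermediate neighborhood $\omega''$ satisfying $\omega' \subset\subset \omega'' \subset\subset \omega$, and split $f = f_1 + f_2$ with $f_1 := \chi f$, $f_2 := (1-\chi) f$. Correspondingly $u = u_1 + u_2$ where $u_i$ solves the Dirichlet problem with datum $f_i$. Since $f_2$ vanishes on $\omega''$, the distance $|x-y|$ is bounded below by a positive constant $\delta$ on $\omega' \times \mathrm{supp}(f_2)$, so both $G(x,y)$ and all of its $x$-derivatives are uniformly smooth on that set; differentiating under the integral then gives $\|\nabla u_2\|_{C^{0,\alpha}(\omega')} \leq C \|f_2\|_{L^1(\Omega)} \leq C \|f\|_{L^1(\Omega)}$ directly.

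For $u_1$ we have $f_1 \in L^\infty(\Omega)$ with $\|f_1\|_{L^\infty(\Omega)} \leq \|f\|_{L^\infty(\omega)}$, so the Calder\'on--Zygmund / Agmon--Douglis--Nirenberg $W^{2,p}$ estimate up to the smooth boundary gives $\|u_1\|_{W^{2,p}(\Omega)} \leq C_p(\|f_1\|_{L^p(\Omega)} + \|u_1\|_{L^p(\Omega)})$ for every $p \in (1,\infty)$, and the lower-order term $\|u_1\|_{L^p}$ is controlled by $\|f_1\|_{L^p}$ via the maximum principle (or Poincar\'e combined with the first part of the lemma). Taking $p > N$ so that $1 - N/p \geq \alpha$ and invoking the Morrey embedding $W^{2,p}(\Omega) \hookrightarrow C^{1,\alpha}(\overline{\Omega})$ produces $\|\nabla u_1\|_{C^{0,\alpha}(\overline{\Omega})} \leq C \|f\|_{L^\infty(\omega)}$; summing this with the bound on $u_2$ and restricting to $\omega'$ yields the claim. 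The main technical point is the geometric choice of the intermediate neighborhood $\omega''$, which must be thick enough to guarantee the strict separation $\mathrm{dist}(\omega', \mathrm{supp}(f_2)) \geq \delta > 0$; once that is in place, transferring the $L^1$ control of $f_2$ into a H\"older control of $\nabla u_2$ is automatic from the kernel regularity.
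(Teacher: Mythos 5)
The paper does not actually prove this lemma: it is stated in the appendix as a known result quoted from \cite{Brezispletier1989,Han1991}, so there is no in-paper argument to compare against. Your proof is correct and is essentially the standard one for these estimates: the $W^{1,q}$ bound for $q<\frac{N}{N-1}$ follows from the Green representation, the kernel bounds of Lemma \ref{estimate of Green function}, and Young's inequality $\|k*g\|_{L^{q}}\leq\|k\|_{L^{q}}\|g\|_{L^{1}}$ with $|\cdot|^{1-N}\in L^{q}(B_R)$ exactly when $q(N-1)<N$; and the local $C^{1,\alpha}$ bound follows from the cut-off splitting $f=\chi f+(1-\chi)f$, Calder\'on--Zygmund plus Morrey embedding for the part supported in $\omega$, and smoothness of $G(x,y)$ off the diagonal for the far part. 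Two routine points deserve a word of justification: differentiation under the integral sign for $f\in L^{1}$ should be handled by first proving the estimate for smooth $f$ and passing to the limit, and the uniform $C^{1,\alpha}(\overline{\omega'\cap\Omega})$ bound on $G(\cdot,y)$ for $y$ ranging over $\mathrm{supp}\,f_{2}$ uses that $\mathrm{supp}\,f_{2}$ is compactly contained in $\Omega$ (so that $G(\cdot,y)$ is harmonic, uniformly bounded on an intermediate neighborhood between $\omega'$ and $\omega''$, and vanishes on $\partial\Omega$, whence boundary Schauder estimates apply uniformly in $y$).
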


\begin{Rem}
Since the embedding $W^{1,q}(\Omega) \to L^{1}(\Omega)$ is compact if $q<\frac{N}{N-1}$, we obtain that if $\{f_{n}\}$ is a sequence of functions bounded both in $L^{1}(\Omega)$ and in $L^{\infty}(\omega)$, the corresponding solutions $\{u_{n}\}$ converge, up to a subsequence, a.e. on $\Omega$, to a function $u$ and, by the Arzel\`a-Ascoli theorem, the sequence $\{\nabla u_{n}\}$ converges to $\nabla u$ uniformly on $\omega$.
\end{Rem}

\begin{Lem} \cite[Appendix B]{Akahori2019CVPDE} \label{moser iteration}
Assume $N\geq 3$. Let $a(x)$ and $b(x)$ be functions on $B_{4}$ and let $u\in H^{1}(B_{4})$ be a weak solution to 
\begin{equation}
    -\Delta u+a(x)u=b(x)u, \text{~in~} B_{4}.
\end{equation}
Suppose that $a(x)$ and $u$ satisfy that
\begin{equation}
 a(x)\geq0\quad\text{for a.a. }x\in B_4,\quad\int_{B_4}a(x)|u(x)v(x)|dx<\infty\quad\text{for each }v\in H_0^1(B_4).   
\end{equation}
\end{Lem}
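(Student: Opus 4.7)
The plan is to establish this Moser-iteration-type bound by the standard scheme: test the weak equation against a truncated power of $u$ cut off to a shrinking family of balls, use the Sobolev embedding to obtain a reverse-Hölder gain in integrability, and iterate. The hypothesis $a \geq 0$ is exploited simply by discarding the term $\int a(x) u \cdot v$, which is nonnegative whenever the test function $v$ has the same sign as $u$. The integrability condition $\int_{B_4} a|uv|\,dx < \infty$ is used only to justify that truncated powers of $u$ are admissible test functions in the weak formulation, and a standard monotone-truncation argument $u_M := \min(|u|,M)\operatorname{sgn}(u)$ removes any integrability worry.

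First, I would fix $p \geq 1$, pick two radii $1 \leq r_1 < r_2 \leq 4$, and a cutoff $\eta \in C_c^\infty(B_{r_2})$ with $\eta \equiv 1$ on $B_{r_1}$, $0 \leq \eta \leq 1$ and $|\nabla\eta| \leq C/(r_2-r_1)$. Testing the equation against $\varphi = \eta^2 |u|^{2(p-1)} u$ and using $a \geq 0$ to drop the $a u \cdot \varphi$ contribution yields, after a Cauchy--Schwarz absorption of the cross term,
\[
\int_{B_{r_2}} \bigl|\nabla\bigl(\eta |u|^{p-1} u\bigr)\bigr|^2 \, dx \;\leq\; C p^2 \int_{B_{r_2}} |b(x)| \, \eta^2 |u|^{2p} \, dx + \frac{C p^2}{(r_2-r_1)^2} \int_{B_{r_2}} |u|^{2p} \, dx .
\]
Combining this with the Sobolev embedding $H_0^1 \hookrightarrow L^{2^*}$ on the left gives a local reverse-Hölder inequality of the form $\|u\|_{L^{p\,2^*}(B_{r_1})} \leq (\text{const})\,\|u\|_{L^{2p \sigma'}(B_{r_2})}$, where $\sigma'$ is the Hölder conjugate used to handle the $b$-term (either $\sigma'=1$ when $b$ is bounded, or $\sigma' = s/(s-1)$ when $b \in L^s$ with $s > N/2$).

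Next I would iterate this estimate over a sequence $p_k = (2^*/(2\sigma'))^k$ with a geometric sequence of radii $r_k \searrow 1$, verifying that the resulting multiplicative constant converges (the log-sum $\sum_k k / p_k$ converges because $2^*/(2\sigma') > 1$ precisely when $s > N/2$). This produces the $L^\infty$ bound
\[
\|u\|_{L^\infty(B_1)} \;\leq\; C\bigl(N, s, \|b\|_{L^s(B_4)}\bigr) \, \|u\|_{L^{2^*}(B_4)} ,
\]
which is the conclusion needed for the Kelvin-transformed decay argument in Proposition \ref{uniform estimate of v-varepsilon}. When only $b \in L^{N/2}$ is available (the borderline case used at the start of that proof), the gain factor $2^*/(2\sigma')$ degenerates to $1$, so one instead splits $b = b_1 + b_2$ with $b_1$ bounded and $\|b_2\|_{L^{N/2}}$ arbitrarily small, absorbs the small part into the Sobolev constant, and iterates using only the bounded part; this yields arbitrary $L^p$ integrability but not $L^\infty$.

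The main bookkeeping obstacle is tracking the constants $C_{p_k}$ in the iteration: one must verify that $\sum_k p_k^{-1} \log C_{p_k}$ converges, which reduces to controlling $\log p_k = O(k)$ against the geometric growth of $p_k$. This is classical and causes no real difficulty. A secondary technical point is the truncation argument needed to legitimize plugging in unbounded test functions; working with $u_M$ and letting $M \to \infty$ via Fatou on the left and dominated convergence on the right (using exactly the hypothesis $\int a |u v| < \infty$) closes that gap.
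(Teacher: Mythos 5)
The paper offers no proof of this lemma at all --- it is quoted, with its exact hypotheses and conclusions, from Appendix B of Akahori--Ibrahim--Ikoma--Kikuchi--Nawa \cite{Akahori2019CVPDE}, and the argument there is precisely the standard Moser iteration you outline (Caccioppoli inequality for $\eta\,|u|^{p-1}u$ with the $a$-term discarded by sign, Sobolev reverse-H\"older gain, geometric iteration of exponents and radii, truncation $u_M$ to legitimize the test functions via the hypothesis $\int_{B_4} a|uv|\,dx<\infty$). Your proposal is therefore correct and follows essentially the same route as the cited proof; the only cosmetic mismatch is that part (1) of the lemma asserts an $H^{1}(B_1)$ bound on $|u|^{q+1}$ rather than mere $L^p$ integrability, but your Caccioppoli step already controls $\|\nabla(\eta|u|^{p}\mathrm{sgn}(u))\|_{L^2}$, so that stronger conclusion comes for free from your own scheme.
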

\begin{enumerate}
    \item Assume that for any $\varepsilon\in(0,1)$, there exists $t_{\varepsilon}>0$ such that 
    \begin{equation}
        \|\chi_{[|b|>t_{\varepsilon}]}b\|_{L^{d/2}(B_{4})}\leq \varepsilon,
    \end{equation}
    where $[|b|>t_{\varepsilon}]:=\{x\in B_{4}:|b(x)|>t\}$ and $\chi_{A}$ denotes the characteristic function of $A\subset \R^{N}$. Then, for any $q\in (0,\infty)$, there exists a constant $C(N,q,t_{\varepsilon})$ such that
    \begin{equation}
        \||u|^{q+1}\|_{H^{1}(B_{1})}\leq C(N,q,t_{\varepsilon})\|u\|_{L^{2^*}(B_{4})}.
    \end{equation}
\item 
Let $s>\frac{N}{2}$ and assume that $b(x)\in L^s(B_4)$. Then, there exists a constant $C\left(N,s,\right.$ $\left.\|b\|_{L^s(B_4)}\right)$ such that
\begin{equation}
 \|u\|_{L^\infty(B_1)}\leq C\left(N,s,\|b\|_{L^s(B_4)}\right)\|u\|_{L^{2^*}(B_4)}. 
\end{equation}
\end{enumerate}
Here, the constant $C(N,q,t_{\varepsilon})$ and $C(N,s,\|b\|_{L^s(B_4)})$ remain bounded as long as $q$, $t_{\varepsilon}$ and $\|b\|_{L^s(B_4)}$ are bounded.

\subsection{Local Poho\v{z}aev identity}

We recall the following local Poho\v{z}aev identities, see \cite[Appendix 6.2]{Cao_Peng_Yan_2021}.
\begin{Lem}\label{Local Pohozaev identiy}
 Suppose that $\Omega$ is a smooth domain in $\R^{N}$ and $f(x,t)\in C(\bar{\Omega}\times\R,\R)$, $u\in C^{2}(\bar{\Omega})$ is a solution of 
 \begin{equation}
     -\Delta u=f(x,u), \text{~in~}\Omega.
 \end{equation}
 Then, for any bounded domain $D \subset \Omega$, one has the following identity
 \begin{equation}
  \begin{aligned}
      &\int_{\partial D}\left((\nabla u\cdot n)((x-y)\cdot\nabla u)-\frac{|\nabla u|^{2}}{2}(x-y)\cdot n\right)+\frac{N-2}{2}\int_{\partial D}u\frac{\partial u}{\partial n} d S_{x}\\
      &\quad+\int_{\partial D} F(x,u)((x-y)\cdot n)d S_{x}\\
      &=\int_{D}N F(x,u)+\frac{2-N}{2}f(x,u)u+(x-y)\cdot F_{x}(x,u)dx,
  \end{aligned}   
 \end{equation}
and for $i=1,\cdots,N $, one has
\begin{equation}
    \int_{\partial D}\frac{\partial u}{\partial x_{i}}\frac{\partial u}{\partial n}-\frac{1}{2}|\nabla u|^{2}n_{i}+F(x,u)n_{i}d S_{x}=\int_{D}F_{x_{i}}(x,u)dx.
\end{equation}
where $y\in\R^{N}$, $F(x,u)=\int_{0}^{u}f(x,t)dt$, $F_{x}$ is the gradient of $F$ with respect to $x$, $dS_{x}$ is the volume element of $\partial\Omega$ and $n$ is the unit outward normal of $\partial\Omega$.
\end{Lem}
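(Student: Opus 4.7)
The plan is to derive both identities by the classical Pohožaev trick: multiply the equation $-\Delta u=f(x,u)$ by a well-chosen test function and perform integration by parts on the bounded subdomain $D$. Because $u\in C^{2}(\bar{\Omega})$, all divergence-theorem manipulations are rigorous without any regularization or approximation.

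For the first identity, I would use the test function $(x-y)\cdot\nabla u$. On the left-hand side, one integration by parts followed by the product rule $\nabla u\cdot\nabla((x-y)\cdot\nabla u)=|\nabla u|^{2}+\tfrac{1}{2}(x-y)\cdot\nabla|\nabla u|^{2}$, together with the divergence theorem applied to the term $(x-y)\cdot\nabla|\nabla u|^{2}$, produces
\[
\tfrac{2-N}{2}\int_{D}|\nabla u|^{2}dx+\tfrac{1}{2}\int_{\partial D}|\nabla u|^{2}(x-y)\cdot n\,dS_{x}-\int_{\partial D}(\nabla u\cdot n)((x-y)\cdot\nabla u)\,dS_{x}.
\]
On the right-hand side, the chain rule $f(x,u)u_{x_{i}}=\partial_{x_{i}}F(x,u)-F_{x_{i}}(x,u)$ combined once more with the divergence theorem yields $-N\int_{D}F(x,u)\,dx+\int_{\partial D}F(x,u)((x-y)\cdot n)\,dS_{x}-\int_{D}(x-y)\cdot F_{x}(x,u)\,dx$. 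The residual bulk term $\int_{D}|\nabla u|^{2}$ is then eliminated via the auxiliary energy identity $\int_{D}|\nabla u|^{2}=\int_{D}f(x,u)u\,dx+\int_{\partial D}u\tfrac{\partial u}{\partial n}\,dS_{x}$, obtained by multiplying the PDE by $u$ itself and integrating by parts. Collecting all contributions and rearranging gives the stated identity.

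For the second (coordinate) identity, I would instead take $u_{x_{i}}$ as the test function. On the left, integration by parts together with the symmetry identity $\sum_{j}u_{x_{j}}u_{x_{i}x_{j}}=\tfrac{1}{2}\partial_{x_{i}}|\nabla u|^{2}$ yields the two boundary terms $-\int_{\partial D}(\nabla u\cdot n)u_{x_{i}}\,dS_{x}+\tfrac{1}{2}\int_{\partial D}|\nabla u|^{2}n_{i}\,dS_{x}$. On the right, the same chain-rule decomposition $f(x,u)u_{x_{i}}=\partial_{x_{i}}F(x,u)-F_{x_{i}}(x,u)$ followed by the divergence theorem produces $\int_{\partial D}F(x,u)n_{i}\,dS_{x}-\int_{D}F_{x_{i}}(x,u)\,dx$. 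Equating both sides and rearranging immediately delivers the claim.

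The only real obstacle is careful sign and coefficient bookkeeping across the two integration-by-parts steps, since the various bulk and boundary contributions must be collected in just the right combinations to match the coefficients $\tfrac{N-2}{2}$ and $\tfrac{2-N}{2}$ in the statement. No delicate analytic input is required: the regularity $u\in C^{2}(\bar{\Omega})$ makes every step elementary, and the only auxiliary fact needed beyond the divergence theorem is the scalar energy identity used to convert the leftover $\int_{D}|\nabla u|^{2}$ term into a combination of $\int_{D}f(x,u)u$ and a boundary integral.
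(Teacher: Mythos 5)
Your derivation is correct: testing the equation against $(x-y)\cdot\nabla u$ and against $u_{x_i}$, using the identities $\nabla u\cdot\nabla((x-y)\cdot\nabla u)=|\nabla u|^{2}+\tfrac12(x-y)\cdot\nabla|\nabla u|^{2}$ and $f(x,u)\nabla u=\nabla_x[F(x,u(x))]-F_x(x,u)$, and eliminating $\int_D|\nabla u|^2$ via the energy identity reproduces both stated identities with the correct coefficients. This is exactly the classical argument that the paper delegates to the cited reference (Cao--Peng--Yan, Appendix 6.2), so there is nothing to add beyond the implicit standing assumption that $\partial D$ is regular enough for the divergence theorem.
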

\begin{Cor}\label{Pohozaev identity}
Let $\Omega$ be a bounded smooth domain in $\R^{N}$ and $f(x,t)\in C(\bar{\Omega}\times\R,\R)$, $u\in C^{2}(\bar{\Omega})$ is a solution of
\begin{equation}
    \begin{cases}
        -\Delta u=f(x,u),&\quad\text{~in~}\Omega,\\
       \quad \ \ u=0,&\quad\text{~on~}\partial\Omega.       
    \end{cases}
\end{equation}
Then the following identity holds
\begin{equation}
    \begin{aligned}
\int_{\Omega} N F(x,u)-\frac{N-2}{2}u f(x,u)+(x-y)\cdot F_{x}(x,u)dx=\frac{1}{2}\int_{\partial\Omega}|\nabla u|^{2}(x-y)\cdot n dS_{x},
    \end{aligned}
\end{equation}
where $y\in\R^{N}$, $F(x,u)=\int_{0}^{u}f(x,t)dt$, $F_{x}$ is the gradient of $F$ with respect to $x$, $dS_{x}$ is the volume element of $\partial\Omega$ and $n$ is the unit outward normal of $\partial\Omega$.
\end{Cor}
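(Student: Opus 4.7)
The plan is to deduce the global Pohožaev identity directly from the local Pohožaev identity in Lemma \ref{Local Pohozaev identiy} by choosing $D=\Omega$ and simplifying the boundary integrals using the Dirichlet condition $u=0$ on $\partial\Omega$.

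First, I will apply the first identity in Lemma \ref{Local Pohozaev identiy} with $D=\Omega$. This produces three boundary integrals on $\partial\Omega$ and a single volume integral on $\Omega$ that already has the desired form $\int_\Omega NF(x,u)-\tfrac{N-2}{2}uf(x,u)+(x-y)\cdot F_x(x,u)\,dx$. So the only work is to show that the entire boundary contribution reduces to $\tfrac12\int_{\partial\Omega}|\nabla u|^2(x-y)\cdot n\,dS_x$.

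Next, I use the Dirichlet condition. Since $u\equiv 0$ on $\partial\Omega$, every tangential derivative of $u$ vanishes on the boundary, which implies $\nabla u=(\partial u/\partial n)\,n$ there. Consequently $(\nabla u\cdot n)\bigl((x-y)\cdot\nabla u\bigr)=(\partial u/\partial n)^2(x-y)\cdot n=|\nabla u|^2(x-y)\cdot n$, so the combination
\[
(\nabla u\cdot n)\bigl((x-y)\cdot\nabla u\bigr)-\tfrac{|\nabla u|^2}{2}(x-y)\cdot n
=\tfrac{1}{2}|\nabla u|^2(x-y)\cdot n.
\]
The term $\tfrac{N-2}{2}\int_{\partial\Omega}u\,\partial_n u\,dS_x$ vanishes because $u=0$ on $\partial\Omega$. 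Likewise, $F(x,u)=\int_0^{u}f(x,t)\,dt=F(x,0)=0$ on $\partial\Omega$, so the $\int_{\partial\Omega}F(x,u)(x-y)\cdot n\,dS_x$ term drops. Moving the surviving boundary term to the right-hand side of the identity yields exactly the claimed equality.

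There is essentially no obstacle: the corollary is an immediate specialization of the local identity, with the Dirichlet boundary condition collapsing the three boundary integrals into a single one. The only point requiring a line of justification is the pointwise identity $\nabla u=(\partial u/\partial n)n$ on $\partial\Omega$, which follows from $u|_{\partial\Omega}\equiv 0$ together with $u\in C^2(\bar\Omega)$ ensuring the gradient is well-defined on $\partial\Omega$.
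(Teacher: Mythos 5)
Your proof is correct and matches the paper's intent exactly: the corollary is stated without a separate proof precisely because it is the local identity of Lemma \ref{Local Pohozaev identiy} with $D=\Omega$, simplified via $u=0$ on $\partial\Omega$ (so $\nabla u=(\partial u/\partial n)n$, $F(x,u)=F(x,0)=0$, and the $u\,\partial_n u$ term vanishes). Nothing further is needed.
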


\subsection{Kernel of Linear operator} The following lemma is proved in \cite{Bianchi1991ANO}. 
\begin{Lem}\cite[Lemma A.1]{Bianchi1991ANO}\label{Kernel of Emden-Fowler equation}
If $u$ is a solution of the following equation
\begin{equation}
\begin{cases}
    -\Delta u=(2^*-1)U^{2^*-2}u, \textrm{~~in~~}\R^{N},\\
    u\in\mathcal{D}^{1,2}(\R^{N}),
\end{cases}    
\end{equation}   
with $U=\left(\frac{N(N-2)}{N(N-2)+|x|^{2}}\right)^{\frac{N-2}{2}}$, then
\begin{equation}
u(x)=\sum_{i=1}^{N}\frac{a_{i}x_{i}}{(N(N-2)+|x|^{2})^{\frac{N}{2}}}+b\frac{N(N-2)-|x|^{2}}{(N(N-2)+|x|^{2})^{\frac{N}{2}}},
\end{equation}
for some $a_{i},b\in\R$.
\end{Lem}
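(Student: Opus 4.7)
My plan is to classify the full $\mathcal{D}^{1,2}(\R^{N})$-kernel of the linearized operator $L:=-\Delta-(2^{*}-1)U^{2^{*}-2}$ at the standard bubble. The strategy has two parts: first exhibit $N+1$ explicit independent solutions coming from the conformal symmetry group of the Emden--Fowler equation, and then show (the hard direction) that no other $\mathcal{D}^{1,2}$-solutions exist.

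For the first part, note that the Aubin--Talenti family $\{c\,U_{\lambda,a}:c\in\R,\lambda>0,a\in\R^{N}\}$ is a smooth $(N+2)$-dimensional manifold of solutions of $-\Delta u=u^{2^{*}-1}$ (modulo sign). Differentiating the relation $-\Delta U_{\lambda,a}=U_{\lambda,a}^{2^{*}-1}$ in the parameters $(\lambda,a)$ and evaluating at the base point corresponding to $U$ yields $N+1$ elements of $\ker L$: the translation derivatives $\partial_{a_{i}}U_{\lambda,a}|_{(\lambda,a)=((N(N-2))^{-1/2},0)}$, which up to a harmless scalar are exactly $x_{i}/(N(N-2)+|x|^{2})^{N/2}$, and the dilation derivative $\partial_{\lambda}U_{\lambda,a}|$, which up to a scalar is $(N(N-2)-|x|^{2})/(N(N-2)+|x|^{2})^{N/2}$. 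A direct substitution confirms that every function in the asserted family lies in $\ker L\cap\mathcal{D}^{1,2}(\R^{N})$ (they decay like $|x|^{-(N-1)}$ and $|x|^{-(N-2)}$ respectively, both of which have finite Dirichlet energy for $N\geq 3$).

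For the second part, I would use the stereographic projection $\pi:\R^{N}\to S^{N}\setminus\{\text{north pole}\}$ together with the conformal change of variable $v(x)=\bigl(\tfrac{2}{1+|x|^{2}}\bigr)^{(N-2)/2}\tilde v(\pi^{-1}(x))$ (up to normalizing the bubble so that $U(x)\propto(1+|x|^{2})^{-(N-2)/2}$, which amounts to a rescaling). Under this change, $\mathcal{D}^{1,2}(\R^{N})$ corresponds isometrically to $H^{1}(S^{N})$, and the conformally covariant identity for the Laplacian transforms the linearized equation $Lv=0$ into the eigenvalue problem
\begin{equation*}
-\Delta_{S^{N}}\tilde v+\tfrac{N(N-2)}{4}\tilde v=(2^{*}-1)\tilde v\quad\text{on }S^{N},
\end{equation*}
i.e.\ $-\Delta_{S^{N}}\tilde v=N\tilde v$. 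The spectrum of $-\Delta_{S^{N}}$ is $\{k(k+N-1):k=0,1,2,\ldots\}$ with eigenspace $\mathcal{H}_{k}$ of dimension $\binom{N+k}{k}-\binom{N+k-2}{k-2}$; the eigenvalue $N$ corresponds precisely to $k=1$, and $\dim\mathcal{H}_{1}=N+1$.

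Pulling $\mathcal{H}_{1}$ back through the stereographic projection produces exactly an $(N+1)$-dimensional subspace of $\ker L$, which must therefore coincide with the span of the $N+1$ symmetry-generated solutions identified in Part~1. The main technical obstacle is verifying the conformal transformation law carefully --- computing how the conformal weight $(2/(1+|x|^{2}))^{(N-2)/2}$ interacts with both $-\Delta$ and the potential $(2^{*}-1)U^{2^{*}-2}$ so that the resulting eigenvalue problem on $S^{N}$ comes out cleanly with eigenvalue $N$. Once this is checked, matching the pulled-back degree-one spherical harmonics with the explicit functions in the statement is a direct (if tedious) computation: the coordinate functions $y_{1},\ldots,y_{N+1}$ on $S^{N}\subset\R^{N+1}$ pull back, after multiplication by the conformal weight, to the $N$ functions $x_{i}/(N(N-2)+|x|^{2})^{N/2}$ (from the first $N$ coordinates) and to $(N(N-2)-|x|^{2})/(N(N-2)+|x|^{2})^{N/2}$ (from the last coordinate, the height function).
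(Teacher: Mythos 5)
The paper does not prove this lemma; it is quoted verbatim from Bianchi--Egnell \cite[Lemma A.1]{Bianchi1991ANO}, whose argument runs by expanding $u$ in spherical harmonics on $\R^{N}$, $u=\sum_{k\ge 0}u_{k}(r)Y_{k}(\theta)$, and analysing the resulting radial ODEs mode by mode: the $k=0$ equation yields only the dilation mode, the $k=1$ equation only the translation modes, and for $k\ge 2$ a comparison/Wronskian argument excludes nontrivial $\mathcal{D}^{1,2}$ solutions. Your route --- conformally transplanting the linearized equation to $S^{N}$ via stereographic projection and identifying the kernel with the first nonconstant eigenspace of $-\Delta_{S^{N}}$ --- is the other standard proof and is equally valid; it trades the ODE analysis for the known spectrum of the sphere, at the cost of the conformal bookkeeping you already flag. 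Two points to tighten. First, your displayed intermediate equation is off: with the sphere-normalized bubble one has $(2^{*}-1)U^{2^{*}-2}=\frac{N+2}{N-2}\cdot\frac{N(N-2)}{4}\phi^{2}=\frac{N(N+2)}{4}\phi^{2}$, so the transplanted equation reads $-\Delta_{S^{N}}\tilde v+\frac{N(N-2)}{4}\tilde v=\frac{N(N+2)}{4}\tilde v$ (not $(2^{*}-1)\tilde v$ on the right); your final conclusion $-\Delta_{S^{N}}\tilde v=N\tilde v$ is nevertheless correct. Second, the hard direction needs one more step that you do not mention: given $u\in\mathcal{D}^{1,2}(\R^{N})$, the pulled-back function $\tilde v$ is a priori defined only on $S^{N}$ minus the north pole, so you must argue that the singularity is removable (e.g.\ $\tilde v\in H^{1}$ of the punctured sphere equals $H^{1}(S^{N})$ since a point has zero capacity, followed by elliptic regularity) before invoking the eigenvalue decomposition. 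With those repairs the argument is complete and the matching of the pulled-back degree-one harmonics with the explicit functions in the statement is, as you say, a direct computation.
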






\smallskip

\end{document}